\newcommand{\E}{\mathbb{E}}
\newcommand{\mZ}{\mathbb{Z}}
\newcommand{\mT}{\mathbb{T}}
\newcommand{\1}{\textbf{1}}
\def\1{1\!{\rm l}}
\newcommand{\la}{\lambda}
\newcommand{\La}{\Lambda}
\newcommand{\eif}{\psi_{b}}
\newcommand{\EM}{\ensuremath}
\newcommand{\al}{\alpha}
\newcommand{\be}{\beta}
\newcommand{\ga}{\gamma}
\newcommand{\te}{\theta}
\newcommand{\ta}{\tau}
\newcommand{\veps}{\varepsilon}
\newcommand{\vphi}{\varphi} 
\newcommand{\leqa}{\lesssim}
\newcommand{\geqa}{\gtrsim}
\newcommand{\cA}{\EM{\mathcal{A}}}
\newcommand{\cB}{\EM{\mathcal{B}}}
\newcommand{\cC}{\EM{\mathcal{C}}}
\newcommand{\cF}{\EM{\mathcal{F}}}
\newcommand{\cG}{\EM{\mathcal{G}}}
\newcommand{\cH}{\EM{\mathcal{H}}}
\newcommand{\cI}{\EM{\mathcal{I}}}
\newcommand{\cJ}{\EM{\mathcal{J}}}
\newcommand{\cL}{\EM{\mathcal{L}}}
\newcommand{\cM}{\EM{\mathcal{M}}}
\newcommand{\cN}{\EM{\mathcal{N}}}
\newcommand{\cV}{\EM{\mathcal{V}}}
\newcommand{\psg}{{\langle}}
\newcommand{\psd}{{\rangle}}
\definecolor{blendedblue}{rgb}{0.2,0.2,0.7}
\definecolor{darkpurple}{RGB}{49,0,94}
\definecolor{darkgreen}{RGB}{11, 84, 37}
\newcommand{\RR}{\mathbb{R}}
\newcommand{\given}{\,|\,}
\newcommand{\rn}{\sqrt{n}}
\newcommand{\psin}{\psi_{b,L_n}} 
\newcommand{\mh}{\mathbb{H}}
\newcommand{\op}{o_{P_0}(1)}
\theoremstyle{plain} 
 \newtheorem{thm}{Theorem}
 \newtheorem{prop}{Proposition}
 \newtheorem{lem}{Lemma}
 \newtheorem{corollary}{Corollary}
\theoremstyle{definition}
\theoremstyle{remark}
 \newtheorem{remark}{Remark} 
\theoremstyle{assumption}
\begin{document}

\begin{frontmatter}

\title{Multiscale Bayesian Survival Analysis}

\runtitle{Bayesian Survival Analysis}


\author{\fnms{Isma\"el} \snm{Castillo}\thanksref{t1, m1}\ead[label=e1]{ismael.castillo@upmc.fr}}
\and
\author{\fnms{St\'ephanie} \snm{van der Pas}\thanksref{t2, m2}\corref{}\ead[label=e2]{s.l.vanderpas@amsterdamumc.nl}}
 
\thankstext{t1}{The author gratefully acknowledges support from the Institut Universitaire de France (IUF) and from the ANR grant ANR-17-CE40-0001 (BASICS).}
\thankstext{t2}{This work is (partly) financed by the Dutch Research Council (NWO), under Veni grant 192.087.}
 
\affiliation{Sorbonne University \& IUF\thanksmark{m1} and Amsterdam UMC\thanksmark{m2}}

\address{Sorbonne Universit\'e \&
Institut Universitaire de France\\
  Laboratoire de Probabilit\'es, 
Statistique et Mod\'elisation\\ 4, Place Jussieu, 75252, Paris cedex 05, France\\
  \printead{e1}}

\address{Amsterdam UMC, Vrije Universiteit Amsterdam\\
Department of Epidemiology and Data Science\\
De Boelelaan 1117, 
Amsterdam, 
The Netherlands\\
 \printead{e2}}

\runauthor{Castillo and Van der Pas}

\begin{abstract} We consider Bayesian nonparametric inference in the right-censoring survival model, where modeling is made at the level of the hazard rate. 
We derive posterior limiting distributions for linear functionals of the hazard, and then for `many' functionals simultaneously in appropriate multiscale spaces.  As an application, we derive Bernstein-von Mises theorems for the cumulative hazard and survival functions, which lead to asymptotically efficient confidence bands for these quantities. Further, we show optimal posterior contraction rates  for the hazard in terms of the supremum norm. In medical studies, a popular approach is to model hazards a priori as random histograms with possibly dependent heights. This and more general classes of arbitrarily smooth prior distributions are considered as applications of our theory. A sampler is provided for possibly dependent histogram posteriors. Its  finite sample properties are investigated on both simulated and real data experiments. 
\end{abstract}

\begin{keyword}[class=MSC]
\kwd[Primary ]{62G20, 62G15}
\end{keyword}

\begin{keyword}
\kwd{Frequentist analysis of Bayesian procedures} 
\kwd{Survival analysis}
\kwd{Nonparametric Bernstein--von Mises theorem}
\kwd{Supremum norm contraction rate}
\end{keyword}

\end{frontmatter}

\tableofcontents

\section{Introduction} \label{sec:intro}
Survival models are at the heart of biomedical applications of statistics, and form an integral part of many other fields, including, among others, the insurance sector, sociology and engineering. In medical studies, relevant quantities for inference are survival probability curves, as well as hazard rate functions. Nonparametric methods have proved very helpful in the analysis of these models, in particular Bayesian methods are often used in such settings for their flexibility and ability to measure uncertainty \cite{Ibrahim2013}. %
Histogram priors are especially attractive, because they model the hazard in an intuitively appealing and interpretable way, by splitting the follow-up period into distinct intervals with a potentially different hazard rate during each interval. Indeed, a great variety of histogram priors -- with possibly dependent heights -- has been proposed to model the hazard rate, see e.g. \cite{Sinha1993, Arjas1994, Aslanidou1998, Sahu1997, Gamerman1991, Fahrmeir2001, Ibrahim2013, Berry2004}. A great benefit of the Bayesian approach is that confidence bands for survival curves are created in a natural way, which offers a typically more meaningful way of quantifying uncertainty compared to the confidence intervals that are only valid at a particular point in time often reported in practice.  A prominent point of interest is whether the use of Bayesian credible bands as confidence bands is justified. While such bands are already often used in practice, it is important to have mathematical guarantees that these sets have the desired frequentist coverage. Our results provide theory validating exactly this use of credible bands as frequentist confidence bands for many priors, including histogram priors as a particular case.

The desired mathematical guarantees can be achieved by  proving a   Bernstein--von Mises theorem (BvM), in the form of a (Bayesian) Donsker--type result for the posterior distribution on the survival curve. We will  derive such a result as a consequence of a nonparametric Bernstein--von Mises result on the hazard itself, thereby providing the sought-after theoretical guarantee for uncertainty quantification. The proofs are partly based on multiscale techniques as introduced in \cite{cn13}, \cite{cn14}, \cite{ic14}, as well as on semiparametric tools \cite{Castillo2012}, \cite{Castillo2015}. Also, we provide a sampler for histogram priors (allowing for dependent heights) and illustrate its use both in simulated and real data situations. We observe in simulations that the corresponding credible sets outperform several popular choices for confidence bands in terms of area. We refer to the books \cite{gillbook93, Klein2016}, as well as to the simulation study and data analysis in Sections \ref{sec:simulation} and \ref{sec:data}  for more on existing non--Bayesian methods.

We now briefly review the literature on the frequentist analysis of Bayesian methods for right--censored data. In the context of neutral to the right priors, Hjort \cite{Hjort1990} considered Beta process priors for the cumulative hazard, showing in particular conjugacy. Kim and Lee \cite{kimlee01} derived sufficient conditions for posterior consistency, while  \cite{Kim2004} obtained a BvM for the survival function for classes of neutral to the right priors in the right-censoring model (see also \cite{Kim2006} for results for the Cox model). We note that all such results, that model  the cumulative hazard or the survival function directly, do use some form of conjugacy of the prior and model,  which is not the case for the theory built up in the present paper. A work that also models the hazard rate -- and in this sense is closer to ours -- is De Blasi et al. \cite{DeBlasi2009a}, where the authors model a priori the hazard rate using a kernel mixture with respect to a completely random measure. They derive posterior consistency for the hazard, as well as limiting results for linear and quadratic functionals of the hazard. In \cite{DeBlasi2009b}, the semiparametric BvM is derived in competing risks models. The Cox model is treated as an example of the general semiparametric BvM theorem in  \cite{Castillo2012}, which uses non-conjugate techniques and a Gaussian prior, although requires the hazard to be sufficiently smooth (at least $3/2$--H\"older).

This paper has three main goals. A first aim is to provide theory for posterior distributions modeling both the hazard rate and the survival function, investigating practically used priors, including random histograms and more generally arbitrarily smooth priors. To do so, we focus on the commonly used right-censoring model. We derive both BvM theorems for the survival curve and hazard rate, as well as minimax optimal supremum--norm rate results for the hazard. Supremum--norm results are particularly desirable in practice, as they justify evaluating the quality of an estimated curve through visual closeness to a true curve, rather than through a criterion less easily visualized such as closeness in the  $L_2$--sense. In addition, supremum--norm results may make contributions to other questions like change-point problems, identifications of `cusps' and estimation of level sets (e.g. \cite{lighosal20}), although we do not explore this further in this paper.  Second, the paper is  intended to serve as a platform to derive such results in more complex survival models; in particular, we develop non-conjugate techniques, that do not rely on an explicit expression  of the posterior distribution, and will apply more generally, provided some form of Local Asymptotic Normality (LAN) holds. Third, we try to minimize  regularity assumptions on the hazard rate as much as possible. We will assume that the hazard is $\be$--H\"older,  for an arbitrary $\be>0$ for most of the results (just assuming  $\be>1/2$ for a few examples of priors). The techniques we introduce also enable   improved minimal regularity requirements for posterior supremum norm convergence compared to \cite{ic14}--\cite{cn14}. We come back to the implications of the present work for density estimation in the Discussion in Section \ref{sec:discussion}.

Let us now briefly describe our main results and give an outline of the paper. In Section \ref{sec:intromodel}, we introduce the right-censoring model and recall standard notation and assumptions in this setting. In Section \ref{sec:prior}, we introduce the main families of prior distributions we consider, namely histogram priors on the hazard rate with possibly dependent height coefficients,  and smoother series priors on the log-hazard. The main mathematical results are stated in Section \ref{sec:main}. A BvM theorem for linear functionals of the hazard is first obtained. Next, we state a Donsker--BvM result for the cumulative hazard and survival functions, whose proof is based on a nonparametric BvM theorem (discussed and proved in Section \ref{sec:npbvm}). Then, optimal supremum norm posterior convergence results are derived for the hazard. Finally, we apply these results to the considered classes of priors. In Section \ref{sec:simulation}, we present a sampler for histogram priors and illustrate the estimators' performance through simulated data, while a real data example is considered in Section \ref{sec:data}. A  discussion is presented in Section \ref{sec:discussion}. Section \ref{sec:proofs} contains the proofs of the BvM for linear functionals. Sections \ref{sec:wave}--\ref{sec:suppsim} {contain the statement and proof of the nonparametric BvM theorem}, and gather the proof of the supremum norm results as well as a number of useful lemmas and additional simulation and data application results. For readers mainly interested in practical applications of our methods, we note that Sections \ref{sec:bvmlin} through \ref{sec:sn} can be skipped at first read.\\

\section{The survival model with independent right censoring}\label{sec:intromodel}
We are interested in i.i.d. survival times $T_1, \ldots, T_n$, but their observation is possibly interfered with by i.i.d. censoring times $C_1, \ldots, C_n$, which are independent of the survival times, so that we observe $X=X^n =((Y_1, \delta_1), \ldots, (Y_n, \delta_n))$ i.i.d. pairs, where $Y_i = T_i \wedge C_i$ and $\delta_i = \1\{T_i \leq C_i\}$.

One main object of interest is the survival function $S(t) = P(T > t)$, where we use the generic notation $T$ for a random variable of same distribution as  $T_1$ (and similarly for $C, Y$ below). In the present setting, as in many others, it is useful to assume that $S$ is induced by a certain  hazard function. Assuming $T$ admits a continuous density, which we denote by $f$, the hazard rate is  $\lambda(t) = \lim_{h \downarrow 0} h^{-1}P(t \leq T < t + h \mid T \geq t)$. Integrating the hazard yields the cumulative hazard $\Lambda(\cdot) = \int_0^\cdot \la(u)du$, which is related to the survival by $S(\cdot) = \exp\{-\Lambda(\cdot)\}$.

We assume there exists a `true' continuous hazard $\lambda_0$ underlying the data-generating process and require a few assumptions, mainly to ensure that influence functions are well-defined and to avoid division by zero. For some $\ta>0$ corresponding to the time at end of the study and $c_i>0$, $i=1,\ldots,4$:
\begin{enumerate}[label=\textbf{(M)}]
\item \label{c:mod}  at the end of follow-up, some individuals are still eventfree and uncensored: 
$P(T > \tau) > 0$ and $P(C \geq \tau) = P(C = \tau) > 0$. 

\item[] 
We have $c_1\le \inf_{t\in[0,\ta]} \la_0(t)\le \sup_{t\in[0,\ta]} \la_0(t) \le c_2$.
\notag

\item[] 
The censoring $C$ has a distribution function $G$ and admits a density
\[ p_C(u) = g(u)\1\{0 \leq u < \tau\} + (1 - G(\tau-))\1\{u = \tau\}\]
with respect to $\text{Leb}([0,\ta])+\delta_\ta$, with $\text{Leb}(I)$ the Lebesgue measure on $I$ and $g$ such that 
$c_3\le \inf_{t\in[0,\ta)} g(t)\le \sup_{t\in[0,\ta)} g(t)\le c_4$. \notag
\end{enumerate}
Henceforth, for notational simplicity and without loss of generality we set $\tau=1$, otherwise one can consider rescaled versions of the procedures. It follows from \ref{c:mod} that $\La_0(\ta)=\La_0(1)\le c_2$.

As the censoring distribution factors out from the likelihood (see Section \ref{sec:proofs}) we do not need to model $G$ and we denote the distribution of the data under the `true' unknown parameters simply by $P_{\la_0}$ (or even $P_0$), and keep the notation $G$ for the censoring law, with $g$ its density. We also denote $\bar{G}(u)=1-G(u-)$ for $u\in[0,1]$ and note that in our setting $1-G(t)=\bar{G}(t)$ for $t<1$. 
The function, for $u\in[0,1]$,
\begin{equation} \label{emo}
M_0(u) = E_{\la_0}\1\{u \leq Y\}=\bar{G}(u)e^{-\La_0(u)}
\end{equation}
 plays an important role in the sequel. Under assumption \ref{c:mod} it is bounded away from $0$.

{\em Notation.}  The distribution function of the  variable of interest $T$ is denoted $F(t) = \int_0^t f(x)dx$ and the survival function is $S=1-F$. For a bounded function $b$ on $[0,1]$ and $\La$ a cumulative hazard, we denote 
$(\La b)(\cdot)  =\int_0^\cdot b(u)d\Lambda(u)$ and, in slight abuse of notation $\La b = (\La b)(1)=\int_0^1 b(u)d\Lambda(u)$. For real $a,b$, we set $a\wedge b=\min(a,b)$ and $a\vee b=\max(a,b)$. 

The notation $(\psi_{lk})$ refers to one of the following two wavelet bases,
\begin{enumerate}
\item {\em the Haar basis on $[0,1]$} sets $\vphi=\1_{(0,1]}$, $\psi=\1_{(0,1/2]}-\1_{(1/2,1]}$ and $\psi_{lk}(\cdot)=2^{l/2}\psi(2^l\cdot{-k})$, {$0 \leq k < 2^l, l \geq 0$,}
 and $\psi_{-1\, -1/2}=\vphi$;
\item {\em a smooth boundary-corrected} wavelet basis on $[0,1]$, such as the CDV wavelets of \cite{cdv93}.  
We refer to Section \ref{sec:wave} for more on their properties. 
\end{enumerate}           
We denote dyadic intervals by $I_k^{l} = (k2^{-l}, (k+1)2^{-l}]$ for $l\ge 0$ and $0\le k\le 2^l-1$. For $(\psi_{lk})$ the Haar basis, $I_k^l$ is the support of $\psi_{lk}$.                   

We denote by $L^2=L^2[0,1]$ the space of square--integrable functions on $[0,1]$, with $\psg f,g\psd=\int_0^1 fg$ the associated inner product and $\|f\|^2=\psg f,f \psd$ the squared $L^2$--norm. Also, $L^2(\La)=\{f:\ \int f^2d\La=\int f^2\la <\infty\}$.  
Given a wavelet basis $(\psi_{lk})$ as above, for $f\in L^2$ we denote $f_{lk}=\psg f,\psi_{lk}\psd$ its wavelet coefficients. 
For any $L\ge 0$, we set 
\begin{equation}\label{cve} 
 \cV_L=\text{Vect}\{\vphi,\, \psi_{lk},\ l\le L, 0\le k<2^l\}
\end{equation}
the space generated by wavelet functions up to level $L$.  The space of continuous (resp. bounded) functions on $[0,1]$ is denoted by $\cC[0,1]$ (resp. $L^\infty[0,1]$) and is equipped with the (essential) supremum norm $\|\cdot\|_\infty$. 
For $\beta, D> 0$, and $l$ the largest integer smaller than $\be$, let $\cH(\be,D)=\{f:\ |f^{(l)}(x)-f^{(l)}(y)|\le D|x-y|^{\be-l},\, x,y\in[0,1]\}$.

In the paper asymptotics are as $n\to\infty$, and $o(1), \op$ are respectively a deterministic sequence going to $0$ as $n\to\infty$ and a random one going to $0$ in probability under $P_0=P_{\la_0}$. 

For $(\mathcal{S},d)$ a metric space, the bounded Lipschitz metric $\cB_{\mathcal{S}}$ on probability measures on $\mathcal{S}$ is, for any $\mu,\nu$ probability measures of $\mathcal{S}$,
\begin{equation}
\cB_{\mathcal{S}}(\mu,\nu)=\sup_{F;\|F\|_{BL}\leq1}\left|\int_{\mathcal{S}}F(x)(d\mu(x)-d\nu(x))\right|,
\end{equation}
where $F:\mathcal{S}\to \mathbb{R}$ and
\begin{equation}
\|F\|_{BL}=\sup_{x\in \mathcal{S}}|F(x)|+\sup_{x\neq y}\frac{|F(x)-F(y)|}{d(x,y)}.
\end{equation}

Throughout the paper, the following rate is frequently used, for $\be>0$,
\begin{align}
\veps_{n,\be}^* & = \left( \frac{\log{n}}{n} \right)^{\frac{\be}{2\be+1}}.\label{ratesup}
\end{align}

\section{Prior distributions and glimpse of the results} \label{sec:prior}

In order to model both the cumulative hazard $\La$ (and related survival $S$) and its rate $\la$, we define a prior distribution on $\la$ via a prior $\Pi$ on the log--hazard $r=\log{\la}$. 

\subsection{Families of priors on the log-hazard}\label{sec:familiesofpriors}

Our results are illustrated with two vast families of prior distributions: dyadic histogram priors, allowing for possibly dependent heights, and referred to as {\bf(H)}-type priors in the sequel; and more general possibly smooth priors,   referred to as {\bf(S)}-type priors.
 
In practice, it can be appealing to model the prior distribution directly on certain time-intervals, with a possible dependence in the choice of amplitudes across times. We allow this in the following two prior classes, with respectively independent and dependent heights. 

{\em {$\bf(H_1)$} Regular dyadic histograms with independent coefficients}. Set
\[ r = \sum_{k=0}^{2^{L+1}-1} r_k  \1_{I_{k}^{L+1}}, \ \ \text{or equivalently } \ \la = \sum_{k=0}^{2^{L+1}-1} \la_k  \1_{I_{k}^{L+1}}, \]
for $L =L_n$ a sequence of integers (called `cut-off'), $r_k=\log{\la_k}$,  
and $\la_k$ are positive independent random variables, all to be specified in the sequel. 

{\em $\bf{\boldsymbol{(}H_2}${\bf )} Regular dyadic histograms with dependent coefficients.} These have a structure similar to {$\bf(H_1)$}, but each $\la_k$ depends on $\la_{k-1}$, for $k \geq 1$. Intuitively this dependence induces some `smoothness', while the prior itself remains a histogram.
Histogram priors with a martingale structure have a long history of success \cite{Arjas1994, Aslanidou1998, Fahrmeir2001, Ibrahim2013,  Sahu1997, Sinha1993}. Specific examples will be studied in Section \ref{sec:appli}.

A more general class of priors on $r=\log{\la}$ we consider sets
\begin{align} \label{priorgen}
r & = \sum_{l=-1}^{L_n} \sum_{k=0}^{2^l-1}\sigma_l Z_{lk} \psi_{lk},
\end{align}
where $(\psi_{lk})$ is a wavelet basis as above, the variables $Z_{lk}$ are independent, $L_n$ and $\sigma_l$ are positive real numbers to be chosen. This includes 
\begin{enumerate}
\item[{$\bf(H_3)$}] {\em Dyadic Haar wavelet histograms} with $(\psi_{lk})$  the Haar basis.
\item[{\bf (S)}] {\em Smooth wavelet priors} with $(\psi_{lk})$ a smooth wavelet basis.
\end{enumerate}
Note that the priors ${\bf (H_1), (H_2)}$ can also be written as in  \eqref{priorgen} with $(\psi_{lk})$ the Haar basis. Conversely,  ${\bf (H_3)}$ is a special case of ${\bf (H_2)}$, see Section \ref{sec:wave}.

\subsection{Frequentist analysis of posterior distributions} Given a prior distribution $\Pi$ on log-hazards $r=\log{\la}$ as above and data $X=X^n$ from the right censoring model, one can form the posterior distribution $\Pi[\cdot\given X]$ on $r$, that is the conditional distribution $\cL(r\given X)$, in the usual way. Taking a frequentist approach to analyse the posterior, we assume that there exists a `true'  $r_0=\log\la_0$ so that the data is generated from $X\sim P_{\la_0}$, and we study $\Pi[\cdot\given X]$ in probability under $P_{\la_0}$.

\subsection{A glimpse of the results}

Consider cut-offs $L_n$ defined as, for $\ga>0$,
\begin{align}
2^{L_n}=2^{L_n^U} & \circeq n^{1/2}, \label{ctunder}\\
\text{or}\quad 2^{L_n}=2^{L_n(\ga)} & \circeq \left(\frac{n}{\log{n}}\right)^{\frac{1}{2\ga+1}},\label{ctga}
\end{align}
where $\circeq$ means that one picks a closest integer solution in $L_n$ of the equation. 

Let $G_{\La_0}$ denote the Gaussian process, for $W$ standard Brownian motion, 
\begin{equation} \label{limdonsker}
G_{\La_0}(t) = W(U_0(t)),\qquad t\in[0,1],
\end{equation}
where we have set $U_0(t)=\int_0^t (\la_0/M_0)(u)du$ and $M_0$ is as in \eqref{emo}.  The appearance of the scaling function $M_0$ is a particular feature of the survival model, which appears in multiple results in this paper. It is closely connected to the LAN inner product, which is $\langle f, g \rangle_L = \int_0^1 f g M_0 \la_0$, as described in more detail in Section \ref{sec:lan}.

\begin{thm} \label{thm:glimpse}
Suppose the true log--hazard  $r_0=\log{\la_0}$ belongs to the H\"older ball $\cH(\be,L)$ for $\be,L>0$. 
Let $\Pi$ be a histogram prior  of type ${\bf (H_3)}$ with standard Laplace  coefficients,  and  cut--off $L_n$ as in \eqref{ctunder} or \eqref{ctga} with $\gamma=1/2$. 
Then, regardless of $\be>0$, for $G_{\La_0}$ as in 
\eqref{limdonsker}, and  $\hat\Lambda_n$  Nelson--Aalen's estimator,
\begin{align*}
\cB_{\mathcal{D}[0,1]}\left(\,\cL(\rn(\La-\hat\La_n)\given X)\ ,\, \cL(G_{\La_0})\right) & =\op,
\end{align*}
with $\mathcal{D}[0,1]$ the space of c\`adl\`ag functions on $[0,1]$. 
Now assuming $0<\be\le 1$, for $L_n$ as in \eqref{ctga} with $\gamma=\be$, 
for arbitrary $M_n\to\infty$, and $\veps_{n,\be}^*$ as in \eqref{ratesup},
\begin{align*}
\Pi[ \| \la - \la_0\|_\infty > M_n \veps_{n,\be}^* \given X] &=\op.
\end{align*}
\end{thm}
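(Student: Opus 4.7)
The plan is to derive both statements as applications of the master theorems stated earlier in Section \ref{sec:main}: a nonparametric Bernstein--von Mises (BvM) theorem in a multiscale space for the Donsker assertion, and a generic supremum--norm contraction theorem for the hazard. Once those master results are in place, the remaining work for the Haar prior of type ${\bf (H_3)}$ with standard Laplace coefficients is to verify the prior--mass and remainder conditions they require; the Laplace density has globally Lipschitz log--density and light exponential tails, which is exactly what makes these verifications tractable.

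For the Donsker statement, I would first establish a nonparametric BvM for $\la$ itself, viewed through its Haar coefficients, in a multiscale sequence space with an admissible weight. The three ingredients to assemble are: (i) the LAN expansion at $\la_0$ in the censoring model, governed by the inner product $\psg f,g\psd_L=\int f g M_0 \la_0$ highlighted in the text; (ii) a change--of--measure estimate for the ratio of the Laplace prior densities at $\la$ and at a $1/\rn$--shifted hazard $\la+t/\rn$ along bounded multiscale directions $t$, which is controllable coordinate by coordinate thanks to Lipschitz--ness of the Laplace log--density; and (iii) tightness of the posterior in the multiscale space, obtained by combining Laplace tails with a standard $L^2$--type contraction rate arising from prior mass and testing arguments. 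The cut--offs \eqref{ctunder} and \eqref{ctga} with $\ga=1/2$ are designed so that the approximation error of $\la_0$ by elements of $\cV_{L_n}$ is negligible in the multiscale norm, which is why the BvM holds for any $\be>0$. The cumulative--hazard map $\la\mapsto \La(\cdot)=\int_0^\cdot \la(u)du$ is continuous from the multiscale space into $\mathcal{D}[0,1]$, so the continuous mapping theorem upgrades the BvM on $\la$ into a Donsker--type BvM on $\La$ around $\La_0$ with limit $G_{\La_0}$. Recentering at Nelson--Aalen is then classical: $\hat\La_n$ is asymptotically efficient, $\rn(\hat\La_n-\La_0)\rightsquigarrow G_{\La_0}$ in $\mathcal{D}[0,1]$, and its efficient influence function matches the one appearing in the LAN expansion, so the posterior recentered at $\hat\La_n$ still converges to the same Gaussian process.

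For the supremum--norm statement, I would invoke the generic supremum--norm contraction theorem from Section \ref{sec:main}, which reduces the task to bounding separately the deterministic bias $\|\la_0-\Pi_{L_n}\la_0\|_\infty$, where $\Pi_{L_n}$ is the Haar projection onto $\cV_{L_n}$, and the stochastic fluctuations of the posterior within $\cV_{L_n}$ (the beyond--$\cV_{L_n}$ contribution being zero since the prior is supported on $\cV_{L_n}$). With $\la_0\in\cH(\be,L)$ and $\be\le 1$, classical Haar approximation bounds the bias by a multiple of $2^{-L_n\be}\asymp \veps_{n,\be}^*$ when $2^{L_n}\asymp(n/\log n)^{1/(2\be+1)}$. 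The stochastic term is controlled by a union bound across the $2^{L_n}$ coordinates: each posterior Haar coefficient concentrates in a $\sqrt{\log n/n}$--window around a coordinate--MLE--type centering, with the $\sqrt{\log n}$ factor produced by the union bound against Laplace tails. Summing with the Haar scaling $2^{l/2}$ yields the claimed rate $\veps_{n,\be}^*$.

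The main obstacle is the nonparametric BvM in this non--conjugate survival model: uniformly propagating the LAN expansion and the change--of--measure estimate over a shrinking neighbourhood of $\la_0$ of full posterior mass, while handling the non--Gaussian Laplace prior in wavelet coordinates. The $\be\le 1$ ceiling in the supremum--norm statement is an intrinsic limitation of Haar approximation rather than of the proof method, and is lifted for the smoother prior class ${\bf (S)}$ treated elsewhere in the paper.
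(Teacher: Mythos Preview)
Your proposal follows essentially the same architecture as the paper: the Donsker statement is obtained from a nonparametric BvM for $\la$ in a multiscale space $\cM_0(w)$ (Theorem~\ref{thm:bvm}), transferred to $\La$ by the continuous integration map (Theorem~\ref{thm:donskergen}), then recentered at Nelson--Aalen (Corollary~\ref{cor:donsker}); the supremum--norm statement follows from Theorem~\ref{thm:sn}, and in both cases the verification reduces to the change--of--variables conditions \ref{c:cvarb}--\ref{c:cvarsn}, handled for the Laplace prior via the Lipschitz property of its log--density.

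One refinement worth flagging: your sketch of the supremum--norm stochastic term as a direct union bound over coordinates is a simplification of what actually happens. The paper's Laplace--transform control of $\rn\psg\la-\la^*,\psi_{lk}\psd$ (Proposition~\ref{prop:lap}) requires an a priori bound on $\|\la-\la_0\|_\infty$ to control the LAN remainder, and the crude initial bound $\La_n=2^{L_n}\veps_n+2^{-\be L_n}$ from Lemma~\ref{lem:snf} need not be $o(1)$ for small $\be$. The proof of Theorem~\ref{thm:sn}(a) therefore iterates: each pass through the Laplace--transform argument improves the rate from $\La_n^{(p)}$ to $\La_n^{(p+1)}=\veps_n^{\be,L_n}(1+(\veps_n^{\be,L_n})^p\La_n)$, and finitely many iterations suffice to reach $\veps_n^{\be,L_n}$. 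Your description implicitly assumes the first pass already gives the sharp rate, which is only the case when the initial bound happens to be $o(1)$. Since you defer to the master theorem anyway, this is not a gap in your plan, but it is the technical heart of why the result holds down to $\be>0$ rather than only for $\be>1/2$.
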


Theorem \ref{thm:glimpse} first provides a functional result for the posterior distribution of $\La$ for histogram priors which has several consequences for inference, notably providing credible bands with optimal coverage for the true cumulative hazard and survival curve, see Section \ref{sec:donsker}. The hazard itself is modelled and its posterior converges at optimal  rate in the supremum norm (when $\ga=\be$ this is the minimax rate; otherwise the rate is still optimal for the regularity $\ga$ and given by \eqref{rategen} below). This and more general results, including smooth priors on hazards, handling any regularity $\be>0$, and statements for the survival function, are considered below.

\section{Main results} \label{sec:main}

In all what follows, the prior is of the form \eqref{priorgen} with cut-off 
$L_n=L_n(\ga)$ given by, for $\ga>0$,
\begin{equation} \label{eln}
2^{L_n} \circeq \left(\frac{n}{\log{n}}\right)^{\frac{1}{1+2\ga}}.
\end{equation}

As the prior distribution sits on levels $l\le L_n$,  it is helpful to introduce $P_{L_n}$ the orthogonal projection onto $\cV_{L_n}=\text{Vect}\{\psi_{lk}, l\le L_n, 0\le k<2^l\}$,  and $P_{L_n^c}$ the orthogonal projection onto the orthocomplement of $\cV_{L_n}$. 

\subsection{Bernstein--von Mises theorems for linear functionals of $\la$}
\label{sec:bvmlin}
To begin with, we consider estimation of, for $b\in L^2(\La)$, 
\[ \psg b,\la \psd = \int_0^1 b(u)\la(u)du = \int_0^1 b(u)d\La(u).\]
For $P_{L_n}$ the projection onto the first $L_n$ wavelet levels and $M_0$ as in \eqref{emo}, let 
\begin{equation} \label{psiln}
\psi_b=b/M_0,\quad \psi_{b,L_n} = P_{L_n}(b/M_0),
\end{equation}
all well-defined quantities by \ref{c:mod}, which guarantees $M_0(1)>0$. As in \eqref{limdonsker}, the function $M_0$ appears in \eqref{psiln} as a particular feature of the right-censoring model and is closely connected to the LAN norm, as becomes apparent in the proof of Theorem \ref{thm:bvmlin} in Section \ref{sec:proofbvmlin}.

We assume that a convergence rate for $\la$ is available: for a sequence $\varepsilon_n=o(1)$,
\begin{enumerate}[label=\textbf{(P\arabic*)}]
\item \label{c:l1}  
for $A_n = \{\lambda : \|\lambda - \lambda_0\|_1 \leq \varepsilon_n\}$, we have $\ \Pi[A_n \mid X] = 1 + o_{P_0}(1)$,
\end{enumerate}
where we require that $\log{n}=o(\rn\veps_n)$ and $\veps_n=o((\log{n})^{-2})$, which is always the case for nonparametric rates arising in the present setting. 
As shown in Section \ref{sec:suffrates}, such a rate is implied by a Hellinger rate for estimating the data--generating distribution, which itself can be obtained through the posterior convergence rates theory of \cite{Ghosal2000}. Alternatively, for some priors, one could obtain an $L^1$-rate directly using the 
method of \cite{Donnet2017}.

For stating a limiting result on the linear functional $\psg b,\la \psd$, one assumes
\begin{enumerate}[label=\textbf{(B)}]
\item \label{c:funb}   
that $b\in L^\infty[0,1]$ and,
for $\veps_n$ as in \ref{c:l1} and  $\eif$ as in \eqref{psiln}, 
\begin{equation*}
\rn \veps_n \| \eif - \psin\|_\infty =o(1);
\end{equation*}
\end{enumerate}
\begin{enumerate}[label=\textbf{(Q)}]
\item \label{c:cvarb} 
with $r = \log{\lambda}, r_0 = \log\lambda_0, r_t^n = r - t\psi_{b,L_n}/\rn$, 
for $A_n$ as in \ref{c:l1},

\begin{equation*}
\frac{\int_{A_n} e^{\ell_n(r_t^n) - \ell_n(r_0)} d\Pi(r)}
{\int e^{\ell_n(r) - \ell_n(r_0)} d\Pi(r)} = 1+\op.
\end{equation*}
\end{enumerate}
For $a\in\RR$ and $b\in L^2(\La)$, let $L_a$ be the map 
\[ L_a : r\to \rn(\psg e^r , b \psd - a) = \rn(\psg \la,b\psd - a), \] 
so $\Pi[\cdot\given X]\circ L_a^{-1}$ denotes the distribution induced on $\rn(\psg\la,b\psd-a)$ if $r\sim \Pi[\cdot\given X]$. 
\begin{thm}[BvM for linear functionals] \label{thm:bvmlin}
Under conditions \ref{c:mod} on the model, suppose the prior distribution $\Pi$ is such that assumption \ref{c:l1} is satisfied. Let $\hat\vphi_b$ denote any efficient estimator of $\vphi_b=\psg b,\la \psd$, for $b$ a fixed element of  $L^2(\La)$  that together with $\Pi$ satisfies \ref{c:cvarb}.

For such a functional representer $b$, under \ref{c:funb}, as $n\to\infty$,
\[ \cB_\RR\left( \Pi[\cdot\given X]\circ L_{\hat \vphi_b}^{-1}\, ,\, \cN(0, v_b) \right)
=\op, \]
where we denote  $v_b=\Lambda_0(b^2/M_0)=\int_0^1 b^2(u)(\la_0/M_0(u))du$ and  $\cB_\RR$ is the bounded--Lipschitz metric on real distributions.
\end{thm}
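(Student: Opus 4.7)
The strategy is to compute the moment generating function of $T_n := \rn(\langle b,\la\rangle - \hat\vphi_b)$ under the posterior and show that, for every $t\in\RR$,
\begin{equation*}
 \Psi_n(t) := \int e^{tT_n(r)}\, d\Pi(r\given X) = e^{t^2 v_b/2} + \op.
\end{equation*}
Pointwise convergence of MGFs in a neighbourhood of $0$ implies weak convergence, hence $\cB_\RR$-convergence on $\RR$, yielding the claimed BvM. Writing $\Psi_n(t)=N_n(t)/D_n$ with $N_n(t)=\int e^{tT_n(r)}e^{\ell_n(r)-\ell_n(r_0)}d\Pi(r)$ and $D_n=N_n(0)$, I would first use \ref{c:l1} to restrict both integrals to $A_n$, at the cost of factors $1+\op$; on $A_n$ the variable $T_n$ is bounded by $\rn\|b\|_\infty\veps_n + O_{P_0}(1)$, so integrability is automatic.

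The crux is a LAN-type identity. With $r_t^n = r - t\psin/\rn$ as in \ref{c:cvarb}, one establishes, uniformly over $r\in A_n$,
\begin{equation*}
 tT_n(r) + \ell_n(r) - \ell_n(r_t^n) = \frac{t^2}{2} v_b + \op.
\end{equation*}
Granting this, $N_n(t) = (e^{t^2 v_b/2}+\op)\int_{A_n}e^{\ell_n(r_t^n)-\ell_n(r_0)}d\Pi(r)$, and \ref{c:cvarb} converts the latter into $(1+\op) D_n$, giving the claimed convergence of $\Psi_n(t)$.

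To prove the identity, insert the explicit log--likelihood $\ell_n(r)= \sum_i[\delta_i r(Y_i) - \int_0^{Y_i} e^{r(u)}du]$ and Taylor-expand $e^{\pm t\psin/\rn}$ to second order. Writing $\bar M_n(u) := n^{-1}\sum_i\1\{u\le Y_i\}$ and $\mathbb{G}_n[h] := n^{-1/2}\sum_i[\delta_i h(Y_i) - \int_0^{Y_i}\la_0(u)h(u)du]$ (the basic martingale empirical process of the model at the truth, with variance $\|h\|_L^2 = \int h^2 M_0\la_0$), the expansion together with the LLN $\bar M_n\to M_0$ yields
\begin{equation*}
\ell_n(r)-\ell_n(r_t^n) = t\,\mathbb{G}_n[\psin] + t\rn\!\int_0^1 M_0(\la_0-\la)\psin\,du + \frac{t^2}{2}v_b + \op.
\end{equation*}
Using the defining identity $M_0\eif=b$ and replacing $\psin$ by $\eif$ via \ref{c:funb} turns the middle term into $-t\rn\langle b,\la-\la_0\rangle + \op$. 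Adding $tT_n(r) = t\rn\langle b,\la-\la_0\rangle + t\rn(\langle b,\la_0\rangle - \hat\vphi_b)$ and inserting the asymptotic linear expansion $\hat\vphi_b - \langle b,\la_0\rangle = \mathbb{G}_n[\eif]/\rn + o_{P_0}(n^{-1/2})$ of the efficient estimator cancels the $\rn\langle b,\la-\la_0\rangle$ contributions and leaves $t\mathbb{G}_n[\psin-\eif]$, which is $\op$ by \ref{c:funb}. The main difficulty is to make all the remainders $\op$ \emph{uniformly} in $r\in A_n$; this hinges on $\|\la-\la_0\|_1\le\veps_n=o((\log n)^{-2})$ from \ref{c:l1}, uniform $L^\infty$-control of $\psin$, and the calibrated rate $\rn\veps_n\|\psin-\eif\|_\infty=o(1)$ of \ref{c:funb}.
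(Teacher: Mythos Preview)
Your proposal is correct and follows essentially the same Laplace--transform/change--of--variables route as the paper: both compute $E[e^{t\rn(\langle b,\la\rangle-\hat\psi)}\mid X,A_n]$, insert $\ell_n(r_t^n)$, and invoke \ref{c:cvarb} to reduce to $e^{t^2v_b/2}$. The paper organises the remainder control via the formal LAN decomposition $\ell_n(r)-\ell_n(r_0)=-\tfrac12\|a\|_L^2+W_n(a)+R_{n,1}+R_{n,2}$ and dedicated empirical--process lemmas (Lemma~\ref{lem:rn3}), whereas you expand $\ell_n(r)-\ell_n(r_t^n)$ directly; just note that your ``LLN $\bar M_n\to M_0$'' step sits in a term multiplied by $\rn$, so you actually need the Glivenko--Cantelli rate $\|\bar M_n-M_0\|_\infty=O_{P_0}(n^{-1/2})$ together with $\|\psin\|_\infty\lesssim L_n$ and $\veps_n L_n^2=o(1)$, which are exactly the ingredients the paper's lemmas supply.
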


Condition  \ref{c:cvarb} can be checked for the priors we consider by change--of--variables techniques, see Section  \ref{sec:changevar}. Condition \ref{c:funb} is a `no--bias' type condition; its form is for simplicity and it can be improved (we note that a simple condition on projections such as (4.12) in \cite{Castillo2015} in the simpler density estimation model is not readily available here though). We refrain from doing so here, and refer the reader to the more general BvM theorem, Theorem \ref{thm:bvm}, which allows to cover a broader class of functionals, including the cumulative hazard, as discussed in the Section \ref{sec:donsker}.

{\em Application: smooth linear functional.}  Suppose $b$ and $r_0$ satisfy a H\"older condition:  $b\in\cH(\mu, D_1)$ and $r_0\in \cH(\be,D_2)$ for some $\mu, \be, D_1 , D_2>0$.  As an example, consider the Laplace prior with independent coefficients and $\sigma_l=1$.  
Then \ref{c:funb} and  \ref{c:cvarb}
 are satisfied if $\mu\wedge 1 + \be\wedge\ga > 1/2 + \ga$. 
For instance if $\ga=1/2$, it is enough that $\be> 1/2$ and $\mu>1/2$; see Section \ref{sec:cfun} for a proof.

{\em Non-linear functionals.} One may obtain results for appropriately smooth non-linear functionals by linearisation. Given a functional $\psi(\la)$, for an appropriate  $a(\cdot)$, one may write
\begin{equation*}
\psi(\la)= \psi(\la_0) + \int_0^1  (\la-\la_0)(u)a(u)du + \rho(\la,\la_0), 
\end{equation*}
where $\rho(\la,\la_0)$ is a remainder term. For example, with a quadratic functional $\psi(\la) = \int_0^1 \la^2(u)du$, one can take $a(u)=2\la_0(u)$ and $\rho(\la,\la_0)=\int_0^1 (\la-\la_0)^2$. Then, provided one can control the remainder term uniformly over a set of high posterior probability, one can state an analogue of Theorem \ref{thm:bvmlin} for the non-linear functional at stake. For instance, tools developed later in the paper enable one to derive a posterior convergence rate for $\|\la-\la_0\|_2$ (this follows e.g. from the stronger $L^\infty$--result in Theorem \ref{thm:sn} below). If $\delta_n$ is the corresponding rate, one can check that controlling the remainder term to adapt the proof from the linear--functional case amounts to asking that $\rn\delta_n^2=o(1)$ holds, which itself can be obtained if $\be>1/2$. We refer to Section 4 of \cite{Castillo2015} for details on implementing this technique and more examples.

\subsection{Bayesian Donsker theorem for cumulative hazard and survival}
\label{sec:donsker}
The standard Donsker theorem in density estimation provides asymptotic normality of the cumulative distribution function in the functional sense. Here we show corresponding analogous Donsker theorems for the cumulative hazard $\La(\cdot)$ and survival $S(\cdot)$. Those in turn imply posterior tightness in the supremum norm for such quantities at rate $1/\rn$ as well as uncertainty quantification in terms of optimal coverage of confidence bands. Other consequences include convergence for suitably regular functionals, in particular the median survival. All these results nearly immediately follow from the nonparametric BvM result in multiscale spaces from  Section \ref{sec:npbvm}, specifically Theorem \ref{thm:bvm}.

We first introduce an appropriate centering for the Bayesian Donsker theorem. The centering we have in mind here comes from the more abstract result from Section \ref{sec:npbvm}. We first define, for a bounded function $g$ on $[0,1]$, 
\begin{equation} \label{wun}
  W_n(g) =W_n(X;g) = \frac{1}{\sqrt{n}} \sum_{i=1}^n[\delta_i g(Y_i) - \La_0g(Y_i)]. 
\end{equation}
We then define  $T_n= T_n(L_n)$ (with $L_n$ as in \eqref{eln}) by the sequence of its wavelet coefficients, for any $k$ and $W_n$ as in \eqref{wun},
\begin{equation} \label{tn}
\psg T_n , \psi_{lk} \psd =
\begin{cases}
\ \psg \la_0 , \psi_{lk} \psd+W_n\left(\psi_{lk}/M_0\right)/\rn \quad & \text{if } l\le L_n\\
\ 0&  \text{if } l>L_n.
\end{cases}
\end{equation}
The centering for the Bayesian Donsker theorem for the cumulative hazard will then be the primitive of $T_n$, that is,  $\mathbb{T}_n(t)=\int_0^t T_n(u)du$.

In combination with \ref{c:l1}, we shall assume the following for the next result.
\begin{enumerate}[label=\textbf{(P\arabic*)}, resume]
\item \label{c:sn}
For some $\zeta_n=o(1)$, we have $\ \Pi[\|\la-\la_0\|_\infty \le \zeta_n \given X] =1 + o_{P_0}(1).$
\end{enumerate}
Let us just mention that $\zeta_n$ just needs to be $o(1)$, and that such supremum norm consistency follows, for instance, from Theorem \ref{thm:sn} below.

Let us require, for suitable directions $b$, and $A_n$ as in \ref{c:l1}:
\begin{enumerate}[label=\textbf{(T)}]
\item \label{c:cvarsn} 
with $r = \log{\lambda}, r_0 = \log\lambda_0, r_t^n = r - \frac{t}{\sqrt{n}}\psi_{b,L_n}$ and $C_1>0$, suppose,  for any $|t|\le \log{n}$, 
\begin{equation*}
\log\frac{\int_{A_n} e^{\ell_n(r_t^n) - \ell_n(r_0)} d\Pi(r)}
{\int e^{\ell_n(r) - \ell_n(r_0)} d\Pi(r)} \le C_1(1+t^2).
\end{equation*}

\end{enumerate} 
Condition \ref{c:cvarsn} can be seen as a non--asymptotic version of \ref{c:cvarb}, and with only a control from above required, and is verified in a similar way. Note that the quantity on the left hand side of \ref{c:cvarsn} is random, but by changing variables it can typically be bounded by a non-random quantity uniformly, as will be seen in the examples. For simplicity, we state here Theorem \ref{thm:donsker} for {\em histogram} priors only: a general statement covering arbitrary priors of type \eqref{priorgen} is Theorem \ref{thm:donskergen}.

\begin{thm}[Donsker's theorem for cumulative hazards] \label{thm:donsker} 
Let $\Pi$ be a histogram prior on hazards, that is, as in \eqref{priorgen} with $(\psi_{lk})$ the Haar basis. Suppose 
  \ref{c:l1}--\ref{c:sn} are satisfied with cut--off $L_n$ and rate $\veps_n$ verifying 

$\rn\veps_n2^{-L_n} = O\left(L_n^{-3}\right)$. 
 
Suppose  \ref{c:cvarb} is satisfied for any  $b\in \cV_{\cL}$ and any fixed  $\cL\ge 0$, and that \ref{c:cvarsn} holds uniformly for $b=\psi_{LK}$ with $L\le L_n,$ $0\le K<2^L$. 

Let $\cL(\La\in \cdot\given X)$  denote the distribution induced on the cumulative hazard $\La$ when  $\la \sim \Pi[\cdot\given X]$. 

Let $G_{\La_0}(t) = W(U_0(t))$ with $W$ Brownian motion and $U_0(t)=\int_0^t (\la_0/M_0)(u)du$. Then, with $\mathbb{T}_n(t)=\int_0^t T_n$,  as $n\to \infty$,
\begin{equation*} 
   \cB_{\cC[0,1]}\left(\,\cL(\rn(\La-\mT_n)\given X)\ ,\, \cL(G_{\La_0})\right) \to^{P_{0}} 0.
\end{equation*}

\end{thm}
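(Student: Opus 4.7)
The plan is to deduce Theorem \ref{thm:donsker} from the nonparametric Bernstein--von Mises theorem for the hazard in multiscale spaces (Theorem \ref{thm:bvm} in Section \ref{sec:npbvm}), combined with the continuous mapping theorem applied to the primitive operator $I: h \mapsto (t\mapsto \int_0^t h(u)\,du)$. The starting identity is
\[
\sqrt{n}\bigl(\Lambda(t)-\mT_n(t)\bigr) \;=\; \bigl\langle \sqrt{n}(\la - T_n), \1_{[0,t]}\bigr\rangle, \qquad t\in[0,1],
\]
and a crucial simplification, peculiar to the Haar basis, is that both a posterior draw $\la$ and the centering $T_n$ belong to $\cV_{L_n}$, so that $\sqrt{n}(\la-T_n)\in \cV_{L_n}$ and the truncation in the definition of $T_n$ creates no mismatch in this identity.

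First I would check that assumptions \ref{c:l1}--\ref{c:sn} together with the change-of-measure controls \ref{c:cvarb} (for $b$ in any fixed $\cV_\cL$) and the non-asymptotic version \ref{c:cvarsn} (uniform over Haar wavelets up to level $L_n$) are precisely what Theorem \ref{thm:bvm} requires, and conclude from it that, conditionally on $X$ under $P_{\la_0}$, $\sqrt{n}(\la - T_n)$ converges in the bounded-Lipschitz sense on an admissible multiscale sequence space $\cM_0(w)$ to a centered Gaussian limit $\cG$ with covariance at wavelets
\[
\mathbb{E}[\cG(\psi_{lk})\,\cG(\psi_{l'k'})] \;=\; \int_0^1 \psi_{lk}(u)\psi_{l'k'}(u)\,\frac{\la_0(u)}{M_0(u)}\,du,
\]
i.e.\ the inverse Fisher inner product associated to the LAN norm of Section \ref{sec:lan}. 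The hypothesis $\sqrt{n}\veps_n 2^{-L_n} = O(L_n^{-3})$ and \ref{c:sn} enter here to ensure negligibility of residual bias terms in the wavelet-by-wavelet LAN expansions, uniformly over $l\leq L_n$.

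Next I would verify that $I$ acts continuously from $\cM_0(w)$ into $(\cC[0,1],\|\cdot\|_\infty)$, so that the continuous mapping theorem for the bounded-Lipschitz metric transports the multiscale BvM into the Donsker statement. For the Haar basis, at each level $l$ at most one index $k$ satisfies $\langle\1_{[0,t]},\psi_{lk}\rangle\neq 0$, with $|\langle\1_{[0,t]},\psi_{lk}\rangle|\leq 2^{-l/2}$, giving
\[
\sup_{t\in[0,1]}|I(h)(t)| \;\leq\; \sum_{l\geq 0} w(l)\,2^{-l/2}\,\sup_k \frac{|h_{lk}|}{w(l)},
\]
which is controlled whenever $\sum_l w(l)\,2^{-l/2}<\infty$, a condition fulfilled by the admissible weights used in Theorem \ref{thm:bvm}. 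The push-forward $I(\cG)$ is then a centered Gaussian process in $\cC[0,1]$ whose covariance, by Parseval's identity in $L^2([0,1])$ applied to the wavelet expansions of $\1_{[0,s]}$ and $\1_{[0,t]}$ inside the integral against $\la_0/M_0$, equals
\[
\mathbb{E}[I(\cG)(s)\,I(\cG)(t)] \;=\; \int_0^1 \1_{[0,s]}(u)\1_{[0,t]}(u)\frac{\la_0(u)}{M_0(u)}\,du \;=\; U_0(s\wedge t),
\]
identifying the limit with $G_{\La_0}(\cdot)=W(U_0(\cdot))$.

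The main obstacle I expect is less any single step than the consistent alignment of scales: one needs a multiscale weight $w$ fine enough for $I$ to be continuous, yet coarse enough for Theorem \ref{thm:bvm} to apply and for the posterior to concentrate on sets of uniformly bounded $\cM_0(w)$-norm. The conditions $\sqrt{n}\veps_n 2^{-L_n}=O(L_n^{-3})$ and \ref{c:sn} are designed precisely so that the wavelet-level LAN remainders sum to $o(1)$ under the chosen $w$, uniformly in $t$. Once Theorem \ref{thm:bvm} is in hand, the remaining reduction is essentially soft.
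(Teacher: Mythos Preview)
Your approach is correct and matches the paper's: derive the Donsker theorem from the multiscale nonparametric BvM (Theorem \ref{thm:bvm}) via the continuous mapping theorem applied to the integration operator, then identify the limit covariance as $U_0(s\wedge t)$. The paper packages the second step as Theorem \ref{thm:donskergen} and the covariance identification as Lemma \ref{lemrkhs}, but the substance is the same.

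One point deserves to be made explicit. You correctly note the tension: the weight $w$ must be large enough that $\sum_l w_l 2^{-l/2}<\infty$ (for continuity of $I:\cM_0(w)\to\cC[0,1]$) yet still satisfy the rate condition of Theorem \ref{thm:bvm}. However, your phrase ``a condition fulfilled by the admissible weights used in Theorem \ref{thm:bvm}'' is not quite right: Theorem \ref{thm:bvm} only requires $w_l\ge l$, which by itself does \emph{not} imply summability of $w_l 2^{-l/2}$. The paper resolves this by choosing $w_l=2^{l/2}/(1+l^2)$, for which $\sum_l w_l 2^{-l/2}=\sum_l (1+l^2)^{-1}<\infty$, and the rate condition of Theorem \ref{thm:bvm} becomes
\[
\sqrt{n}\,\veps_n 2^{-L_n}=o\Bigl(\min_{l\le L_n} l^{-1/4}2^{-l/2}w_l\Bigr)=o\bigl(L_n^{-9/4}\bigr),
\]
which is implied by the hypothesis $\sqrt{n}\,\veps_n 2^{-L_n}=O(L_n^{-3})$. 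You should name this (or an equivalent) choice and check both requirements; once that is done your argument is complete.
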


Theorem \ref{thm:donsker} on the cumulative hazard is derived as a consequence of   a more general nonparametric BvM theorem (Theorem \ref{thm:bvm}) for the hazard, see Section \ref{sec:npbvm} for a proof. 
 In order to verify that the previous statement leads to efficient estimation, one now derives a result with centering at an efficient estimator, namely the Nelson--Aalen estimator $\hat\La_n$ (\cite{Nelson69}, \cite{Aalen1975}, see \cite{gillbook93} Section IV.1 for an overview). Convergence in distribution for the latter is considered on the space $\mathcal{D}[0,1]$ of c\`adl\`ag functions on $[0,1]$, equipped with the supremum norm (as is usual in this setting, see \cite{gillbook93}, Section II.8 for details).   Replacing the Nelson-Aalen estimator with a smooth approximation would enable a result in $\mathcal{C}[0,1]$.  
\begin{corollary} \label{cor:donsker}
Suppose the prior $\Pi$ and model satisfy the conditions of Theorem \ref{thm:donsker} and suppose $\ga<\be+1/2$. 

Let $\hat\Lambda_n$ be  Nelson-Aalen's estimator. Then for $G_{\La_0}(t) = W(U_0(t))$ as before, and $\Sigma_0(t)=-S_0(t) G_{\La_0}(t)$,
\begin{align*}
\cB_{\mathcal{D}[0,1]}\left(\,\cL(\rn(\La-\hat\La_n)\given X)\ ,\, \cL(G_{\La_0})\right) & \to^{P_{0}} 0,\\
\cB_{\mathcal{D}[0,1]}\left(\,\cL(\rn(S-\hat{S}_n)\given X)\ ,\, \cL(\Sigma_0)\right) & \to^{P_{0}} 0,
\end{align*}
where $\hat{S}_n$ is the survival function corresponding to $\hat\La_n$. 
\end{corollary}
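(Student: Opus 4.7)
The plan is two-step: (i) replace the centering $\mT_n$ in Theorem \ref{thm:donsker} by the Nelson--Aalen estimator $\hat\La_n$ via a direct sup-norm comparison and the bounded--Lipschitz triangle inequality, and then (ii) push this cumulative-hazard BvM forward to the survival function through a first-order expansion of the map $\La\mapsto e^{-\La}$, whose Hadamard derivative at $\La_0$ is multiplication by $-S_0$.

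For step (i), I would express $\mT_n$ and $\hat\La_n$ in a common linearised form. Writing $h_t=\1_{[0,t]}$ and using $\int_0^t\psi_{lk}=\psg h_t,\psi_{lk}\psd$, the definition \eqref{tn} unfolds to
\[
\mT_n(t)=\La_0(t)-\int_0^t(\la_0-P_{L_n}\la_0)+\rn^{-1}W_n\bigl((P_{L_n}h_t)/M_0\bigr),
\]
while the martingale expansion of Nelson--Aalen, combined with the uniform approximation $n^{-1}\sum_i\1\{Y_i\ge u\}\approx M_0(u)$ ensured by \ref{c:mod}, yields
\[
\hat\La_n(t)=\La_0(t)+\rn^{-1}W_n(h_t/M_0)+R_n(t),\qquad \|\rn R_n\|_\infty=\op.
\]
Subtracting,
\[
\rn(\hat\La_n-\mT_n)(t)=W_n\bigl((h_t-P_{L_n}h_t)/M_0\bigr)+\rn\int_0^t(\la_0-P_{L_n}\la_0)+\rn R_n(t).
\]
For the deterministic bias, Haar cancellation $\int\psi_{lk}=0$ with the coefficient bound $|\la_{0,lk}|\lesssim 2^{-l(\be+1/2)}$ (and the fact that at each level only one $k$ with $I_k^l\ni t$ contributes) gives $\sup_t|\int_0^t(\la_0-P_{L_n}\la_0)|\lesssim 2^{-L_n(\be+1)}$, whence $\rn\cdot 2^{-L_n(\be+1)}=o(1)$ under the cut-off \eqref{eln} precisely when $\ga<\be+1/2$. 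For the stochastic residual, $\|(h_t-P_{L_n}h_t)/M_0\|_2=O(2^{-L_n/2})$ yields fixed-$t$ variance $O(2^{-L_n})$, and uniformity in $t\in[0,1]$ follows from a chaining bound on the Donsker class $\{h_t\}$ minus its $L_n$-projection, or equivalently from the multiscale high-frequency control embedded in the proof of Theorem \ref{thm:bvm} in Section \ref{sec:npbvm}. Thus $\rn\|\mT_n-\hat\La_n\|_\infty=\op$, and since both $\mT_n$ and $\hat\La_n$ depend only on the data, the bounded--Lipschitz triangle inequality, applied through the $1$-Lipschitz embedding $\cC[0,1]\hookrightarrow\mathcal{D}[0,1]$, transfers Theorem \ref{thm:donsker} into the first display.

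For step (ii), the mean-value identity $\rn(S(t)-\hat S_n(t))=-e^{-c_n(t)}\rn(\La(t)-\hat\La_n(t))$ with $c_n(t)$ between $\La(t)$ and $\hat\La_n(t)$ is the entry point. Posterior sup-norm consistency of $\La$ (from \ref{c:sn} integrated) and uniform consistency of $\hat\La_n$ give $\|e^{-c_n}-S_0\|_\infty=\op$ on a posterior set of probability $1+\op$; combined with the sup-norm tightness of $\rn(\La-\hat\La_n)$ under the posterior (inherited from the first display), this produces $\rn(S-\hat S_n)=-S_0\cdot\rn(\La-\hat\La_n)+\op$ uniformly in $t$. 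Multiplication by the bounded continuous function $-S_0$ being a $\|S_0\|_\infty$-Lipschitz map $\mathcal{D}[0,1]\to\mathcal{D}[0,1]$, a continuous-mapping argument for the bounded--Lipschitz metric delivers the second display with limit $\cL(-S_0\cdot G_{\La_0})=\cL(\Sigma_0)$.

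The main obstacle I anticipate is the uniform-in-$t$ control of the stochastic residual $\sup_t|W_n((h_t-P_{L_n}h_t)/M_0)|=\op$ in step (i): pointwise variance estimates are immediate, but simultaneously handling all $t\in[0,1]$ requires either a chaining/Bernstein bound tailored to the bounded-variation indicator family, or — more in keeping with the paper — a direct invocation of the multiscale high-frequency estimates of Section \ref{sec:npbvm}, where precisely this kind of tail-wavelet control has already been packaged.
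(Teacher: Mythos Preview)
Your proposal is correct and follows essentially the same route as the paper: decompose $\rn(\hat\La_n-\mT_n)$ into the deterministic bias $\rn\int_0^t(P_{L_n^c}\la_0)$ (controlled under $\ga<\be+1/2$) and the stochastic residual $W_n\bigl((P_{L_n^c}\1_{[0,t]})/M_0\bigr)$, then transfer to $S$ by Hadamard differentiability of $\La\mapsto e^{-\La}$. The uniform-in-$t$ control of the stochastic term that you flag as the main obstacle is handled in the paper by a bracketing entropy argument for the class $\{(P_{L_n^c}\1_{[0,t]})/M_0:\,t\in[0,1]\}$ via Lemma~\ref{lembra}, packaged as Lemma~\ref{lem:diff}---precisely the ``chaining bound tailored to the bounded-variation indicator family'' you anticipated.
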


\begin{corollary}[Survival confidence bands] \label{cor:bands}
Under the conditions of Corollary \ref{cor:donsker}, as $n\to\infty$,
\begin{align*} \label{kslim}
 \cB_{\mathbb{R}}\left(\,\cL(\rn\|\La-\hat\La_n\|_\infty \given X)\ ,\ \cL(\|G_{\La_0}\|_\infty)\,\right) & \to^{P_{0}} 0 \\
  \cB_{\mathbb{R}}\left(\,\cL(\rn\|S-\hat{S}_n\|_\infty \given X)\ ,\ \cL(\|\Sigma_{\La_0}\|_\infty)\,\right) & \to^{P_{0}} 0.
\end{align*}
In particular, quantile credible bands for  $\La$ (resp. $S$) at level $1-\al$ are asymptotically confidence bands for $\La$ (resp. $S$) at level $1-\al$.  
\end{corollary}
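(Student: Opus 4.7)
\textbf{Proof plan for Corollary \ref{cor:bands}.} The plan is to deduce the two supremum-norm displays by a continuous-mapping argument applied to the functional convergence in $\mathcal{D}[0,1]$ already granted by Corollary \ref{cor:donsker}, and then to combine this with the classical frequentist Donsker theorem for the Nelson--Aalen estimator to conclude asymptotic coverage of quantile credible bands.

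First I would observe that the map $N:\mathcal{D}[0,1]\to\mathbb{R}$, $f\mapsto \|f\|_\infty$, is $1$-Lipschitz with respect to the supremum metric on both sides and is bounded on bounded sets, so for any $F:\mathbb{R}\to\mathbb{R}$ with $\|F\|_{BL}\le 1$, the composition $F\circ N$ satisfies $\|F\circ N\|_{BL}\le 2$ on $(\mathcal{D}[0,1],\|\cdot\|_\infty)$. Testing against such $F$ in the definition of $\cB_{\mathbb{R}}$ and $F\circ N$ in the definition of $\cB_{\mathcal{D}[0,1]}$ yields
\[
\cB_{\mathbb{R}}\!\left(\cL(\rn\|\La-\hat\La_n\|_\infty\given X),\,\cL(\|G_{\La_0}\|_\infty)\right)\le 2\,\cB_{\mathcal{D}[0,1]}\!\left(\cL(\rn(\La-\hat\La_n)\given X),\,\cL(G_{\La_0})\right),
\]
and the right-hand side is $\op$ by Corollary \ref{cor:donsker}. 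The identical argument applied to $\rn(S-\hat S_n)$ and its limit $\Sigma_0=-S_0 G_{\La_0}$ gives the second display (recalling that the corollary already provides the functional BvM for the survival directly, via Hadamard differentiability of $\La\mapsto e^{-\La}$).

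For the credible-band statement, fix $\al\in(0,1)$ and let $q_n(\al)$ denote the posterior $(1-\al)$-quantile of $\rn\|\La-\hat\La_n\|_\infty$, so that the quantile credible band is $C_n=\{\La:\rn\|\La-\hat\La_n\|_\infty\le q_n(\al)\}$ with $\Pi[\La\in C_n\given X]=1-\al$. The frequentist coverage is $P_0(\rn\|\La_0-\hat\La_n\|_\infty\le q_n(\al))$. The supremum of the non-degenerate Gaussian process $G_{\La_0}$ has a continuous distribution function (by standard Gaussian anti-concentration, using that $U_0(t)=\int_0^t\la_0/M_0$ is strictly increasing under \ref{c:mod}), so the bounded-Lipschitz convergence on $\mathbb{R}$ just established forces $q_n(\al)\to q_\infty(\al)$ in $P_0$-probability, where $q_\infty(\al)$ is the $(1-\al)$-quantile of $\|G_{\La_0}\|_\infty$. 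On the other hand, the classical Donsker theorem for the Nelson--Aalen estimator (see e.g.\ \cite{gillbook93}, Section IV.1) yields $\rn(\hat\La_n-\La_0)\to G_{\La_0}$ in distribution in $\mathcal{D}[0,1]$, hence $\rn\|\hat\La_n-\La_0\|_\infty\to\|G_{\La_0}\|_\infty$ in distribution under $P_0$ by continuous mapping. A Slutsky-type argument, using continuity of the limit CDF at $q_\infty(\al)$, then gives $P_0(\La_0\in C_n)\to 1-\al$. The survival case is the same, with $\|\Sigma_{\La_0}\|_\infty=\|S_0 G_{\La_0}\|_\infty$, whose distribution is again continuous since $S_0$ is bounded away from $0$ on $[0,1]$ by \ref{c:mod}.

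The only delicate points I foresee are the bounded-Lipschitz composition inequality above (straightforward from the $1$-Lipschitz property of $N$) and the continuity of the CDFs of $\|G_{\La_0}\|_\infty$ and $\|\Sigma_{\La_0}\|_\infty$ at their quantiles; everything else is a routine transfer from the functional statement of Corollary \ref{cor:donsker} together with the classical Nelson--Aalen Donsker theorem.
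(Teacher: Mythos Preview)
Your proposal is correct and follows essentially the same approach as the paper: the paper's proof consists of a single sentence invoking the continuous mapping theorem for $g\mapsto\|g\|_\infty$ from $\mathcal{D}[0,1]$ (with the sup norm) to $\mathbb{R}^+$, which is exactly what your explicit bounded-Lipschitz composition inequality makes quantitative. Your treatment of the credible-band claim (quantile convergence via continuity of the law of $\|G_{\La_0}\|_\infty$, combined with the frequentist Nelson--Aalen Donsker theorem and a Slutsky argument) spells out in detail what the paper leaves entirely implicit, and is the standard way to make that conclusion rigorous.
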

A quantity particularly useful in medical applications is the median survival time $m=m_{\La}=S^{-1}(1/2)$. Let us consider the Gaussian variable $Z_\infty$, with $m_0=m_{\La_0}$ and recalling $U_0(t)=\int_0^t (\la_0/M_0)(u)du$,
\begin{equation} \label{medlim}
Z_\infty \sim \cN\left(0,\frac{U_0(m_0)}{4f_0^2(m_0)}\right).
\end{equation}
\begin{corollary}[BvM for median survival time] \label{cor:median}
Under the conditions of Corollary \ref{cor:donsker}, let $M_\La=S^{-1}(1/2)$  denote the median survival time and let $\hat{M}_n$ be an efficient estimator thereof. Then, for $Z_\infty$ as in \eqref{medlim}, 
\begin{equation} \label{mediansurv}
 \cB_{\mathbb{R}}\left(\,\cL(\rn(M_\La - \hat{M}_n)\given X)\ ,\ \cL(Z_\infty)\,\right)\to^{P_{0}} 0. 
\end{equation}
\end{corollary}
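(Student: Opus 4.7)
The plan is to deduce the median BvM from the Donsker theorem of Corollary \ref{cor:donsker} via a Taylor/delta-method expansion of the median functional $\phi: S\mapsto S^{-1}(1/2)$. The Hadamard derivative of $\phi$ at $S_0$, tangentially to functions continuous at $m_0$, is the evaluation-and-scaling map $h\mapsto h(m_0)/f_0(m_0)$, so one expects
\[
\sqrt{n}(M_\La-\hat M_n) \;\approx\; \sqrt{n}(S-\hat S_n)(m_0)/f_0(m_0),
\]
whose posterior law is directly controlled by Corollary \ref{cor:donsker}, and the resulting Gaussian limit is readily computed from $\Sigma_0$.

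To justify this linearisation under the posterior, I would first use Condition \ref{c:sn} (standing in Theorem \ref{thm:donsker}) to obtain $\|\la-\la_0\|_\infty=o_{P_0}(1)$ on a set of posterior probability tending to $1$, and Corollary \ref{cor:bands} combined with $P_0$-tightness of $\sqrt n\|\hat\La_n-\La_0\|_\infty$ to obtain $\sqrt n\|S-S_0\|_\infty=O_{P_0}(1)$. On this event the equation $S(M_\La)=1/2=S_0(m_0)$ has a solution with $|M_\La-m_0|\to 0$, and integrating $S'=-\la S$ yields
\[
(S-S_0)(m_0) \;=\; \int_{m_0}^{M_\La}\la(u)S(u)\,du \;=\; (M_\La-m_0)\,\bar f,
\]
where $\bar f$ denotes the average of $\la S$ on the interval (with the obvious convention if $M_\La<m_0$). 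Using $\|\la-\la_0\|_\infty\to 0$, $\|S-S_0\|_\infty\to 0$, $|M_\La-m_0|\to 0$ and continuity of $\la_0,S_0$ at $m_0$, one has $\bar f\to f_0(m_0)=\la_0(m_0)S_0(m_0)$ in $P_0$-probability under the posterior; combined with $\sqrt n(S-S_0)(m_0)=O_{P_0}(1)$ this gives
\[
\sqrt n(M_\La-m_0) \;=\; \frac{\sqrt n(S-S_0)(m_0)}{f_0(m_0)} + o_{P_0}(1).
\]
The standard asymptotic linearisation of the efficient Nelson--Aalen/Kaplan--Meier median estimator yields the analogous $P_0$-expansion $\sqrt n(\hat M_n-m_0)=\sqrt n(\hat S_n-S_0)(m_0)/f_0(m_0)+o_{P_0}(1)$.

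Subtracting the two expansions gives $\sqrt n(M_\La-\hat M_n)=\sqrt n(S-\hat S_n)(m_0)/f_0(m_0)+o_{P_0}(1)$ under the posterior. The map $h\mapsto h(m_0)/f_0(m_0)$ is $1/|f_0(m_0)|$-Lipschitz from $(\mathcal{D}[0,1],\|\cdot\|_\infty)$ to $\RR$, so for any $F\in\mathrm{BL}(\RR)$ the composition lies in $\mathrm{BL}(\mathcal{D}[0,1])$ up to a constant, and Corollary \ref{cor:donsker} transfers to $\RR$,
\[
\cB_\RR\Bigl(\cL\bigl(\sqrt n(S-\hat S_n)(m_0)/f_0(m_0)\bigm\vert X\bigr),\,\cL\bigl(\Sigma_0(m_0)/f_0(m_0)\bigr)\Bigr)\to^{P_0}0.
\]
Since $\Sigma_0=-S_0\,G_{\La_0}$ with $S_0(m_0)=1/2$, the limit equals $-W(U_0(m_0))/(2f_0(m_0))\sim Z_\infty$ as in \eqref{medlim}. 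The triangle inequality for $\cB_\RR$, combined with the $o_{P_0}(1)$ remainder (which does not disturb BL-convergence), then yields \eqref{mediansurv}. The principal obstacle is the uniform control of the integral remainder under the posterior: this forces one to invoke both the parametric supremum-norm rate $\sqrt n\|\La-\hat\La_n\|_\infty=O_{P_0}(1)$ from Corollary \ref{cor:bands} and the supremum-norm consistency \ref{c:sn} for $\la$, as both are needed to secure $\bar f\to f_0(m_0)$ on a set of posterior probability tending to one.
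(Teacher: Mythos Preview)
Your proposal is correct and follows essentially the same route as the paper: deduce the result from the Donsker BvM for $S$ in Corollary \ref{cor:donsker} via the delta method for the quantile functional, identifying the limit as $\Sigma_0(m_0)/f_0(m_0)$. The paper condenses this into a one-line invocation of the Hadamard differentiability of the quantile map tangentially to continuous functions (Lemma 21.3 in \cite{aad98}), whereas you carry out the linearisation and remainder control explicitly, using \ref{c:sn} and the $\sqrt n$-tightness from Corollary \ref{cor:bands} to show $\bar f\to f_0(m_0)$; this is precisely the content of that Hadamard-differentiability lemma worked out by hand.
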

This result justifies the asymptotic normality for the posterior median survival observed in Section \ref{sec:simulation}, Figure \ref{fig:sim_med_surv}.

\subsection{Supremum--norm convergence rate for $\la$}
\label{sec:sn}

Let us set, for $\be>0, L_n>0$,
\begin{equation} \label{rategen}
 \veps_n^{\be,L_n}=\sqrt{\frac{L_n 2^{L_n}}{n}} + 2^{-\be L_n}.
\end{equation}
In the next statement, as earlier the prior $\Pi$ is as in \eqref{priorgen}, with $L_n$ as in \eqref{eln}.  In statement (a), $\Pi_{L_n}[\cdot\given X]$ denotes the posterior distribution on $\la$ projected  onto the first $L_n$ levels of wavelet coefficients (i.e. setting $\psg \la,\psi_{lk}\psd=0$ for $l>L_n$). In other words, it is the distribution of $P_{L_n}Z$ if $Z\sim\cL(\la\given X)$. The projected posterior is considered, as  controlling the `high frequencies' $\la_{lk}$ for $\la > L_n$ seems technically challenging for very low regularities. While the prior on $r = \log{\lambda}$ is already truncated to $L_n$, the induced posterior on $\lambda$ rather than $r$ may give mass to wavelet coefficients with frequencies above $L_n$ (unless the prior is a dyadic histogram).

\begin{thm}[$\|\,\cdot\,\|_\infty$--contraction for posterior hazard]\label{thm:sn}
Let  $X=(X_1, \dots, X_n)$ be a sample of law $P_0$ with hazard rate $\la_0$ under conditions \ref{c:mod}. For  $\be,D>0$, suppose $\log\la_0\in \cH(\be,D)$.  

Suppose $\Pi$ satisfies \ref{c:l1} with $\veps_n\leqa \veps_n^{\be,L_n}$ defined in \eqref{rategen}, and  that \ref{c:cvarsn} holds uniformly for $b=\psi_{LK}$, for any $L\le L_n$ and $0\le K<2^L$.

(a) Suppose   
$(\psi_{lk})$ is the Haar or a CDV wavelet basis with high enough regularity.   Then, for $\Pi_{L_n}\left[\cdot\given X\right]$ the projected posterior onto $\cV_{L_n}$, for arbitrary $M_n\to\infty$ 
\[ \Pi_{L_n}\left[\|\la-\la_0\|_\infty >M_n \veps_n^{\be,L_n} \given X\right] =\op. \] 
Note that for Haar wavelets, $\Pi_{L_n}[\cdot\given X]=\Pi[\cdot\given X]$ is the ordinary posterior. 
 
(b) Suppose
$(\psi_{lk})$ is a CDV wavelet basis with high enough regularity.  If 
$\be\wedge\ga>1/2$,
for arbitrary $M_n\to\infty$, 
\[ \Pi\left[\|\la-\la_0\|_\infty >M_n \veps_n^{\be,L_n} \given X\right] =\op. \] 
 
As a consequence, for both (a)--(b), if $L_n=L_n(\be)$, the corresponding posterior distributions contract at optimal minimax rate $\veps_{n,\be}^*$ in supremum norm.  
\end{thm}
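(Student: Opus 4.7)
My plan is to reduce the supremum--norm contraction to multiscale control of wavelet coefficients via the standard estimate $\|f\|_\infty \lesssim \sum_l 2^{l/2}\max_{k}|\psg f,\psi_{lk}\psd|$ valid for Haar and CDV wavelets. I would write
\[ \la-\la_0 = P_{L_n}(\la-\la_0) + (\la-P_{L_n}\la) - P_{L_n^c}\la_0, \]
and handle the three pieces separately. The deterministic bias $\|P_{L_n^c}\la_0\|_\infty \lesssim 2^{-\be L_n}$ follows from $\la_0=e^{r_0}\in\cH(\be,D')$ (transferring H\"older smoothness through the exponential) and standard wavelet approximation bounds, provided the basis has regularity $R>\be$ in the CDV case (automatic for Haar when $\be\le 1$). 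The middle piece $\la-P_{L_n}\la$ vanishes for the projected posterior, so only the first piece matters for part (a).

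For the low--frequency contribution $P_{L_n}(\la-\la_0)$, I proceed coefficient--by--coefficient. Fix $b=\psi_{LK}$ with $L\le L_n$, $0\le K<2^L$, and set $Z_n(b) = \rn\,\psg \la-\la_0, b\psd - W_n(\psi_{b,L_n})$. Combining the LAN expansion of $\ell_n(r_t^n) - \ell_n(r)$ with the tilting inequality \ref{c:cvarsn} yields an exponential moment of sub--Gaussian type
\[ \E^{\Pi[\cdot\given X]}\!\left[e^{t Z_n(b)}\1_{A_n}\right]\le e^{C(1+t^2)},\qquad |t|\le \log n, \]
where the quadratic variance scale $\|\psi_{b,L_n}\|_L^2 = \La_0(\psi_{b,L_n}^2 M_0)$ is $\lesssim \|\psi_{LK}\|_2^2\cdot\|1/M_0\|_\infty \lesssim 1$ uniformly in $(L,K)$ by \ref{c:mod}. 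A union bound over the $O(2^{L_n})$ indices $(L,K)$ then gives $\max_{L,K}|Z_n(\psi_{LK})|\lesssim \sqrt{L_n}$ on a set of posterior mass $1-\op$. Bernstein's inequality for the empirical--process terms $W_n(\psi_{LK}/M_0)$, combined with the weighted dyadic sum $\sum_{l\le L_n}2^{l/2}$, finally produces $\|P_{L_n}(\la-\la_0)\|_\infty\lesssim \sqrt{L_n 2^{L_n}/n}$ with posterior probability $1-\op$. Together with the bias, this proves (a).

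Part (b) additionally requires controlling $\|\la-P_{L_n}\la\|_\infty$, i.e.\ the high--frequency content of $\la=e^r$ even though the prior constrains $r\in\cV_{L_n}$. Writing $\la-\la_0 = e^{r_0}(e^{r-r_0}-1)$ with $r-r_0 = P_{L_n}(r-r_0)-P_{L_n^c}r_0$, the first--order term $e^{r_0}(r-r_0)$ has high--frequency part essentially $-e^{r_0}P_{L_n^c}r_0$; a Besov multiplier argument exploiting smoothness of $e^{r_0}$ transfers the bias $\|P_{L_n^c}r_0\|_\infty\lesssim 2^{-\be L_n}$ to the high--frequency content of the linearisation. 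The quadratic remainder $O((r-r_0)^2)$ requires a preliminary uniform bound on $r-r_0$, and this is where the hypothesis $\be\wedge\ga>1/2$ intervenes: I would use a Sobolev--type embedding $B^{s}_{2,\infty}\hookrightarrow L^\infty$ valid for $s>1/2$ to upgrade the $L^2$--control on $\cV_{L_n}$ (derived from \ref{c:l1} together with the coefficient--wise bound from Paragraph 2) to a uniform bound. This nonlinear high--frequency step is the main obstacle: the projection shortcut of (a) is no longer available and one must contend with $e^r$ rather than $r$ itself, so multiplier estimates in Besov scales and the $>1/2$ threshold for $L^\infty$ embedding are unavoidable. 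Once this is in hand, combining with the low--frequency and bias bounds yields the rate $\veps_n^{\be,L_n}$, which reduces to the minimax rate $\veps_{n,\be}^*$ when $\ga=\be$.
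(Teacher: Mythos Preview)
Your decomposition and overall strategy for part (a) match the paper's, but there is a genuine gap in the exponential moment step. You assert that the LAN expansion of $\ell_n(r_t^n)-\ell_n(r)$ combined with \ref{c:cvarsn} yields
\[
\E^{\Pi[\cdot\given X]}\!\left[e^{tZ_n(b)}\1_{A_n}\right]\le e^{C(1+t^2)}
\]
uniformly over $b=\psi_{LK}$, $L\le L_n$. This is too optimistic: the LAN expansion \eqref{eq:LAN} has remainder terms $R_{n,1},R_{n,2}$ that depend nonlinearly on $\la$, and their differences $R_n(r,r_0)-R_n(r_t^n,r_0)$ are \emph{not} uniformly bounded on the $L^1$--ball $A_n$ alone when $\|b\|_\infty$ grows like $2^{L_n/2}$. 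If you try to bound them via the $L^1$--based Lemma \ref{lem:rn3}, you pick up factors of order $\veps_n\mu_n^2$ with $\mu_n\asymp L_n2^{L_n/2}$, i.e.\ $\veps_n L_n^2 2^{L_n}$, which diverges for $\be\le 1$. If instead you use the $L^\infty$--based Lemmas \ref{lem:rn1}--\ref{lem:rn2}, you need a preliminary supremum bound $\|\la-\la_0\|_\infty\le v_n$, and the resulting exponential moment carries a factor $(1+v_n)$ in the $t^2$ coefficient.

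The paper's resolution is an \emph{iteration}: first extract from \ref{c:l1} a crude bound $\La_n=2^{L_n}\veps_n+2^{-\be L_n}$ (Lemma \ref{lem:snf}), which need not even be $o(1)$; then run your Laplace--transform argument on the set $E_n=A_n\cap\{\|\la-\la_0\|_\infty\le\La_n\}$, obtaining a posterior rate $\La_n^{(1)}=\veps_n^{\be,L_n}(1+\La_n)$; then feed $\La_n^{(1)}$ back in place of $\La_n$ and repeat. After finitely many steps $(\veps_n^{\be,L_n})^p\La_n=O(1)$ and the rate stabilises at $\veps_n^{\be,L_n}$. Without this bootstrap, your argument does not close for small $\be$.

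For part (b) your diagnosis of the difficulty (high frequencies of $e^r$ when only $r\in\cV_{L_n}$) and of the role of $\be\wedge\ga>1/2$ is correct, but the sketch via Besov multipliers and Taylor expansion of $e^{r-r_0}$ is vague at the crucial point. The paper instead works directly with $r$: it uses the condition $\be\wedge\ga>1/2$ through Lemma \ref{lem:consistsn} to get preliminary $\|\cdot\|_\infty$--consistency $\zeta_n=2^{L_n/2}\veps_n+2^{-\be L_n}=o(1)$, introduces a centering $\tilde r_n$ at the $r$--level, splits $\|r-\tilde r_n\|_\infty$ into low and high frequencies via the LAN inner product $\psg\cdot,\cdot\psd_L$ (not the Euclidean one), and controls the high--frequency part by a product--of--H\"older--functions estimate (Lemma \ref{lem:biasr}). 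The extra term $t\rn B(\la,\la_0)$ arising because one no longer cancels the semiparametric bias contributes $\rn\zeta_n^2 2^{-l/2}$, and again an iteration is needed to remove the factor $L_n\zeta_n$ from the rate. Your linearisation route may be workable, but as written it does not explain how the quadratic remainder $O((r-r_0)^2)$ is controlled at the target rate rather than merely at $\zeta_n^2$, nor how the iteration is avoided.
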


The rate in Theorem \ref{thm:sn} is sharp. In  Theorem \ref{lbhazard}, we prove the matching (in case $\ga=\be$) lower bound $\veps_{n,\be}^*$ as in \eqref{ratesup} up to constants for the minimax risk for the supremum loss for hazard estimation. The condition $\veps_n\leqa \veps_{n}^{\be,L_n}$ is typically not hard to check  by using the generic tools from \cite{Ghosal2000}; it is satisfied for all considered priors, see Section \ref{sec:l1} and the proof of Theorem \ref{thm:appli}. We note that supremum--norm posterior convergence results are still relatively few in the literature, and have mostly been developed in  white noise regression and density estimation, see for instance \cite{ic14}. Some recent contributions further deal with inverse problems and diffusions \cite{nicklschroedinger}--\cite{nickl_ray_diffusions20}, and also follow the multiscale approach \cite{cn14} adopted here. We further discuss the links with the results in \cite{ic14},\cite{cn13} and \cite{cn14} in the Discussion Section \ref{sec:discussion}, but for now let us just note that the previous results do not follow directly from density estimation results, the survival model structure needs further substantial work, in part due to the specific LAN norm structure (see Section \ref{sec:lan}) which underpins it.

\subsection{Applications: histogram classes and wavelet priors}
 \label{sec:appli}
 We now turn to the consequences for practice of our main results. We have verified the conditions for our main results for the four classes of priors announced in Section \ref{sec:familiesofpriors}. We recall that the classes are  {\bf (H$_1$)} random histograms with independent heights, {\bf (H$_2$)} random histograms with dependent heights, {\bf (H$_3$)} Haar wavelet priors, and {\bf (S)} wavelet priors with smoother wavelet bases. Here we present the details for the dependent and independent Gamma histogram priors as representatives of classes {\bf (H$_1$)} and {\bf (H$_2$)}.  Within  {\bf (H$_1$)} and {\bf (H$_2$)}, results are available for log-normal and log-Laplace priors as well. The specification of these log-normal and log-Laplace priors, as well as the wavelet priors in {\bf (H$_3$)} and   {\bf (S)}  can be found in  Section  \ref{sec:specificationpriors}.
 
 With $\operatorname{Gamma}(\alpha, \beta)$ we will refer to the Gamma distribution with shape parameter $\alpha$ and rate parameter $\beta$. The \emph{independent Gamma prior} then takes the form
 \begin{equation} \label{histheightmain}
 \la = \sum_{k=0}^{2^{L_n+1}-1}  \la_k \1_{I_{k}^{L_n+1}}, \quad \lambda_k \sim \operatorname{Gamma}(\alpha_0, \beta_0) \text{ i.i.d.}, \ k = 0, \ldots, 2^{L_n+1}-1
\end{equation} 
with $\alpha_0 > 0, \beta_0 > 0$ to be chosen freely.

The \emph{dependent Gamma prior} follows the formulation from \cite{Arjas1994}. It takes the same form as \eqref{histheightmain}, but now a dependence structure between the $\lambda_k$'s is introduced so that for $k = 1, \ldots, 2^{L_n+1}-1$ and some $\alpha > 0$:
\begin{equation*}\begin{aligned} 
E[\la_k \mid \la_{k-1}, \ldots, \la_0] &= \la_{k-1}\\
\text{Var}(\la_k \mid \la_{k-1}, \ldots, \la_0) &= (\la_{k-1} / \alpha)^2,
\end{aligned} \end{equation*}
The intuition behind such dependence is that it allows for borrowing of information across time periods and  leads to more `smoothly' varying prior histograms. The desired structure is achieved by setting, for some $\alpha_0, \beta_0, \alpha > 0$ to be chosen freely:
\begin{align*}
\lambda_0 &\sim \operatorname{Gamma}(\alpha_0, \beta_0)\\
\lambda_k \mid \la_0, \ldots, \la_{k-1} &\sim \operatorname{Gamma}(\alpha, \alpha/\la_{k-1}), \quad k = 1, \ldots, 2^{L_n+1}-1.
\end{align*}

We now state a result for the dependent and independent Gamma priors, which justify the use in practice of their corresponding credible bands for the cumulative hazard and the survival as if they were confidence bands. A more general theorem, covering all four classes of priors considered in this paper, is available as Theorem \ref{thm:applifull}. In particular, in Theorem \ref{thm:applifull} several classes of priors are identified for which the smoothness assumption can be decreased to $\beta > 0$ rather than $\beta > 1/2$.

Similar to Theorem \ref{thm:donsker} above, for proving the Donsker--type result on $\La$ in the next statement, we shall first prove a nonparametric BvM result on $\la$ in a multiscale space $\mathcal{M}_0(w)$, which is the last result mentioned in the next statement. Although this could be skipped at first read, we refer the reader to Section \ref{sec:npbvm}  for precise definitions.
\begin{thm} \label{thm:appli}
Let  $X=(X_1, \dots, X_n)$ be a sample of law $P_0$ with hazard rate $\la_0$ under conditions \ref{c:mod}. For  $\be,D>0$, suppose $\log\la_0\in \cH(\be,D)$. Suppose the prior $\Pi$ is an independent or dependent Gamma prior as described above, with $L_n$ chosen as in \eqref{eln} with $\ga=\be$. Then 
 for any $1/2 < \beta \leq 1$, 
\[ \cB_{\mathcal{D}[0,1]}\left(\,\cL(\rn(\La-\hat\La_n)\given X)\ ,\, \cL(G_{\La_0})\right)  \to^{P_{0}} 0,\]
for $\hat\La_n$ Nelson Aalen's estimator, as well as, for $\veps_{n,\be}^*$ as in \eqref{ratesup}, 
\[ \Pi\left[\|\la-\la_0\|_\infty >M_n \veps_{n,\be}^* \given X\right] =\op. \] 
Also, the posterior distribution satisfies the nonparametric BvM theorem, Theorem \ref{thm:bvm}, in $\cM_0(w)$ with the choices $w_l=l$ or $w_l=2^{l/2}/(l+2)^2$ and centering $T_n$ as in \eqref{tn}.
 \end{thm}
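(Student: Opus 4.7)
The strategy is to reduce Theorem \ref{thm:appli} to verifying the abstract hypotheses \ref{c:l1}, \ref{c:sn}, \ref{c:cvarb} and \ref{c:cvarsn} for the two Gamma priors with cut-off $L_n = L_n(\be)$. Once these are established, Theorem \ref{thm:sn}(a) provides the supremum-norm rate (the projected and ordinary posteriors coincide for a Haar basis, so \ref{c:sn} also follows), Theorem \ref{thm:donsker} combined with Corollary \ref{cor:donsker} (valid since $\ga = \be \le 1 < \be + 1/2$) delivers the Bayesian Donsker theorem centered at Nelson--Aalen, and Theorem \ref{thm:bvm} yields the nonparametric BvM in $\cM_0(w)$ with either of the prescribed weights.

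For \ref{c:l1}, I would apply the posterior-contraction theory of \cite{Ghosal2000}, specialised to the right-censoring likelihood by dominating the Hellinger distance by $\|\la - \la_0\|_1$ under \ref{c:mod}. The three standard ingredients are a prior mass lower bound around $\la_0$, a sieve of histograms with heights in a compact subset of $(0,\infty)$ with controlled metric entropy, and exponential decay of the sieve complement. Approximating $\la_0$ by its histogram projection at level $L_n$ introduces an $L^1$-bias of order $2^{-\be L_n}$. For the independent Gamma prior, the prior mass factorises into $2^{L_n+1}$ univariate Gamma probabilities, each bounded below by positivity of the Gamma density on compact sets. For the dependent prior the same type of estimate applies to the telescoping product of conditional Gamma densities, provided one localises iteratively so that each $\la_{k-1}$ stays within a neighbourhood of its target height. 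This yields $\veps_n \asymp \veps_n^{\be,L_n}$, and the technical condition $\sqrt n \veps_n 2^{-L_n} = O(L_n^{-3})$ is checked at $L_n(\be)$.

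For \ref{c:cvarb} and \ref{c:cvarsn}, the shift $r \mapsto r - t\psi_{b,L_n}/\sqrt n$ acts multiplicatively on each histogram height, $\la_k \mapsto \tilde\la_k = \la_k\exp(-t c_{b,k}/\sqrt n)$ with $|c_{b,k}| \le \|\psi_{b,L_n}\|_\infty$. For $b\in\cV_{\cL}$ this is $O(1)$; for $b = \psi_{LK}$ with $L \le L_n$ it is $O(2^{L_n/2}) = o(\sqrt n)$. A change of variables in the numerator turns the ratio into an integral against the original posterior times a product of Gamma density ratios; in the independent case
\begin{equation*}
\log\frac{p(\tilde\la_k)}{p(\la_k)} = (\al_0-1)\log(\tilde\la_k/\la_k) - \beta_0(\tilde\la_k - \la_k),
\end{equation*}
with an analogous expression involving conditional densities in the dependent case. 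On the event $A_n$ these per-bin log-ratios sum to $o_{P_0}(1)$ when $|t|$ is bounded (giving the equality in \ref{c:cvarb}) and to $O(1+t^2)$ when $|t| \le \log n$ (giving the upper bound in \ref{c:cvarsn}); the LAN expansion of the likelihood in direction $\psi_{b,L_n}$ from Section \ref{sec:lan} absorbs any residual $t$-linear contribution.

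The principal technical obstacle is the dependent Gamma prior, where the Markov structure of the heights couples the change-of-variable and prior-mass computations along the whole chain of $2^{L_n+1}$ bins. Handling the cumulative effect of a simultaneous multiplicative perturbation of the $\la_k$ requires a localisation keeping each $\la_{k-1}$ bounded away from $0$ and $\infty$, so that the conditional Gamma density $\la_k \mapsto \la_k^{\al-1}\exp(-\al\la_k/\la_{k-1})$ remains well-behaved under small perturbations of either argument; this is arranged by intersecting the integration domain with the sup-norm sieve guaranteed by Theorem \ref{thm:sn}. The smoothness lower bound $\be > 1/2$ enters both through the inequality $\sqrt n \veps_n 2^{-L_n} = O(L_n^{-3})$ and through the approximation-bias control needed for the multiscale centering $T_n$ in \eqref{tn} to asymptotically match the true hazard.
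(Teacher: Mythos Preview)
Your overall strategy---verify \ref{c:l1}, \ref{c:cvarb}, \ref{c:cvarsn}, then invoke Theorems \ref{thm:sn}, \ref{thm:bvm}, \ref{thm:donsker} and Corollary \ref{cor:donsker}---is exactly the paper's route, and your outline for \ref{c:l1} via \cite{Ghosal2000} (prior mass, sieve, entropy, with the Markov chain handled by iterated conditional bounds) matches the paper's Section \ref{sec:l1}. The change-of-variables skeleton for \ref{c:cvarb}/\ref{c:cvarsn} is also the right one. Two points, however, need correction or sharpening.

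First, you misidentify where the restriction $\be>1/2$ enters. The rate condition $\sqrt n\,\veps_n 2^{-L_n}=O(L_n^{-3})$ holds for \emph{every} $\be>0$ when $\ga=\be$: both terms of $\veps_n^{\be,L_n}$ give $o(1)$ after multiplication by $\sqrt n\,2^{-L_n}$, and the centering-bias control for $T_n$ likewise requires no such restriction. In the paper, $\be>1/2$ is used inside the Gamma change-of-variables (Section \ref{sec:changevargamma}) in two places: (i) to invoke the preliminary sup-norm consistency of Lemma \ref{lem:consistsn} (which needs $2^{L_n/2}\veps_n=o(1)$, i.e.\ $\be>1/2$), ensuring the log-heights $h_k$ are uniformly bounded so that the multiplicative factors $e^{h_j-h_{j-1}}$ (dependent case) or $e^{h_j}$ (independent case) in the Gamma density ratio stay controlled; and (ii) for the \emph{independent} Gamma prior specifically, to make $\sum_j|z_j|\leqa |t|2^{L_n}/\sqrt n=o(|t|)$, which again forces $\be>1/2$.

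Second, your per-bin bound $|c_{b,k}|\le\|\psi_{b,L_n}\|_\infty$ is too coarse to close the argument for $b=\psi_{LK}$ with $L$ up to $L_n$. The paper relies on two localisation lemmas derived from the structure of $\psi_{LK}/M_0$: Lemma \ref{lempsinh} shows the histogram heights $H_j$ of $\psi_{b,L_n}$ vanish off the support $S_{LK}$ and are $O(2^{L/2})$ on it (so $\sum_j|H_j|\leqa 2^{L_n-L/2}$, giving the independent-case bound), and---crucially for the dependent prior---Lemma \ref{lemdeltah} shows $\sum_j|H_j-H_{j-1}|\le C2^{L/2}$, exploiting that $M_0^{-1}$ is Lipschitz. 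The dependent-Gamma log-ratio naturally produces increments $z_j-z_{j-1}$ (see \eqref{trgamma} in the paper), and it is this telescoping bound that makes $\sum_j|z_j-z_{j-1}|\leqa |t|2^{L/2}/\sqrt n$ uniformly in $L\le L_n$. You allude to ``an analogous expression involving conditional densities'' but do not isolate this increment structure; without it, a naive sum over $2^{L_n+1}$ bins each of size $|t|2^{L_n/2}/\sqrt n$ would fail by a factor $2^{L_n}$.
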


While for full details on all classes of priors, we refer the reader to Theorem \ref{thm:applifull}, we would like to draw attention here to one additional case of particular practical interest, which is the dependent log-Laplace prior. For the dependent log-Laplace prior, the condition $\beta > 1/2$ can be improved to $\beta > 0$. It is far from the only example for which $\beta > 0$ suffices, as the same is true for most wavelet priors from classes {\bf (H$_3$)} and   {\bf (S)}. Nevertheless, as computation for the histogram-type priors is  convenient, as expanded on in Section \ref{sec:simulation}, the dependent log-Laplace prior may be of particular practical interest.

\begin{remark} \label{rem:lnu}
The choice of $L_n$ as in \eqref{eln} is intended for obtaining  sharp supremum norm rates, but many other choices are possible, in particular if the focus is on the nonparametric BvM or the Donsker BvM. For instance, the Donsker BvM holds for any $\be>0$ with histogram priors if    $L_n=L_n^U$  as in \eqref{ctunder}, as stated  in Theorem \ref{thm:glimpse}; see the proof of Theorem \ref{thm:appli}. 
\end{remark}

\section{\label{sec:simulation}Simulation study}

We verify our coverage results empirically, employing the two histogram priors with independent or dependent Gamma heights from the previous section. In addition, we draw two comparisons: (i) between the aforementioned dependent and independent Gamma priors; and (ii) between the credible bands for the survival function arising from said Gamma priors, and two non-Bayesian confidence bands.

\subsection{Computing the credible and confidence bands}
For the frequentist confidence bands for the survival function, the Hall-Wellner band \cite{Hall1980} and the log-transformed equal precision band \cite{Nair1984} are competitive options \cite{Nair1984, Borgan1990}.  

We compute credible bands  for the cumulative hazard and for the survival function, in eight scenarios described below, expecting to see the good coverage guaranteed by  Corollary \ref{cor:bands}. To build the credible bands, we find, for each object, a minimal radius $r$ such that $(1-\alpha)100$\% of all posterior draws is within distance $r$ of the posterior mean, with $\alpha$ set to 0.05. As a final step, we bound the credible bands from below by 0, and in case of the survival function, from above by 1.

The piecewise constant priors allow us to exploit a convenient Poisson representation of the survival likelihood for sampling \cite{Holford1980, Laird1981}.  Details on this representation and about  the algorithm we used to sample from the posterior distributions are given in Section \ref{sec:sampler}.  The samplers for the dependent and independent Gamma priors, as well as the functions used to compute the credible bands are available in the BayesSurvival R package \cite{packageBayesSurvival}. The documentation of the BayesSurvival package contains further details about the sampler. While we did not pursue this further, we remark that for the CDV wavelet prior the convenient Poisson representation is not available, but standard MCMC methods should allow sampling from the corresponding posterior distribution.

\subsection{Scenarios and evaluation measures}
We consider two hazards, with $\tau = 1$:
\begin{enumerate}
\item The \emph{smooth} hazard, $\lambda_s(t) = 6((t+0.05)^3 - 2(t+0.05)^2 + t + 0.05) + 0.7$. 
\item The \emph{piecewise linear} hazard, $\lambda_{pl}$, which is equal to 3 on $[0, 0.4]$, to 1.5 on $[0.6, 1]$ and the linear interpolation on $[0.4, 0.6]$.
\end{enumerate}
These hazards and their corresponding cumulative hazards and survival functions are depicted in Figure \ref{fig:sim_truth}. Both hazards meet the conditions of our main results.

\begin{figure}
\caption{Illustration of the two hazards used in the simulation study, Section \ref{sec:simulation}.}
\label{fig:sim_truth}
\includegraphics[width=\textwidth]{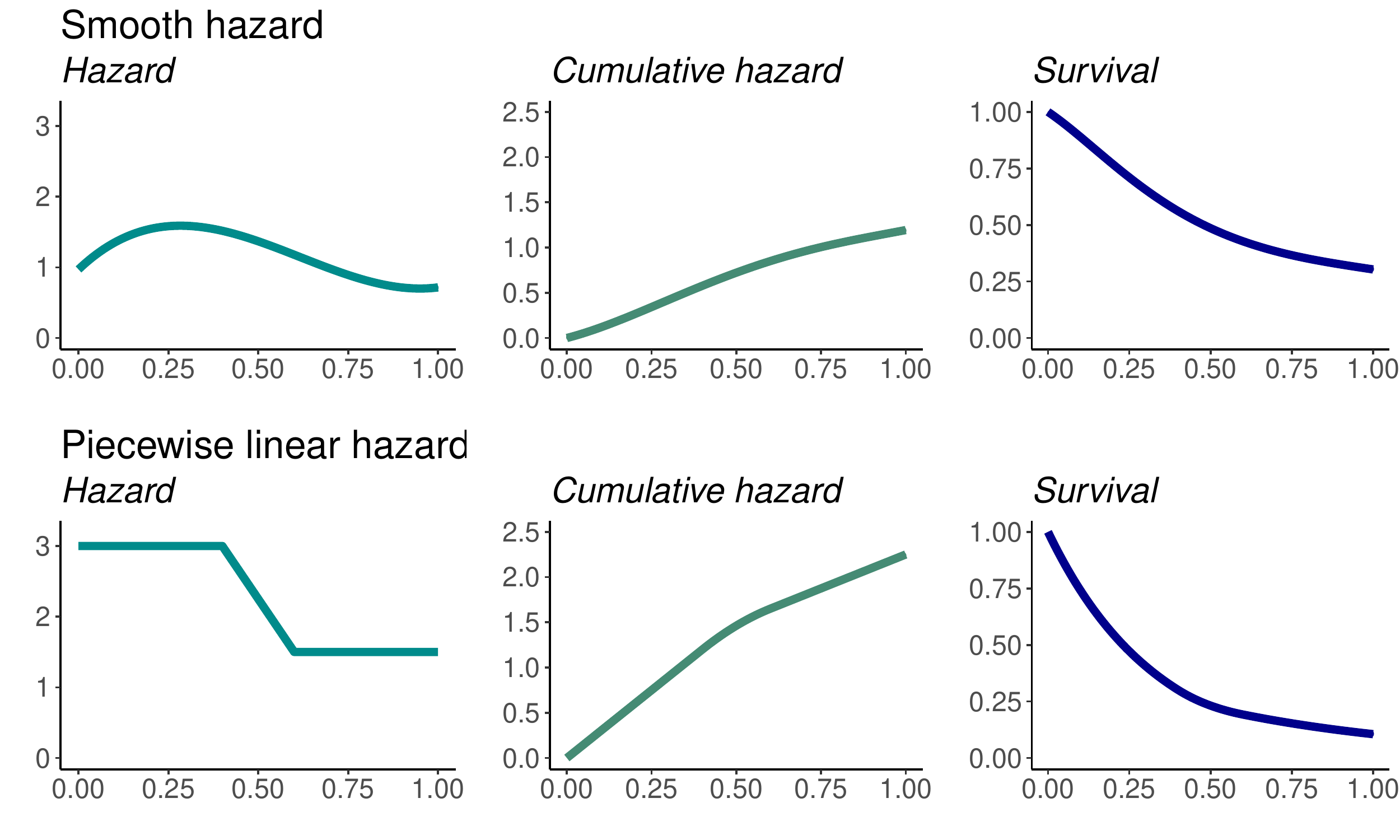}
\end{figure}

Per hazard, we consider two scenarios at two sample sizes, for a total of eight scenarios:
\begin{enumerate}
\item Independent uniform censoring throughout the interval $[0, 1)$ and administrative censoring at $t = 1$ (meaning that everyone still under follow-up at $t = 1$ is censored), with $n = 200$ or $n = 2000$ (55\% and 34\% censoring with $\la_s$ and $\la_{pl}$ respectively).
\item Administrative censoring only, with $n = 200$ or $n = 2000$ (30\% and 11\% with $\la_s$ and $\la_{pl}$ respectively). This scenario does not meet condition \ref{c:mod}, because the censoring density is equal to 0 on $[0, 1)$.
\end{enumerate}

We evaluate the coverage of the bands for the cumulative hazard and survival. In addition, we compute the areas of the credible bands for the survival function, and of the Hall-Wellner and log-transformed equal precision bands. To ensure a fair comparison, we post-process all survival bands to lie between 0 and 1. Finally, we retain and plot the posterior draws of the median survival to illustrate the Bernstein-von Mises phenomenon as expected from Corollary \ref{cor:median}. 

We create $N = 1000$ synthetic data sets for each setting, and set the confidence level to 95\%. The parameters for the dependent Gamma prior are $\alpha = \beta_0 = 1, \alpha_0 = 1.5$, and for the independent Gamma prior $\alpha = 1.5, \beta = 1$.  

As in our theoretical results, we work with a number of intervals $K\circeq (n / \log{n})^{\frac{1}{1+2\gamma}}$ as in \eqref{eln}. We consider both $\gamma = 1/2$, which can be viewed as the recommended default value (as the Donsker BvM theorem for $\La$ and $S$ then holds regardless of the smoothness value $\be>0$ of the hazard, see below Theorem \ref{thm:appli}), and $\gamma = 1$, exploiting that in our scenarios, the true hazards are smooth.

\subsection{Results}
The coverage and size results for the survival bands are given in Table \ref{table:surv}.  Plots of the posterior draws of the median survival, for both $\lambda_{pl}$ and $\lambda_{s}$ in the scenario with  administrative censoring only,  $n = 2000$, $\gamma = 1/2$ and the dependent Gamma prior are given in Figure \ref{fig:sim_med_surv}. The coverage results for the credible bands for the cumulative hazard are reported in  Section \ref{sec:simhaz}.

\begin{table}[t]
\caption{Coverage and area of the credible bands for the survival function using the dependent and independent Gamma priors, and of the Hall-Wellner (H-W) and log-transformed equal precision (log-EP) bands. The parameter $\gamma$ is that of \eqref{eln}, so $\gamma = 1/2$ corresponds to $K = \left\lceil \left(n/\log{n}\right)^{1/2} \right\rceil$ intervals and $\gamma = 1$ to  $K = \left\lceil \left(n/\log{n}\right)^{1/3} \right\rceil$ intervals.}
\label{table:surv}
\begin{tabular}{l|rr|rr|rr}
& \multicolumn{2}{c|}{$\gamma = 1/2$} & \multicolumn{2}{c|}{$\gamma = 1$} & \\
 & dep. & indep. & dep. & indep.	& H-W	& log-EP\\
 \hline
\multicolumn{7}{l}{ \textbf{Smooth hazard}} \\
 \hline
  $n = 200$, adm. + unif. & 0.95  & 0.97 & 0.94 & 0.96 & 0.94  & 0.93  \\
 \emph{area} & \emph{0.22} & \emph{0.21} & \emph{0.22} & \emph{0.21} & \emph{0.26} & \emph{0.25} \\
 $n = 2000$, adm. + unif. & 0.95 & 0.95 &  0.96 & 0.96 &  0.96 &  0.96 \\
 \emph{area} & \emph{0.08} & \emph{0.08} & \emph{0.08} & \emph{0.08} & \emph{0.08} & \emph{0.08} \\
   $n = 200$, adm. & 0.95 & 0.95 & 0.97 & 0.96 & 0.97 & 0.91 \\
\emph{area} & \emph{0.16} & \emph{0.15} & \emph{0.15} & \emph{0.15} & \emph{0.18} & \emph{0.19} \\
 $n = 2000$, adm. & 0.94  & 0.94 & 0.96  & 0.96 & 0.97   & 0.95  \\
 \emph{area} & \emph{0.05} & \emph{0.05} & \emph{0.05} & \emph{0.05} & \emph{0.06} & \emph{0.06} \\
  \hline
\multicolumn{7}{l}{ \textbf{Piecewise linear hazard}} \\
 \hline
  $n = 200$, adm. + unif. & 0.94 & 0.95  & 0.95  & 0.95  & 0.95  & 0.94 \\
 \emph{area} & \emph{0.18} & \emph{0.17} & \emph{0.16} & \emph{0.16} & \emph{0.27} & \emph{0.22} \\
 $n = 2000$, adm. + unif. & 0.94  & 0.95  & 0.94 & 0.95 & 0.95  & 0.95  \\
 \emph{area} & \emph{0.07} & \emph{0.06} & \emph{0.06} & \emph{0.06} & \emph{0.10} & \emph{0.07} \\
   $n = 200$, adm. & 0.94  & 0.93 & 0.95  & 0.95 & 0.96  & 0.96 \\
\emph{area} & \emph{0.15} & \emph{0.15} & \emph{0.14} & \emph{0.14} & \emph{0.19} & \emph{0.18} \\
 $n = 2000$, adm. & 0.95  &  0.94 & 0.95  & 0.95  & 0.96  & 0.97  \\
 \emph{area} & \emph{0.05} & \emph{0.05} & \emph{0.05} & \emph{0.05} & \emph{0.06} & \emph{0.06} \\
 \end{tabular}
\end{table}

\begin{figure}
\caption{Histograms of the $N = 1000$ posterior draws of the median survival using the dependent Gamma prior, in the scenario with  administrative censoring only, $n = 2000$ and $K = \lceil (n/\log{n})^{1/2}\rceil$, with normal distributions centered at the mean of the draws with variance equal to the empirical variance of the draws (in gray) and the Gaussian expected from Corollary \ref{cor:median}, centered at the posterior median (in black). The histograms illustrate the Bernstein-von Mises result for the median survival of Corollary \ref{cor:median}. }
\label{fig:sim_med_surv}
\includegraphics[width=\textwidth]{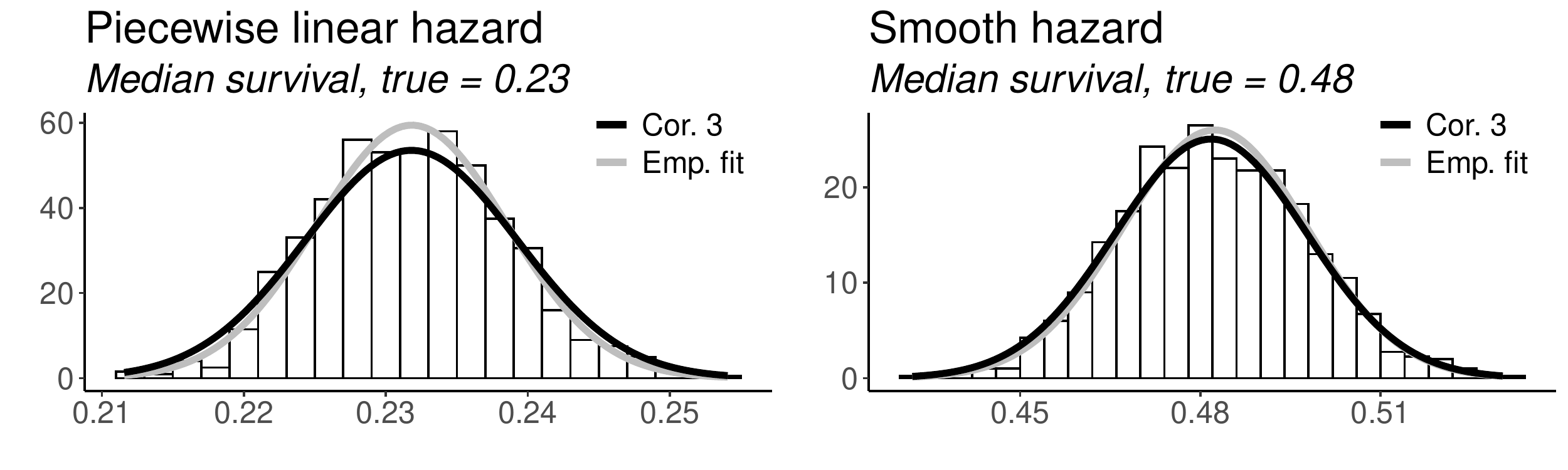}
\end{figure}

Figure \ref{fig:sim_med_surv} shows a normal shape of the posterior distribution of the median survival, as expected by Corollary \ref{cor:median}. For comparison, the Gaussian density centered at the posterior median of the median survival and with variance as in \eqref{medlim} (approximated by numerical integration) is shown, as well as the Gaussian centered at the mean of the draws with variance equal to the empirical variance of the draws.

\subsection{Discussion on simulations}
Comparing the results for the dependent and independent Gamma priors, the differences between the two are minor. Both priors achieve (close to) the nominal level of the band, as expected from Corollary \ref{cor:bands}. This even holds in the scenarios with only administrative censoring, despite the partial violation of condition \ref{c:mod} in this case. Decreasing the number of intervals from  $K = \lceil (n/\log{n})^{1/2}\rceil$ to  $K = \lceil (n/\log{n})^{1/3}\rceil$ leads to smaller bands in some cases, and somewhat higher coverage. The differences are small, and the number of intervals  $K = \lceil (n/\log{n})^{1/2}\rceil$ seems like a good choice when nothing is known about smoothness of the true hazard.

Comparing the coverage and areas of the credible bands for the survival  to those of the Hall-Wellner and log-transformed equal precision bands (Table \ref{table:surv}), we find the highest coverage in most scenarios by the Hall-Wellner band, but at the cost of an area that is up to roughly 70\% larger than that of the Bayesian version. The largest absolute differences in area are observed for the $n = 200$ sample size. The log-transformed equal precision band is closer in size to the credible bands, although still up to 30\% larger than the Bayesian credible bands, but comes with a decrease in coverage. Subtle differences matter, as is shown for example in the scenario with the piecewise linear hazard, $n = 200$ and administrative as well as uniform censoring. The Hall-Wellner band has 95\% coverage at an area of 0.27, the smaller log-transformed equal precision band has coverage 94\% at an area of 0.22, while a further decrease to an area of 0.16 to 0.18 (depending on the choice of $K$) for the Bayesian bands still results in 94-95\% coverage.

For context, the bands formed by the pointwise confidence intervals typically calculated around the Kaplan-Meier and Nelson-Aalen estimators offer no guarantee of coverage of the survival or cumulative hazard. Indeed, in the scenario with the smooth hazard, $n = 200$ observations and both uniform and administrative censoring, if the pointwise confidence intervals are collated into a band, we observed coverage of 43\% for the survival, and of 43\% for the cumulative hazard.

In conclusion, the Bayesian survival bands are an attractive option, providing high coverage despite their small size compared to the popular Hall-Wellner and log-transformed equal precision bands, and their use seems especially promising for small and moderate sample sizes. We note that we have considered here only a certain type of credible sets for simplicity: bands with radii determined by a posterior quantile. This simple choice already works remarkably well. One could also  consider more elaborate constructions of credible bands with varying radius: this is left for future work.

\section{Data application} \label{sec:data}
We visually illustrate how the Bayesian procedures compare to other existing popular methods. We do so on the North Central Cancer Treatment Group lung cancer data set \cite{Loprinzi1994}, which contains 228 observations of which 63 are censored. 

For the prior, we take the dependent Gamma prior, with the same parameter settings as in Section \ref{sec:simulation}. As we have no knowledge of the true smoothness of the hazard, we take as number of intervals $K = \lceil (n/\log{n})^{1/2}\rceil$ the default choice corresponding to $\gamma = 1/2$ in \eqref{eln}. 

We compare the  credible band and posterior mean for the survival function to three frequentist methods to quantify uncertainty: the Hall-Wellner band,  the log-transformed equal precision band, and the Kaplan-Meier estimator with its pointwise intervals,  in Figure \ref{fig:cancer_surv}. We also report on results for the hazard (posterior mean only) and cumulative hazard in  Section \ref{sec:datahaz}.

\begin{figure}
\caption{Real data experiment $(n=228)$. 
Posterior mean of the survival (solid) with credible band (shaded area), compared to (dashed, from left to right, top to bottom):  the Hall-Wellner band; the log-transformed equal precision (log-EP) band; Kaplan-Meier with pointwise confidence intervals. The plots illustrate the differences in sizes between the bands. }
\label{fig:cancer_surv}
\includegraphics[width=0.95\textwidth]{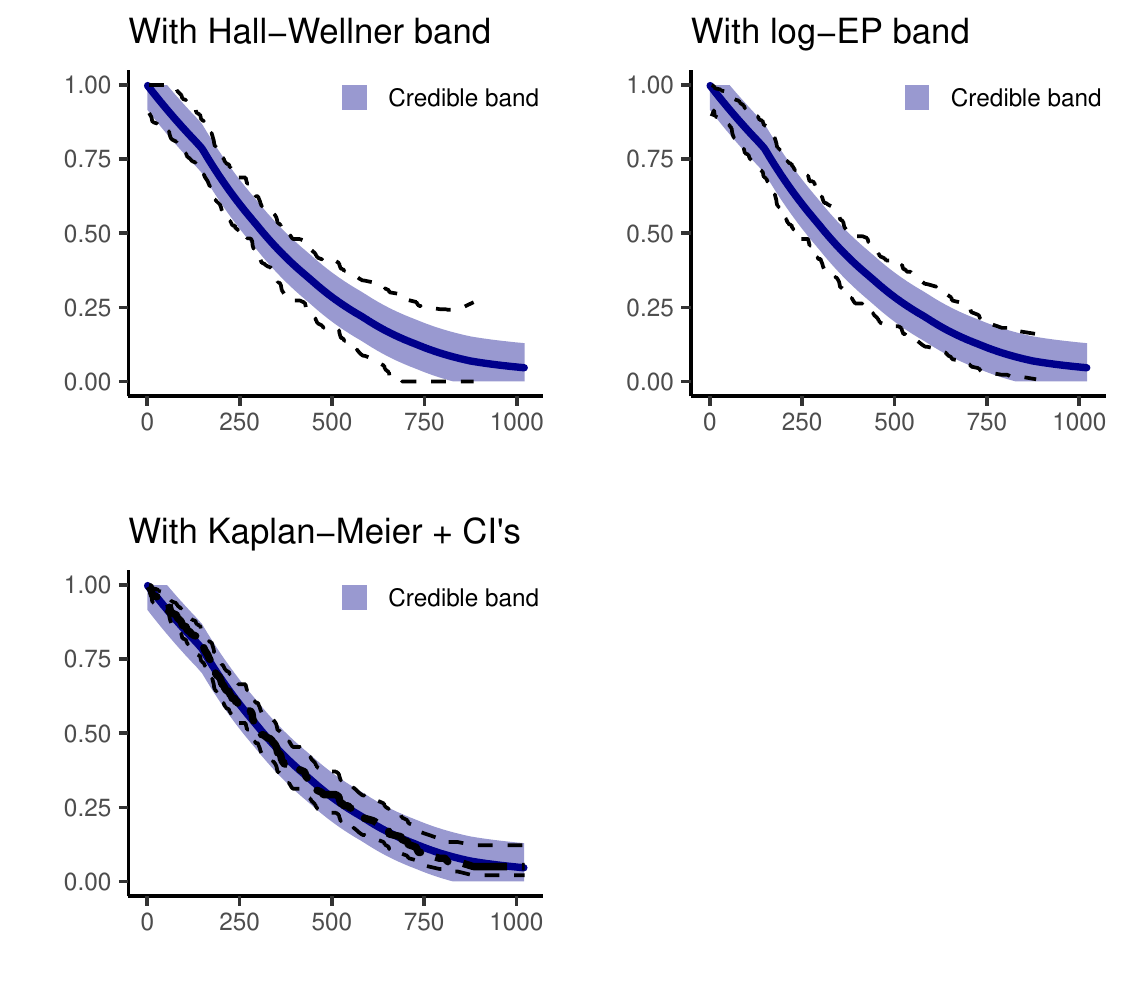}
\end{figure}

We see that the posterior means for all three survival objects are close to their frequentist counterparts. The credible bands for the survival function reveal an interesting pattern, matching what was observed in Section \ref{sec:simulation}. The area of the credible band is noticably smaller than that of the two non-Bayesian bands. Interestingly, the credible band is quite similar in size to the Kaplan-Meier pointwise confidence intervals, despite the much stronger guarantees now available for the credible band. This illustrates the conclusions from Section \ref{sec:simulation}, that the Bayesian credible band for the survival function is at an attractive point on the spectrum that trades off size and coverage.

\section{Discussion} \label{sec:discussion}

This work derives inference results for Bayesian procedures in the nonparametric  right-censoring model. Our results in particular provide theoretical back-up of practically used histogram priors on the hazard. We see that Bayesian methods are competitive with the standard frequentist options, providing natural uncertainty quantification, with credible sets reaching exact asymptotic coverage while having an optimal size in terms of efficiency.  Our methods could also be used to evaluate other classes of priors, not considered here, such as (truncated) Gaussian processes on log-hazards. More generalized frameworks like the generalized transformation model \cite{Younes1997} could also be studied with a similar approach, where one would first need to investigate the LAN properties of this generalization and then employ a Laplace transform approach similar to the one presented here.

While the Kaplan-Meier estimator with pointwise confidence intervals is  a highly popular method to quantify uncertainty in survival analysis (with non-guaranteed coverage for the `band' that arises visually by combining the pointwise intervals), our results show that the Bayesian paradigm offers an attractive alternative option, with automatic reliable uncertainty quantification. The credible bands are easily computed and turn out to be quite narrow compared to common frequentist methods for obtaining confidence bands. Any of the priors studied in this paper is guaranteed to yield good results, with the most crucial choice to be made being the number of intervals. We recommend taking the number \eqref{eln} as a guideline, with $\gamma = 1/2$ as a default choice in the absence of information on the smoothness of the underlying hazard.

The results also extend several recently obtained results in the Bayesian nonparametrics literature, in terms of rates for hazards, but also in terms of required regularity conditions. We briefly discuss  expected consequences for two models: density estimation and the Cox model.

In terms of density estimation, one can formulate results similar to the ones presented here. One main simplification in terms of proofs in density estimation is that LAN remainder terms are less complex, and thus easier to handle. A main novelty here for the density estimation framework with respect to \cite{ic14}--\cite{cn14} is in terms of regularity conditions. Using the scheme of proof of Theorem \ref{thm:sn}, parts (a) and (b), one can extend the corresponding results in \cite{ic14}--\cite{cn14} in density estimation on $[0,1]$, improving upon  minimal required regularities by at least $1/2$ (i.e. the cited works require $\be>1/2$ for wavelet histogram priors, this condition is removed here; also, $\be>1$ was required for smooth wavelet priors, here the condition becomes just $\be>1/2$ and is even completely removed for projected posteriors). Some of these refinements are similar in spirit to the idea of getting improved rates successively by an iterative argument, as was used recently by Richard Nickl and co--authors in inverse problems or diffusion  settings in \cite{nicklschroedinger}, \cite{nicklsoehl19}, \cite{nickl_ray_diffusions20} (note that the argument in these papers is not aiming at decreasing minimal regularity requirements though, which in inverse problems contexts are typically higher due to the `inverse problem' operator involved) -- a less sharp version of this idea also featured in \cite{ic14} p. 2083, to get consistency rates in the $\|\cdot\|_\infty$ norm for $\be\in(1/2,1]$ in density estimation, but those rates, unlike here, were not yet the optimal ones. 

Regarding the Cox model,  the paper \cite{Castillo2012} derived Hellinger posterior rates, as well as  consistency rates in terms of the LAN norm (for a slightly different presentation, see also Chapter 12 of the book \cite{gvbook}, where the  results of that paper are also presented). Those were enough for obtaining a BvM for $\te$ in the Cox model (a somewhat related result in the present paper is Theorem \ref{thm:bvmlin}). The required H\"older--regularity condition on the hazard was $\be>3/2$. Here we are able to go down to at least to $\be>1/2$ for $L^1$, LAN--norm and supremum norm rates (getting optimal rates for those, up to logarithmic terms for the first two norms), and to just $\be>0$ for histograms or truncated posteriors. 
For Hellinger and LAN norm rates, new arguments in Section \ref{sec:suffrates} enable this improvement. Note that pushing down the regularity constraint enables us to deal with histograms priors, 
which were ruled out in the treatment of \cite{Castillo2012, gvbook} (where a fast enough rate is needed for preliminary concentration in the LAN norm, and where supremum--norm rates are not discussed). Applying the present arguments in the Cox model is expected to enable lowering regularity requirements there. 
Another important  novelty in the present setting, already only in the setting of Theorem \ref{thm:bvmlin}, is dealing with functionals $\psg f,\psi_{lk} \psd$ for unbounded $l,k$, which in particular requires substantially more general bounds of remainder terms in the LAN expansion compared to \cite{Castillo2012}, see Lemmas \ref{lem:rn1}--\ref{lem:rn2}.

Here we studied conditions under which both BvM and supremum--norm rates can be achieved simultaneously, and under which such rates are optimal. If one is interested only in a BvM result for $\Lambda$, or satisfied with a BvM statement and supremum--norm consistency only, the class of priors with which this can be achieved grows larger and one may not need the techniques developed here. It would be interesting to develop theory for more classes of priors, possibly allowing for non-conjugate ones (as we do here).

The present work only addresses a certain set of questions. There are many other interesting ones to consider and the present contribution is intended as a platform in the simplest nonparametric survival analysis model from which to derive other results. Future interesting directions within the survival analysis field include the use of  covariates \cite{Friedman1982}, with possibly nonproportional hazards \cite{Laird1981}, and dealing with other classes of priors, e.g. survival trees \cite{Bou2011}.

\subsection*{Acknowledgements} I. C. would like to thank Richard Nickl for insightful discussions, in particular pertaining to low regularities treatment. S. P. would like to thank Leonhard Held for drawing our attention to the median survival as a quantity of interest, and Judith Rousseau for a question on lower smoothness levels. The authors would also like to thank the Associate Editor  and referees, as well as Bo Ning, for insightful comments.

\section{Proofs}  \label{sec:proofs}

Let us write $r = \log \la$ and $r_0 = \log \la_0$ for two hazards $\la, \la_0$, and $a = \sqrt{n}(r-r_0)$ for the scaled difference.

Let us recall the notation $M_0(u) = E_{\la_0}\1\{u \leq Y\} = (1-G(u))e^{-\La_0(u)}$, and, with $P_{L_n}$ the orthogonal projection onto $\text{Vect}\{\psi_{lk}, 0 \leq k < 2^l, l \leq L_n\}$,
\begin{equation*}\label{eq:psindef}
\psin = P_{L_n}(b/M_0).
\end{equation*}

The density of the pair $(Y, \delta) = (T \wedge C, \delta)$ (with respect to $\text{Leb}[0,\ta]+\delta_0$) is
\begin{align*}
p^{(Y,\delta)}_\la(y, d) &= \underbrace{\{g(y)S(y)\}^{1-d} }_{\text{Censored before time } \tau } \underbrace{ \{(1 - G(y-))\la(y)S(y)\}^d}_{\text{Event before time }\tau} \1\{y < \tau\}\\
&\quad\quad\quad + \underbrace{\{(1-G(\tau-))S(\tau)\}}_{\text{Censored at time } \tau}\1\{d = 0, y = \tau\},
\end{align*}
where $S$ is the survival function defined by the hazard $\la$. The part regarding the censoring distribution factorises in the likelihood, so needs not to be modeled with a prior distribution. 

The log--likelihood ratio is given by, with $a=\rn(r-r_0)$,
 \[
 \ell_n(r) - \ell_n(r_0) = \frac{1}{\sqrt{n}} \sum_{i=1}^n \delta_i a(Y_i) -  \frac{1}{\sqrt{n}}\sum_{i=1}^n \sqrt{n}\left[\La(Y_i) - \La_0(Y_i)\right].
 \]

\subsection{LAN expansion} \label{sec:lan}
The log-likelihood can be rewritten to feature a limiting Gaussian experiment via a LAN expansion. For the Cox model, the LAN expansion was considered in \cite{Castillo2012} and the following can be seen as the special case where the Cox model parameter is $0$:
 \begin{equation}\label{eq:LAN}
  \ell_n(r) - \ell_n(r_0) = -\tfrac{1}{2}\|a\|_L^2 + W_n(a) + R_n(r,r_0),
 \end{equation}
 where the LAN-norm $\|\cdot\|_L$ stems from the inner product
 \[ \psg a_1, a_2 \psd_L = \La_0\{a_1a_2M_0\} = \int_0^1 a_1(u)a_2(u)M_0(u)\la_0(u)du\]
that is $\|a\|_L^2 = \La_0\{ a^2M_0\}$, where
 \begin{equation}\label{eq:dispWna}
 W_n(a) = \frac{1}{\sqrt{n}} \sum_{i=1}^n[\delta_i a(Y_i) - \La_0a(Y_i)] 
 \end{equation}
and where the remainder term can be decomposed as, $R_n(r,r_0) = R_{n,1}(r,r_0) + R_{n,2}(r,r_0)$,  with  $\mathbb{G}_nf = \frac{1}{\sqrt{n}}\sum_{i=1}^n[f(Y_i) - P_0f]$,  
 \begin{align*}
R_{n,1}(r,r_0) & = \mathbb{G}_n( (\La_0 a)(\cdot) - \sqrt{n}(\La - \La_0)(\cdot)), \\
R_{n,2}(r,r_0) & = n\La_0\{M_0(1 + r - r_0 + \tfrac{1}{2}(r-r_0)^2 - e^{r-r_0})\}(1).
 \end{align*}

\subsection{Proof of Theorem \ref{thm:glimpse}}
For the cut--off \eqref{ctga}, the results are special cases of Theorem \ref{thm:applifull} (dealing with Laplace priors on coefficients). 
 For the cut--off  $L_n=L_n^U$ in \eqref{ctunder}, one proceeds following similar arguments as in the proof of Theorem \ref{thm:applifull} for $L_n=L_n(\ga)$.

\subsection{Proof of Theorem \ref{thm:bvmlin}} \label{sec:proofbvmlin}
In the theorem statement, we denoted $\varphi_b(\la) = \int_0^1 b(u) \la(u) du$ to make the dependence on $b$ explicit, which will be useful in the sequel. Since here we consider one particular $b$, for simplicity of notation in the next lines $\psi(\la)$ will stand for $\varphi_b(\la)$, for $b\in L^2(\Lambda)$.  First, we wish to relate the difference $\psi(\la) - \psi(\la_0)$ to a LAN--inner product involving  $r - r_0$. Let us recall that $\eif=b/M_0$ and $\psin =P_{L_n}\eif$.

  Let us  write
\begin{align*}
\psi(\la) - \psi(\la_0) &= \int \frac{\la-\la_0}{\la_0} \frac{b}{M_0} M_0 \la_0
= \left \langle \frac{\la - \la_0}{\la_0}, \eif\right\rangle_L.
\end{align*}
We define $B(\la, \la_0) = \langle r - r_0 - \frac{\la - \la_0}{\la_0}, \eif \rangle_L$, so that we have 
\begin{equation}\label{eq:defBcensored}
\psi(\la) - \psi(\la_0) = \langle r - r_0, \eif \rangle_L - B(\la, \la_0).
\end{equation}

\emph{\label{sec:BvMforlambda}Bernstein-von Mises for $\lambda$.}\label{sec:BvMcensored} 
We intend to show that, for any real $t$,
\begin{equation}\label{eq:BvMgoalcensored}
E\left[ e^{t\sqrt{n}[\psi(\la) - \widehat\psi]} \mid X, A_n \right] \overset{P_{\lambda_0}}{\rightarrow}  e^{\frac{t^2}{2}\|\eif\|_L^2},
\end{equation}
for some $\widehat\psi$ yet to be defined and where $A_n$ is as in \ref{c:l1}. 
This convergence of the Laplace transforms implies that the distribution $\Pi[\cdot \given X,A_n]\circ L^{-1}_{\hat\psi}$ converges in terms of the bounded Lipschitz metric to a $\cN(0, \|\eif\|_L^2)$ distribution (see Lemmas 1 and 2 in \cite{cr15supp} for more details). In turn, this implies the same result for $\Pi[\cdot \given X]\circ L^{-1}_{\hat\psi}$ by using $\Pi[A_n\given X]=\op$ and the definition of the bounded Lipschitz metric, which is the desired result, provided $\widehat\psi$ is an efficient estimator of $\psi(\la)$.
The following uses some elements of the proof of Theorem 4.1 of \cite{Castillo2015}, but one main difference is that later, we wish to apply  the results to  many $b$'s simultaneously as in \cite{ic14}--\cite{cn14} in a (essentially) non--asymptotic fashion. In the following paragraphs, we make in passing a few useful notes in preparation of  the proof of Proposition \ref{prop:lap} below.

We start by expanding the left hand side of \eqref{eq:BvMgoalcensored}, with $\psi(\la_0)$ instead of $\widehat\psi$. Applying Bayes' formula together with \eqref{eq:defBcensored} leads to
\begin{align*}
E\left[ e^{t\sqrt{n}[\psi(\la) -\psi(\la_0)]} \mid X, A_n \right] 
&= \frac{\int_{A_n} e^{t\sqrt{n}\langle r - r_0, \eif\rangle_L- t\sqrt{n}B(\la, \la_0) + \ell_n(r) - \ell_n(r_0)} d\Pi(r) }
{\Pi[A_n\given X]\int e^{\ell_n(r) - \ell_n(r_0)} d\Pi(r)}.
\end{align*}
The idea is now to merge the terms appearing on the exponential on the numerator of the last display. To do so, it is helpful to introduce a term $\ell(r_t)$, with $r_t=r-tn^{-1/2}a_n$ for some suitable function $a_n$.  One natural choice is $a_n=\psi_b$. However, for later treatment, as $\eif$ in general does not have a finite expansion onto the basis $(\psi_{lk})$, it is helpful to project it onto the space $\cV_{L_n}$ spanned by the prior. So one rather sets $a_n=\psin=P_{L_n}\psi_b$ and one defines $\la_t(u) = \la(u) e^{- t\psin/\rn}$ for  $u\in[0,1]$ (and interpolate to $\la(u)$ for $u > 1$ to ensure $\int_0^\infty \la_t(u)du = \infty$, so that $\la_t$ is a proper hazard). Using the LAN expansion \eqref{eq:LAN}, we compute
\begin{align*}
\ell_n&(r) - \ell_n(r_0) - [\ell_n(r_t) - \ell_n(r_0)]\\
&= \frac{t^2}{2} \|\psin\|_L^2 - t\sqrt{n}\langle r-r_0,\psin\rangle_{L} + tW_n(\psin) + R_n(r, r_0) - R_n(r_t^n, r_0).
\end{align*}
Plugging this into the last but one display, one obtains
\begin{align}
 E\left[ e^{t\sqrt{n}[\psi(\la) -\psi(\la_0)]} \mid X, A_n \right] \cdot e^{-\frac{t^2}{2} \|\psin\|_L^2- tW_n(\psin)} 
 \cdot \Pi[A_n\given X]\qquad \label{laphist}\\
\quad=\frac{\int_{A_n} e^{  \ell_n(r_t^n) - \ell_n(r_0) + R_n(r, r_0) - R_n(r_t^n, r_0) +t\sqrt{n}[\psg r-r_0,\eif-\psin\psd_L-B(\la, \la_0)]} d\Pi(r) } 
{\int e^{\ell_n(r) - \ell_n(r_0)} d\Pi(r)}.\notag
\end{align}

Let us split the term $B(\la,\la_0)$ in two parts as follows
\[
 \psg  r - r_0 - \frac{\la - \la_0}{\la_0}, \eif -\psin \psd_L
+  \psg  r - r_0 - \frac{\la - \la_0}{\la_0}, \psin \psd_L =: 
B_n^c(\la,\la_0) + B_n(\la,\la_0).
\]
Rearranging the expression, with $B_n(\la,\la_0)$ defined in the last display, 
\[ \psg r-r_0,\eif-\psin\psd_L - B(\la, \la_0)
 = -B_n(\la,\la_0) + \psg \frac{\la-\la_0}{\la_0} ,\eif-\psin\psd_L.
\]
The last term induces a semiparametric bias $t\rn \psg \frac{\la-\la_0}{\la_0} ,\eif-\psin\psd_L$ in \eqref{laphist}, because of the approximation of $\eif$.  On the set $A_n$, this term is bounded by 
\begin{align*}
 |\psg \frac{\la-\la_0}{\la_0} & ,\eif-\psin\psd_L| =|\int (\la-\la_0)(\eif-\psin) M_0 | \\& \le \|\la-\la_0\|_1\|\eif-\psin\|_\infty \|M_0\|_\infty
  \le C \veps_n \|\eif-\psin\|_\infty.
\end{align*}
Using \ref{c:funb},  this expression is a $o(1/\rn)$, which shows that for any {\em fixed} $t$,
\[ t\rn |\psg \frac{\la-\la_0}{\la_0}  ,\eif-\psin\psd_L| =o(1). \]

\emph{The LAN remainder terms.} \label{secrelan}
We keep in mind that the remainder terms (and/or their differences) only need to be bounded on the sets  $A_n$ as in \ref{c:l1}. Going back to \eqref{laphist}, one notes that 
\begin{align*}
R_n&(r, r_0)  - R_n(r_t^n, r_0) +t\sqrt{n}\left[\psg r-r_0,\eif-\psin\psd_L-B(\la, \la_0)\right] \\
& = R_n(r, r_0) - R_n(r_t^n, r_0)-t\rn B_n(\la,\la_0) + t\rn \psg \frac{\la-\la_0}{\la_0} ,\eif-\psin\psd_L,
\end{align*}
and the last bias term has been shown to be under control above.

We first look at  $R_{n,1}(r, r_0) - R_{n,1}(r^n_t, r_0)$. With 
$\La^n_t(\cdot) = \int_0^\cdot e^{r - \tfrac{t}{\sqrt{n}}\psin}$,
\begin{align*}
R_{n,1}&(r, r_0) - R_{n,1}(r^n_t, r_0)
= \sqrt{n} \mathbb{G}_n\left\{ \frac{t}{\sqrt{n}}\La_0\psin(\cdot) - (\La - \La^n_t)(\cdot) \right\}\\
&= \mathbb{G}_n\left\{t(\La_0-\La)\psin(\cdot) - \sqrt{n}\left(\La\left[1 - \frac{t}{\sqrt{n}}\psin\right] - \La^n_t\right)(\cdot) \right\}\\
&= \mathbb{G}_n\left\{t(\La_0-\La)\psin(\cdot) + \sqrt{n}\int_0^\cdot e^r\left( e^{- \tfrac{t}{\sqrt{n}}\psin} - 1 + \frac{t}{\sqrt{n}}\psin \right) \right\}.
\end{align*}

This part, we can control using empirical process tools: we handle each term in the sum from the last display separately.  
We write $f_n = (\La_0-\La)\psin$ and $g_n = \sqrt{n}\int_0^\cdot e^r( e^{- \tfrac{t}{\sqrt{n}}\psin} - 1 + \frac{t}{\sqrt{n}}\psin )$. 
In order to apply Lemma \ref{lem:rn3}, one first checks that $\psin$ belongs to the set $\cH_n$ defined in \eqref{hach}. This follows from Lemma \ref{lem:psi}, as here $b$ is a fixed element in $L^\infty[0,1]$, which implies $\|\psin\|_\infty\leqa  
L_n\|b\|_\infty\leqa L_n$  and $\|\psin\|_2\leqa \| b\|_2\leqa 1$, so that one can set $\mu_n=CL_n$ for some $C>0$. Second, one notices that on $A_n$ the hazard $\la$ verifies the conditions defining the set $\cL_n^1$ in \eqref{ellnb} with $\veps_n$ as in \ref{c:l1}. This shows $f_n\in\cF_n^1$ and $g_n\in\cG_n^1$, for $\cF_n^1, \cG_n^1$ as in \eqref{fnb}--\eqref{gnb}. By Lemma \ref{lem:rn3}, noting that $|t|\mu_n/\rn$ is bounded (even goes to zero here) for fixed $t$, one gets
\[  \sup_{f_n\in\cF_n^1,\, g_n\in\cG_n^1}  |\mathbb{G}_n(tf_n + g_n)|  =O_{P_{\la_0}}\left(|t|\veps_n\mu_n+ \frac{t^2}{\rn}(1+\veps_n\mu_n^2) \right). \]
The last bound is a $o_{P_{\la_0}}(1+t^2)=\op$ using  $\mu_n\leqa L_n$ and $\veps_n=o(L_n^{-2})$ by assumption.

We now turn to $R_{n,2}(r, r_0) - R_{n, 2}(r^n_t, r_0) - t\sqrt{n}B_n(\la, \la_0)$. We write:
\begin{align*}
R&_{n,2}(r, r_0) - R_{n, 2}(r^n_t, r_0) \\
&= n \La_0\left\{M_0\left(e^{r - \frac{t}{\sqrt{n}}\psin -r_0} - e^{r-r_0}  - (r^n_t-r) - \tfrac{1}{2}[(r^n_t-r_0)^2 -(r-r_0)^2] \right)\right\}\\
&= n \La_0\left\{ M_0\left(e^{r-r_0}\left(e^{- \frac{t}{\sqrt{n}}\psin} - 1\right) + \frac{t}{\sqrt{n}}\psin - \frac{1}{2}\frac{t^2}{n}\psin^2  + \frac{t}{\sqrt{n}}\psin(r-r_0) \right)\right\}
\end{align*}

The term $t\sqrt{n}B_n(\la, \la_0)$ may be rewritten in terms of $r$ and $r_0$ as $t\sqrt{n}B_n(\la, \la_0) = n\La_0\left\{ M_0\left[ r - r_0 - (e^{r-r_0} - 1) \right] \frac{t}{\sqrt{n}}\psin\right\}$. This cancels out partly as follows
\begin{align*}
R_{n,2}(r, r_0)&  - R_{n, 2}(r^n_t, r_0) -  t\sqrt{n}B_n(\la, \la_0)\\
&= n \La_0\left\{M_0\left(e^{r-r_0}\left(e^{- \frac{t}{\sqrt{n}}\psin} - 1 + \frac{t}{\sqrt{n}}\psin\right)  - \frac{1}{2}\frac{t^2}{n}\psin^2  \right)  \right\}.
\end{align*}

By the same argument as above, $h=\psi_{b,L_n}$ verifies that $|t|\|h\|_\infty/\rn$ is bounded and one sets $\mu_n=CL_n$, so that $h\in\cH_n, \la\in\cL_n^1$ as in \eqref{hach}--\eqref{ellnb}. By Lemma \ref{lem:rn3}, the previous display is  $O(t^2\veps_n\mu_n^2)=o(t^2)=o(1)$ for fixed $t$, using as before that $\veps_n=o(L_n^{-2})$.  

Given that the semiparametric bias is negligible on $A_n$ (as justified above),
\begin{align*}
&E_{\la_0}\left[ e^{t\sqrt{n}[\psi(\la) -\psi(\la_0)]} \given X, A_n \right] \Pi[A_n\given X] e^{-\frac{t^2}{2} \|\psin\|_L^2 - tW_n(\psin)}\notag\\
&\quad= \frac{\int_{A_n} e^{ R_n(r, r_0) - R_n(r_t^n, r_0) - t\sqrt{n}B_n(\la, \la_0) + o(1)+ \ell_n(r_t^n) - \ell_n(r_0)}  d\Pi(r) } 
{\int e^{\ell_n(r) - \ell_n(r_0)} d\Pi(r)}.
\end{align*}
From the previous computations $|R_n(r, r_0) - R_n(r_t^n, r_0) - t\sqrt{n}B_n(\la, \la_0)|$ is a $\op$ uniformly over $A_n$. On the other hand, with $r_t^n=r-t\psin/\sqrt{n}$,
\begin{equation} \label{ratiocv}
 \frac{\int_{A_n} e^{\ell_n(r_t^n) - \ell_n(r_0)}  d\Pi(r) } 
{\int e^{\ell_n(r) - \ell_n(r_0)} d\Pi(r)}
\end{equation}
goes to $1$ in probability as $n\to\infty$ 
  by \ref{c:cvarb}. Also,  Lemma \ref{lem:psi} implies $\|\psin\|_L\to\|\eif\|_L$ and $W_n(\psin-\eif)=\op$. From this one concludes that 
 \[ E_{\la_0}\left[ e^{t\sqrt{n}[\psi(\la) -\psi(\la_0)-W_n(\eif)/\rn]} \given X, A_n \right]\to e^{t^2\|\eif\|_L^2/2}
\]
under $P_{\la_0}$, which coincides with \eqref{eq:BvMgoalcensored} if one sets $\hat\psi=\psi(\la_0)+W_n(\eif)/\rn$. The latter expression is the first--order expansion of any efficient estimator of $\psi(\la)$ (see Section \ref{sec:backgroundeff} for some background on efficiency), which concludes the proof  of Theorem \ref{thm:bvmlin}. 

\subsection{A key proposition} \label{sec:prop}
We defer the proofs of Theorems \ref{thm:donsker}, \ref{thm:sn} and \ref{thm:appli} to later, but would like to present here a key technical tool underpinning both of them,  namely a version of (the `tightness part' of) the proof of Theorem \ref{thm:bvmlin}, which allows the norm $\|b\|_\infty$ to increase with $n$ and is essential for dealing with  many $b$'s simultaneously, such as wavelet basis functions with $l\to\infty$. Later we use it for $b=\psi_{LK}$ and $L\le L_n$. Recall condition \ref{c:cvarsn} from Section \ref{sec:donsker} and let us denote by $D_n$ the sets 
\begin{equation} \label{den}
D_n=\{\la:\ \|\la-\la_0\|_1\le \veps_n,\ \|\la-\la_0\|_\infty\le \zeta_n\}.
\end{equation}
Under \ref{c:l1}--\ref{c:sn}, we have $\Pi[D_n\given X]=1+\op$.

\begin{prop}[Laplace transform control for linear functionals] \label{prop:lap}

Let $b\in L^2(\La)$ be a possibly $n$--dependent functional representer that together with $\Pi$ satisfies \ref{c:cvarsn}. Suppose that for some constants $c_1,c_2>0$,
\[ \|b\|_2\le c_1,\qquad \|b\|_\infty\le c_2 2^{L_n/2}. \]
Then,  
 for $\veps_n$ as in \ref{c:l1}, 
we have for a constant $C>0$, for all $|t|\le \log{n}$,
\begin{align*}
 \log & E\left[ e^{t\rn\psg \la - \la_0 , b \psd } \given X, D_n \right] \\
& \le  tW_n(\psi_b) + C(1+t^2+|t|\rn \veps_n \| \eif - \psin\|_\infty)+|t|\op,
\end{align*}
where $\psi_b=b/M_0$, with $D_n$ the event in \eqref{den},  $W_n$ is defined in \eqref{wun}, and $\op$ is uniform with respect to $t, b$ verifying the above conditions.
\end{prop}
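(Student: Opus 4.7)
The plan is to adapt the Laplace--transform computation used in the proof of Theorem \ref{thm:bvmlin}, keeping the LAN shift along the projected representer $\psin=P_{L_n}(b/M_0)$, but with three changes suited to the current non--asymptotic and uniform--in--$b$ setting: tracking all constants carefully to produce a $C(1+t^2)$ upper bound rather than an exact Gaussian limit; exploiting the additional supremum--norm control $\|\la-\la_0\|_\infty\le \zeta_n$ built into $D_n$ (absent from the proof of Theorem \ref{thm:bvmlin}) in order to absorb the growth of $\|b\|_\infty$; and invoking the one--sided assumption \ref{c:cvarsn} in place of \ref{c:cvarb}. First I would mirror the computation leading to \eqref{laphist}, applying Bayes' rule together with the LAN expansion \eqref{eq:LAN} along $\psin$, to obtain
\begin{align*}
&\log E\!\left[e^{t\rn\psg \la - \la_0,b\psd}\given X,D_n\right]\\
&\qquad = \tfrac{t^2}{2}\|\psin\|_L^2 + tW_n(\psin) - \log\Pi[D_n\given X]\\
&\qquad\quad + \log\frac{\int_{D_n} e^{\ell_n(r_t^n)-\ell_n(r_0)+\Delta_n(r,t,b)}d\Pi(r)}{\int e^{\ell_n(r)-\ell_n(r_0)}d\Pi(r)},
\end{align*}
where $r_t^n=r-t\psin/\rn$ and $\Delta_n(r,t,b)=R_n(r,r_0)-R_n(r_t^n,r_0)-t\rn B_n(\la,\la_0)+t\rn\psg\frac{\la-\la_0}{\la_0},\psi_b-\psin\psd_L$ gathers the LAN remainder together with the semiparametric bias stemming from projecting $\psi_b$ onto $\cV_{L_n}$.

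Next I would bound each piece uniformly over $D_n$, $|t|\le\log n$, and admissible $b$. The LAN quadratic term satisfies $\|\psin\|_L^2\leqa \|\psi_b\|_L^2\leqa \|b\|_2^2\le Cc_1^2$ by equivalence of $L^2$ and $L^2(\La_0)$ norms under \ref{c:mod}, so $\tfrac{t^2}{2}\|\psin\|_L^2\leqa t^2$. Splitting $tW_n(\psin)=tW_n(\psi_b)+tW_n(\psin-\psi_b)$, the second piece is $|t|\op$ uniformly by Chebyshev applied to the martingale sum in \eqref{eq:dispWna}, since $\|\psin-\psi_b\|_L=o(1)$. The semiparametric bias is controlled on $D_n$ by $|t|\rn\veps_n\|\psi_b-\psin\|_\infty$ via H\"older's inequality and $\|M_0\|_\infty<\infty$, giving exactly the explicit bias term in the conclusion. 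The LAN remainder $R_n(r,r_0)-R_n(r_t^n,r_0)-t\rn B_n(\la,\la_0)$ is treated as in Section \ref{secrelan} via Lemmas \ref{lem:rn1}--\ref{lem:rn3}, now with scale $\mu_n\asymp L_n 2^{L_n/2}$ dictated by Lemma \ref{lem:psi} and the hypothesis $\|b\|_\infty\le c_2 2^{L_n/2}$; the crucial check is that $|t|\mu_n/\rn$ remains $o(1)$ for $|t|\le\log n$ and $L_n$ as in \eqref{eln}, yielding a contribution of order $1+t^2$. Finally, assumption \ref{c:cvarsn} directly bounds the log of the ratio of integrals above by $C(1+t^2)$, and $-\log\Pi[D_n\given X]=\op$ follows from \ref{c:l1}--\ref{c:sn}. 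Summing the bounds produces the announced inequality.

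The main obstacle is the uniform--in--$b$ control of the LAN remainder when both $\|\psin\|_\infty$ and $|t|$ are allowed to grow. Unlike in the proof of Theorem \ref{thm:bvmlin}, where fixed $b$ and fixed $t$ gave $|t|\mu_n/\rn\to 0$ trivially and the remainder was $o(1)$, here the second--order term in $R_{n,2}$ scales like $t^2\veps_n\mu_n^2$ while the empirical process term in $R_{n,1}$ scales like $|t|\veps_n\mu_n+t^2(1+\veps_n\mu_n^2)/\rn$; both are only of order $1+t^2$, and crucially rely on $D_n$ supplying both an $L^1$-- and an $L^\infty$--rate for $\la-\la_0$ and on the threshold $|t|\mu_n/\rn=o(1)$ barely holding thanks to the cut--off \eqref{eln}. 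Without the supremum--norm event built into $D_n$, the bounds in Lemmas \ref{lem:rn1}--\ref{lem:rn3} could not be made uniform in $b$ in this regime.
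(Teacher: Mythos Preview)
Your overall architecture matches the paper's proof closely: expand the Laplace transform via the LAN identity along $\psin$, isolate $tW_n(\psin)+\tfrac{t^2}{2}\|\psin\|_L^2$, bound the semiparametric bias on $D_n$ by $|t|\rn\veps_n\|\psi_b-\psin\|_\infty$, control the LAN remainder, and close with \ref{c:cvarsn} (using $D_n\subset A_n$) together with $\Pi[D_n\given X]=1+\op$. The switch from $W_n(\psin)$ to $W_n(\psi_b)$, the bound $\|\psin\|_L^2\leqa c_1^2$, and the check $|t|\mu_n/\rn=o(1)$ are all as in the paper.

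There is, however, a genuine gap in your remainder step. You write that $R_{n,2}$ contributes $t^2\veps_n\mu_n^2$ and $R_{n,1}$ contributes $|t|\veps_n\mu_n+t^2(1+\veps_n\mu_n^2)/\rn$, and then claim both are $O(1+t^2)$. Those are the estimates of Lemma~\ref{lem:rn3}, the $L^1$--based version, and they do \emph{not} give $O(1+t^2)$ here: with $\mu_n\asymp L_n 2^{L_n/2}$ and $\veps_n\asymp\veps_n^{\be,L_n}$ one has $\veps_n\mu_n^2\asymp L_n^{5/2}2^{3L_n/2}/\rn$, which blows up for any $\ga\le 1$. What makes the argument go through is precisely the $\|\cdot\|_\infty$--rate in $D_n$ that you flag as essential but then do not actually use: the paper applies Lemmas~\ref{lem:rn1}--\ref{lem:rn2} with $v_n=\zeta_n$ (not Lemma~\ref{lem:rn3}), whose bounds are $O_{P_0}(|t|\zeta_n+t^2(1+\zeta_n)/\rn)$ for the $R_{n,1}$ part and $O(t^2\{\zeta_n+|t|(1+\zeta_n)/\rn\})$ for the $R_{n,2}$ part. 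In these $L^\infty$--based estimates $\mu_n$ does not appear at all (it enters only through the side condition $|t|\mu_n/\rn$ bounded), so the whole remainder is $|t|\op+o(t^2)$ simply because $\zeta_n=o(1)$, uniformly in $b$. The distinction between Lemma~\ref{lem:rn3} and Lemmas~\ref{lem:rn1}--\ref{lem:rn2} is exactly the point of the proposition; your proposal invokes the wrong one.
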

\begin{proof}[Proof of Proposition \ref{prop:lap}]
The proof follows in spirit that of Theorem \ref{thm:bvmlin}, but this time one keeps track of the dependence in $t$ as well as allows for possibly $n$--dependent $b$.

As before, one controls the Laplace transform $E\left[ e^{t\sqrt{n}\psg \la-\la_0,b \psd} \mid X, D_n \right]$. 
Using the bound on  semiparametric bias derived in the proof of Theorem \ref{thm:bvmlin},
\begin{align*}
 |\psg \frac{\la-\la_0}{\la_0}  ,\eif-\psin\psd_L|   \le C \veps_n \|\eif-\psin\|_\infty.
\end{align*}

To control remainder terms $R_{n,1}, R_{n,2}$, 
one now argues in a similar way as in the proof of Theorem \ref{thm:bvmlin}  (below we freely refer to functions $f_n, g_n$ appearing in $R_{n,1}$ and defined in that proof), but now with $b$ verifying the growth conditions of the Proposition. First, using Lemma \ref{lem:psi}, we have $\|\psin\|_\infty\leqa  
L_n\|b\|_\infty\leqa L_n2^{L_n/2}$  as well as $\|\psin\|_2\leqa \| b\|_2\leqa 1$, so that one can set $\mu_n:=CL_n2^{L_n/2}$ for some $C>0$. Second,   $\la$ belongs on $D_n$ to the set $\cL_n$ in \eqref{elln} once setting $v_n=\zeta_n$. This shows $f_n\in\cF_n$ and $g_n\in\cG_n$, for $\cF_n, \cG_n$ as in \eqref{fn}--\eqref{gn}. By Lemma \ref{lem:rn1}, noting that $|t|\mu_n/\rn$ is bounded if $|t|\le \log{n}$,
\[  \sup_{f_n\in\cF_n,\, g_n\in\cG_n}  |\mathbb{G}_n(tf_n + g_n)|  =O_{P_{\la_0}}\left(|t|\zeta_n+ \frac{t^2}{\rn}(1+\zeta_n) \right). \]
The last bound is a $o_{P_{\la_0}}(|t|)$ using that $\zeta_n=o(1)$ and $|t|\le \log{n}$. 
By applying Lemma \ref{lem:rn2} with $v_n=\zeta_n$,  the term $R_{n,2}$ is bounded by 
\[ |R_{n,2}(r, r_0)  - R_{n, 2}(r^n_t, r_0) -  t\sqrt{n}B_n(\la, \la_0)|= O\left(t^2\left\{\zeta_n + |t| \frac{1+\zeta_n}{\rn} \right\}\right).\]
As $|t|/\rn$ is bounded, one deduces, uniformly over the set $D_n$, the bound 
 $|R_n(r, r_0) - R_n(r_t^n, r_0) - t\sqrt{n}B(\la, \la_0)| \leqa t^2\zeta_n+O_P(|t|\{1+\zeta_n\})$. 
 
Using the Laplace transform control from the proof of Theorem \ref{thm:bvmlin}, 
\begin{align*}
&E_{\la_0}\left[ e^{t\sqrt{n}[\psi(\la) -\psi(\la_0)]} \given X, D_n \right] \Pi[D_n\given X] e^{|t|O(\rn \veps_n \| \eif - \psin\|_\infty)} \notag\\
&\quad=e^{\frac{t^2}{2} \|\eif\|_L^2+ tW_n(\eif)+ |t|\cdot \op + t^2 O(1) + O(|t|)} \frac{\int_{D_n} e^{ \ell_n(r_t^n) - \ell_n(r_0)}  d\Pi(r) } 
{\int e^{\ell_n(r) - \ell_n(r_0)} d\Pi(r)}.
\end{align*}
Note that  \ref{c:cvarsn} provides an upper bound of the ratio in the last display, as by definition $D_n\subset A_n$ and the integrand is positive. 
Now combining this with $\Pi[D_n\given X]=1+\op$ leads to
\begin{align*}
 E_{\la_0}& \left[  e^{t\sqrt{n}[\psi(\la) - \psi(\la_0)-W_n(\eif)/\rn ]} \given X, D_n \right]\\
 & \le (1+\op)e^{C(1+t^2)+|t|\{\rn \veps_n \| \eif - \psin\|_\infty+\op\}},
\end{align*}
as announced, which concludes the proof of Proposition \ref{prop:lap}.  \qedhere

\end{proof}

\subsection{Proofs of Corollaries \ref{cor:donsker}, \ref{cor:bands}, \ref{cor:median}}  Recall that the statements are formulated in the space $\mathcal{D}[0,1]$ of c\`adl\`ag functions on $[0,1]$ equipped with the supremum norm and the $\sigma$--algebra generated by open balls (see e.g. \cite{gillbook93}, Section II.8). 
To see that Corollary \ref{cor:donsker} follows from Theorem \ref{thm:donsker}: for the statement on $\La$, it is enough to check that $\rn\|\mathbb{T}_n-\hat\La_n\|_\infty=\op$ under the conditions of Corollary \ref{cor:donsker}. This is verified below. The statement on $S$ follows by Hadamard--differentiability of the negative exponential map $\La\to e^{-\La}$ from $\mathcal{D}[0,1]$ to $\mathcal{D}[0,1]$. Corollary \ref{cor:bands} follows by the continuous mapping theorem, as the map $g\to \|g\|_\infty$ is continuous from $\mathcal{D}[0,1]$ to $\RR^+$, for $\mathcal{D}[0,1]$ equipped with the supremum norm.  Finally, 
the result for the median functional in Corollary \ref{cor:median} is obtained using Hadamard--differentiability of the quantile transformation (on $\mathcal{D}[0,1]$ tangentially to the set of continuous functions at the considered point) as established in Lemma 21.3 of \cite{aad98}.

It now remains to check that $\rn\|\mathbb{T}_n-\hat\La_n\|_\infty=\op$. One first notes that Nelson--Aalen's estimator $\hat \La_n$ is an `efficient' estimator of $\La$ in that it is asymptotically linear in the efficient influence function. Namely, one has, see e.g. \cite{gillbook93}, p. 626, recalling the definition of $W_n$ in \eqref{eq:dispWna},
\[ \sup_{t\in[0,1]} \left| \rn(\hat\La_n-\La_0)(t) - W_n\left( \frac{\1_{\cdot\le t}}{M_0(\cdot)} \right) \right| = \op, \]
where we rewrite the integral with respect to the martingale process in \cite{gillbook93}, p. 626 in terms of $W_n$. 
It is thus enough to check that $\rn\|\mathbb{T}_n-\La^*\|_\infty=\op$, where we have set $\La^*(t)= \La_0(t)+n^{-1/2}W_n\left(\1_{\cdot\le t}/M_0(\cdot) \right)$. This follows from Lemma \ref{lem:diff}, which concludes the verifications for Corollary \ref{cor:donsker}.

\section{Details on histogram and wavelet bases} \label{sec:wave}

\subsection{Wavelets}  The following applies for $(\psi_{lk})$ either the Haar or the CDV wavelet basis. Although some properties of the Haar basis are lost when using CDV (e.g. simple explicit expression or the fact that for a given $l$, the supports of $\psi_{lk}$'s are disjoint for Haar), most convenient localisation properties and characterisation of spaces are maintained. Recall that the CDV basis is still denoted  $(\psi_{lk})$, with indexes $l\ge 0$, $0\le k\le 2^l -1$ (with respect to the original construction in \cite{cdv93}, one starts at a sufficiently large level $l\ge J$, with $J$ fixed large enough; for simplicity, up to renumbering, one can start the indexing at $l=0$). 
Let  $\alpha >0$ be fixed. Then the following properties and notation are used for both bases.
\begin{itemize}
	\item[(W1)] $(\psi_{lk})$ forms an orthonormal basis of $L^2[0,1]$
	\item[(W2)] $\psi_{lk}$ have 
	support $S_{lk}$, with diameter at most a constant (independent of $l,k$) times $2^{-l}$, and 
	$\|\psi_{lk}\|_{\infty} \leqa 2^{l/2}$. The $\psi_{lk}$'s are 
	in the H\"{o}lder class   $\cH(S, D)$, for some $S\ge \al$, $D > 0$. 
	\item[(W3)] 
	At fixed level $l$, given a fixed $\psi_{lk}$ with support $S_{lk}$, 
	\begin{itemize}
		\item[$\diamond$] the number of wavelets of the level $l'\le l$ with support intersecting $S_{lk}$ is bounded by a universal constant (independent of $l',l,k$)
		\item[$\diamond$] the number of wavelets of the level $l'>l$ with support intersecting $S_{lk}$ is bounded by $2^{l'-l}$ times a universal constant. 
	\end{itemize}
	The following localisation property holds: 
	$\sum_{k=0}^{2^l-1} \|\psi_{lk}\|_{\infty} \leqa 2^{l/2}$, where the inequality is up to a fixed universal constant.   
	
\end{itemize}
The basis $(\psi_{lk})$ characterises Besov spaces $B^{s}_{\infty,\infty}[0,1]$, any $s \le \al$, in terms of  wavelet coefficients. That is, $g\in B_{\infty,\infty}^s[0,1]$ if and only if
\begin{equation} \label{besovnorm}
	\|g\|_{\infty,\infty,s} := \sup_{l\ge 0,\ 0\le k\le 2^{l}-1} 2^{l(\frac12+s)} |\psg g,\psi_{lk} \psd_2| <\infty. 
\end{equation}
Also, recall that $B_{\infty,\infty}^s$ coincides with the H\"older space $\cC^s$ when $s$ is not an integer, and that when $s$ is an integer the inclusion 
$\cC^s\subset B_{\infty,\infty}^s$ holds. If  the Haar-wavelet is considered, the fact that $f_0$ is in  $\cH(s, L)$, $0<s\le 1$, $L>0$,  implies that the supremum in \eqref{besovnorm} with $\psi_{lk}=\psi_{lk}^H$ is finite. Also by definition  the H\"older class $\cH(s,L)$ introduced below \eqref{cve} is a subset of $B_{\infty,\infty}^s$.

\subsection{Histograms\label{sec:relatinghistogramsandhaar}} 
The priors of classes {$\bf(H_1)$} and {$\bf(H_2)$} are defined through the step heights of dyadic histograms, while the priors of class {$\bf(H_3)$} are defined on the wavelet coefficients. In the sequel, proofs for classes {$\bf(H_1)$} and {$\bf(H_2)$} (e.g. in Section \ref{sec:changevar}) can be simplified by relating the step heights of the histograms to the wavelet coefficients through the Haar transform. Specifically, we may write $r_W = 
W r_H$, where $r_{W} = (r_{-1}, r_{00}, r_{10}, r_{11}, \ldots, r_{L(2^L-1)})'$ is a vector of the wavelet coefficients for the wavelets up to level $L$, $r_H = (r_1, \ldots, r_{2^{L+1}})'$ contains the step heights of the histogram, and the entries of $W$ are given by $W_{-1,j} = 2^{-(L+1)}$ and
\begin{align*}
	W_{lk,j} &= 2^{-(L+1)+l/2}\left[\1_{I_{j-1}^{L+1} \subset I_{2k}^{l+1}} - \1_{I_{j-1}^{L+1} \subset I_{2k+1}^{l+1}} \right],
\end{align*}
where $W_{lk, \cdot}$ is the row corresponding to the wavelet coefficient $r_{lk}$. We remark that $2^{\frac{L+1}{2}}W$ is an orthogonal matrix.

\section{Nonparametric BvM theorem for the hazard rate and Donsker's theorem} 
\label{sec:npbvm}

\subsection{Background and theorem statement}

In this Section we derive a nonparametric BvM theorem for the hazard rate $\la$, Theorem \ref{thm:bvm}, following the approach of \cite{cn13}--\cite{cn14}. Since $\la$ is a nonparametric quantity and, unlike the cumulative  hazard $\La$, is typically estimable only at rates much slower than the parametric rate $1/\rn$ in the usual loss functions (such as $L^2$ or $L^\infty$--losses), the formulation of such a result needs some care. In order for the rate to be of order $1/\rn$, one weakens the loss function, which we take as a norm on multiscale spaces $\cM$ as defined below. The use of such spaces is 
also motivated by properties of the mapping $\la\to\La(\cdot)=\int_0^\cdot \la$, which will be continuous from $\cM$ to the space of continuous functions and will therefore allow a transfer from the nonparametric BvM result for $\la$ to a Donsker--BvM result on $\La$, using the continuous mapping theorem.  

We introduce such \emph{multiscale spaces} now, and refer to \cite{cn13}--\cite{cn14} for more details and background.

Let $(w_l)$ be a sequence such that $w_l/\sqrt{l}\uparrow \infty$. Call this an {\em admissible} sequence. Define the {\em multiscale} space $\cM$ of $\la$'s identified  from their sequence of wavelet coefficients as 
\[ \cM := \cM(w) = \left\{ \la=\{\psg \la,\psi_{lk} \psd\},\ \ \sup_{l} 
\max_{k} \frac{|\psg \la , \psi_{lk} \psd|}{w_l} <\infty  \right\},  \]
equipped with the norm  $\|\la\|_\cM=\sup_{l} \max_{k} |\psg \la , \psi_{lk} \psd| / w_l$  and consider the following separable subspace of $\cM$
\begin{equation}\label{eq:multi0}
	\cM_0 := \cM_0(w) = \left\{ \la=\{\psg \la,\psi_{lk} \psd\},\ \ \lim_{l\to\infty} 
	\max_{k} \frac{|\psg \la , \psi_{lk} \psd|}{w_l} = 0  \right\}.  
\end{equation}

{\em Limiting distribution.} Recalling $M_0(\cdot)=P_{\la_0}[Y\ge \cdot]$, let us define  $Q_0$, probability measure on $[0,1]$ with density $u_0:=\la_0/M_0$  with respect to Lebesgue's measure, that is $dQ_0(x)=u_0(x)dx$, and define the zero-mean Gaussian process $\mZ_{Q_0}$ (call it  $Q_0$--white noise process) indexed by the Hilbert space $L^2(Q_0)=\{f:\ \int_0^1 f^2dQ_0<\infty\}$, with covariance function
\begin{equation} \label{def:gpz}
	E[\mZ_{Q_0}(g)\mZ_{Q_0}(h)]=\int_0^1 ghdQ_0. 
\end{equation}

{\em Centering $T_n$.} The centering $T_n$ was defined in \eqref{tn} in the main paper, as
\begin{equation}
	\psg T_n , \psi_{lk} \psd =
	\begin{cases}
		\ \psg \la_0 , \psi_{lk} \psd+W_n\left(\psi_{lk}/M_0\right)/\rn \quad & \text{if } l\le L_n\\
		\ 0&  \text{if } l>L_n.
	\end{cases}
\end{equation} 
with $W_n$ as in \eqref{wun}, that is, for a bounded function $g$ on $[0,1]$, 
\begin{equation} 
	W_n(g) =W_n(X;g) = \frac{1}{\sqrt{n}} \sum_{i=1}^n[\delta_i g(Y_i) - \La_0g(Y_i)]. 
\end{equation}

For $z\in \mathcal M_0$, the map $\tau_z: \la \mapsto \sqrt n (\la- z)$ maps $\mathcal M_0 \to\mathcal M_0$, and below we consider the shifted posterior $\Pi[\cdot\given X] \circ \tau_{T_n}^{-1}$, with centering $T_n$.

Let us recall condition \ref{c:cvarsn}:  for suitable directions $b$, and $A_n$ as in \ref{c:l1},
\begin{enumerate}[label=\textbf{(T)}]
	\item
	with $r = \log{\lambda}, r_0 = \log\lambda_0, r_t^n = r - \frac{t}{\sqrt{n}}\psi_{b,L_n}$ and $C_1>0$, suppose 
	\begin{equation*}
		\log\frac{\int_{A_n} e^{\ell_n(r_t^n) - \ell_n(r_0)} d\Pi(r)}
		{\int e^{\ell_n(r) - \ell_n(r_0)} d\Pi(r)} \le C_1(1+t^2)
	\end{equation*}
	holds for any $|t|\le \log{n}$.
\end{enumerate}

In the next statement, $\Pi_{L_n}[\cdot\given X]$ denotes the posterior distribution on $\la$ projected  onto the first $L_n$ levels of wavelet coefficients. Let us also recall the definition of $\cV_L$ in \eqref{cve}.

\begin{thm}\label{thm:bvm}
	Let  $X=(X_1, \dots, X_n)$ be a sample of law $P_0$ with hazard rate $\la_0$ under conditions \ref{c:mod}. 
	Let $\mathcal M_0=\mathcal M_0(w)$ for some $w_l\uparrow\infty$ with $w_l\ge l$.
	Let $T_n$ be as in \eqref{tn}. Suppose the prior $\Pi$ is such that \ref{c:l1}--\ref{c:sn} are satisfied with cut--off $L_n$ and rate $\veps_n$ verifying 
	\[ \rn\veps_n2^{-L_n} = o\left( \min_{l\le L_n} \left\{  l^{-1/4} 2^{-l/2} w_l \right\}\right).\] 
	Suppose  \ref{c:cvarb} is satisfied for any  $b\in \cV_{\cL}$ and any fixed  $\cL\ge 0$, and that \ref{c:cvarsn} holds uniformly for $b=\psi_{LK}$ with $L\le L_n,$ $0\le K<2^L$. 
	
	{\em (Case $1$)}. If  $\la_{lk}=\psg \la,\psi_{lk} \psd=0$ for $l>L_n$ under $\Pi[\cdot\given X]$, then  
	\begin{equation} \label{iidbvm}
		\cB_{\mathcal M_0}(\Pi[\cdot\given X]\circ\tau_{T_n}^{-1}, \mZ_{Q_0})\to^{P_{0}} 0,
	\end{equation}
	where $\mZ_{Q_0}$ is as in \eqref{def:gpz} and $\cB_{\cM_0}$ is the bounded--Lipschitz metric on $\cM_0$. 
	
	{\em (Case $2$)}. If the posterior distribution does not set all $\la_{lk}$ to $0$, then \eqref{iidbvm} continues to hold for the projected posterior $\Pi_{L_n}[\cdot\given X]$. 
	It also holds for the original posterior $\Pi[\cdot\given X]$ provided  $w_{L_n}^{-1}\zeta_n2^{-L_n/2}+\|\la_0^{L_n^c}\|_{\cM(w)}=O(1/\rn)$, where $\la_0^{L_n^c}=\la_0-P_{L_n}\la_0$.
\end{thm}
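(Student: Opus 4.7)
The plan is to prove weak convergence of the shifted (and possibly projected) posterior to $\mZ_{Q_0}$ in $\cM_0(w)$ by separating the argument into convergence of all finite-dimensional marginals and uniform tightness in the multiscale norm. By the definition of $T_n$ in \eqref{tn}, for every $l\le L_n$ and $0\le k < 2^l$ the key identity
\begin{equation*}
\rn\psg \la - T_n, \psi_{lk}\psd \;=\; \rn\psg \la - \la_0, \psi_{lk}\psd - W_n(\psi_{lk}/M_0)
\end{equation*}
holds, which is exactly the recentered linear functional whose exponential moments are controlled by Proposition \ref{prop:lap} applied to $b=\psi_{lk}$. For $l > L_n$ the same quantity reduces to $\rn\psg \la, \psi_{lk}\psd$, which is identically zero in Case 1 and for the projected posterior of Case 2, and is handled separately in the full-posterior Case 2 via the extra hypothesis.

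For finite-dimensional convergence at any fixed level $\cL\ge 0$, the Cram\'er--Wold device reduces matters to a single linear combination $b = \sum_{l\le\cL,k}c_{lk}\psi_{lk}\in\cV_\cL$. Theorem \ref{thm:bvmlin} then applies: its condition \ref{c:funb} is trivially satisfied because $\psi_b = b/M_0$ is fixed, and \ref{c:cvarb} is assumed. Identifying the efficient linear expansion $\psg \la_0,b\psd + W_n(b/M_0)/\rn$ as the appropriate centering, the marginals of $\rn(\la - T_n)$ on $\cV_\cL$ converge to the corresponding finite-dimensional marginals of $\mZ_{Q_0}$.

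For tightness in the multiscale norm on the intermediate scales $L_0 < l \le L_n$, I apply Proposition \ref{prop:lap} with $b = \psi_{lk}$: this $b$ satisfies $\|b\|_2\lesssim 1$ and $\|b\|_\infty\lesssim 2^{l/2}\le 2^{L_n/2}$, so the proposition yields, uniformly over $L_0<l\le L_n$, $0\le k<2^l$ and $|t|\le\log n$,
\begin{equation*}
\log E\bigl[e^{t\rn\psg\la - T_n,\psi_{lk}\psd}\given X, D_n\bigr] \;\le\; C(1+t^2) + |t|(\eta_n + \op),
\end{equation*}
where $\eta_n = \sup_{l\le L_n,k}\rn\veps_n\|\psi_{lk}/M_0 - P_{L_n}(\psi_{lk}/M_0)\|_\infty$. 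Smoothness of $1/M_0$ inherited from \ref{c:mod} together with standard wavelet localisation estimates for the projection of the product $\psi_{lk}\cdot (1/M_0)$ show that the hypothesis $\rn\veps_n 2^{-L_n} = o(\min_{l\le L_n}l^{-1/4}2^{-l/2}w_l)$ ensures $\eta_n = o(w_l)$ uniformly. A Chernoff bound with optimized $t \asymp \eta w_l$ then gives the sub-Gaussian tail
\begin{equation*}
\Pi\bigl[\rn|\psg\la - T_n,\psi_{lk}\psd|>\eta w_l\given X, D_n\bigr] \le 2\exp(-c\eta^2 w_l^2) + \op,
\end{equation*}
and a union bound over $2^l$ values of $k$ and $L_0<l\le L_n$ is summable because the hypothesis $w_l\ge l$ guarantees $\sum_l 2^l e^{-cl^2}<\infty$. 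Choosing $L_0$ large thus makes this probability arbitrarily small.

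The high-frequency tail $l>L_n$ is trivial in Case 1 and for the projected posterior in Case 2. For the full posterior in Case 2, split $\rn\psg\la,\psi_{lk}\psd = \rn\psg\la-\la_0,\psi_{lk}\psd + \rn\psg\la_0,\psi_{lk}\psd$: on the event $D_n$ (of posterior mass $1+\op$ by \ref{c:sn}) the first term is bounded by $\rn\zeta_n\|\psi_{lk}\|_1\lesssim\rn\zeta_n 2^{-l/2}$, which divided by $w_l$ is controlled by the extra hypothesis $w_{L_n}^{-1}\zeta_n 2^{-L_n/2}=O(1/\rn)$; the second term divided by $w_l$ is bounded by $\rn\|\la_0^{L_n^c}\|_{\cM(w)}=O(1)$, also by the extra hypothesis. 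Combining all three pieces yields \eqref{iidbvm}. The main technical obstacle throughout is the uniform control of the projection bias $\eta_n$ coupling the $L^1$--rate $\veps_n$, the cutoff $L_n$, and the multiscale weights $w_l$: this drives the unusual form of the rate hypothesis and is the reason Proposition \ref{prop:lap} was formulated with $b$ allowed to depend on $n$ and of growing $L^\infty$-norm.
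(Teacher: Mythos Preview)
Your overall strategy matches the paper's proof: finite-dimensional convergence via Cram\'er--Wold and Theorem~\ref{thm:bvmlin}, tightness driven by the Laplace-transform control of Proposition~\ref{prop:lap}, and the high-frequency tail in Case~2 handled exactly as you describe. The finite-dimensional argument and the Case~2 tail argument are essentially correct, though condition~\ref{c:funb} is not quite ``trivial'': for fixed $b\in\cV_\cL$ one still needs $\rn\veps_n\|\psi_b-\psi_{b,L_n}\|_\infty = o(1)$, which requires the decay estimate $\|\psi_b-\psi_{b,L_n}\|_\infty\lesssim 2^{-L_n}$ from Lemma~\ref{lem:checkpsi} together with the rate hypothesis evaluated at $l=1$.

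There is, however, a real gap in your tightness step. Proposition~\ref{prop:lap} is valid only for $|t|\le\log n$, and your Chernoff parameter $t\asymp\eta w_l$ can be far larger than $\log n$ whenever $(w_l)$ grows faster than logarithmically---for instance the choice $w_l=2^{l/2}/(l+2)^2$ used in Theorem~\ref{thm:appli} gives $w_{L_n}$ of polynomial size in $n$. The paper's proof is engineered precisely to avoid this constraint: it introduces an auxiliary sequence $\bar w_l:=w_l/l^{1/4}$ (still satisfying $\bar w_l\ge\sqrt l$ thanks to $w_l\ge l$), bounds $\rn E[\|\la-T_n\|_{\cM_0(\bar w)}\given X,D_n]$ directly by a layer-cake argument, and applies Proposition~\ref{prop:lap} level by level with $t=l/\bar w_l\le l^{1/4}\le L_n^{1/4}$, safely inside $[0,\log n]$. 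With this small $t$ the bias contribution becomes $|t|\rn\veps_n\,2^{l/2-L_n}\lesssim l$ (rather than the cruder $o(w_l)$ you state), and the $l^{-1/4}$ factor in the rate hypothesis is exactly what is needed to make this work. Tightness in $\cM_0(\bar w)$ together with finite-dimensional convergence then yields weak convergence in $\cM_0(w)$ by Proposition~6 of \cite{cn14}, a reduction you should invoke explicitly rather than leaving the passage from ``choose $L_0$ large'' to ``\eqref{iidbvm} holds'' implicit.
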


The conditions on rates in Theorem \ref{thm:bvm} are  satisfied for typical prior choices as seen in Theorems \ref{thm:appli} and \ref{thm:applifull}, which shows that it is enough for the prior to `undersmooth', a condition that can even be further weakened by taking $(w_l)$ increasing significantly faster to infinity than $l$.

\subsection{Proof of Theorem \ref{thm:bvm}} \label{sec:proof:bvmnp}

Let us set $\bar{w}_l=w_l/l^{1/4}$. The sequence $(\bar{w}_l)$ verifies $w_l/\bar{w}_l\uparrow\infty$ and $\bar{w}_l\ge \sqrt{l}$. One first notes that the assumption on $(w_l)$ implies
\begin{equation} \label{techrates}
	\rn\veps_n2^{-L_n}\leqa \bar{w}_l2^{-l/2}. 
\end{equation} 

We follow the approach of the proof of Theorem 3 in \cite{cn14}: by Proposition 6 in \cite{cn14},  it is enough to prove tightness in  $\cM_0(\bar{w})$ (with $\bar{w}=(\bar{w}_l)$ such that $w_l/\bar{w}_l\uparrow\infty$ and $\bar{w}_l\ge \sqrt{l}$, which is the case for our choice of $\bar{w}$ above) as well as convergence of finite--dimensional distributions. We first deal with Case 1, that is we  assume that all $\la_{lk}=\psg \la,\psi_{lk}\psd$ for $l>L_n$ are zero under the posterior distribution. 
Let us note that by definition of $T_n$ in \eqref{tn},
\begin{equation*} 
	T_n =\la_{0,L_n}+\frac{1}{\rn}\sum_{L\le L_n} \sum_{0\le K<2^L} W_n(\psi_{LK}/M_0) \psi_{LK}. 
\end{equation*}
Let us start with tightness, proceeding similarly as in 5.4 (ii) of \cite{cn14}, taking $T_n$ as the  centering, and denoting by $E_X, P_X$ expectation and probability under $\Pi[\cdot \mid X,D_n]$, for $D_n$ as in \eqref{den}. In Case 1, only  frequencies for $l\le L_n$ are relevant and for $M > 0$ and $z_l:=\bar{w}_l/\sqrt{l}$,
\begin{align*}
	\sqrt{n}E_X[ \|\la &- T_n\|_{\cM_0(\bar w)}] \leq M+ \int_M^\infty P_X(\sqrt{n} \|\la -T_n\|_{\cM_0(\bar w)} > u) \ du\\
	&= M + \int_M^\infty P_X\left(\max_{l \leq L_n}\, \bar{w}_l^{-1} \rn \max_k |\psg \la-T_n,\psi_{lk} \psd_2| > u \right)\ du\\
	&\leq M + \sum_{l \leq L_n} \sum_{k < 2^l} \int_M^\infty P_X\left(z_l^{-1} \rn |\psg \la-T_n,\psi_{lk} \psd_2| > \sqrt{l} u\right) \ du\\
	&\leq M + \sum_{l \leq L_n} \sum_{k < 2^l} \int_M^\infty e^{-\sqrt{l}\cdot\sqrt{l} u} E_X\left[e^{\sqrt{l}\cdot z_l^{-1}\sqrt{n}  |\langle \la-T_n,\psi_{lk} \rangle_2| }\right] \ du,
\end{align*}
where the last line follows from Markov's inequality. 
We now wish to apply Proposition \ref{prop:lap} with $b=\psi_{lk}$ and $t=\sqrt{l}/z_l$. By Lemma \ref{lem:checkpsi}, we have $\|\psi_b-\psi_{b,L_n}\|_\infty\leqa 2^{l/2-L_n}$  
so that, for $l\le L_n$,
\begin{align*}
	|t|\rn \veps_n \| \eif - \psin\|_\infty & \leqa |t|\rn \veps_n 2^{-L_n}2^{l/2}.
\end{align*}
By using $t=\sqrt{l}/z_l$ and  $\rn\veps_n2^{-L_n}\le  \bar{w_l}2^{-l/2}=z_l\sqrt{l}2^{-l/2},$ which follows by combining \eqref{techrates} and the definition of $z_l$, one obtains that the last display is bounded by $Cl$.
Note that this bound  holds for $b=\psi_{lk}$, uniformly  for $l\le L_n$ and $0\le k<2^l$. Proposition \ref{prop:lap} implies that for 
$c_1, c_2$ independent of $l,k$, for $t$ as above, 
\[ 
E_X\left[ e^{t\sqrt{n} \langle \la - T_n, \psi_{LK}\rangle}\right] \leq c_1 e^{c_2 l}(1 + \op),
\]
where the $\op$ is  uniform in $l,k$. 
This results in
\begin{align*}
	\sqrt{n}E_X[ \|\la - T_n\|_{\cM_0(\bar w)}] &\lesssim M +  \sum_{l \leq L_n} \sum_{k < 2^l} 2e^{2c_2l}\int_M^\infty e^{-l u} \ du(1+\op)\\
	&\leqa M +  \sum_{l \leq L_n} 2^l 2e^{2c_2l}\int_M^\infty e^{-lu} \ du(1+\op),
\end{align*}
where the sum  is bounded by a constant for $M$ large enough. So we conclude:
$$ E_X[ \|\la - T_n \|_{\cM_0(\bar w)}] = O_{P_0}(n^{-1/2}).$$

Now that tightness is established, one now wishes to check that BvM holds for finite--dimensional projections. By Cram\'er--Wold, it is enough to do so  for $\langle \la, \psi_T \rangle_2$ with $\psi_T := \sum_{(l,k) \in T} t_{lk}\psi_{lk}$, for any finite set of indices $T$ and $t_{lk}$ any values in $\mathbb{R}$.  

It is enough to check that one can apply Theorem \ref{thm:bvmlin} for the functional representer $b=\psi_T$, as this guarantees the BvM theorem holds for the linear functional $\psg b,\la\psd$. By assumption, \ref{c:l1}--\ref{c:sn} are satisfied, and also  \ref{c:cvarb} for any  $b=\psi_T$, as $\psi_T\in \cV_{\cL}$ for large enough  $\cL\ge 0$, so it remains to check that \ref{c:funb} is verified for $b=\psi_T$. This holds  by invoking Lemma \ref{lem:checkpsi} with $L=\cL$ bounded, which gives $\|\psi_b-\psin\|_\infty\leqa 2^{-L_n}$.  As $\rn\veps_n2^{-L_n}=o(1)$ follows from the  assumption on $\veps_n$ (by bounding the minimum in the condition from above by the first term $l=1$), this concludes the proof in the case $\la_{lk}=0$ for $l>L_n$ under the posterior.

We now deal with Case 2.  The argument for finite-dimensional distributions is unchanged. For the tightness argument, one notes
\begin{align*}
	\sup_{l>L_n} &\max_{0\le k<2^l} w_l^{-1} |\psg \la-T_n,\psi_{lk}\psd|
	=  \sup_{l>L_n}  \max_{0\le k<2^l} w_l^{-1}|\psg \la,\psi_{lk}\psd|\\
	& \le w_{L_n}^{-1} \sup_{l>L_n}  \max_{0\le k<2^l} \|\la-\la_0\|_\infty \|\psi_{lk}\|_1 + \|\la_0^{L_n^c}\|_{\cM(w)},
\end{align*}
which is bounded by $w_{L_n}^{-1}  2^{-L_n/2}\zeta_n+ \|\la_0^{L_n^c}\|_{\cM(w)}$ on $D_n$. Noting that the $\cM(w)$--norm is the maximum of the last display and of the corresponding quantity with $l\le L_n$, for which the arguments for Case 1 apply,  concludes the proof of Theorem \ref{thm:bvm}. \qed

\subsection{Donsker's theorem for general priors} We now state a generalisation of Theorem \ref{thm:donsker} presented in the main paper. As its statement suggests, its proof quite directly follows from the nonparametric BvM Theorem \ref{thm:bvm}. 

Define, for a given centering $T_n\in L^2$, its primitive $\mathbb{T}_n(t)=\int_0^t T_n(u)du$.

\begin{thm} \label{thm:donskergen} 
	Let $\Pi$ be a prior on hazards as in \eqref{priorgen} supported in $L^2$ and suppose the nonparametric Bernstein--von Mises \eqref{iidbvm}  holds true in $\mathcal M_0(w)$ for some sequence $(w_l)$ such that $\sum_l w_l 2^{-l/2}<\infty$, and centering   $T_n\in L^2$.
	
	Let $\cL(\La\in \cdot\given X)$  denote the distribution induced on the cumulative hazard $\La$ when  $\la \sim \Pi[\cdot\given X]$. 
	
	Let $G_{\La_0}(t) = W(U_0(t))$ with $W$ Brownian motion and $U_0(t)=\int_0^t (\la_0/M_0)(u)du$. Then, with $\mathbb{T}_n(t)=\int_0^t T_n$,  as $n\to \infty$,
	\begin{equation} \label{funlim}
		\mathcal{B}_{\cC[0,1]}\left(\,\cL(\rn(\La-\mT_n)\given X)\ ,\, \cL(G_{\La_0})\right) \to^{P_{0}} 0.
	\end{equation}
	
\end{thm}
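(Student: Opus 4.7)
The statement asks for the continuous mapping theorem applied to the primitive map. Define $I\colon \cM_0(w) \to \cC[0,1]$ by $I(f)(t) = \int_0^t f(u)\, du = \psg f, \1_{[0,t]} \psd$, so that on the support of the posterior $\rn(\La(\cdot) - \mT_n(\cdot)) = I(\rn(\la - T_n))$ (note $\la - T_n \in L^2 \subset \cM_0(w)$ since $w_l \ge l \to \infty$). The plan is then in three parts: (a) show $I$ is a bounded linear operator from $(\cM_0(w), \|\cdot\|_{\cM(w)})$ into $(\cC[0,1], \|\cdot\|_\infty)$; (b) transfer the nonparametric BvM \eqref{iidbvm} through $I$ via the bounded-Lipschitz metric; and (c) identify $\cL(I(\mZ_{Q_0}))$ with $\cL(G_{\La_0})$.

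For the main technical step (a), expand $f = \sum_{l,k} f_{lk}\psi_{lk}$ and set $\Psi_{lk}(t) = \int_0^t \psi_{lk}$. Wavelet localisation gives $\|\Psi_{lk}\|_\infty \le \|\psi_{lk}\|_1 \leqa 2^{-l/2}$, and for each fixed $t$ only $O(1)$ indices $k$ at level $l$ contribute nontrivially (wavelets with $l \ge 0$ have zero mean and compact support, so $\Psi_{lk}(t)$ vanishes outside the support of $\psi_{lk}$). Combined with $|f_{lk}| \le w_l \|f\|_{\cM(w)}$ this yields, uniformly in $t$,
\[
|I(f)(t)| \leqa \|f\|_{\cM(w)} \sum_{l\ge -1} w_l 2^{-l/2},
\]
which is finite by hypothesis. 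Uniform convergence of the wavelet series in $t$ also ensures $I(f)$ is continuous, so $I$ is a bounded linear map with operator norm $\leqa \sum_l w_l 2^{-l/2}$. This is precisely the step that consumes the summability condition on $(w_l)$, and is the main technical ingredient of the proof.

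For (b), the operator-norm bound implies that whenever $F$ lies in the unit BL ball on $\cC[0,1]$, the composition $F \circ I$ lies in a BL ball on $\cM_0(w)$ of radius $\|I\|_{op} \vee 1$, so pushing forward by $I$ increases the BL metric by at most that factor. Applied to \eqref{iidbvm} this immediately yields \eqref{funlim} but with $\cL(I(\mZ_{Q_0}))$ in place of $\cL(G_{\La_0})$. For (c), both processes are centered Gaussian on $[0,1]$; since $\la_0/M_0$ is bounded under \ref{c:mod} we have $\1_{[0,t]} \in L^2(Q_0)$, so the representation $I(f)(t) = \psg f, \1_{[0,t]} \psd$ lets us identify $I(\mZ_{Q_0})(t) = \mZ_{Q_0}(\1_{[0,t]})$ with covariance
\[
\operatorname{Cov}\big(I(\mZ_{Q_0})(s), I(\mZ_{Q_0})(t)\big) = \int_0^1 \1_{[0,s]}\1_{[0,t]}\, dQ_0 = U_0(s\wedge t),
\]
matching exactly the covariance of the time-changed Brownian motion $G_{\La_0}(t) = W(U_0(t))$ and completing the identification. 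Beyond step (a), everything is routine push-forward of a weak limit combined with Gaussian covariance matching.
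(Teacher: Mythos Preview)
Your proposal is correct and follows essentially the same route as the paper: define the integration map on $\cM_0(w)$, show it is linear and continuous into $\cC[0,1]$ under the summability condition $\sum_l w_l 2^{-l/2}<\infty$, apply the continuous mapping theorem to the nonparametric BvM, and identify the limiting Gaussian process. The paper outsources the continuity step to \cite{cn14} while you reprove it directly, and the paper matches the limit via RKHS equality (Lemma~\ref{lemrkhs}) while you match covariances directly; these are equivalent for centered Gaussians. One small notational point: elements of $\cM_0(w)$ are sequences, not necessarily $L^2$ functions, so the map $I$ should strictly be defined via the wavelet series $\{h_{lk}\}\mapsto \sum_{l,k} h_{lk}\int_0^t\psi_{lk}$ (as you in fact use in your argument) rather than as $\int_0^t f$; this matters when applying $I$ to $\mZ_{Q_0}$, where the identification $I(\mZ_{Q_0})(t)=\mZ_{Q_0}(\1_{[0,t]})$ goes through the $L^2(Q_0)$--isometry of the white noise rather than a literal inner product.
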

Theorem \ref{thm:donskergen} on the cumulative hazard is obtained by combining the nonparametric BvM theorem for the hazard, Theorem \ref{thm:bvm}, with the fact that `integration' is a continuous mapping, as we see in the next subsection.

\subsection{Proof of Theorem \ref{thm:donskergen}} \label{sec:proofdonsker}
One proceeds as in \cite{cn14}, proof of Theorem 4, by considering the `integration' map
\begin{equation} \label{defl}
	L:\{h_{lk}\} \mapsto L_t(\{h_{lk}\}):=\sum_{l,k} h_{lk} \int_0^t \psi_{lk}(x)dx, ~t \in [0,1],
\end{equation}
which is shown in \cite{cn14}, p. 1955 to be  linear and continuous from $\cM_0(w)$ to $L^\infty([0,1])$ (and also $\cC[0,1]$). The continuous mapping theorem applied to $L$ and $\|\cdot\|_\infty\circ L$ implies the two claimed convergences in distribution, upon checking that the limiting distribution under the map $L$, that is $\mZ_{P_0}\circ L^{-1}$, coincides with $G_{\La_0}$,  which follows from Lemma \ref{lemrkhs}. \qed 

\begin{lem} \label{lemrkhs}
	The Gaussian processes $[0,1]\ni t\to G_{\Lambda_0}(t)=W(U_0(t))$ and $[0,1]\ni t\to \mZ_{P_0}\circ L_t^{-1}$  coincide, where $L_t$ is the integration map \eqref{defl}.
\end{lem}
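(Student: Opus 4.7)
Both processes in question are centered Gaussian processes on $[0,1]$, so by uniqueness of Gaussian laws it suffices to verify that they share the same covariance function. The plan is thus to compute both covariances and check that they coincide with $U_0(s\wedge t)$.

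First I would treat $G_{\La_0}$: since $W$ is standard Brownian motion and $U_0$ is non-decreasing (its derivative $\la_0/M_0$ is positive by \ref{c:mod}), one has immediately
\[
E[G_{\La_0}(s) G_{\La_0}(t)] = E[W(U_0(s))W(U_0(t))] = U_0(s) \wedge U_0(t) = U_0(s\wedge t).
\]

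Second, I would identify $\mZ_{Q_0}\circ L_t^{-1}$ with $\mZ_{Q_0}(\1_{[0,t]})$. Viewing $\mZ_{Q_0}$ as a random element of $\cM_0(w)$ via its wavelet coefficients $Z_{lk}=\mZ_{Q_0}(\psi_{lk})$, the definition \eqref{defl} of $L_t$ gives
\[
L_t(\mZ_{Q_0}) = \sum_{l,k} Z_{lk}\int_0^t \psi_{lk}(x)\,dx = \sum_{l,k} \mZ_{Q_0}(\psi_{lk})\,\langle \1_{[0,t]},\psi_{lk}\psd.
\]
Since $\1_{[0,t]}\in L^2[0,1]\subset L^2(Q_0)$ (as $dQ_0/dx=u_0$ is bounded by \ref{c:mod}), the wavelet expansion $\1_{[0,t]}=\sum_{l,k}\langle \1_{[0,t]},\psi_{lk}\psd\,\psi_{lk}$ converges in $L^2(Q_0)$, so by linearity and $L^2(Q_0)$-continuity of $\mZ_{Q_0}$ the series above converges to $\mZ_{Q_0}(\1_{[0,t]})$ almost surely. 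Consequently, by the defining covariance \eqref{def:gpz},
\[
E[L_s(\mZ_{Q_0})\,L_t(\mZ_{Q_0})] = \int_0^1 \1_{[0,s]}\1_{[0,t]}\,dQ_0 = Q_0([0,s\wedge t]) = \int_0^{s\wedge t}\frac{\la_0}{M_0}(u)\,du = U_0(s\wedge t).
\]
The two covariance functions coincide, so the two Gaussian processes have the same finite-dimensional distributions, hence the same law.

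The only non-routine step is the identification $L_t(\mZ_{Q_0}) = \mZ_{Q_0}(\1_{[0,t]})$, i.e., the interchange of the (a priori formal) sum and the white-noise functional. This is essentially Parseval's identity applied to the isonormal process $\mZ_{Q_0}$: one verifies convergence of the partial sums in $L^2(\Omega)$ using $\sum_{l,k}\langle\1_{[0,t]},\psi_{lk}\psd^2=\|\1_{[0,t]}\|_2^2 <\infty$ together with the uniform equivalence $\|\cdot\|_{L^2(Q_0)}\asymp \|\cdot\|_2$ arising from the bounds on $u_0$. Everything else reduces to the observation that $U_0$ is strictly increasing.
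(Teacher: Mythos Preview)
Your proposal is correct and follows essentially the same approach as the paper: the paper's proof is a one-liner observing that both are centered Gaussian processes and that it suffices to check their RKHS (equivalently, covariance functions) coincide, which is exactly what you do explicitly by computing both covariances as $U_0(s\wedge t)$. Your additional care in justifying the identification $L_t(\mZ_{Q_0})=\mZ_{Q_0}(\1_{[0,t]})$ via $L^2(Q_0)$-convergence of the wavelet expansion of $\1_{[0,t]}$ fills in the details the paper leaves implicit.
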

\begin{proof}
	As both are centered Gaussian processes, the result follows by checking that their respective RKHS coincide.
\end{proof}

\subsection{Proof of Theorem \ref{thm:donsker}}
Let us note that Theorem \ref{thm:donsker} is in fact (almost) a special case of Theorem \ref{thm:donskergen}. Indeed, under the conditions of Theorem \ref{thm:donsker}, the condition on rates in the statement of the BvM Theorem \ref{thm:bvm} is satisfied if one takes $w_l=2^{l/2}/(1+l^2)$ since that condition asks,  for this choice of $(w_l)$,
\[ \rn\veps_n2^{-L_n}=o(L_n^{-9/4}),\]
which is certainly satisfied if $\rn\veps_n2^{-L_n}=O(L_n^{-3})$ as assumed. By Theorem \ref{thm:bvm} and since one considers histogram priors (Case 1 of Theorem \ref{thm:bvm}), we deduce that the nonparametric BvM Theorem \eqref{iidbvm} holds, with $\sum_l w_l 2^{-l/2}<\infty$ by construction. Hence Theorem \ref{thm:donsker} follows by applying Theorem \ref{thm:donskergen}.

\section{Supremum norm results}

\subsection{Generic $\|\,\cdot\,\|_\infty$--bound}

Let us denote, for $L_n$ the prior's cut--off, the rate $\veps_n$ as in Condition \ref{c:l1}, and $\be>0$,
\begin{equation} \label{rate:first}
	\La_n= 2^{L_n}\veps_n + 2^{-\be L_n}.
\end{equation}
Let us define the $\ell_\infty$--metric between bounded functions $f,g$ as
\begin{equation} \label{ellinfty}
	\ell_\infty(f,g):= \sum_{l} 2^{l/2}\max_{0\le k<l} 
	|\psg f-g,\psi_{lk}\psd|. 
\end{equation}
The following standard bound follows from the localisation property (W3) of the wavelet basis, for bounded functions $f,g$,
\[ \|f-g\|_\infty \leqa \sum_{l} 2^{l/2} \max_{k} |\psg f-g,\psi_{lk}\psd|
=\ell_\infty(f,g). \]
\begin{lem} \label{lem:snf}
	Suppose $\la_0\in \cH(\be,D)$ for some $\be,D>0$ and that \ref{c:l1} holds with rate $\veps_n$. Then for $\La_n$ as in \eqref{rate:first},
	\[ \Pi[\|\la-\la_0\|_\infty > \La_n\given X] =\op. \]
	The same results also holds for the $\ell_\infty$--metric. 
\end{lem}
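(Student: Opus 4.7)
The plan is to work directly with the wavelet characterization of $\|\cdot\|_\infty$ recalled just above the lemma, namely
\[
\|\la-\la_0\|_\infty \leqa \ell_\infty(\la,\la_0)=\sum_{l\ge -1}2^{l/2}\max_{0\le k<2^l}|\psg\la-\la_0,\psi_{lk}\psd|,
\]
which already reduces the two claimed bounds to a single one on $\ell_\infty(\la,\la_0)$. Given the prior cutoff $L_n$, I would split this sum at level $L_n$ as $\ell_\infty(\la,\la_0)=S_{\text{low}}+S_{\text{high}}$, and show each piece is of order $\La_n$ on a set of posterior probability $1-\op$.

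For the low-frequency block $S_{\text{low}}=\sum_{l\le L_n}2^{l/2}\max_k|\psg\la-\la_0,\psi_{lk}\psd|$, I would work on the event $A_n=\{\|\la-\la_0\|_1\le\veps_n\}$, which has posterior probability $1+\op$ by \ref{c:l1}. Using the basis property $\|\psi_{lk}\|_\infty\leqa 2^{l/2}$, one bounds each wavelet coefficient by
\[
|\psg\la-\la_0,\psi_{lk}\psd|\le \|\la-\la_0\|_1\,\|\psi_{lk}\|_\infty\leqa \veps_n\,2^{l/2},
\]
so that $S_{\text{low}}\leqa \veps_n\sum_{l\le L_n}2^l\leqa \veps_n 2^{L_n}$, the first term in $\La_n$.

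For the high-frequency block, the plan is to exploit that, under $\Pi[\cdot\given X]$ for a histogram prior (or equivalently for the projected posterior $\Pi_{L_n}[\cdot\given X]$ in the general case), one has $\psg\la,\psi_{lk}\psd=0$ for $l>L_n$, so that $\psg\la-\la_0,\psi_{lk}\psd=-\psg\la_0,\psi_{lk}\psd$ at those levels. The Hölder assumption $\la_0\in\cH(\be,D)$ together with the wavelet characterization of Besov/H\"older balls \eqref{besovnorm} yields $|\psg\la_0,\psi_{lk}\psd|\leqa 2^{-l(1/2+\be)}$, so
\[
S_{\text{high}}\leqa \sum_{l>L_n}2^{l/2}\cdot 2^{-l(1/2+\be)}=\sum_{l>L_n}2^{-\be l}\leqa 2^{-\be L_n}.
\]
Combining the two blocks gives $\ell_\infty(\la,\la_0)\leqa \veps_n 2^{L_n}+2^{-\be L_n}=\La_n$ on $A_n$, and hence the announced posterior bound for both $\|\cdot\|_\infty$ and $\ell_\infty$.

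The only non-routine point is the vanishing of the high-frequency coefficients under the posterior: for Haar histogram priors ({\bf (H$_1$)}, {\bf (H$_2$)}, {\bf (H$_3$)}), $\la$ is piecewise constant on the dyadic partition at level $L_n+1$, so $\psg\la,\psi_{lk}\psd=0$ for $l>L_n$ by orthogonality; for general {\bf (S)}-type priors the identity $\psg\la,\psi_{lk}\psd=0$ fails on the raw posterior (since $\la=e^r$ is smooth rather than supported on $\cV_{L_n}$), and the same argument then has to be read for the projected posterior $\Pi_{L_n}[\cdot\given X]$, matching the formulation of Theorem~\ref{thm:sn}(a). This is the sole structural step; everything else is a straightforward summation.
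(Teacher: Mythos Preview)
Your argument is correct and, for the low–frequency block, is genuinely simpler than the paper's. The paper also splits $\la-\la_0$ at level $L_n$ and bounds the high--frequency part exactly as you do, using $|\la_{0,lk}|\leqa 2^{-l(1/2+\be)}$ together with the (implicit) fact that $\la_{lk}=0$ for $l>L_n$. Where the two proofs differ is in the low--frequency block: instead of your direct duality bound $|\psg\la-\la_0,\psi_{lk}\psd|\le\|\la-\la_0\|_1\|\psi_{lk}\|_\infty\leqa\veps_n 2^{l/2}$, the paper passes through the $L^2$ norm via Cauchy--Schwarz, obtaining $\ell_\infty(\la_{L_n},\la_{0,L_n})\leqa 2^{L_n/2}\|\la-\la_0\|_2$, then interpolates $\|\la-\la_0\|_2\le\|\la-\la_0\|_\infty^{1/2}\|\la-\la_0\|_1^{1/2}$, and finally closes by a bootstrapping inequality of the form $\|\la-\la_0\|_\infty\leqa \sqrt{\|\la-\la_0\|_\infty}\,\sqrt{\veps_n}\,2^{L_n/2}+2^{-\be L_n}$ solved via $2ab\le a^2/c+cb^2$. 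Both routes land on the same $\La_n=\veps_n 2^{L_n}+2^{-\be L_n}$; yours avoids the bootstrapping entirely and is the more elementary argument.

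Your closing remark on the structural assumption is also to the point: the paper's own decomposition $(\la-\la_0)(x)=(\la_{L_n}-\la_{0,L_n})(x)-\sum_{l>L_n,k}\la_{0,lk}\psi_{lk}(x)$ equally presupposes $\la_{lk}=0$ for $l>L_n$, so the scope of the lemma (Haar histograms, or the projected posterior for smooth bases) is the same in both proofs.
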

\begin{proof}
	Let us recall the notation $g_{L_n}=P_{L_n}g$, the $L^2$--projection of a given function $g$ onto the subspace $\cV_{L_n}$ generated by the first $L_n$ levels of wavelet coefficients. For any $x\in[0,1]$, 
	\begin{align*}
		(\la - \la_0)(x) & = \sum_{l\le L_n}\sum_{k=0}^{2^l-1} (\la_{lk}-\la_{0,lk})\psi_{lk}(x)- \sum_{l> L_n}\sum_{k=0}^{2^l-1} \la_{0,lk}\psi_{lk}(x) \\
		& = (\la_{L_n}-\la_{0,L_n})(x) + R(x),
	\end{align*}
	where $|R(x)|\le  \sum_{l>L_n} 2^{l/2}\max_{k}|\la_{0,lk}|\le \sum_{l>L_n} 2^{-l\be}\leqa 2^{-L_n\be}$ using the H\"older condition on $\la_0$
	and  with $g_{lk}=\psg g , \psi_{lk} \psd$ for a given  $g\in L^2[0,1]$. Also,
	
	\begin{align*}
		&\left|\la_{L_n}(x) -\la_{0,L_n}(x)\right| 
		\le
		C \ell_\infty(\la_{L_n},\la_{0,L_n})
		= C\sum_{l\le L_n} \max_{k}|\la_{lk}-\la_{0,lk}| 2^{l/2}\\
		& \quad \leqa \|\la_{L_n}-\la_{0,L_n}\|_2 \sqrt{ \sum_{l\le L_n} 2^l} \leqa \|\la-\la_0\|_2 2^{L_n/2} \\
		& \quad 
		\leqa \|\la-\la_0\|_\infty^{1/2} \|\la-\la_0\|_1^{1/2} 2^{L_n/2}.
	\end{align*}
	Reinserting this in the first identity on $(\la-\la_0)(x)$ above and taking the supremum in $x$ one gets, invoking the $\|\cdot\|_1$ concentration of the posterior,
	\[ \|\la-\la_0\|_\infty \leqa \sqrt{ \|\la-\la_0\|_\infty }\sqrt{\veps_n} 2^{L_n/2} + 2^{-L_n\be}. \]
	To bound the first term on the right-hand side, one uses the inequality $2ab\le a^2/c+cb^2$, for any $a,b,c>0$, to obtain, for suitably large $c$,
	\[ \|\la-\la_0\|_\infty \leqa \veps_n 2^{L_n} + 2^{-L_n\be}, \]
	as required. 
	
	We  note that the previous bounds also hold for the related $\ell_\infty$--norm. 
	Indeed, one can reproduce the previous bounds starting directly from $\ell_\infty(\la,\la_0)$ instead of $(\la-\la_0)(x)$, leading to 
	$\ell_\infty(\la,\la_0) \leqa \La_n$.
\end{proof}

\subsection{Proof of Theorem \ref{thm:sn}, part (a)}

As \ref{c:l1} holds, one can invoke Lemma \ref{lem:snf} to obtain a first supremum norm bound $\La_n$ under the posterior (note that, except for high regularities $\be$, this does not yet entail posterior consistency, i.e. $\La_n$ may go to $\infty$ with $n$).

Recall that $P_{L_n}\la_0=\la_{0,L_n}$ denotes the $L_2$--projection of $\la_0$ onto $\cV_{L_n}$, and for any $l,k$, let $\psi_{lk,n}= P_{L_n}(\psi_{lk}/M_0)$ and define a function $\la^*= \la^*(L_n)$ by the sequence of its wavelet coefficients
\begin{equation} \label{lastar}
	\psg \la^* , \psi_{lk} \psd =
	\begin{cases}
		\ \psg \la_0 , \psi_{lk} \psd+W_n\left(\psi_{lk,n} \right)/\rn \quad & \text{if } l\le L_n\\
		\ 0&  \text{if } l>L_n,
	\end{cases}
\end{equation}
for any $k$ and where $W_n$ is defined in \eqref{wun}. We show in Section \ref{sec:backgroundeff}, Lemma \ref{lem:tn}, that $T_n$ and  $\la^*$ are very close, so that one can indifferently consider a centering $T_n$ or $\la^*$. 

As part (a) is about the projected posterior, it is enough to consider $\la_{L_n}=P_{L_n}\la$. More precisely, by definition of $\Pi_{L_n}[\cdot\given X]$, for any given rate $r_n$, 
\begin{align*}
	\Pi_{L_n}[\|\la-\la_0\|_\infty >r_n\given X] & =\Pi_{L_n}[\|\la_{L_n}-\la_0\|_\infty >r_n\given X] \\
	& = \Pi[\|\la_{L_n}-\la_0\|_\infty >r_n\given X],
\end{align*}
as indeed the distribution of $\la_{L_n}$ is the same under $\Pi_{L_n}[\cdot\given X]$ and $\Pi[\cdot\given X]$. Once this is noted, it is enough to work with the original posterior, but considering only the first $L_n$ wavelet levels.

We bound:
\begin{align*}
	\|  \la_{L_n} - \la_0 \|_\infty \leq 
	\underbrace{\| \la_0 - \la_{0,L_n} \|_\infty}_{=(i)}
	+ \underbrace{\| \la_{0, L_n} - \la_{L_n}^* \|_\infty}_{=(ii)} 
	+ \underbrace{\|\la_{L_n}^*-\la_{L_n}  \|_\infty}_{=(iii)}
\end{align*}
and study the expectation under $\la_0$ of the posterior expectation of each component separately. We start by terms (i) and (ii), that do not depend on the posterior and are shown to go to zero at the required rate.

\emph{Part (i).} This part is deterministic. Recalling the notation $\la_{0,lk}=\langle \la_0, \psi_{lk}\rangle_2$,
\begin{align*}
	\| \la_0-\la_{0,L_n} \|_\infty & \leq  \sum_{l=L_n + 1}^{\infty} \max_{0 \leq k < 2^l -1} |\la_{0,lk}| \left\| \sum_{k=0}^{2^l - 1} |\psi_{lk}| \right\|_\infty\\
	&\lesssim \sum_{l=L_n + 1}^{\infty}2^{-l(1/2+\beta)}2^{l/2} = \sum_{l=L_n + 1}^{\infty}2^{-l\beta} \lesssim 2^{-L_n\beta},
\end{align*}
where we have used that $\la_0\in \cH(\beta, D)$ for some $D > 0$.

\emph{Part (ii).} By Lemma \ref{lem:parttwo}, the expectation $E_{\la_0} \|\la_{L_n}^* - \la_{0,L_n}\|_\infty$ is bounded above by a constant multiple of $(L_n 2^{L_n}/n)^{1/2}$. This implies that the sum $(i)+(ii)$ is a $O_{P_0}(\veps_n^{\be,L_n})$.

\emph{Part (iii).}
For any $\la\in \cV_{L_n}$, one bounds $\la-\la_{L_n}^*$ from above as follows
\begin{align*}
	\|\la-\la_{L_n}^*\|_\infty &  \le \Big| \sum_{l=0}^{L_n}  \sum_{0\le k<2^l} \psg \la-\la^*,\psi_{lk}\psd \psi_{lk}  \Big|\\
	&  \le \sum_{l=0}^{L_n} \max_{0\le k<2^l} |\psg \la-\la^*,\psi_{lk}\psd| \Big\| \sum_{0\le k<2^l} |\psi_{lk}| \Big\|_\infty \\ 
	&\le \frac{1}{\sqrt{n}}\sum_{l=0}^{L_n} 2^{l/2}\sqrt{n}\max_{0\le k<2^l} |\psg \la-\la^*,\psi_{lk}\psd|.
\end{align*}
Let us define the sets $E_n$ as, with $A_n$ as in \ref{c:l1} and $\La_n$ as in \eqref{rate:first},
\begin{equation} \label{en}
	E_n = A_n \cap \{ \la:\, \|\la-\la_0\|_\infty\le \La_n\}.
\end{equation}
Let $\Pi_n=\Pi[\cdot\given X,E_n]$ be a shorthand for the posterior distribution conditioned on $E_n$, and let $E^{\Pi_n}$ be the expectation under $\Pi_n$.  Bounding the expected maximum by Laplace transforms following the approach of
\cite{ic14}--\cite{cn14}, we find for any $t > 0$:
\begin{align*}
	E^{\Pi_n}  & \max_{0\le k<2^l} \sqrt{n} |\psg \la-\la^*,\psi_{lk}\psd| \\
	& \le \frac{1}{t} \log\left(\sum_{k=0}^{2^l - 1}  E^{\Pi_n}\left[e^{t\sqrt{n}\psg \la-\la^*,\psi_{lk}\psd} +  e^{-t\sqrt{n}\psg \la-\la^*,\psi_{lk}\psd} \right]  \right).
\end{align*}
We now follow the steps of the proof of Theorem \ref{thm:bvmlin}, but in place of 
$A_n$ we work instead on $E_n$. Similar to  \eqref{laphist}, we have, in terms of the sets $E_n$, and setting $b=\psi_{LK}$ as a shorthand, recalling $\psi_b=b/M_0$ and that $\psin$ is its projection onto $\cV_{L_n}$,
\begin{align} \label{laplacetransfo}
	& E\left[ e^{t\sqrt{n}\psg \la-\la^*,\psi_{LK}\psd} \given X, E_n \right] \cdot e^{-\frac{t^2}{2} \| \psin \|_L^2}\Pi[E_n\given X]  \notag \\
	&\quad=\frac{\int_{E_n} e^{  \ell_n(r_t^n) - \ell_n(r_0) + R_n(r, r_0) - R_n(r_t^n, r_0) +t\sqrt{n}[\psg r-r_0,\eif-\psin \psd_L-B(\la, \la_0)]} d\Pi(r) } 
	{\int e^{\ell_n(r) - \ell_n(r_0)} d\Pi(r)}.
\end{align}
Now we rearrange the term in brackets similarly as in the proof of Theorem  \ref{thm:bvmlin} by introducing the term $B_n(\la,\la_0)$ as below \eqref{laphist}. 

On the set $A_n$, we have 
\begin{align*}
	|\psg \frac{\la-\la_0}{\la_0},\eif-\psin \psd_L| &  \leqa \veps_n \|\eif-\psin\|_\infty \leqa \veps_n 2^{L/2-L_n},
\end{align*}
where one uses Lemma \ref{lem:psi}. This shows that the last term in the last display is bounded from above by $\veps_n 2^{-L_n/2}$ uniformly over $L\le L_n$ and $K$. As $\veps_n\leqa  \veps_{n}^{\be,L_n}$ by assumption, 
one deduces
\[ t \rn  | \psg \frac{\la-\la_0}{\la_0},\eif-\psin \psd_L |
\leqa t \rn 2^{-L_n/2} \veps_{n}^{\be,L_n}.
\]
We now focus on bounding $|R_n(r, r_0) - R_n(r_t^n, r_0)-t\rn B_n(\la,\la_0)|$ from above. The control of this term is similar in spirit to that in the proof of Proposition \ref{prop:lap}. We include it for completeness. Note that both bounds on the $R_{n,1}$ and $R_{n,2}$ parts as obtained there do not use supremum--norm consistency of the posterior, but only an upper-bound on $\|\la-\la_0\|_\infty$, so one may reuse these bounds here  replacing $\zeta_n$ by the generic `rate' (or rather, bound, as it does not go to $0$ in general) $\La_n$ obtained in Lemma \ref{lem:snf} (or for future use in further iterations by any such bound on $\|\la-\la_0\|_\infty$).

Let us first deal with the terms $R_{n,1}$, recalling the notation 
\begin{align*}
	R_{n,1}&(r, r_0) - R_{n,1}(r^n_t, r_0)\\
	&= \mathbb{G}_n\left\{t(\La_0-\La)\psin(\cdot) + \sqrt{n}\int_0^\cdot e^r\left( e^{- \tfrac{t}{\sqrt{n}}\psin} - 1 + \frac{t}{\sqrt{n}}\psin \right) \right\},
\end{align*}
which we write $t \mathbb{G}_n f_n + \mathbb{G}_n g_n$, setting $f_n = (\La_0-\La)\psin$ as well as  
$g_n = \sqrt{n}\int_0^\cdot e^r( e^{- \tfrac{t}{\sqrt{n}}\psin} \\ - 1 + \frac{t}{\sqrt{n}}\psin )$.

First, using Lemma \ref{lem:checkpsi}, we have $\|\psin\|_\infty\leqa  
L_n\|b\|_\infty\leqa L_n2^{L_n/2}$  as well as $\|\psin\|_2\leqa \| b\|_2\leqa 1$, so that one can set $\mu_n=CL_n2^{L_n/2}$ for some $C>0$. Second,  the hazard $\la$ belongs to the set $\cL_n$ in \eqref{elln} once setting $v_n=\La_n$. This shows $f_n\in\cF_n$ and $g_n\in\cG_n$, for $\cF_n, \cG_n$ as in \eqref{fn}--\eqref{gn}. By Lemma \ref{lem:rn1}, noting that $|t|\mu_n/\rn$ is bounded if $|t|\le \log{n}$,
\[  \sup_{f_n\in\cF_n,\, g_n\in\cG_n}  |\mathbb{G}_n(tf_n + g_n)|  =O_{P_{\la_0}}\left(|t|\La_n+ \frac{t^2}{\rn}(1+\La_n) \right). \]
One now has to bound $R_{n,2}(r, r_0)  - R_{n, 2}(r^n_t, r_0) -  t\sqrt{n}B_n(\la, \la_0)$, which using Lemma \ref{lem:rn2} is bounded by 
\[ |R_{n,2}(r, r_0)  - R_{n, 2}(r^n_t, r_0) -  t\sqrt{n}B_n(\la, \la_0)|= O\left(t^2\left\{\La_n + |t| \frac{1+\La_n}{\rn} \right\}\right).\]

As by assumption $|t|/\rn$ is bounded (in fact goes to $0$ fast), one concludes that 
$|R_n(r, r_0) - R_n(r_t^n, r_0) - t\sqrt{n}B(\la, \la_0)| \leqa t^2\La_n+O_P(|t|\{1+\La_n\})$. 
Reinserting these bounds into the Laplace transform expression \eqref{laplacetransfo} and using that $\|\psin\|_L$ is bounded by Lemma \ref{lem:psi} and that $\Pi[E_n\given X]=\op$ leads to, with $y_n=:=\rn 2^{-L_n/2} \veps_{n}^{\be,L_n}$,
\begin{align*} 
	E&\left[ e^{t\sqrt{n}\psg \la-\la^*,\psi_{LK}\psd} \given X, E_n \right] \\
	& = (1+\op)  e^{C[(1+\La_n) t^2 +(\sqrt{L_n}+y_n)t] + O_{P_0}(t\{1+\La_n\})} \frac{\int_{E_n} e^{  \ell_n(r_t^n) - \ell_n(r_0)} d\Pi(r) } 
	{\int e^{\ell_n(r) - \ell_n(r_0)} d\Pi(r)} \\
	& \leqa (1+\op)  Ce^{C[(1+\La_n) t^2 +(\sqrt{L_n}+y_n)t] + O_{P_0}(t\{1+\La_n\})},
\end{align*}
where one uses assumption \ref{c:cvarsn} noting that the numerator in that assumption bounds from above the numerator in the last display, as $E_n\subset A_n$.

Now inserting these bounds on remainders within the Laplace transform argument one gets, with $t=t_l=\sqrt{l}$ as before, and $\Pi_n=\Pi[\cdot\given X,E_n]$ the  conditioned posterior as before, 
\begin{align*} 
	\int & \|\la-\la^*\|_\infty d\Pi_n(\la)  \\
	& \le \frac{1}{\sqrt{n}} \sum_{l=0}^{L_n} 2^{l/2} \frac{1}{t_l}\log\left\{ \sum_{k=0}^{2^{l}-1} 2(1+\op)  Ce^{C[(1+\La_n) t^2 +(\sqrt{L_n}+y_n)t] + O_{P_0}(t\{1+\La_n\})} \right\} \\
	&\leqa  \frac{1}{\sqrt{n}} \sum_{l=0}^{L_n} 2^{l/2} \frac{1}{t_l}\left[
	O_{P_0}(1)+l+(1+\La_n) t_l^2 +(\sqrt{L_n}+y_n)t_l + O_{P_0}(t_l\{1+\La_n\}) \right]\\
	& \leqa O_{P_0}(1) \sum_{l=0}^{L_n} \frac{2^{l/2}}{\rn}(1+\La_n)
	+  \sum_{l=0}^{L_n} \frac{2^{l/2}}{\rn}(1+\La_n)t_l
	+  \frac{2^{L_n/2}}{\sqrt{n}} (\sqrt{L_n}+y_n)
	\\
	& \leqa O_{P_0}(1) \sqrt{\frac{2^{L_n}}{n}} (1+\La_n) +  \sqrt{L_n\frac{2^{L_n}}{n}}(1+\La_n) + \veps_{n}^{\be,L_n}\\
	& \leqa O_{P_0}(1) \frac{\veps_n^{\be, L_n}}{\sqrt{L_n}}(1+\La_n) + \veps_n^{\be, L_n}(1+\La_n).
\end{align*} 
From this one deduces using Markov's inequality that for $M_n\to\infty$ arbitrary,
\[ \Pi_{L_n}[\|\la-\la^*\|_\infty>M_n\La_n^{(1)}\given X]=\op,\]
where we have set 
\[ \La_n^{(1)} := \veps_n^{\be, L_n}(1+\La_n). \] 
Combining this with steps (i)--(ii), and using Markov's inequality to get 
$P_0(\|\la^*-\la_0\|_\infty>M_n\veps_n^{\be,L})=o(1)$, for any $M_n\to\infty$, one obtains that the projected posterior contracts at rate $\La_n^{(1)}$ around $\la_0$, for arbitrary $M_n\to\infty$,
\[  \Pi_{L_n}[\|\la-\la_0\|_\infty>M_n\La_n^{(1)}\given X]=\op. \]  

As $\veps_n^{\be, L_n}=o(1)$, one observes that 
\[ \veps_n^{\be,L_n} \leqa \La_n^{(1)} =o(\La_n), \] 
and the new obtained rate is  faster than $\zeta_n$.

We can now reproduce identically the argument of this subsection, but now using the improved rate $\La_n^{1}:=M_n\La_n\veps_n^{\be,L_n}$ for $\|\la-\la^*\|_\infty$ in \eqref{en} (with given arbitrary diverging $M_n\to\infty$), which once we apply the Laplace transform argument again leads to, for arbitrary $M_n\to\infty$, 
\[ \Pi_{L_n}[\|\la-\la_0\|_\infty>M_n\La_n^{(2)}\given X]=\op,\]

where we have set $\La_n^{(2)} := \veps_n^{\be, L_n}(1+\veps_n^{\be, L_n}\La_n)$. 
Further iterating the argument, one obtains the rate, for fixed given $p\ge 1$,
\[ \La_n^{(p+1)} :=  (1+(\veps_n^{\be, L_n})^{p} \La_n)\veps_n^{\be, L_n}. \]
Noting that $\La_n$ from \eqref{rate:first} verifies $\La_n\le 2^{L_n}$, one sees, as $2^{L_n}$ is a given power of $n$, that for an integer $p$ large enough, we have $(\veps_n^{\be, L_n})^{p} \La_n=O(1)$, so that the overall obtained rate at that iteration is $\veps_n^{\be, L_n}$ as requested, which concludes the proof of Theorem \ref{thm:sn}, part (a).

\subsection{Proof of Theorem \ref{thm:sn}, part (b)}

Note that 
\[ r-r_0 = \log(\la/\la_0)=\log\left[1+\frac{\la-\la_0}{\la_0} \right]. \]
Therefore a rate  $\|r-r_0\|_\infty=o(1)$ automatically translates into the same rate for $\|\la-\la_0\|_\infty$ and vice-versa. Within this proof, we set  
\[ \zeta_n:=2^{L_n/2}\veps_{n}+2^{-L_n\be}.\]  
As
$\veps_n\leqa \veps_{n,\be}$ and $\be\wedge \ga>1/2$ by assumption, we have, using the explicit expressions of $\veps_{n,\be}$ and $L_n$, that $\zeta_n\asymp 2^{L_n/2}\veps_{n}$ and $\zeta_n=o(n^{-\rho})$ for some $\rho>0$. 

Combining this with Lemma \ref{lem:consistsn}, we have, on the set $A_n$,

\[ \|r-r_0\|_\infty \leqa \zeta_n\]
under the posterior distribution 
and in particular $\|r-r_0\|_\infty$ is bounded.  
Now define a centering function as, with $r_{0,L_n}=P_{L_n}r_0$,
\begin{equation} \label{centerwave}
	\tilde r_n= \tilde r_{L_n}:=  r_{0,L_n}+\frac{1}{\rn} \sum_{l\le L_n, k} W_n(\psi_{lk}), 
\end{equation} 
and proceeding similarly as in Lemma \ref{lem:parttwo}, one checks that 
\begin{equation} \label{centerpartb}
	\|\tilde r_n - r_{0,L_n}\|_\infty \leqa \ell_\infty(\tilde{r}_n,r_{0,L_n})= O_{P_0}( \veps_n^{\be,L_n}).
\end{equation}

Next one writes, recalling the expression of the LAN--norm $\|\cdot\|_L$ in \eqref{eq:LAN},
\begin{align*}
	\|r-&\tilde{r}_n\|_\infty  \le \|(r-\tilde r_n)M_0\la_0 \frac{1}{M_0\la_0}\|_\infty 
	\le \| (r-\tilde r_n)M_0\la_0 \|_\infty \|\frac1{M_0\la_0}\|_\infty \\
	& \leqa \sum_{l\ge 0} 2^{l/2} \max_{k} \left|\psg (r-\tilde r_n)M_0\la_0 , \psi_{lk} \psd_2\right| \\
	& \leqa \sum_{l\le L_n} 2^{l/2} \max_{k} \left|\psg r-\tilde r_n, \psi_{lk} \psd_L\right|
	+ \sum_{l> L_n} 2^{l/2} \max_{k} \left|\psg (r-\tilde r_n)M_0\la_0 , \psi_{lk} \psd_2\right|\\
	& \qquad =: \qquad\qquad(I) \qquad\qquad \ \ +\ \ \qquad\qquad (II).
\end{align*}
Combining Lemma \ref{lem:consistsn} with Lemma \ref{lem:biasr}, one obtains, on the set $A_n$,
\begin{align*}
	(II) & \leqa 2^{-L_n\delta} \ell_\infty(r,\tilde{r}_n)\leqa 2^{-L_n\delta}(\ell_\infty(r,r_0)+\ell_\infty(r_0,\tilde{r}_n)) \\
	& \leqa  2^{-L_n\delta} \zeta_n + 2^{-L_n\delta} O_{P_0}(\veps_{n}^{\be,L_n}),  
\end{align*}
where $\delta=1\wedge \be$. 
This shows that $(II)=O_{P_0}(\veps_{n}^{\be,L_n})$, since $\be\ge 1/2$.

We now control the  term (I) corresponding to levels $l\le L_n$. Let us recall that, setting $r_t=r-t\psi_{lk}/\rn$, one can expand the log-likelihood as follows
\begin{align}
	\ell_n(&r) - \ell_n(r_0) - [\ell_n(r_t) - \ell_n(r_0)] \notag\\
	&= \frac{t^2}{2} \|\psi_{lk}\|_L^2 - t\sqrt{n}\langle r-r_0,\psi_{lk} \rangle_{L} + tW_n(\psi_{lk}) + R_n(r, r_0) - R_n(r_t, r_0) \label{expwave}\\
	&= \frac{t^2}{2} \|\psi_{lk}\|_L^2 - t\sqrt{n}\langle r-\tilde{r}_n,\psi_{lk} \rangle_{L} + R_n(r, r_0) - R_n(r_t, r_0). \notag
\end{align}
We now control uniformly the Laplace transforms, for $\tilde r_n$ given by \eqref{centerwave}, $E[\exp\{t\rn(\psg r-\tilde{r}_n,\psi_{lk}\psd_L\}\given X]$.

This is quite similar as for part (a). There are a few differences. The bracket at stake is with $\psi_{lk}$ in terms of the LAN inner product, instead of the Euclidean inner product: we do not start with $\psg \la-\la_0,\psi_{lk}\psd_2$, but with $\psg r-r_0,\psi_{lk}\psd_L$, so the term $B(\la,\la_0)$  in particular does not cancel out when studying the remainder term. On the other hand, the semiparametric bias coming from the approximation of  $\psi_{lk}/M_0$ is not present this time.

We have the following analog of \eqref{laplacetransfo} (this time analysing $r$ rather than $\la$), for $l\le L_n$ and any admissible $k$, for $\tilde{r}_n$ as in \eqref{centerwave},
\begin{align}
	& E\left[ e^{t\sqrt{n}\psg r-\tilde r_{n},\psi_{lk}\psd_L} \mid X, A_n \right] \cdot e^{-\frac{t^2}{2} \|\psi_{lk}\|_L^2}\cdot \Pi[A_n\given X] \notag\\
	&\quad=\frac{\int_{A_n} e^{  \ell_n(r_t) - \ell_n(r_0) + R_n(r, r_0) - R_n(r_t, r_0) 
		} 
		d\Pi(r) }{\int e^{\ell_n(r) - \ell_n(r_0)} d\Pi(r)}. \label{lapnewave}
\end{align} 
Note that here there is no need to further project $\psi_{lk}$ for $l\le L_n$, as the latter already belongs to $\cV_{L_n}$. 
We now study the difference $R_n(r, r_0) - R_n(r_t, r_0)$. Recalling $R_n(r, r_0)=R_{n,1}(r, r_0)+R_{n,2}(r, r_0)$, let us start with the term $R_{n,2}$
\begin{align*}
	R&_{n,2}(r, r_0) - R_{n, 2}(r_t, r_0) \\
	& =  t\sqrt{n}B(\la, \la_0) + 
	n \La_0\left\{M_0\left(e^{r-r_0}\left(e^{- \frac{t}{\sqrt{n}}\psi_{lk}} - 1 + \frac{t}{\sqrt{n}}\psi_{lk}\right)  - \frac{1}{2}\frac{t^2}{n}\psi_{lk}^2  \right)  \right\},
\end{align*}
where this time the term $B(\la,\la_0)$ has to be studied separately
\begin{align*}
	t\sqrt{n}B(\la, \la_0) &= t\sqrt{n}\left\langle r - r_0 - \frac{\la - \la_0}{\la_0}, \psi_{lk} \right\rangle_L
	= t\sqrt{n}\left\langle r - r_0 - (e^{r-r_0} - 1), \psi_{lk} \right\rangle_L\\
	&= n\La_0\left\{ M_0\left[ r - r_0 - (e^{r-r_0} - 1) \right] 
	\frac{t}{\sqrt{n}}\psi_{lk}\right\}.
\end{align*}

Combining the posterior supremum--norm consistency following from Lemma \ref{lem:consistsn} as noted above and the inequality $|e^u-1-u|\le Cu^2$ for bounded $u$, 
\begin{align*}
	|t\rn B(\la,\la_0)|
	& \leqa \rn t\int \|r-r_0\|_\infty^2 |\psi_{lk}|  \leqa \rn t\zeta_n^2 2^{-l/2}.
\end{align*}
One now deals with $R_{n,1}(r_t,r_0)-R_{n,1}(r,r_0)$ and 
$R_{n,2}(r_t,r_0)-R_{n,2}(r,r_0)-t\rn B(\la,\la_0)$ in a similar way as we did for part (a) (and in the proof of Theorem \ref{thm:bvmlin}). The difference here is that we work with $h=\psi_{lk}$ which satisfies similar bounds as $\psin$, namely $\|\psi_{lk}\|_\infty\leqa 2^{l/2}\leqa 2^{L_n/2}$ for $l\le L_n$ and $\|\psi_{lk}\|_2\leqa 1$, so that for $t\ge 0$,
\[ |R_n(r, r_0) - R_n(r_t^n, r_0) - t\sqrt{n}B(\la, \la_0)| \leqa t^2\zeta_n+
O_{P_0}(t\{1+\zeta_n\}), \]
which combining with the bound on $B(\la, \la_0)$ above leads to, with $\zeta_n=o(1)$,
\[|R_n(r, r_0) - R_n(r_t^n, r_0)| \leqa  
(\rn2^{-l/2}\zeta_n^2)t + t^2\zeta_n + O_{P_0}(t).
\]
Following similar steps as for part (a) above, the Laplace transform method gives us,  using \ref{c:cvarsn}, that
\begin{align*} 
	E&\left[ e^{t\sqrt{n}\psg r-\tilde{r}_{n},\psi_{LK}\psd_L} \given X, A_n \right] \\
	& =   e^{C[(1+\zeta_n) t^2 + (\rn2^{-l/2}\zeta_n^2) t] + O_{P_0}(t)} \frac{\int_{A_n} e^{  \ell_n(r_t^n) - \ell_n(r_0)} d\Pi(r) } 
	{\int e^{\ell_n(r) - \ell_n(r_0)} d\Pi(r)} \\
	& \leqa e^{C[(1+\zeta_n) t^2 + (\rn2^{-l/2}\zeta_n^2) t] + O_{P_0}(t)}.
\end{align*} 

Now for $\Pi_n[\cdot\given X]=\Pi[\cdot\given X,A_n]$ the conditioned posterior and $t=t_l=\sqrt{l}$,
\begin{align*} 
	\int & \|r-\tilde{r}_{n}\|_\infty d\Pi_n(r\given X)  \\
	& \le \frac{1}{\sqrt{n}} \sum_{l=0}^{L_n} 2^{l/2} \frac{1}{t_l}\log\left\{ \sum_{k=0}^{2^{l}-1} 2e^{C[(1+\zeta_n) t^2 + (\rn2^{-l/2}\zeta_n^2) t_l] + O_{P_0}(t)}
	\right\} \\
	&\leqa  \frac{1}{\sqrt{n}} \sum_{l=0}^{L_n} 2^{l/2} \frac{1}{t_l}\left[
	l+(1+\zeta_n) t_l^2 +(\rn2^{-l/2}\zeta_n^2)t_l + O_{P_0}(t_l) \right]\\
	& \leqa O_{P_0}(1) \frac{1}{\sqrt{n}} \sum_{l=0}^{L_n} 2^{l/2}
	+  \frac{1}{\sqrt{n}} \sum_{l=0}^{L_n} 2^{l/2}\left[ (1+\zeta_n)\sqrt{l} +\rn2^{-l/2}\zeta_n^2 \right]
	\\
	& \leqa O_{P_0}(1) \sqrt{\frac{2^{L_n}}{n}}
	+  \sqrt{L_n\frac{2^{L_n}}{n}}(1+\zeta_n)+ L_n\zeta_n^2\\
	& \leqa O_{P_0}(1) \frac{\veps_n^{\be, L_n}}{\sqrt{L_n}}
	+ \veps_n^{\be, L_n}(1+\zeta_n)+(L_n\zeta_n)\zeta_n.
\end{align*} 

From this one deduces using Markov's inequality, proceeding as for part (a) above,  that for $M_n\to\infty$ arbitrary,
\[ \Pi[\|r-\tilde{r}_{n}\|_\infty>M_n \zeta_n^{(1)}\given X]=\op,\]
where we have set 
\[ \zeta_n^{(1)} := \veps_n^{\be, L_n}+ (L_n\zeta_n)\zeta_n. \] 
Combining this with \eqref{centerpartb}, one obtains that the posterior contracts at rate $\La_n^{(1)}$ around $r_0$. By using the remark at the beginning of the proof, one obtains the same for the posterior of $\la$ around $\la_0$: for arbitrary $M_n\to \infty$,
\[ \Pi[\|\la-\la_0\|_\infty>M_n\zeta_n^{(1)}\given X]=\op.\]  
Since, as noted earlier, $\zeta_n=o(n^{-\rho})$ for some $\rho>0$, we have $L_n\zeta_n=o(1)$,  so that the new rate improves upon $\zeta_n$. Let us now set
\[ E_n=A_n\cap \{\la:\ \|\la-\la_0\|_\infty\le M_n\zeta_n^{(1)}\},\]
where $M_n$ is a given (arbitrary) sequence going to $\infty$. 
By iterating the argument using $E_n$ instead of $A_n$ (and invoking $E_n\subset A_n$ just before using \ref{c:cvarsn}), one obtains the posterior rate, for $p\ge 1$,
\[ \zeta_n^{(p)} := \veps_n^{\be, L_n} \vee (L_n\zeta_n)^p \zeta_n . \]
As $\zeta_n$ decreases polynomially with $n$, for an integer $p$ large enough, the first term dominates in the maximum on the last display, so the overall obtained rate is $\veps_n^{\be, L_n}$ as requested, which concludes the proof of Theorem \ref{thm:sn}, part (b).

\begin{lem} \label{lem:biasr}
	Suppose $r_0\in  \cH(\be, D)$ for some $\be,D>0$. 
	For $\tilde{r}_n$ as in \eqref{centerwave} and $\delta= 1\wedge \be$,
	\[ \sum_{l> L_n} 2^{l/2} \max_{k} \left|\psg (r-\tilde{r}_n)M_0\la_0 , \psi_{lk} \psd_2\right|
	\leqa 2^{-L_n\delta}\ell_\infty(r,\tilde{r}_n).
	\]
\end{lem}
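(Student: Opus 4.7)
The plan is to exploit a dichotomy between low and high wavelet frequencies of $g:=r-\tilde r_n$. Under the prior \eqref{priorgen} the function $r$ is supported on $\cV_{L_n}$, and $\tilde r_n$ lies in $\cV_{L_n}$ by its definition \eqref{centerwave}, so $g\in\cV_{L_n}$. By orthogonality of wavelet subspaces at different resolutions, this forces $\langle g,\psi_{lk}\rangle=0$ for every $l>L_n$. Combining this vanishing with H\"older regularity of the multiplier $\phi:=M_0\la_0$ will be the mechanism that produces the decay factor $2^{-L_n\delta}$.

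First, I would check that $\phi\in\cH(\delta,D')$ for some $D'>0$. Under \ref{c:mod} the censoring density is bounded, so the tail function $\bar G$ is Lipschitz on $[0,1)$; the assumption $r_0\in \cH(\be,D)$ together with boundedness of $r_0$ (from \ref{c:mod}) gives $\la_0=e^{r_0}\in\cH(\be,D'')$, whence $e^{-\La_0}$ is at least Lipschitz. Taking the product $\phi=\bar G\cdot e^{-\La_0}\cdot\la_0$ yields a $\delta$-H\"older function with $\delta=1\wedge\be$.

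The main computation is then the following. For each $l>L_n$ and any admissible $k$, using $\langle g,\psi_{lk}\rangle=0$ and any constant $c$,
\[
 \langle g\phi,\psi_{lk}\rangle \;=\; \int g(\phi-c)\psi_{lk}\,dx \;+\; c\,\langle g,\psi_{lk}\rangle \;=\; \int g(\phi-c)\psi_{lk}\,dx.
\]
Choosing $c=\phi(x_{lk})$ for a point $x_{lk}\in\text{supp}(\psi_{lk})$, H\"older smoothness of $\phi$ on a support of diameter $\lesssim 2^{-l}$ yields $|\phi-c|\lesssim 2^{-l\delta}$ there, and together with $\|\psi_{lk}\|_1\lesssim 2^{-l/2}$ this gives
\[
 |\langle g\phi,\psi_{lk}\rangle| \;\lesssim\; \|g\|_\infty\, 2^{-l\delta}\, 2^{-l/2}.
\]
Multiplying by $2^{l/2}$, taking the maximum over $k$ and summing over $l>L_n$ produces $\sum_{l>L_n}2^{l/2}\max_k|\langle g\phi,\psi_{lk}\rangle|\lesssim \|g\|_\infty\sum_{l>L_n}2^{-l\delta}\lesssim 2^{-L_n\delta}\|g\|_\infty$, and the proof closes via the standard localisation bound $\|g\|_\infty=\|r-\tilde r_n\|_\infty\lesssim \ell_\infty(r,\tilde r_n)$ recalled just above Lemma \ref{lem:snf}.

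The main point of the argument (and the only subtle one) is the observation that both $r$ and $\tilde r_n$ live in $\cV_{L_n}$, making the vanishing $\langle g,\psi_{lk}\rangle=0$ available for $l>L_n$. Without this observation, the diagonal term $\int\phi\psi_{lk}^2\cdot g_{lk}$ in a naive expansion would contribute a term of order $\|\phi\|_\infty \sum_{l>L_n}2^{l/2}\max_k|g_{lk}|$, which lacks the $2^{-L_n\delta}$ factor; this is precisely why the lemma is deployed in a regime where the posterior concentrates on $\cV_{L_n}$.
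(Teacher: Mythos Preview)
Your proof is correct and follows essentially the same approach as the paper: both exploit that $r-\tilde r_n\in\cV_{L_n}$ to get the vanishing $\langle r-\tilde r_n,\psi_{lk}\rangle=0$ for $l>L_n$, then subtract a constant from $\phi=M_0\la_0$ and use its $\delta$-H\"older regularity on the support of $\psi_{lk}$. The only cosmetic difference is that the paper expands $r-\tilde r_n$ in its wavelet coefficients and bounds each cross term $\langle\psi_{LK}\phi,\psi_{lk}\rangle$ individually (landing directly on $\ell_\infty(r,\tilde r_n)$), whereas you bound via $\|r-\tilde r_n\|_\infty$ first and then invoke $\|\cdot\|_\infty\lesssim\ell_\infty$; your route is a touch more compact.
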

\begin{proof}
	For any $l>L_n$ and admissible $k$, by expanding $r-\tilde{r}_n$ onto the wavelet basis, and recalling that both $r_{LK}=0$ and $\tilde{r}_{n,LK}=0$ for levels $L>L_n$ by the definitions of the prior and of $\tilde{r}_n$,
	\begin{align*}
		\psg (r-\tilde{r}_n)M_0\la_0 , &\psi_{lk} \psd_2  = \sum_{L\le L_n, K}(r_{LK}-\tilde{r}_{n,LK})\psg \psi_{LK}M_0\la_0,\psi_{lk}\psd_2.
	\end{align*}
	Let us denote by $\overline{M_0\la_0}^{lk}$ the mean value of the function $M_0\la_0$ over the support $S_{lk}$ of $\psi_{lk}$. By Lemma \ref{lem:lip}, the function $M_0$ is a Lipschitz function, so $M_0\la_0$ belongs to $\cH(\delta,d)$ for some large enough $d$, if one sets $\delta:=1\wedge \be$.  One now bounds the inner products in the last display for $l>L_n, L\le L_n$ and admissible $k, K$ as follows, noting that $\psg \psi_{LK},\psi_{lk}\psd = 0$,
	\begin{align*}
		|\psg \psi_{LK}M_0\la_0,\psi_{lk}\psd_2| & = 
		|\psg \psi_{LK} \{M_0\la_0-\overline{M_0\la_0}^{lk}\},\psi_{lk}\psd_2|\\
		& \leqa |S_{lk}|^\delta 2^{L/2} 2^{-l/2},
	\end{align*}
	where we use $\|\psi_{LK}\|_\infty \leqa 2^{L/2}$ and $\|\psi_{lk}\|_1\le 2^{-l/2}$ and that $M_0\la_0$ is $\delta$--H\"older. Deduce, using that for a given $L\le L_n$ the support of $\psi_{LK}$ intersects that of $\psi_{lk}$ at most a constant number of times, that the quantity $\psg (r-\tilde{r}_n)M_0\la_0 , \psi_{lk} \psd_2$ is bounded in absolute value by a constant times
	\begin{align*}
		|S_{lk}|^\ga  2^{-l/2} \sum_{L\le L_n} 2^{L/2} \max_{K}|r_{LK}-\tilde r_{n,LK}|
		\le  2^{-(1/2+\delta)l} \ell_\infty(r,\tilde{r}_n),
	\end{align*}
	which gives the result by inserting this bound in the sum of the statement.
\end{proof}

\section{Examples of priors, proof of Theorems \ref{thm:appli} and \ref{thm:applifull}} \label{sec:proofappli}
We first provide examples of priors meeting the conditions required for our main results in Section \ref{sec:specificationpriors}. The results for the dependent and independent Gamma priors were already stated as Theorem \ref{thm:appli} in the main paper. In Section \ref{sec:fullthmappli} we state a result covering all four classes of priors considered in the paper, as Theorem \ref{thm:applifull}, of which Theorem \ref{thm:appli} is a special case. The proof of Theorem \ref{thm:applifull} is subsequently given first for the independent Laplace prior in Section \ref{sec:proofindeplaplace}. The modifications required for the remaining priors are described in \ref{sec:proofotherpriors}.

\subsection{Specification of priors}\label{sec:specificationpriors}

For classes {\bf (H$_3$)} and {\bf (S)}, referred to as `wavelet priors', the prior on the log--hazard $r=\log{\la}$ is given as in \eqref{priorgen}, 
\begin{align*} 
	r & = \sum_{l\le L_n,\, k}\sigma_l Z_{lk} \psi_{lk},
\end{align*}
with cut--off $L_n$ as in \eqref{eln}, $\sigma_l>0$ and $Z_{lk}$ independent random variables. We consider two common distributions for $Z_{lk}$: either a standard Laplace,  or Gaussian $\cN(0,1)$, assuming,
\begin{equation}\label{c:sig} 
	\sigma_l = 2^{-l/2} \ (0\le l \le L_n) \quad \text{or} \quad
	\sigma_l = 1 \ (0\le l \le L_n).
\end{equation}

The choices of $\sigma_l$ above are for simplicity of presentation, and either diverging or slightly larger or smaller scaling factors could be considered as well.  
Class {\bf (H$_3$)} arises by selecting $\psi_{lk}$ to be Haar, while class  {\bf (S)} arises by selecting  the smoother CDV wavelet basis.

In classes {\bf (H$_1$)} and  {\bf (H$_2$)}, the prior is a random histogram, with either dependent or independent heights. These priors may be viewed as versions of Haar wavelet priors, but  are more conveniently expressed in terms of histogram heights for the hazard itself as
\begin{equation} \label{histheight}
	\la = \sum_{k=0}^{2^{L_n+1}-1}  \la_k \1_{I_{k}^{L_n+1}},
\end{equation} 
where $\la_k$ are random heights whose distribution is specified below. 

We consider three prior distributions for  $\la_k$'s: a Gamma, log-normal and log-Laplace prior. For each distribution, we verify our conditions for independent (leading to class  {\bf (H$_1$)}) and dependent (leading to class {\bf (H$_2$)}) $\la_k$'s.

We provide details for the dependent class {\bf (H$_2$)} formulations, which follow the autoregressive idea in \cite{Arjas1994}. The priors are constructed so that the prior mean and variance on the $\lambda$-scale  satisfy , for $k = 1, \ldots, 2^{L_n+1}-1$: 
\begin{equation}\begin{aligned} \label{eq:ARstructure}
		E[\la_k \mid \la_{k-1}, \ldots, \la_0] &= \la_{k-1}\\
		\text{Var}(\la_k \mid \la_{k-1}, \ldots, \la_0) &= \sigma^2 (\la_{k-1})^2,
\end{aligned} \end{equation}
for some constant $\sigma^2>0$ to be specified.

The specification for the Gamma priors was already given in Section \ref{sec:appli}. We now list the parameter specifications for the dependent versions of the log-normal and log-Laplace priors. In the independent {\bf (H$_1$)} case, under the prior each $\la_k$ is i.i.d. with common distribution the same as that of $\la_0$  as specified below. 
\begin{enumerate}

	\item The dependent log-normal prior. With $X \sim \operatorname{LN}(\mu, \sigma^2)$ we refer to the distribution of $X=e^Y$, where $Y$ follows a normal distribution with mean $\mu$ and variance $\sigma^2$.  For some $\mu_0, \sigma_0, \sigma > 0$ to be freely chosen, the structure \eqref{eq:ARstructure} is obtained by choosing:
	\begin{align*}
		\lambda_0 &\sim \operatorname{LN}(\mu_0, \sigma_0^2)\\
		\lambda_k \mid \la_0, \ldots, \la_{k-1} &\sim \operatorname{LN}\left(\log\left(\tfrac{\la_{k-1}}{\sqrt{1+\sigma^2}}\right), \log(1+\sigma^2)\right), 
	\end{align*}
	for $k = 1, \ldots, 2^{L_n+1}-1$.

	\item The dependent log-Laplace prior. With $X \sim \operatorname{LL}(\mu, \theta)$ we refer to the distribution of $X=e^Y$, where $Y$ is Laplace distributed with  location $\mu$ and rate $\theta$. For some $\mu_0 > 0, \theta_0 > 2$ and $\sigma > 0$  to be freely chosen, the structure \eqref{eq:ARstructure} is obtained by choosing:
	\begin{align*}
		\lambda_0 &\sim \operatorname{LL}(\mu_0, \theta_0)\\
		\lambda_k \mid \la_0, \ldots, \la_{k-1} &\sim \operatorname{LL}\left( \log\left(\la_{k-1} \tfrac{g(\sigma) - \sigma^2}{g(\sigma)}\right), \sqrt{\tfrac{g(\sigma)}{\sigma^2}} \right),
	\end{align*}
	for $k = 1, \ldots, 2^{L_n+1}-1$, where $g(\sigma) = 2\sigma^2 + 1 +  \sqrt{4\sigma^4 + 5\sigma^2 + 1}$.
	
\end{enumerate}

\subsection{Full theorem statement}\label{sec:fullthmappli}
We now state the more general theorem, of which Theorem \ref{thm:appli} is a special case. Theorem \ref{thm:applifull} below provides details on all four classes of priors considered in this paper.

\begin{thm} \label{thm:applifull}
	Let  $X=(X_1, \dots, X_n)$ be a sample of law $P_0$ with hazard rate $\la_0$ under conditions \ref{c:mod}. For  $\be,L>0$, suppose $\log\la_0\in \cH(\be,L)$.
	
	Suppose the prior $\Pi$ is of the type \eqref{priorgen} with $L_n$ chosen as in \eqref{eln} with $\ga=\be$ and parameters specified as above. Then, for histogram priors (of types {\bf (H$_1$)}, {\bf (H$_2$)} or {\bf (H$_3$)}) and any $0< \be\le 1$  (except for a few examples listed at the end of this statement for which we require $\be>1/2$), the posterior distribution satisfies the nonparametric BvM theorem \eqref{iidbvm} in $\cM_0(w)$ with the choices $w_l=l$ or $w_l=2^{l/2}/(l+2)^2$ and centering $T_n$ as in \eqref{tn}. Also, 
	\[ \beta_{\mathcal{D}[0,1]}\left(\,\cL(\rn(\La-\hat\La_n)\given X)\ ,\, \cL(G_{\La_0})\right)  \to^{P_{0}} 0,\]
	for $\hat\La_n$ Nelson Aalen's estimator, as well as, for $\veps_{n,\be}^*$ as in \eqref{ratesup}, 
	\[ \Pi\left[\|\la-\la_0\|_\infty >M_n \veps_{n,\be}^* \given X\right] =\op. \] 
	For smooth wavelet priors {\bf(S)}, for any given $\be>0$, the previous results hold for the projected posterior $\Pi_{L_n}[\cdot\given X]$, provided the wavelet basis is regular enough. If $\be>1/2$, the results also hold for the original posterior distribution $\Pi[\cdot\given X]$ under the same assumptions.
	
	For the following examples of priors we require $\be>1/2$: Gaussian, Gamma and independent log--Laplace histograms. For all wavelet priors  (except Gaussian wavelets in case $\sigma_l=2^{-l/2}$) as well as for dependent log--Laplace histograms,  the results hold for any $\be>0$ (with $\be\le 1$ in the histogram case). 
\end{thm}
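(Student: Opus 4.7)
The three conclusions follow from previously proved general results: the nonparametric BvM from Theorem \ref{thm:bvm}, the Donsker BvM from Corollary \ref{cor:donsker} (or more generally Theorem \ref{thm:donskergen}), and the supremum-norm rate from Theorem \ref{thm:sn}. The proof therefore reduces to verifying, family by family, that the four hypotheses \ref{c:l1}, \ref{c:sn}, \ref{c:cvarb} and \ref{c:cvarsn} hold with $\veps_n \lesssim \veps_n^{\be,L_n}$ for the choice $L_n = L_n(\be)$ of \eqref{eln}. Both weights $w_l = l$ and $w_l = 2^{l/2}/(l+2)^2$ satisfy the growth assumption of Theorem \ref{thm:bvm}, and the second also satisfies $\sum_l w_l 2^{-l/2} < \infty$, which is what Theorem \ref{thm:donskergen} needs to transfer the multiscale BvM into a Donsker BvM on $\cC[0,1]$.

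\textbf{Rates and consistency.} For \ref{c:l1} I would apply the generic posterior-contraction theorem of \cite{Ghosal2000}, upgrading a Hellinger rate to an $L^1$ rate via Section \ref{sec:suffrates}. The prior-mass condition is obtained by lower-bounding the probability of a small neighborhood of the wavelet/Haar projection of $\log\la_0$ at level $L_n$, with approximation bias $\lesssim 2^{-\be L_n}$, using explicit lower tails of the Laplace, Gaussian, Gamma, log-normal or log-Laplace density in a neighborhood of the true coordinate values; the sieves are standard truncations of the $O(2^{L_n})$ coordinates, with entropy of the same order. For \ref{c:sn} only $\zeta_n = o(1)$ is required, which follows either directly from Theorem \ref{thm:sn}(a) in the histogram case, or by combining Lemma \ref{lem:snf} with Step~1 for wavelet priors.

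\textbf{Change of variables.} The central technical step is bounding the integral ratios of \ref{c:cvarb} and \ref{c:cvarsn}. The shift $r_t^n = r - (t/\sqrt{n})\psi_{b,L_n}$ is expressed in terms of the prior coordinates: for wavelet priors it acts coordinate-wise on the $Z_{lk}$'s with amplitudes $t \psg \psi_{b,L_n},\psi_{lk}\psd/(\sigma_l\sqrt{n})$, and for histogram priors the Haar transform of Section \ref{sec:relatinghistogramsandhaar} converts it into a shift of the heights $\la_k$. The log Radon--Nikodym derivative is then bounded coordinate-wise using $|\log\phi(z-s) - \log\phi(z)| \le |sz| + s^2/2$ for Gaussian, $||x-s| - |x|| \le |s|$ for Laplace, and Taylor expansion of the log-Gamma / log-normal / log-Laplace densities combined with the boundedness of $r$ on $A_n$ (guaranteed by \ref{c:sn}) for the remaining cases. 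Summing over the $O(2^{L_n})$ coordinates yields a bound of the form $\exp\{C(1+t^2) + |t|\Delta_n\}$ with $\Delta_n$ a semiparametric-bias contribution; the subsequent application of the main theorems is then mechanical.

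\textbf{Main obstacle.} The source of the dichotomy $\be > 1/2$ versus $\be > 0$ lies entirely in the control of $\Delta_n$. For independent Gamma, Gaussian and independent log-Laplace histograms, $\Delta_n$ contains a term of order $\sqrt{n}\,\veps_n \|\psi_{b,L_n}\|_\infty$ coming from the nonlinear reparametrisation of $\la$ through the heights; at $b = \psi_{LK}$ with $L \le L_n$ this reaches order $\sqrt{n} \veps_n 2^{L_n/2}$ and, under $L_n = L_n(\be)$, is controlled by $C(1+t^2)$ only when $\be > 1/2$. For dependent log-Laplace histograms, the conditional-Laplace (martingale/AR) structure produces a telescoping cancellation that removes this contribution; for wavelet Laplace priors, the orthogonality of the basis prevents the bias from accumulating across levels; in both cases the result goes through for any $\be > 0$. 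The delicate prior-by-prior bookkeeping of these log-ratio bounds, and their interaction with the $L_n(\be)$ choice, is where most of the technical work lies, and it is the only place where the smoothness threshold is tight; once it is carried out, direct application of Theorems \ref{thm:bvm}, \ref{thm:sn} and Corollary \ref{cor:donsker} yields the three conclusions.
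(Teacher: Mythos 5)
Your proposal is correct and follows essentially the same route as the paper: verify \ref{c:l1} via the Hellinger--rate machinery of Section \ref{sec:l1} upgraded to an $L^1$ hazard rate through Section \ref{sec:suffrates}, check \ref{c:cvarb}--\ref{c:cvarsn} prior by prior via explicit change of variables (Section \ref{sec:changevar}), and then invoke Theorems \ref{thm:sn}, \ref{thm:bvm} and \ref{thm:donskergen}/Corollary \ref{cor:donsker}. One inaccuracy in your diagnosis of the $\be>1/2$ threshold: the obstruction is not a term of order $\rn\veps_n\|\psi_{b,L_n}\|_\infty$ (which at $L=L_n$ would be of order $\sqrt{L_n}\,2^{L_n}$ and hence never bounded for any $\be$), but rather, depending on the prior, the quantity $\sum_j|z_j|\leqa |t|2^{L_n}/\rn$ for independent log--Laplace heights, the boundedness of $\veps_n\|\psi_{b,L_n}\|_{\mh}\leqa\veps_n 2^{L_n/2}$ on the auxiliary set $B_n$ for Gaussian heights (the coordinate-wise cross term $|sz|$ you invoke is random and must be controlled by restricting to such a high-prior-probability RKHS set), and the supremum-norm consistency of $r$ from Lemma \ref{lem:consistsn} (itself requiring $\be>1/2$) needed to control the extra exponential factor for Gamma heights.
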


For simplicity, we have stated Theorems \ref{thm:appli} and Theorem \ref{thm:applifull} in the case of matched regularity $\ga=\be$. It extends to the case of arbitrary $\ga$ as follows, say first in the case of Haar wavelets: the supremum--norm rate becomes $\veps_{n}^{\be,L_n}$ as in Theorem \ref{thm:sn}. 
The nonparametric BvM theorem holds in the undersmoothing case $\ga\le \be$  when $w_l=l$ and under the weaker condition $\ga<\be+1/2$ for $w_l=2^{l/2}/(l+2)^2$. Also, the Donsker BvM theorem holds if one sets $\ga=1/2$, regardless of $\be>0$ (and more generally as soon as $\ga<\be+1/2$). This is verified along the proof of Theorem \ref{thm:appli} in Section \ref{sec:proofappli}. The case of CDV wavelets is similar, under the further condition $\ga>1/2$ if one works with the un-projected posterior distribution.

\begin{remark} \label{rem:beta}
	The condition $\be>1/2$ assumed for some examples of priors in Theorems  \ref{thm:appli} and \ref{thm:applifull} can be seen to arise from checking the change of variables conditions \ref{c:cvarsn} and \ref{c:cvarb}. In many examples, it can be removed if one allows for individual prior variances (either on histogram heights or wavelet coefficients) that go to infinity fast enough with $n$. For dependent Gaussian histogram priors for instance, by replacing $\log(1+\sigma^2)$ by $2^{L_n}\log(1+\sigma^2)$, one can check that the condition $\be>1/2$ can be removed.
\end{remark}

\subsection{Proof for independent Laplace coefficients}\label{sec:proofindeplaplace}

Let us consider the case of independent Laplace priors on coefficients $Z_{lk}$ in \eqref{priorgen}, with $(\psi_{lk})$ either the Haar basis (in which case we use $\be\le 1$), or the CDV wavelet basis.

First,  the rate condition \ref{c:l1} is verified in Section \ref{sec:l1} 
with $\veps_n\leqa \veps_{n,\be}^{L_n}$. Also, the change of variables condition \ref{c:cvarsn} for $b=\psi_{LK}$ is verified  thanks to the bounds obtained in Section \ref{sec:cvarlap}, where it is shown that it suffices to control, for $0\le t\le\log{n}$, 
\[ t\sum_{l\le L_n,\, k} \frac{|\psi_{n,lk}|}{\rn \sigma_l} \leqa
\frac{t}{\rn} \sum_{l\le L_n} \frac{1}{\sigma_l} 2^{-|l-L|/2}\leqa 
\frac{t}{\rn}(2^{L/2}+L_n2^{L/2}), 
\]
using the first bound of Lemma \ref{lem:cvarlap}. The last bound is at most $tL_n2^{L_n/2}/\rn$, which goes to $0$ if $0\le t \le \log{n}$.  One can now apply Theorem \ref{thm:sn}, which yields the supremum norm contraction rate of $\zeta_n=M_n\veps_n^{\be,L_n}$, for arbitrary $M_n\to\infty$, under no further assumptions for the Haar basis, and, for the CDV basis, either for the projected posterior, or for the full posterior with the additional condition  $2(\be\wedge \ga)>1$, that is if $\be>1/2$, $\ga>1/2$. This shows that \ref{c:sn} holds under these conditions.

We now verify the nonparametric BvM Theorem \ref{thm:bvm} with the choices $w_l=l$ and $\ga\le \be$. First, \ref{c:l1}--\ref{c:sn} hold as verified above. Then, since $\veps_n\leqa \veps_{n,\be}^{L_n}\leqa (L_n2^{L_n}/n)^{1/2}$ for $\ga\le \be$ (see above), we have $\rn\veps_n2^{-L_n/2}\le \sqrt{L_n}=o(w_{L_n})$. 

Now for $w_l=l$, we have
\[ m_n:=\min_{l\le L_n} \left\{ w_l2^{-l/2}l^{-1/4}\right\}
=w_{L_n}2^{-L_n/2}L_n^{-1/4},\] 
so the condition $\rn\veps_n2^{-L_n}=o(m_n)$ is satisfied. By the same reasoning, now using that $\zeta_n\leqa M_n\veps_{n,\be}$ for arbitrary $M_n\to\infty$ and that
$\|\la_0^{L_n^c}\|_{\cM(w)} \leqa 2^{-(1/2+\be)L_n}/w_{L_n}$, one sees that the condition required for controlling the full CDV posterior in Case 2  is also fulfilled in the case $\ga\le \be$. 
Finally, condition \ref{c:cvarb} is verified in a similar way as for \ref{c:cvarsn} above: now $b=\sum_{i=1}^N b_i \psi_{l_i k_i}$, for some $l_i\le L, k_i\le 2^{l_i}$ and $N\ge 1$ fixed. So the previous bounds and the triangle inequality imply that the bound obtained in last display two paragraphs above is bounded from above by the same quantity up to a different multiplicative constant, from which \ref{c:cvarb} is obtained. 
This shows that one can apply Theorem \ref{thm:bvm} when working on the full posterior (i.e. with Haar or CDV with $\be>1/2$). 

If one chooses instead $w_l=2^{l/2}/l^2$, one only needs the condition $\ga< \be+1/2$ to be met. Indeed, in this case $m_n:=\min_{l\le L_n} w_l2^{-l/2}l^{-1/4}=L_n^{-9/4}$, and
\[ \rn\veps_n = \sqrt{L_n2^{L_n}}+\rn2^{-L_n\be} \leqa  \sqrt{L_n}2^{L_n/2}
+ (\log{n})^\delta n^{(1/2+\ga-\be)/(1+2\ga)}, \]
for some $\delta>0$, 
and this is $o(2^{L_n}/L_n^{9/4})$ as soon as $\ga<\be+1/2$. The condition on $\zeta_n$ in Case 2 leads to the same condition, using that one can take $\zeta_n\le M_n\veps_{n,\be}$ for arbitrarily slow $M_n\to\infty$. 

Let us now check that the Donsker--BvM theorem holds for the posterior on $\La$  as soon as $\ga<\be+1/2$.  This follows by applying Theorem \ref{thm:donskergen} with the sequence $w_l=2^{l/2}/l^2$, which verifies the required summability condition as well as the nonparametric BvM in $\cM_0(w)$, as checked above. This gives a limiting result with centering at $\mathbb{T}_n$, and the Donsker--BvM with centering at $\hat\La_n$ (respectively $\hat S_n$ for $\cL(S\given X)$) by Corollary \ref{cor:donsker}. 

Finally, we justify the result stated in Remark \ref{rem:lnu}. When the cut-off is $L_n^U$, one first uses Theorem \ref{thm:sn} to obtain the rate 
$\veps_n^{\be,L_n^U}$. Indeed, Condition \ref{c:l1} with $\veps_n\leqa \veps_n^{\be,L_n^U}$ is obtained as in Section \ref{sec:l1}, using that $L_n^U 2^{L_n^U}\le \veps_n^{\be,L_n^U}$ and $2^{-L_n\be}\le \veps_n^{\be,L_n^U}$ by definition. Verification of \ref{c:cvarsn} is as before, which means that one can indeed apply Theorem \ref{thm:sn}. Similarly, one checks that Theorems \ref{thm:bvm} and Theorem \ref{thm:donskergen} can be applied under the same conditions on $\be$ and the same choices of $w_l$ as for the cut-off $L_n=L_n(\ga)$ in \eqref{eln} as above.

\subsection{Proof for other priors} \label{sec:proofotherpriors}  The proof for the other priors is essentially the same, after verifying Hellinger concentration and the change of variable conditions. The Hellinger rates for all priors with independent coefficients are verified in Section \ref{sec:hellingerindep}, and for the dependent histogram priors verification takes place in Section \ref{sec:hellingerdep}.  

The change of variable conditions, specifically \ref{c:cvarsn} for $b = \psi_{LK}$ and \ref{c:cvarb} are verified in Section \ref{sec:changevar}. For the dependent and independent log-Laplace histogram priors, the conditions are verified in Section \ref{sec:loglaplacechangeofvar}. The conditions are verified for all priors with Gaussian coefficients in \ref{sec:changevargauss} and finally for the dependent and independent Gamma histogram priors in \ref{sec:changevargamma}.

\section{Intermediate rate results\label{sec:suffrates}}

In this Section, we show how to derive posterior contraction rates for the hazard rate in terms of $\|\cdot\|_1, \|\cdot\|_2$ and $\|\cdot\|_\infty$ losses (for $\|\cdot\|_\infty$ the rate is only intermediate, i.e. non--optimal), starting from a Hellinger contraction rate $\veps_n=o(1)$  for the densities in the model (in slight abuse of notation we denote this rate by $\veps_n$, which is used in the main paper for the $L^1$--rate, but one shows below that the Hellinger rate on densities gives an $L^1$--rate on the hazard of the same order).  The derivation of the Hellinger rate is done following the general rate theory of \cite{Ghosal2000}, see Section \ref{sec:l1} below.

Recall that $h^2(P,Q)=\int (\sqrt{p}-\sqrt{q})^2 d\mu=:h^2(p,q)$ is the square--Hellinger distance between two probability distributions $P,Q$ with densities $p,q$ with respect to a common dominating measure $\mu$. For two positive integrable  functions $\la_1, \la_2$, let us denote  
\begin{equation} \label{pseudoh}
	H^2(\la_1,\la_2)= \int_0^1  ( \sqrt{\la_1} - \sqrt{\la_2} )^2.
\end{equation} 
the squared `pseudo-Hellinger' (as we do not assume $\la_1, \la_2$ to be densities) distance between $\la_1$ and $\la_2$. For a given  hazard $\la$, we write
$\bar\la$ for $\la/\int_0^1 \la$, the normalised density defined from $\la$.

\subsection{\label{sec:helltol1}$\|\cdot\|_1$--rates from Hellinger rates}
For $p_{\la}=p_{\la,g}$ the density of the observations under the survival model with hazard $\la$ and censoring density $g$, and $P_{\la}=P_{\la,g}$ the corresponding distribution.  
Suppose one knows that 
\begin{equation} \label{c:hrate}
	\Pi[ \{\la: h^2(P_{\la},P_{\la_0}) \le r_n \} \given X] = 1+\op. 
\end{equation}
We now show that this implies a contraction rate $\veps_n=Mr_n$ in $L^1$-distance for the hazard, up to a large enough multiplicative constant $M>0$.

The Hellinger distance $h^2(p_{\la},p_{\la_0})=h^2(p_{\la,g},p_{\la_0,g})$ can be written as
\begin{align} 
	h^2(p_{\la},p_{\la_0})  = &\ \underbrace{ \int_0^1 \left[\sqrt{gS}-\sqrt{gS_0}\right]^2(y) dy }_{= (i)}  
	+ \underbrace{ \int_0^1 \bar{G}(y)\left[ \sqrt{\la S} - \sqrt{\la_0 S_0} \right]^2(y) dy }_{=(ii)} \notag\\
	&\  + \underbrace{ \bar{G}(1)\left[\sqrt{S}-\sqrt{S_0}\right]^2(1) }_{=(iii)},
	\label{eq:hellingerdef}
\end{align}
with the notation $\bar{G}(y)=1 - G(y-)$ as noted below \ref{c:mod}.

\begin{lem}\label{lem:lab} Under assumptions \ref{c:mod}, there exist constants $c,c' > 0$ (depending only on the universal constants $c_i$ in  \ref{c:mod}) such that if $h^2(p_{\la},p_{\la_0})\leqa \veps_n^2=o(1)$, then 
	$c'\le \La(1) \le c$.
\end{lem}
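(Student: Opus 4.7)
The plan is to exploit the third term in the Hellinger decomposition \eqref{eq:hellingerdef}, which isolates $S(1)$, and then convert bounds on $S(1)$ into bounds on $\La(1)=-\log S(1)$ using that $\La_0(1)$ is itself bounded above and below by the universal constants of assumption \ref{c:mod}.

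First I would observe that all three summands in \eqref{eq:hellingerdef} are non-negative, so the hypothesis $h^2(p_{\la},p_{\la_0})\lesssim \veps_n^2$ implies in particular
\[
\bar G(1)\bigl[\sqrt{S(1)}-\sqrt{S_0(1)}\bigr]^2 \;\lesssim\; \veps_n^2.
\]
Assumption \ref{c:mod} gives $\bar G(1)=P(C=1)>0$, so $|\sqrt{S(1)}-\sqrt{S_0(1)}|\lesssim \veps_n=o(1)$. Combined with $c_1\le \La_0(1)\le c_2$ from \ref{c:mod} (which follows from $c_1\le \la_0\le c_2$ on $[0,1]$), we have $S_0(1)=e^{-\La_0(1)}\in[e^{-c_2},e^{-c_1}]$, so $S_0(1)$ is bounded away from both $0$ and $1$ by constants depending only on $c_1,c_2$.

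Next I would derive a two-sided bound on $S(1)$. From $\sqrt{S(1)}=\sqrt{S_0(1)}+O(\veps_n)$ and $\veps_n=o(1)$, one obtains for $n$ sufficiently large
\[
\tfrac{1}{2}\sqrt{S_0(1)} \;\le\; \sqrt{S(1)}\;\le\; \tfrac{1}{2}\bigl(1+\sqrt{S_0(1)}\bigr),
\]
hence $e^{-c_2}/4 \le S(1) \le \bigl((1+e^{-c_1/2})/2\bigr)^2<1$. Taking $-\log$ and setting $c:=\log(4)+c_2$ and $c':=-2\log\bigl((1+e^{-c_1/2})/2\bigr)>0$ yields the claimed $c'\le\La(1)\le c$, with $c,c'$ depending only on the constants of \ref{c:mod}.

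There is no real obstacle here: the only point to watch is that the inequality $h^2\lesssim \veps_n^2$ with $\veps_n=o(1)$ is genuinely needed in order to swallow the (unknown) multiplicative constant in front of $\veps_n$ when squeezing $S(1)$ strictly inside $(0,1)$, so the statement is asymptotic (valid for $n$ large enough) rather than holding pointwise for each $\la$. The explicit formulas above show that the resulting $c,c'$ depend only on $c_1,c_2$ (and, through $\bar G(1)$, on the censoring constants of \ref{c:mod}), as claimed in the lemma.
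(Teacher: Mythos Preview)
Your proof is correct and follows essentially the same approach as the paper: isolate term (iii) of the Hellinger decomposition \eqref{eq:hellingerdef}, use $\bar G(1)>0$ from \ref{c:mod} to get $|\sqrt{S(1)}-\sqrt{S_0(1)}|\lesssim\veps_n$, and then invoke the two-sided bounds on $\La_0(1)$ to conclude. The paper's proof is more terse (it stops at ``the previous inequality implies the same for $\La(1)$''), whereas you spell out explicit constants, but the argument is identical.
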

\begin{proof}
	
	Since $\bar{G}(1)$ is by \ref{c:mod} a constant bounded away from zero, the upper bound $\bar{G}(1)\left[\sqrt{S}-\sqrt{S_0}\right]^2(1) \lesssim \veps_n^2$ from part (iii) in \eqref{eq:hellingerdef} yields the inequality $|e^{-\La(1)/2} - e^{-\La_0(1)/2}| \lesssim \veps_n$. Since it follows from \ref{c:mod} that $\La_0(1)$ is bounded away from $0$ and infinity, the previous inequality implies the same for $\La(1)$.
\end{proof}

\begin{lem}\label{lem:h_lambda_bar_bounded} 
	Under \ref{c:mod},   on a set where $h^2(p_{\la},p_{\la_0})\leqa \veps_n^2$, we have, for $H$ as in \eqref{pseudoh}, that $H^2(\la, \la_0) \lesssim \veps_n^2$.
\end{lem}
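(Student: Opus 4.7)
The strategy is to extract $H^2(\la,\la_0)$ from the middle term $(ii)$ of the Hellinger decomposition \eqref{eq:hellingerdef} by peeling off the survival factors. The main ingredient is that under \ref{c:mod} and the assumed Hellinger bound, the quantities $\bar G$, $S_0$, $S$ and $\la_0$ are all uniformly bounded away from $0$ and $\infty$ on $[0,1]$, so the only genuinely unbounded object one has to worry about is $\la$ itself, which we avoid by choosing the right algebraic splitting.

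First, I would collect the three bounds given by \eqref{eq:hellingerdef}. Since $g\ge c_3$ on $[0,1)$ and $\bar G\ge c_3$ there as well, terms $(i)$ and $(ii)$ yield
\begin{equation*}
\int_0^1(\sqrt S-\sqrt{S_0})^2\,\lesssim\,\veps_n^2,\qquad \int_0^1(\sqrt{\la S}-\sqrt{\la_0 S_0})^2\,\lesssim\,\veps_n^2.
\end{equation*}
Term $(iii)$ together with Lemma~\ref{lem:lab} gives $\La(1)\le c$, and since $\La$ is increasing this implies $S(y)\ge e^{-c}$ uniformly on $[0,1]$; the same bound holds for $S_0$ by \ref{c:mod}.

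Next I would use the decomposition
\begin{equation*}
\sqrt{\la S}-\sqrt{\la_0 S_0}=\sqrt S\bigl(\sqrt\la-\sqrt{\la_0}\bigr)+\sqrt{\la_0}\bigl(\sqrt S-\sqrt{S_0}\bigr),
\end{equation*}
solving for the first summand, squaring, and applying $(a-b)^2\le 2a^2+2b^2$ to obtain
\begin{equation*}
S\bigl(\sqrt\la-\sqrt{\la_0}\bigr)^2\le 2\bigl(\sqrt{\la S}-\sqrt{\la_0 S_0}\bigr)^2+2\la_0\bigl(\sqrt S-\sqrt{S_0}\bigr)^2.
\end{equation*}
Dividing by $S$, which is bounded below by $e^{-c}>0$, and integrating over $[0,1]$, the first term on the right is controlled by $(ii)$ up to a constant, while the second is bounded by $(\|\la_0\|_\infty/\inf S)\int(\sqrt S-\sqrt{S_0})^2$, which is controlled by $(i)$ thanks to $\la_0\le c_2$ from \ref{c:mod}. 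Both pieces are thus $O(\veps_n^2)$, which gives $H^2(\la,\la_0)\lesssim\veps_n^2$ as claimed.

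The only subtlety is ensuring $S$ is bounded below, which is precisely why the preliminary Lemma~\ref{lem:lab} and the $(iii)$ piece of the Hellinger decomposition are invoked; without it one cannot divide by $S$. The crucial choice is to split so that the remainder factor in front of $(\sqrt S-\sqrt{S_0})^2$ is $\la_0$ (bounded) rather than $\la$ (unknown a priori), which is what makes the argument go through.
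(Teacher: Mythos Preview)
The proposal is correct and follows essentially the same route as the paper: both use Lemma~\ref{lem:lab} to bound $S$ away from zero, then split $\sqrt{\la S}-\sqrt{\la_0 S_0}$ so that the cross term carries the bounded factor $\la_0$ rather than $\la$, controlling the two pieces by $(ii)$ and $(i)$ respectively. The only cosmetic difference is that the paper first writes $H^2(\la,\la_0)\le S(1)^{-1}\int(\sqrt{\la S}-\sqrt{\la_0 S})^2$ and then adds and subtracts $\sqrt{\la_0 S_0}$, whereas you write the algebraic identity directly and divide by $S$ at the end; one minor slip is that $\bar G\ge c_3$ is not quite what \ref{c:mod} gives (rather $\bar G\ge \bar G(1)>0$), but this does not affect the argument.
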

\begin{proof} 
	By Lemma \ref{lem:lab}, the quantity $S(1)$ is bounded from below, so using the  inequality $(a+b)^2 \leq 2(a^2+b^2)$, 
	\begin{align}
		H^2(\la,\la_0) & \leq \frac{1}{S(1)}\int_0^1 ( \sqrt{\la S} - \sqrt{\la_0 S} )^2 \lesssim \int_0^1 ( \sqrt{\la S} - \sqrt{\la_0 S} )^2 \notag \\
		&\le 2 \int_0^1 ( \sqrt{\la S} - \sqrt{\la_0 S_0} )^2
		+2 \int_0^1 ( \sqrt{\la_0 S_0} - \sqrt{\la_0 S} )^2. \label{eq:hbar1}
	\end{align}
	Since $G$ is a cdf, the term (ii) in \eqref{eq:hellingerdef} is bounded from below by
	\[ (ii) \ge \bar{G}(1)\int_0^1 \left[ \sqrt{\la S} - \sqrt{\la_0 S_0} \right]^2(y) dy. \]
	By  assumption \ref{c:mod} on $\bar{G}(1)$, and because $(ii) \lesssim \veps_n^2$ by assumption, we obtain that the first term of \eqref{eq:hbar1} is bounded above by a constant multiple of $\veps_n^2$. 
	By  assumption \ref{c:mod} on $g$, one gets, again referring to \eqref{eq:hellingerdef}:
	\[ (i) =  \int_0^1 \left[\sqrt{gS}-\sqrt{gS_0}\right]^2(y) dy\ge c_4  \int_0^1 \left[\sqrt{S}-\sqrt{S_0}\right]^2.\]
	
	Using that $(i)\leqa \veps_n^2$ by assumption, and noticing that the second term of \eqref{eq:hbar1} is bounded by $2\|\la_0\|_\infty \int_0^1(\sqrt{S}-\sqrt{S_0})^2$ one gets by  assumption \ref{c:mod} on $\|\la_0\|_\infty$ that the second term of \eqref{eq:hbar1} is also less than $C\veps_n^2$. 
\end{proof}

\begin{lem}\label{lem:h_lambda_bar_and_h_bar}
	Suppose \ref{c:mod} holds.
	
	{Then, on a set where $h^2(p_{\la},p_{\la_0})\leqa \veps_n^2$, we have that $H^2(\bar{\la}, \bar{\la_0}) \lesssim \veps_n^2$ and $\|\la - \la_0\|_1^2 \lesssim \veps_n^2$. }
\end{lem}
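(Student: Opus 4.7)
The plan is to combine the two preceding lemmas (which already give $c' \le \La(1) \le c$ and $H^2(\la,\la_0) \lesssim \veps_n^2$ on the event under consideration) with two elementary manipulations: a Cauchy--Schwarz bound for the $L^1$ statement, and a careful algebraic splitting for the normalised hazards. Throughout, all the $\lesssim$'s below depend only on the constants $c_i$ of \ref{c:mod}.

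\textbf{Step 1 ($L^1$-bound on $\la-\la_0$).} I would factorise $\la-\la_0 = (\sqrt\la-\sqrt{\la_0})(\sqrt\la+\sqrt{\la_0})$ and apply Cauchy--Schwarz:
\[
\|\la-\la_0\|_1^2 \le \Bigl(\int_0^1(\sqrt\la+\sqrt{\la_0})^2\Bigr)\cdot H^2(\la,\la_0)\le 2(\La(1)+\La_0(1))\,H^2(\la,\la_0).
\]
By Lemma \ref{lem:lab} we have $\La(1) \le c$ and by \ref{c:mod} $\La_0(1) \le c_2$, so the prefactor is bounded, and Lemma \ref{lem:h_lambda_bar_bounded} then yields $\|\la-\la_0\|_1^2 \lesssim \veps_n^2$.

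\textbf{Step 2 (control of $|\La(1)-\La_0(1)|$).} From part (iii) of the Hellinger decomposition \eqref{eq:hellingerdef} together with \ref{c:mod}, one gets $|e^{-\La(1)/2}-e^{-\La_0(1)/2}|\lesssim \veps_n$. Since, by Lemma \ref{lem:lab} and \ref{c:mod}, both $\La(1)$ and $\La_0(1)$ lie in a compact interval of $(0,\infty)$ on which $x\mapsto e^{-x/2}$ is bi-Lipschitz, this upgrades to $|\La(1)-\La_0(1)|\lesssim \veps_n$, and hence $|\sqrt{\La(1)}-\sqrt{\La_0(1)}|\lesssim \veps_n$ as well, using again that both quantities are bounded away from $0$.

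\textbf{Step 3 ($H^2$-bound on $\bar\la-\bar{\la_0}$).} Writing $\bar\la = \la/\La(1)$ and decomposing
\[
\sqrt{\bar\la}-\sqrt{\bar{\la_0}} = \frac{\sqrt\la-\sqrt{\la_0}}{\sqrt{\La(1)}} + \sqrt{\la_0}\Bigl(\frac{1}{\sqrt{\La(1)}}-\frac{1}{\sqrt{\La_0(1)}}\Bigr),
\]
I would square and integrate, using $(a+b)^2\le 2(a^2+b^2)$:
\[
H^2(\bar\la,\bar{\la_0}) \le \frac{2}{\La(1)}H^2(\la,\la_0) + 2\La_0(1)\Bigl(\frac{1}{\sqrt{\La(1)}}-\frac{1}{\sqrt{\La_0(1)}}\Bigr)^2.
\]
The first term is $\lesssim \veps_n^2$ by Step 1's preliminaries. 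For the second, since $\La(1),\La_0(1)$ are bounded away from $0$,
\[
\Bigl|\tfrac{1}{\sqrt{\La(1)}}-\tfrac{1}{\sqrt{\La_0(1)}}\Bigr| = \frac{|\sqrt{\La_0(1)}-\sqrt{\La(1)}|}{\sqrt{\La(1)\La_0(1)}} \lesssim \veps_n
\]
by Step 2, so this term is also $\lesssim \veps_n^2$.

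The only real subtlety is Step 2, which is the bridge from the Hellinger information about survival at the endpoint to an $O(\veps_n)$ control on the total mass $\La(1)$; once that is in hand, both claims of the lemma follow from routine inequalities.
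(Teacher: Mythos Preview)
Your proof is correct and follows essentially the same route as the paper: the same algebraic splitting of $\sqrt{\bar\la}-\sqrt{\bar{\la_0}}$ in Step 3 and the same Cauchy--Schwarz factorisation in Step 1. The only minor variation is your Step 2, where you bound $|\La(1)-\La_0(1)|$ via the endpoint survival term (iii) and the bi-Lipschitz property of $x\mapsto e^{-x/2}$; the paper instead re-uses the Cauchy--Schwarz bound from Step 1, writing $(\La(1)-\La_0(1))^2=\bigl(\int_0^1(\la-\la_0)\bigr)^2\le 2(\La(1)+\La_0(1))H^2(\la,\la_0)$, which avoids invoking (iii) a second time but is otherwise equivalent.
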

\begin{proof}
	By using the inequality $(a+b)^2 \leq 2(a^2+b^2)$,
	\begin{align*}
		H^2(\bar\la,\bar\la_0) 
		&\leq 2  \int_0^1 \left( \sqrt{\frac{\la}{\La(1) }} -  \sqrt{\frac{\la_0}{\La(1) }} \right)^2
		+  2\int_0^1 \left( \sqrt{\frac{\la_0}{\La(1) }} -  \sqrt{\frac{\la_0}{\La_0(1) }} \right)^2\\
		&= 2\La(1)^{-1} H^2(\la, \la_0) +  2\La_0(1) \left( \La(1)^{-1/2} - \La_0(1)^{-1/2} \right)^2.
	\end{align*}
	By Lemmas \ref{lem:lab} and \ref{lem:h_lambda_bar_bounded}, we have $2\La(1)^{-1} H^2(\la, \la_0)  \lesssim \veps_n^2$. Continuing with the second term of the above display, again using {Lemma \ref{lem:lab},} $(a+b)^2 \leq 2(a^2+b^2)$ and the Cauchy-Schwarz inequality,
	\begin{align*}
		\La_0(1) &\left( \La(1)^{-1/2} - \La_0(1)^{-1/2} \right)^2 = \La(1)^{-1}\left( \frac{\La_0(1)-\La(1)}{\sqrt{\La_0(1)} + \sqrt{\La(1)}} \right)^2 \\
		& \leq (\La_0(1)-\La(1))^2
		\lesssim \left(\int_0^1 (\la_0 - \la)\right)^2\\
		&
		\leq \int_0^1 (\sqrt{\la}-\sqrt{\la_0})^2 \cdot \int_0^1 (\sqrt{\la}+\sqrt{\la_0})^2
		\leq 2 [\La(1) + \La_0(1)] H^2(\la, \la_0),
	\end{align*}
	which is also bounded above by a constant multiple of $\veps_n^2$ by Lemmas \ref{lem:lab} and \ref{lem:h_lambda_bar_bounded}. { By noting $\|\la - \la_0\|_1^2 = (\int_0^1 |(\sqrt{\la} - \sqrt{\la_0})(\sqrt{\la} + \sqrt{\la_0})|)^2$ and applying Cauchy-Schwarz as above, we find that $\|\la - \la_0\|^2_1$ is bounded by a constant multiple of $\veps^2_n$ as well. }
\end{proof}

\subsection{\label{sec-strongn} Log--hazard $r$: $\|\cdot\|_2$--rate and intermediate $\|\cdot\|_\infty$--rate} $ $
We now show how to derive results for stronger losses than $\|\cdot\|_1$ for the log--hazard $r$. 
We focus on the $\|\cdot\|_2$--norm, or equivalently, as $\la_0M_0$ is bounded away from $0$ and $\infty$ by \ref{c:mod}, on the LAN $\|\cdot\|_L$--norm defined below \eqref{eq:LAN}.

\begin{lem} \label{lem:consistsn}
	Suppose $r_0=\log{\la_0}\in   \cH(\be, D)$ with $\be>1/2$, $D > 0$. Assume, for $\veps_n$ the rate in \ref{c:l1}, that the cut--off $L_n$ verifies $2^{L_n/2}\veps_n=o(1)$. 
	Then 
	\begin{align*}
		\|r-r_0\|_2^2& \leqa \|r-r_0\|_L^2 \leqa \veps_n^2,\\ 
		\|r-r_0\|_\infty & \leqa \ell_\infty(r,r_0)\leqa 2^{L_n/2}\veps_n +2^{-\be L_n}.
	\end{align*}
\end{lem}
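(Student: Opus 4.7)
The plan is to exploit the Hellinger posterior rate $\veps_n$ that underlies the $L^1$ concentration in \ref{c:l1} (as established in Section \ref{sec:suffrates}), together with the finite-dimensional structure of the prior (which forces $r \in \cV_{L_n}$), in three stages.

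First, I would upgrade the control on $\la - \la_0$ from $L^1$ to $L^2$ via the algebraic identity $\la - \la_0 = (\sqrt{\la} + \sqrt{\la_0})(\sqrt{\la} - \sqrt{\la_0})$. Assuming a crude sup bound on $\la$ on the event of interest (so that $\sqrt{\la} + \sqrt{\la_0}$ is uniformly bounded), one gets $\|\la - \la_0\|_2^2 \leqa H^2(\la, \la_0) \leqa \veps_n^2$ by Lemma \ref{lem:h_lambda_bar_bounded}. Then, on a set where $\|r - r_0\|_\infty = o(1)$, the identity $\la - \la_0 = \la_0(e^{r - r_0} - 1)$ together with $\la_0 \asymp 1$ from \ref{c:mod} yields the pointwise equivalence $|\la - \la_0| \asymp |r - r_0|$, transferring to $\|r - r_0\|_2 \leqa \veps_n$; the claim $\|r - r_0\|_L \leqa \veps_n$ then follows since $M_0\la_0 \asymp 1$.

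Second, for the sup bound, decompose $r - r_0 = (r - r_{0, L_n}) + (r_{0, L_n} - r_0)$ with $r_{0, L_n} = P_{L_n} r_0$. The H\"older hypothesis on $r_0$ together with the Besov-wavelet characterisation gives $\sum_{l > L_n} 2^{l/2}\max_k |\langle r_0, \psi_{lk}\rangle| \leqa 2^{-\be L_n}$, controlling the tail of $\ell_\infty(r, r_0)$. For the low-frequency part $l \le L_n$, apply Cauchy--Schwarz:
\[
\sum_{l \leq L_n} 2^{l/2} \max_k|\langle r - r_0, \psi_{lk}\rangle| \leq \Bigl(\sum_{l \leq L_n} 2^l\Bigr)^{1/2} \|r - r_{0, L_n}\|_2 \leqa 2^{L_n/2}\veps_n,
\]
using the $L^2$ rate from Stage 1. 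This gives $\ell_\infty(r, r_0) \leqa 2^{L_n/2}\veps_n + 2^{-\be L_n}$, and the reduction $\|r - r_0\|_\infty \leqa \ell_\infty(r, r_0)$ is the standard wavelet localisation bound from (W3).

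The preliminary $\|r - r_0\|_\infty = o(1)$ required by the first stage is itself obtained by a bootstrap: starting from a rough sup control on $r$ (inherited from the prior), iterate the Stage 1 Hellinger upgrade to get an $L^2$ rate, then apply the Nikol'skii inequality $\|f\|_\infty \leqa 2^{L_n/2}\|f\|_2$ on $\cV_{L_n}$ combined with the $2^{-\be L_n}$ bias estimate to get an improved sup rate, and repeat. The assumptions $2^{L_n/2}\veps_n = o(1)$ and $\be > 1/2$ are precisely what make this iteration converge to the fixed point $\|r - r_0\|_\infty \leqa 2^{L_n/2}\veps_n + 2^{-\be L_n}$. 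The main obstacle is this bootstrap: Nikol'skii is lossy by a factor $2^{L_n/2}$, so a naive iteration using only the $L^1$ rate would blow up; the Hellinger-based (rather than $L^1$-based) $L^2$ upgrade in Stage 1 is exactly what breaks the loss and closes the iteration.
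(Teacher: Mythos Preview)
Your overall scheme---get an $L^2$ rate from Hellinger, then lift to $L^\infty$ via Cauchy--Schwarz on the low frequencies plus the H\"older bias bound on the high ones---matches the paper's. The gap is in Stage~1 and the bootstrap meant to initialise it.

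Your inequality $\|\la - \la_0\|_2^2 \leqa H^2(\la,\la_0)$ needs $\sqrt{\la}+\sqrt{\la_0}$ bounded, and the transfer $\|\la-\la_0\|_2\to\|r-r_0\|_2$ needs $\la$ bounded away from zero; both amount to controlling $\|r-r_0\|_\infty$. If you track the constants honestly, Stage~1 delivers only
\[
\|r-r_0\|_2^2\ \leqa\ e^{c\|r-r_0\|_\infty}\,\veps_n^2,
\]
an \emph{exponential} factor in $\|r-r_0\|_\infty$. Feeding this into the Nikol'skii step produces the recursion $M_{k+1}\leqa 2^{L_n/2}\veps_n\, e^{cM_k/2} + 2^{-\be L_n}$, which diverges unless $M_0$ is already small. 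Neither the prior support nor the $L^1$ rate on $A_n$ furnishes a small $M_0$: on $A_n$ one only has $\|\la-\la_0\|_1\le\veps_n$, which says nothing about $\inf\la$. So the bootstrap does not close.

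The paper avoids the circularity by invoking Lemma~8 of Ghosal--van der Vaart \cite{Ghosal2007} on the normalised densities $\bar\la,\bar\la_0$:
\[
\int \bar\la_0\log^2\Bigl(\frac{\bar\la_0}{\bar\la}\Bigr)\ \leqa\ h^2(\bar\la,\bar\la_0)\Bigl(1+\log\Bigl\|\frac{\bar\la_0}{\bar\la}\Bigr\|_\infty\Bigr)^2.
\]
After absorbing the normalising constants via Lemma~\ref{lem:h_lambda_bar_and_h_bar}, this yields the \emph{polynomial} relation $\|r-r_0\|_2^2\leqa \veps_n^2(1+\|r-r_0\|_\infty^2)$. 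Now the argument is one-shot: substitute $\|r-r_0\|_\infty\leqa 2^{L_n/2}\|r-r_0\|_2+2^{-\be L_n}$, use $2^{L_n}\veps_n^2=o(1)$, and absorb---no iteration, no a~priori sup bound needed. The logarithm inside the Ghosal--van der Vaart bound is precisely what turns the density ratio $\|\la_0/\la\|_\infty\sim e^{\|r-r_0\|_\infty}$ into a factor linear in $\|r-r_0\|_\infty$, and this is the missing ingredient in your proposal.
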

\begin{proof} 
	To bound $\|r-r_0\|_L^2$ from above, it is enough to bound, as $M_0\le 1$, the quantity $\int_0^1 (r-r_0)^2\la_0=\int_0^1 \la_0\log^2(\la/\la_0)$,
	
	\begin{align*}
		\int_0^1 \la_0\log^2(\la/\la_0) & = \La_0(1)  \int_0^1 \bar\la_0 \log^2\left(\frac{\bar\la}{\bar\la_0}\frac{\La(1)}{\La_0(1)}\right) \\
		& \le 2\La_0(1)\int_0^1 \bar\la_0 \log^2\frac{\bar\la_0}{\bar\la}
		+ 2\La_0(1)\log^2\left(\frac{\La(1)}{\La_0(1)}\right).
	\end{align*}
	Further, using the bound on $\|\cdot\|_1$ from Lemma \ref{lem:h_lambda_bar_and_h_bar}, 
	\[ \left|\frac{\La(1)}{\La_0(1)}-1\right|
	\le (\La(1)-\La_0(1))/\La_0(1)\leqa \|\La-\La_0\|_\infty\leqa \|\la-\la_0\|_1\leqa \veps_n,\] 
	so that  expanding the logarithm in the last but one display one gets
	\[  \int_0^1 \la_0\log^2(\la/\la_0)  \leqa
	\int_0^1 \bar\la_0 \log^2\frac{\bar\la_0}{\bar\la} + \veps_n^2.
	\]
	Now, under \ref{c:mod}, for $c_1>0$ a universal constant,
	\[ \|r-r_0\|_2^2 \le c_1^{-1} \int_0^1 \la_0\log^2(\la/\la_0) \leqa
	\int_0^1 \bar\la_0 \log^2\frac{\bar\la_0}{\bar\la} + \veps_n^2.\]
	From Lemma 8 in \cite{Ghosal2007} applied with the densities $p=\bar\la_0$ and $q=\bar\la$,
	\[ \int_0^1 \bar\la_0 \log^2\frac{\bar\la_0}{\bar\la} \leqa h^2(\bar\la,\bar\la_0)
	\left(1+ \log\left\| \frac{\bar\la_0}{\bar\la}\right\|_\infty\right)^2.\]
	Proceeding as in the previous bounds,
	\[ \frac{\bar\la_0}{\bar\la} = \frac{\La(1)}{\La_0(1)}e^{r_0-r}\le (1+C\veps_n)e^{\|r-r_0\|_\infty},
	\]
	so that  
	\[\left(1+ \log\left\| \frac{\bar\la_0}{\bar\la}\right\|_\infty\right)^2  
	\leqa 1 + \|r-r_0\|_\infty^2. 
	\]
	Deduce, using the bound on $H^2(\bar{\la},\bar{\la_0})=h^2(\bar\la,\bar\la_0)$ from Lemma \ref{lem:h_lambda_bar_and_h_bar}, that
	\[ \| r - r_0 \|_2^2 \leqa  \veps_n^2(1 + \|r-r_0\|_\infty^2). \]
	
	For $M_n=2^{m_n}$ to be chosen, since $r_0\in  \cH(\be, D)$, and for any $\ga\le \be$ and $x\in[0,1]$, recalling that the end of study is at $\ta=1$,
	\begin{align*}
		|(r-&r_0)(x)|  \leqa \sum_{l\le m_n} \sum_k |r_{lk}-r_{0,lk}||\psi_{lk}(x)|+
		\sum_{l>m_n} 2^{l/2}\max_{k}(|r_{0,lk}|+|r_{lk}|)\\
		& \leqa \|r-r_0\|_2 \Big[\sum_{l\le m_n} \sum_k \psi_{lk}^2(x)\Big]^{1/2}
		+ M_n^{-\be} + \sum_{l>m_n} 2^{-l\ga}\max_{k} \left[2^{(1/2+\ga)l} |r_{lk}|\right]
		\\ 
		& \leqa \sqrt{M_n}\|r-r_0\|_2+M_n^{-\be}+ \sum_{l>m_n} 2^{-l\ga}\max_{k} \left[2^{(1/2+\ga)l} |r_{lk}|\right].
	\end{align*}
	Let us choose $M_n=2^{L_n}$ the prior cut-off.  
	Then for any prior truncated at $2^{L_n}$, the last term in the previous display is zero and one gets the bound
	\[ \|r-r_0\|_\infty 
	\le 2^{L_n/2}\|r-r_0\|_2 + 2^{-\be L_n}.
	\]  
	Putting this bound back into the former inequalities leads to
	\begin{align*}
		\| r-r_0 \|_2^2 & \leqa \veps_n^2 2^{L_n}\|r-r_0\|_2^2 + \veps_n^2.
	\end{align*}
	Since the cut--off is such that  $\veps_n^2 2^{L_n}=o(1)$ by assumption, one deduces
	\[ \|r-r_0 \|_2^2  \leqa \veps_n^2,\]
	as well as, using the previously obtained bounds, the $\|\cdot\|_\infty$--consistency
	\[ \|r-r_0\|_\infty  \leqa 2^{L_n/2}\veps_n + 2^{-\be L_n}= o(1),\]
	under the previous assumptions.  For the $\ell_\infty$--statement, one notes that
	\begin{align*}
		\ell_\infty(r,r_0) & = \sum_{l\le L_n} 2^{l/2} \max_k |r_{lk}-r_{0,lk}| + 
		\sum_{l> L_n} 2^{l/2} \max_k |r_{0,lk}|\\
		& \le \Big[\sum_{l \le L_n} 2^l\Big]^{1/2} \Big[\sum_{l \le L_n} (r_{lk}-r_{0,lk})^2\Big]^{1/2} + 2^{-\be L_n} \\
		& \leqa 2^{L_n/2}\|r-r_0\|_2 + 2^{-\be L_n} \leqa 2^{L_n/2}\veps_n + 2^{-\be L_n},
	\end{align*}
	using the previously obtained $L^2$--rate on $r$. 
\end{proof}

\section{\label{sec:l1}Hellinger rates}

We obtain Hellinger-rates for all considered priors.
We recall that by $h^2(p_\la, p_{\la_0})$ we mean $h^2(p_{\la, g}, p_{\la_0, g})$, that is, we vary the hazard while we keep the censoring distribution the same.  
Our goal is to obtain a Hellinger rate $\veps_n\leqa \veps_{n}^{\be,L_n}$. In this Section we use $\veps_{n,\be}$ as a shorthand for  $\veps_{n}^{\be,L_n}$. That is,
\[ \veps_{n,\be} = \sqrt{L_n2^{L_n}/n}+2^{-\be L_n}.\] 
Following \cite{Ghosal2000}, suitable tests exist for the Hellinger distance and it suffices to verify, as $n \to \infty$, the conditions of Theorem 1 in  \cite{Ghosal2000},
\begin{align}
	\log N(C_1\varepsilon_n, \mathcal{A}_n, h) & \le C_2n\varepsilon_n^2, \label{entropy}\\
	\Pi(L^2[0, 1]\backslash\mathcal{A}_n) & 
	\le e^{-n\varepsilon_n^2(C_3 + 4C_4^2)},
	\label{setan}\\
	\Pi(B_{KL}(r_0, C_4\varepsilon_n)) & \ge e^{-n\varepsilon_n^2C_3},
	\label{priormass}
\end{align}
for $\mathcal{A}_n$ a sequence of measurable sets, where $K(p,p_0)=-P_0\log(p/p_0)$ and $V(p,p_0)=P_0\log^2(p/p_0)$, and one denotes 
\[B_{KL}(p_0, \varepsilon)=\{r:\ K(p_\la,p_{\la_0})\le \veps^2, V(p_\la,p_{\la_0})\le \veps^2\}.\] 
Let us set, with $r_{lk}=\psg r, \psi_{lk}\psd$ the wavelet coefficients of $r$,
\[ \cA_n=\{r:\, r_{lk}=0 \ (\forall\, l>L_n, k),\quad |r_{lk}|\le n \ (\forall\, l\le L_n, k)\}. \]
By the same arguments as used in the Cox model in \cite{Castillo2012}, one can relate the Hellinger distance between the data distributions and the supremum norm distance between log--hazards, as well as  the set $B_{KL}$ to a supremum norm ball: by Lemmas 7 and 8 of \cite{Castillo2012} (setting the parameter of the Cox model to zero), we have 
\begin{align}
	h^2(p_{\la_1},p_{\la_2}) & \leqa \|r_1-r_2\|_\infty^2 e^{\|r_1-r_2\|_\infty}
	\quad (\|r_1-r_2\|<1/4) \label{boundh2} \\
	\{r : \|r - r_0\|_\infty \leq \veps\} & \subset \{r : K(p_{\la_0}, p_\la) \lesssim \veps^2, V(p_{\la_0}, p_\la) \lesssim \veps^2\}, \label{inclusion}
\end{align}
for any $\veps>0$ small enough.

\subsection{Independent priors on log--hazard Haar coefficients\label{sec:hellingerindep}} We consider  either independent Gaussian $r_{lk}\sim \cN(0,\sigma_l^2)$ or Laplace $\cL(0,\sigma_l)$ coefficients. We denote $\cL(0,\sigma_l)$ the Laplace distribution with scale $\sigma_l$, that is the distribution of $\sigma_l Y$ where $Y\sim\cL(0,1)$.  

Condition \eqref{setan} follows from a union bound by noticing that the marginal probabilities $\Pi[|r_{lk}|> n]\leqa e^{-n}$ by using $\sigma_l\le 1$ and the Gaussian or Laplace tail bound, so that the prior probability in \eqref{setan} is bounded from above by $C2^{L_n}e^{-n}$ which goes much faster to $0$ than $e^{-Cn\veps_{n,\be}^2}$.

To verify the entropy condition \eqref{entropy}, one notes that thanks to the bound \eqref{boundh2}, one can replace the Hellinger metric by the supremum norm on $r$'s. By identifying $r\in\cA_n$ with the vector of size $\sum_{l\le L_n} 2^l$ of its wavelet coefficients, we have, for any  $r_1, r_2$ in $\cA_n$,
\begin{align*}
	\|r_1-r_2\|_\infty & \leqa \ell_\infty(r_1,r_2)=\sum_{l\le L_n}2^{l/2}\max_{k}|r_{1,lk}-r_{2,lk}|\\
	& \leqa 2^{L_n/2} \max_{l\le L_n, k} |r_{1,lk}-r_{2,lk}|
	\leqa 2^{L_n/2} \|r_1-r_2\|_{\ell^2},
\end{align*}
where $\|\cdot\|_{\ell^2}$ is the Euclidean norm of a vector. From this deduce that
\begin{align*}
	\log N(\veps,\cA_n,h) & \leqa \log N(c_1\veps 2^{-L_n/2},B_{\ell^2}(0,c_2 2^{L_n/2}n),\|\cdot\|_{\ell^2}) \\
	&\leqa C2^{L_n}\log\left( \frac{c_3 2^{L_n/2}n}{\veps 2^{-L_n/2}}\right) \leqa 
	2^{L_n}\log(n/\veps),
\end{align*}
using that the $\veps$--entropy of a Euclidean ball of radius $R$ in dimension $C2^{L_n}$ is bounded from above by $C2^{L_n}\log(C'R/\veps)$. The last display is bounded from above by a constant times $L_n2^{L_n}\leqa n\veps_{n,\be}^2$ for $\veps\asymp\veps_{n,\be}$.

Finally,  the prior mass condition is obtained by first using the inclusion \eqref{inclusion} and then noting that, since $\|r_0-r_{0,L_n}\|_\infty\leqa 2^{-L_n \be}\leqa \veps_{n,\be}$ (recall $r_{0,L_n}=P_{L_n}r_0$) using that $r_0$ is $\be$--H\"older, if one chooses $\veps_n\asymp \veps_{n,\be}$,
\begin{align*}
	\Pi[B_{KL}(r_0,C\veps_n)] & \geqa \Pi[\|r-r_0\|_\infty \leqa \veps_n]\geqa 
	\Pi[\ell_\infty(r,r_{0,L_n}) \leqa \veps_n]\\
	& \geqa \Pi[|r_{lk}-r_{0,lk}|\leqa 2^{-l/2}\veps_n/L_n,\ \ \forall\, l\le L_n, k]\\
	& \geqa  \prod_{l\le L_n, k} \Pi[|r_{lk}/\sigma_l-r_{0,lk}/\sigma_l|\leqa 2^{-l/2}\sigma_l^{-1}\veps_n/L_n]
\end{align*}
by using independence of the coordinates $r_{lk}$ in this case. Further, each marginal prior probability in the last display can be bounded from below using the minimum of the density on the corresponding interval. As $\sigma_l\geq 2^{-l/2}$ by \eqref{c:sig}, the term $r_{0,lk}/\sigma_l$ is bounded, so the density at stake is bounded from below by a constant $c_0>0$. One deduces  that
\begin{align*}
	\Pi[B_{KL}(r_0,C\veps_n)] 
	&\geqa \prod_{l\le L_n,k}  c_0 \left(2 2^{-L_n/2} \frac{\veps_n}{L_n} \right)
	\geqa e^{-C' 2^{L_n}\log\left(2 2^{L_n/2} \frac{L_n}{\veps_n} \right)},
\end{align*}
which is bounded from below by $e^{-C2^{L_n}\log(L_n)}\geq
e^{-C'n\veps_{n,\be}^2}$ as requested.

\subsection{Priors defined on histogram heights\label{sec:hellingerdep}}

The priors on histogram heights from Section \ref{sec:appli} either draw heights independently, or in a dependent way specified in \eqref{eq:ARstructure}. Below we give the argument for dependent heights in detail, the case of independent heights being similar and easier (independence can be used to bound the prior mass directly, as for wavelet priors above).

In the next lines and in the sequel we freely use the following bounds, that follow from the definition of conditional probabilities: for measurable $A,B$,
\begin{align}
	\inf_{x\in A}\, & \Pi[r_k\in B  \given r_{k-1}=x] \notag \\
	& \le \Pi[r_k\in B\given r_{k-1}\in A]
	\le \sup_{x\in A}\, \Pi[r_k\in B\given r_{k-1}=x]. \label{cpr}
\end{align}
We begin by verifying \eqref{priormass}, for which we use the inclusion \eqref{inclusion}.  We start with a computation valid for all three dependent priors. Let $r_{0,k}  = |I^{L_n+1}_k|^{-1}\int_{I^{L_n+1}_k} r_0$ and $r_{0,L_n} = \sum_{k=0}^{2^{L_n+1}-1} r_{0,k} \1_{I^{L_n+1}_k}$,   the $L_2$-projection of $r_0$ onto the space of histograms with $2^{L_n+1}$ equispaced bins. For $n$ large enough, we have $\|r_0 - r_{0,{L_n}}\|_\infty \leq \varepsilon_n/2$ (as $\|r_0 - r_{0,{L_n}}\|_\infty \leqa 2^{-L_n\be}$ for $\be\le 1$ if $r_0\in\cH(\be,D)$ and taking $\veps_n\asymp \veps_{n,\be}$). First using the triangle inequality,
\begin{align*}
	\Pi&[ \|r - r_0\|_\infty < \varepsilon_n] 
	\geq \Pi [ \|r - r_{0,{L_n}}\|_\infty < \varepsilon_n/2]\\
	&= \Pi[ \forall k \in \{0, \ldots, 2^{L_n+1}-1\} :  |r_k - r_{0,k}| < \varepsilon_n/2]\notag\\
	&=  \Pi[|r_0 - r_{0,0}| < \varepsilon_n/2] \prod_{k=0}^{2^{L_n + 1}} \Pi[ |r_{k+1} - r_{0,k+1}| < \varepsilon_n/2 \mid  |r_{k} - r_{0,k}| < \varepsilon_n/2],\notag
\end{align*}
using the Markov structure of the prior. Since $r_0\in \cH(\be,D)$, we further obtain $ |r_{0,k} - r_{0,k+1}| \leq D|I^{L_n+1}_1|^\beta$ for all $k$. Using the so obtained inequalities, we verify \eqref{priormass} for each of the three dependent histogram priors.

\emph{Prior mass: dependent log-normal prior}\\
We study $\Pi[ |r_{k+1} - r_{0,k+1}| < \varepsilon_n/2 \mid  |r_{k} - r_{0,k}| < \varepsilon_n/2]$. We recall the prior structure $r_{k+1} \mid r_k \sim \mathcal{N}(r_k - s^2/2, s^2)$, where for clarity of exposition we abbreviate $s^2 = \log(1+\sigma^2)$ and we use the first inequality in \eqref{cpr}.

For a normally distributed random variable, the probability of being realized in an interval of size $\varepsilon_n/2$  is smallest farthest out in the tails. The most extreme interval is achieved if $|r_k - r_{0,k}| = \varepsilon_n/2$ and $|r_{0,k} - r_{0,k+1}| = D2^{-(L_n+1)\beta}$. Thus the probability of interest is at least the probability that $Z$ is in $[s^2/2 + D2^{-(L_n+1)\beta}, s^2/2 + \varepsilon _n + D2^{-(L_n+1)\beta}]$, where $Z \sim \mathcal{N}(0, s^2)$.

We use: $\int_a^b \frac{1}{\sqrt{2\pi}} e^{-x^2/2} dx \geq  (b-a)/\sqrt{2\pi} e^{-\max\{|a|, |b|\}^2/2}$, and find:
\begin{align*}
	\Pi &[ |r_{k+1} - r_{0,k+1}| < \varepsilon_n/2 \mid  |r_{k} - r_{0,k}| < \varepsilon_n/2]\\
	&\geq   \varepsilon_n\frac{1}{\sqrt{2\pi s^2}}e^{-\frac{1}{2 s^2} (\max\{ |s^2/2  + D2^{-(L_n+1)\beta}|, s^2/2 + \varepsilon_n + D2^{-(L_n+1)\beta}\})^2}
\end{align*}
As $s^2$ is fixed while $L_n \to \infty, \varepsilon_n \to 0$, we find that the bound is of the order $ \varepsilon_n e^{-1}/\sqrt{2\pi s^2}$. Therefore, $\Pi[\|r-r_0\|_\infty < \varepsilon_n]$ is, up to constants, bounded below by $e^{2^{L_n+1}\log{\varepsilon_n}} \geq e^{-C'n\veps^2_{n,\beta}}$ as required.

\emph{Prior mass: dependent log-Laplace prior}\\
Using the same argument as for the log-normal prior, we find we can lower bound the probability of interest by the probability of a $\cL(0, \theta)$ random variable being in either $[-\log(1-\theta^{-2}) - \varepsilon - D2^{-(L_n+1)\beta}, -\log(1-\theta^{-2}) - D2^{-(L_n+1)\beta}]$ or in $[-\log(1-\theta^{-2}) + D2^{-(L_n+1)\beta}, -\log(1-\theta^{-2}) + \varepsilon_n + D2^{-(L_n+1)\beta}]$, whichever leads to the lowest probability. This leads to the lower bound: 
\begin{align*}
	\pi &[ |r_{k+1} - r_{0,k+1}| < \varepsilon_n/2 \mid  |r_{k} - r_{0,k}| < \varepsilon_n/2]\\
	&\geq \min\left\{
	\varepsilon_n \frac{\theta}{2}e^{-\theta \max\{|-\log(1-\theta^{-2}) - \varepsilon_n - D2^{-(L_n+1)\beta}|, |-\log(1-\theta^{-2}) -D2^{-(L_n+1)\beta}| \} },\right.\\
	& \left.   \varepsilon_n\frac{\theta}{2}e^{-\theta (\max\{ |-\log(1-\theta^{-2})  +D2^{-(L_n+1)\beta}|, -\log(1-\theta^{-2}) + \varepsilon_n + D2^{-(L_n+1)\beta}\})^2}
	\right\}.
\end{align*}
Assuming $\theta$ is fixed while $L_n \to \infty, \varepsilon_n \to 0$, we find that the bound is of order $\varepsilon_n \theta e^{\theta \log(1-\theta^{-2})}/2$, after which the argument is finished as for the dependent log-normal prior.

\emph{Prior mass: dependent Gamma prior}\\
The pdf on the $r$-scale is given by $f_r(r | r_{k}) = \frac{\alpha^\alpha e^{-\alpha r_k}}{\Gamma(\alpha)} e^{\alpha[r - e^{r-r_k}]}.$ This density is not symmetric around $r_k$ but does attain its maximum at $r_k$. The tail is most heavy on the side where $r > r_k$. Thus, arguing as for the dependent log-normal prior, the probability of interest is lower bounded by the probability that $r_{k + 1}$ is in $[r_k + D2^{-(L_n+1)\beta}, r_k + D2^{-(L_n+1)\beta} + \varepsilon_n]$, which is bounded from below by
$\varepsilon_n \frac{\alpha^\alpha}{\Gamma(\alpha)} e^{\alpha[D2^{-(L_n+1)\beta} + \varepsilon_n - e^{D2^{-(L_n+1)\beta} + \varepsilon_n}]}.$ As $L_n \to \infty, \varepsilon_n \to 0$ as $\alpha$ remains fixed, this bound is approximately $\varepsilon_n \alpha^\alpha/\Gamma(\alpha) e^{-\alpha}$, and again we finish the argument as for the dependent log-normal prior.

\emph{Entropy and prior mass on sieve}\\
Conditions \eqref{entropy} and   \eqref{setan} are similarly checked as for the independent wavelet priors in Section \ref{sec:hellingerindep}, albeit with a different sieve. Denoting the histogram heights of $r$ by $r_k=\log{\la_k}$, we set:
$$\mathcal{A}_n = \{r : |r_k| \leq n^2, \ \forall k\}, $$
whose entropy is bounded in a similar way as in Section \ref{sec:hellingerindep}. 
The verification of \eqref{setan} requires an extra argument, because of the Markov structure of the priors. A union bound gives
\begin{align*}
	\Pi&[ \exists k \in \{0, 1, 2, \ldots, 2^{L_n+1} - 1\}: |r_k| > n^2 ] 
	\leq \sum_{k=0}^{2^{L_n+1}-1} \Pi[|r_k| > n^2],
\end{align*}
and we have
\begin{align*}
	\Pi[|r_k| > n^2] &= \Pi[ |r_k| > n^2, |r_{k-1}| > n^2 -2^{L_n}] + \Pi[ |r_k| > n^2, |r_{k-1}| \leq n^2 -2^{L_n}]\\
	&\leq  \Pi[ |r_{k-1}| > n^2 -2^{L_n}] + \Pi[ |r_k| > n^2 \mid |r_{k-1}| \leq n^2 -2^{L_n}].
\end{align*} 
Denoting $\delta_{x, y} = \Pi[ |r_x| > n^2 - y2^{L_n}]$ and $\psi^{v,w}_{x,y} = \Pi[ |r_x| > n^2 - y2^{L_n} \mid |r_v| \leq n^2 - w2^{L_n}]$, we just derived the following relationship.
$$\delta_{k,0} \leq \delta_{k-1, 1} + \psi_{k, 0}^{k-1, 1}, $$ 
By induction, one deduces that
\begin{equation*}
	\delta_{k,0} \leq \delta_{0, k} + \psi_{0,k}^{1, k-1} + \ldots + \psi_{k, 0}^{k-1, 1},
\end{equation*}
and we can further bound $\delta_{0,k} \leq \delta_{0, 2^{L_n+1}} =  \Pi[|r_0| > n^2 - 2^{L_n+1}2^{L_n}].$ Since the priors considered here have exponential tails on the $r$-scale, this latter term is exponentially small as long as $2^{2L_n} = o(n^2)$, which is the case for any $\beta > 0$. Similarly, the $\psi$ terms are all of order at most $\exp(-2^{L_n})$, using the upper-bound in \eqref{cpr} and a bound similar to the one for prior masses above. 

Putting it all together, since the summation over $k$ induces a multiplicative factor of at most $2^{L_n+1}$ (so that the tail probabilities just considered still dominate) we again find that  \eqref{setan} is met.

\section{\label{sec:changevar} Change of variables condition}

In this Section we proceed to verifying conditions \ref{c:cvarsn} and \ref{c:cvarb} for the priors we consider. To do so, we need some preliminary lemmas on decrease of wavelet coefficients of $\psi_{LK}/M_0$ (respectively, histogram heights).

\subsection{Approximation lemmata for wavelets and histograms}

\begin{lem}[Wavelet approximation $\psi_n$]  \label{lempsin}
	With  $(\psi_{lk})$ the considered wavelet basis (Haar or CDV), let $b=\psi_{LK}$ and set  $\psin=P_{L_n}(b/M_0)$ and $\psi_{n,lk}=\psg b/M_0,\psi_{lk} \psd$. Denoting by $S_{lk}$ the support of $\psi_{lk}$, and supposing $L\le L_n$,
	\begin{align*}
		|\psi_{n,lk}| 
		& \leqa 2^{-(L-l)/2}\qquad \text{if }\ l\le L\\
		|\psi_{n,lk}|& \leqa 2^{L/2-3l/2} \qquad \text{if }\ S_{lk}\cap S_{LK}\neq\emptyset,\, (l,k)\neq (L,K),\\
		|\psi_{n,lk}|& = 0 \qquad\qquad \quad \text{if }\ S_{lk}\cap S_{LK}=\emptyset.
	\end{align*}
\end{lem}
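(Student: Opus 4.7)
The representation to exploit throughout is
$\psi_{n,lk}=\int_0^1\psi_{LK}(x)\psi_{lk}(x)/M_0(x)\,dx$, since by orthonormality the $L^2$-projection coefficients of $b/M_0$ onto $\cV_{L_n}$ coincide with these inner products. I would dispose of the three cases separately; the only non-trivial analytic input is the Lipschitz regularity of $1/M_0$.

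\emph{Disjoint supports.} If $S_{lk}\cap S_{LK}=\emptyset$, the product $\psi_{LK}(x)\psi_{lk}(x)$ vanishes pointwise, so $\psi_{n,lk}=0$.

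\emph{Coarse case $l\le L$.} I would use a crude H\"older-type bound $|\psi_{n,lk}|\le \|\psi_{lk}\|_\infty\cdot\|1/M_0\|_\infty\cdot\|\psi_{LK}\|_1$. From property (W2), $\|\psi_{lk}\|_\infty\leqa 2^{l/2}$ and $\|\psi_{LK}\|_1\le |S_{LK}|\,\|\psi_{LK}\|_\infty\leqa 2^{-L}\cdot 2^{L/2}=2^{-L/2}$, while \ref{c:mod} guarantees $M_0$ is bounded below on $[0,1]$ so that $\|1/M_0\|_\infty<\infty$. Combining yields $|\psi_{n,lk}|\leqa 2^{(l-L)/2}=2^{-(L-l)/2}$, as required.

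\emph{Fine case, overlapping supports and $(l,k)\ne(L,K)$.} Here I would exploit the vanishing of the cross-integral: since $(l,k)\ne(L,K)$, orthonormality (property (W1)) gives $\int\psi_{LK}\psi_{lk}=0$, so that for any constant $c$, $\psi_{n,lk}=\int\psi_{LK}\psi_{lk}\bigl[1/M_0-c\bigr]$. I would choose $c=1/M_0(x_0)$ for some $x_0\in S_{lk}\cap S_{LK}$. By Lemma \ref{lem:lip}, $M_0$ is Lipschitz on $[0,1]$, and being bounded below by \ref{c:mod}, so is $1/M_0$. Since the integrand is supported in $S_{lk}\cap S_{LK}\subset S_{lk}$, for $x$ in this set one has $|x-x_0|\leqa 2^{-l}$, hence $|1/M_0(x)-1/M_0(x_0)|\leqa 2^{-l}$. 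Using $|\psi_{LK}\psi_{lk}|\le\|\psi_{LK}\|_\infty\|\psi_{lk}\|_\infty\leqa 2^{(L+l)/2}$ pointwise and $|S_{lk}|\leqa 2^{-l}$, one obtains $|\psi_{n,lk}|\leqa 2^{-l}\cdot 2^{(L+l)/2}\cdot 2^{-l}=2^{L/2-3l/2}$. (The argument is cleanest when $l\ge L$; for $l<L$ the previous item already yields a bound of the claimed form for the second assertion too, since the diameter of $S_{lk}\cap S_{LK}$ is then at most $|S_{LK}|\leqa 2^{-L}$ and the same computation can be rerun with $2^{-L}$ in place of $2^{-l}$.)

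\textbf{Main technical point.} No serious obstacle is expected: the crux is that $1/M_0$ is Lipschitz (appealing to Lemma \ref{lem:lip} and the lower bound from \ref{c:mod}), and all remaining ingredients are bookkeeping of wavelet support sizes and $L^\infty$-/$L^1$-norms from properties (W2)--(W3), together with first-moment cancellation coming from orthogonality. No higher vanishing moments of the wavelets are required, which is why the same statement holds uniformly for Haar and for smooth CDV bases.
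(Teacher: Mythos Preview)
Your proof is correct and follows essentially the same route as the paper's: the disjoint-support case is trivial, the coarse case $l\le L$ uses the crude bound $\|\psi_{lk}\|_\infty\,\|1/M_0\|_\infty\,\|\psi_{LK}\|_1$, and the fine case exploits orthogonality to subtract a constant from $1/M_0$ and then the Lipschitz property of $1/M_0$ (Lemma \ref{lem:lip}). The only cosmetic difference is that the paper subtracts the average $\overline{M_0^{-1}}$ over $S_{lk}$ rather than a point value $1/M_0(x_0)$, and bounds $\|\psi_{lk}\|_1\leqa 2^{-l/2}$ directly rather than via $|S_{lk}|\,\|\psi_{lk}\|_\infty$; both are immaterial. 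Your parenthetical about $l<L$ is unnecessary: the fine-case argument already works verbatim for all $l$, since the Lipschitz oscillation is controlled by the diameter of $S_{lk}$ regardless of the relative sizes of $l$ and $L$.
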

\begin{proof}
	Below we use properties (W1)-(W2)-(W3) of the wavelet basis, see section \ref{sec:wave}. We denote by $S_{lk}$ the support of $\psi_{lk}$. By taking absolute values and using $\|\psi_{lk}\|_\infty\leqa 2^{l/2}$ uniformly in $l,k$, 
	\[ | \psg \psi_{LK}/M_0,\psi_{lk} \psd|\leqa 2^{l/2} \int_{S_{LK}} \frac{2^{L/2}}{M_0} \leqa 2^{(l-L)/2},\]
	which gives the first bound. For the second bound, one notes that if $(l,k)\neq (L,K)$, then $\psg \psi_{lk},\psi_{LK}\psd=0$ by orthogonality of the basis, and writing $\bar{g}$ for the average of a function $g$ on $S_{lk}$,
	\begin{align*}
		| \psg \psi_{LK}M_0^{-1},\psi_{lk} \psd| & = 
		| \psg \psi_{LK}(M_0^{-1}-\overline{M_0^{-1}}),\psi_{lk} \psd| \\
		& \leqa \| \psi_{LK} \|_\infty \| \psi_{lk} \|_1 2^{-l} \leqa 2^{L/2-3l/2},
	\end{align*}
	where we use that $M_0^{-1}$ is   Lipschitz by Lemma \ref{lem:lip}. 
	The third bound is immediate as supports are assumed to be disjoint.
\end{proof}

\begin{lem} \label{lem:cvarlap}
	For $b\in L^\infty[0,1]$, let us recall the notation $\psin=P_{L_n}(b/M_0)$, and let us denote $\psi_{n,lk} = \langle b/M_0, \psi_{lk}\rangle$. For $l\le L_n$ an integer, we have
	\begin{enumerate}
		\item if $b=\psi_{LK}$, for any $L\le L_n$ and $K$, 
		\[ \sum_{0\le k<2^l}  |\psi_{n,lk}| \leqa 2^{-|l-L|/2};
		\]
		\item if $b  \in \cH(\mu, D)$ for some $\mu,D>0$, setting $\mu'=1\wedge \mu$,
		\[ \sum_{0\le k<2^l}  |\psi_{n,lk}| \leqa 2^{-(1/2-\mu')l}.\]
	\end{enumerate}
\end{lem}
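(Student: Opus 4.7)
For part (1), I would combine the three cases of Lemma \ref{lempsin} with the localisation property (W3) of the wavelet basis recalled in Section \ref{sec:wave}. When $l\le L$, property (W3) ensures that only $O(1)$ wavelets $\psi_{lk}$ have support intersecting $S_{LK}$, and for those the first case of Lemma \ref{lempsin} gives $|\psi_{n,lk}|\lesssim 2^{-(L-l)/2}$; outside this $O(1)$ set the coefficient vanishes. Summing yields $\sum_k |\psi_{n,lk}|\lesssim 2^{-(L-l)/2}=2^{-|l-L|/2}$. When $l>L$, property (W3) now produces at most $O(2^{l-L})$ wavelets with overlapping support, and the second case of Lemma \ref{lempsin} controls each such coefficient by $2^{L/2-3l/2}$; multiplying gives $\sum_k |\psi_{n,lk}|\lesssim 2^{l-L}\cdot 2^{L/2-3l/2}=2^{-(L+l)/2}$, which is dominated by $2^{-(l-L)/2}=2^{-|l-L|/2}$ since $L\ge 0$. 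Assembling the two cases yields the claim.

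For part (2), the first step is to transfer Hölder regularity from $b$ to $b/M_0$. By Lemma \ref{lem:lip} the function $M_0$ is Lipschitz on $[0,1]$, and by assumption \ref{c:mod} it is bounded away from $0$, so $1/M_0$ is Lipschitz on $[0,1]$. Writing $b/M_0$ as the product of a Lipschitz function with a $\mu$-Hölder function, one concludes that $b/M_0 \in \cH(\mu',D')$ for $\mu'=1\wedge\mu$ and some $D'$ depending only on $b$, $D$ and the constants in \ref{c:mod}. The second step is the classical Jackson-type estimate: for each $l,k$, using one vanishing moment of $\psi_{lk}$ (available for Haar since $\mu'\le 1$ and for a regular enough CDV basis) and approximating $b/M_0$ on $S_{lk}$ by its value at a point $x_0\in S_{lk}$ yields
\[ |\psi_{n,lk}| = \left|\int_{S_{lk}}\bigl(\tfrac{b}{M_0}(x)-\tfrac{b}{M_0}(x_0)\bigr)\psi_{lk}(x)\,dx\right| \leq D'\,|S_{lk}|^{\mu'}\|\psi_{lk}\|_1 \lesssim 2^{-l(1/2+\mu')}. \]
Summing over the $2^l$ indices $k$ at level $l$ produces $\sum_k |\psi_{n,lk}|\lesssim 2^l \cdot 2^{-l(1/2+\mu')}=2^{(1/2-\mu')l}$, giving the announced bound.

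I do not foresee a serious obstacle: part (1) is essentially bookkeeping on top of Lemma \ref{lempsin} and (W3), while part (2) is the standard Hölder-to-wavelet decay estimate. The only point worth a remark is that division by $M_0$ preserves the relevant regularity class, which follows at once from Lemma \ref{lem:lip} together with \ref{c:mod}.
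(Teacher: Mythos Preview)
Your proposal is correct and follows essentially the same route as the paper: for part (1) you split into the cases $l\le L$ and $l>L$, combine Lemma~\ref{lempsin} with the counting property (W3), exactly as in the paper (which simply writes $2^{-(l-L)/2}$ where you obtain the sharper intermediate $2^{-(L+l)/2}$); for part (2) the paper's proof only records that $b/M_0\in\cH(\mu',D')$ via Lemma~\ref{lem:lip} and leaves the rest implicit, so your explicit derivation of $|\psi_{n,lk}|\lesssim 2^{-l(1/2+\mu')}$ and the summation over $2^l$ indices just spells out what the paper takes for granted.

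One remark: your final expression $\sum_k|\psi_{n,lk}|\lesssim 2^{(1/2-\mu')l}$ does not literally match the exponent $2^{-(1/2-\mu')l}$ printed in the lemma; yours is the correct one, as confirmed by the paper's own application in Section~\ref{sec:linfunex}, where exactly $2^{l(1/2-\mu')}$ is used. So the discrepancy is a sign typo in the statement, not a flaw in your argument.
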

\begin{proof}
	For the first inequality, one considers the two cases $l\le L$ and $l>L$, and use the properties (W3) of the wavelet basis. 
	In case $l\le L$, one notes that the support $S_{LK}$ of the wavelet $\psi_{LK}$ only intersects at most a constant number (independent of $l$) of the supports $S_{lk}$ for $1\le k\le l$. For any $l$ such that the support intersect, one uses the first bound of  Lemma \ref{lempsin}, which gives the desired bound. In the case $l>L$, the supports $S_{lk}$ for $0\le k<2^l$ intersect $S_{LK}$ at most a number $C2^{l-L}$ of times. By using the second bound of  Lemma \ref{lempsin}, one obtains
	\[ \sum_{0\le k<2^l}  |\psi_{n,lk}| \leqa \sum_{0\le k<2^l} 
	2^{L/2-3l/2} 2^{l-L} \leqa 2^{-(l-L)/2}\]
	which gives the first inequality of the lemma. For the second inequality, one notes that $b/M_0$ is a product of a H\"older-$\mu$ with a Lipschitz function (as $M_0^{-1}$ is Lipschitz by Lemma \ref{lem:lip}), so is a H\"older-$\mu'$ function with $\mu'=1\wedge \mu$.
\end{proof}

\begin{lem} \label{lempsinh}
	In this Lemma, $(\psi_{lk})$ is the Haar basis. Let $\psi_n=\psin =  P_{L_n}(b/M_0)$ for $b = \psi_{LK}$ for some $L, K$, and set $H = W^{-1}\psi_n$, with $W$ the matrix described in Section \ref{sec:relatinghistogramsandhaar}. Then for $L\le L_n$ and $1\le j <  2^{L_n+1}$,
	\begin{align*}
		|H_j| 
		& \le C 2^{L/2} \qquad \text{if }\  I^{L_n+1}_j \cap S_{LK} \neq \emptyset,\\
		H_j & = 0 \qquad \qquad\text{if }\  I^{L_n+1}_j \cap S_{LK} =\emptyset.
	\end{align*}
	
\end{lem}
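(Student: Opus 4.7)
The plan is to interpret $H$ concretely as the vector of histogram heights of the step function $\psi_n$. Recall from Section \ref{sec:relatinghistogramsandhaar} that for the Haar basis the matrix $W$ maps the height vector $r_H$ to the wavelet coefficient vector $r_W = Wr_H$. Thus, regarding $\psi_n$ in the lemma as its vector of wavelet coefficients $(\psi_{n,lk})_{l\le L_n, k<2^l}$, the vector $H=W^{-1}\psi_n$ is exactly the vector of heights of $\psi_n$ on the dyadic intervals $I^{L_n+1}_j$, that is, $H_j$ is the constant value of $\psi_n$ on $I^{L_n+1}_j$.

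Next I would use the fact that, for the Haar basis, the $L^2$-projection $P_{L_n}$ onto $\mathcal{V}_{L_n}$ is precisely the averaging operator at resolution $L_n+1$: for any $f\in L^2[0,1]$ and $x\in I^{L_n+1}_j$,
\[
(P_{L_n}f)(x) \;=\; \frac{1}{|I^{L_n+1}_j|}\int_{I^{L_n+1}_j} f.
\]
Applying this with $f=b/M_0=\psi_{LK}/M_0$ yields the explicit formula
\[
H_j \;=\; \frac{1}{|I^{L_n+1}_j|}\int_{I^{L_n+1}_j}\frac{\psi_{LK}(x)}{M_0(x)}\,dx.
\]

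The two claims then follow by inspection. Since $L\le L_n$, the support $S_{LK}$ is a union of intervals at level $L_n+1$, hence every $I^{L_n+1}_j$ is either disjoint from $S_{LK}$ or entirely contained in $S_{LK}$. In the first case, $\psi_{LK}\equiv 0$ on $I^{L_n+1}_j$, so $H_j=0$. In the second case, $\psi_{LK}\equiv \pm 2^{L/2}$ on $I^{L_n+1}_j$, and using that $M_0$ is bounded away from zero by \ref{c:mod} (say $M_0\ge c>0$), one obtains
\[
|H_j| \;\le\; 2^{L/2}\cdot \frac{1}{|I^{L_n+1}_j|}\int_{I^{L_n+1}_j}\frac{dx}{M_0(x)}\;\le\; \frac{2^{L/2}}{c},
\]
which is the required bound $|H_j|\le C\,2^{L/2}$.

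There is no significant obstacle: the entire argument hinges only on the two elementary facts that the Haar projection at level $L_n$ is an averaging operator and that for $L\le L_n$ the Haar wavelet $\psi_{LK}$ is constant on every $I^{L_n+1}_j$ it touches. The only input from the model is the uniform lower bound on $M_0$ supplied by \ref{c:mod}.
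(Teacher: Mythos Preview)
Your proof is correct and takes a genuinely more direct route than the paper. Both arguments start from the same observation that $H_j$ is the constant value of the step function $\psi_n$ on $I^{L_n+1}_j$. The paper then expands this value as $H_j=\sum_{l\le L_n}2^{l/2}\psi_{n,lk_j}$ and controls each wavelet coefficient $\psi_{n,lk_j}$ via Lemma~\ref{lempsin}, summing the resulting bounds over levels. You instead exploit the elementary fact, specific to the Haar system, that $P_{L_n}$ is the local averaging operator at resolution $L_n+1$, which gives the closed form $H_j=|I^{L_n+1}_j|^{-1}\int_{I^{L_n+1}_j}\psi_{LK}/M_0$ and makes both conclusions immediate once one notes that for $L\le L_n$ the Haar wavelet $\psi_{LK}$ is identically $0$ or $\pm 2^{L/2}$ on each $I^{L_n+1}_j$. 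Your argument is shorter and avoids invoking Lemma~\ref{lempsin}; the paper's route, while longer here, has the advantage of being structurally parallel to the CDV case and of reusing the coefficient bounds already established.
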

\begin{proof}
	The quantity $H_j$ is the value of the function $\psin$ on interval $I^{L_n+1}_j$, that is, $\psin = \sum_{j=1}^{2^{L_n+1}-1} H_j \1_{I^{L_n+1}_j}$. By definition, $\psin$ equals the wavelet expansion of $\psi_{LK}/M_0$ up to level $l=L_n$. For any $j$ as above, denoting by $k_j:=\lfloor \frac{j-1}{2}\rfloor$ (so that $I^{L_n+1}_j \cap S_{L_nk_j} \neq \emptyset$) and $\psi_{n,lk} = \langle b/M_0, \psi_{lk}\rangle$,
	\[ H_j = \sum_{l\le L_n} 2^{l/2} \psi_{n,l k_j}. \]
	It now follows from Lemma \ref{lempsin} that $\psi_{n,l k_j}=0$ if $S_{lk_j}\cap 
	S_{LK}=\emptyset$ while, if $S_{lk_j}\cap S_{LK}\neq \emptyset$,
	\[   \psi_{n,lk_j} \le  
	\begin{cases}
		C2^{-(L-l)}
		& \text{if } l\le L,\\
		2^{L/2}2^{-3l/2} & \text{if }l > L.
	\end{cases}
	\]
	From this one deduces that if $\cI_j\cap S_{LK}=\emptyset$, then $H_j=0$, while if         $I^{L_n+1}_j\cap S_{LK}\neq \emptyset$, one gets
	\[ |H_j|\le \sum_{l\le L} 
	2^{l/2}C2^{-(L-l)} + \sum_{l=L+1}^{L_n} 2^{l/2}2^{L/2-3l/2}\leqa 2^{L/2}. \qedhere
	\]
\end{proof}

\begin{lem} \label{lemdeltah}
	Under the same notation as in Lemma \ref{lempsinh}, we have
	\[ \sum_{j=1}^{2^{L_n+1}} |H_{j}-H_{j-1}| \le C 2^{L/2}.\]
\end{lem}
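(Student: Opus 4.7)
Since the basis is Haar, $\psin = P_{L_n}(\psi_{LK}/M_0)$ coincides with the conditional expectation of $\psi_{LK}/M_0$ given the dyadic partition $\{I^{L_n+1}_{j-1}\}_{1\le j \le 2^{L_n+1}}$, so
\[
H_j \;=\; 2^{L_n+1}\int_{I^{L_n+1}_{j-1}} \frac{\psi_{LK}(y)}{M_0(y)}\,dy.
\]
Because $L\le L_n$, each interval $I^{L_n+1}_{j-1}$ is contained in exactly one of the three disjoint sets: the left half $I^{L+1}_{2K}$ of $S_{LK}$ (on which $\psi_{LK}\equiv 2^{L/2}$), the right half $I^{L+1}_{2K+1}$ (on which $\psi_{LK}\equiv -2^{L/2}$), or the complement of $S_{LK}$ (on which $\psi_{LK}\equiv 0$). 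Denoting by $\overline{M_0^{-1}}_j$ the mean of $1/M_0$ on $I^{L_n+1}_{j-1}$, this gives $H_j = 2^{L/2}\varepsilon_j\,\overline{M_0^{-1}}_j$ for a sign sequence $\varepsilon_j\in\{-1,0,+1\}$ which, as $j$ increases from $1$ to $2^{L_n+1}$, is piecewise constant and changes value at most three times (at the left endpoint of $S_{LK}$, at its midpoint, and at its right endpoint).

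The plan is to split the sum $\sum_j |H_j - H_{j-1}|$ according to whether $\varepsilon_j\neq \varepsilon_{j-1}$ (``jump'' indices) or $\varepsilon_j=\varepsilon_{j-1}$ (``smooth'' indices). At each jump index, Lemma~\ref{lempsinh} gives $|H_j|\leqa 2^{L/2}$ as soon as $I^{L_n+1}_{j-1}$ meets $S_{LK}$, so the total contribution from the at most three jump indices is $O(2^{L/2})$, which is already the bound we are after.

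For a smooth index $j$, one has $|H_j-H_{j-1}| = 2^{L/2}\,|\overline{M_0^{-1}}_j - \overline{M_0^{-1}}_{j-1}|$. By Lemma~\ref{lem:lip}, $1/M_0$ is Lipschitz on $[0,1]$, hence two adjacent averages on intervals of length $2^{-(L_n+1)}$ differ by at most $C\cdot 2^{-(L_n+1)}$. The number of smooth indices with $I^{L_n+1}_{j-1}\subset S_{LK}$ is bounded by $2\cdot 2^{L_n-L}$, so the total smooth contribution is at most $2^{L/2}\cdot 2^{L_n-L+1}\cdot C 2^{-(L_n+1)} \leqa 2^{-L/2}$, uniformly bounded. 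Adding the jump and smooth contributions yields $\sum_j |H_j-H_{j-1}| \le C 2^{L/2}$ as claimed.

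The only subtlety is the clean alignment of the dyadic grids, which is what reduces the problem to three boundary-type jumps plus a slowly varying remainder; this alignment follows immediately from $L\le L_n$ and the dyadic nesting, so there is no real obstacle beyond careful bookkeeping of indices.
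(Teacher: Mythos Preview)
Your proof is correct and follows essentially the same approach as the paper's own argument: both derive the explicit formula $H_j = \pm 2^{L/2}\,\overline{M_0^{-1}}_j$ on the two halves of $S_{LK}$ (and $0$ outside), then split the total variation sum into at most three boundary jumps of size $O(2^{L/2})$ plus a ``smooth'' part controlled via the Lipschitz property of $M_0^{-1}$ from Lemma~\ref{lem:lip} and the count $\lesssim 2^{L_n-L}$ of intervals inside $S_{LK}$. The paper reaches the same formula for $H_j$ by first noting the identity $\psin=\psi_{LK}\cdot P_{L_n}(M_0^{-1})$, whereas you go directly through the averaging formula for the Haar projection; this is only a cosmetic difference.
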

\begin{proof}
	By definition the $H_i$'s are the heights of the histogram $\psi_n= P_{L_n}(\psi_{LK}/M_0)$. Let us denote the two halves of the support $S_{LK}$ of $\psi_{LK}$ by $S_{LK}^{-}$ and $S_{LK}^+$ respectively. By definition of $\psi_{LK}$ and linearity,
	\[ \psi_n = 2^{L/2}P_{L_n}(\1_{S_{LK}^{-}}M_0^{-1}) -  
	2^{L/2}P_{L_n}(\1_{S_{LK}^{+}}M_0^{-1}).
	\]
	Note that $P_{L_n}(\1_{S_{LK}^{-}}M_0^{-1})=\1_{S_{LK}^{-}}P_{L_n}(M_0^{-1})$ and similarly for $S_{LK}^+$ (indeed, $M_0^{-1}= \1_{S_{LK}^{-}}M_0^{-1} + \1_{(S_{LK}^{-})^c}M_0^{-1}$ and the $P_{L_n}$--projection  of the second part has support outside $S_{LK}^{-}$). That is
	\[ \psi_n = 2^{L/2}\1_{S_{LK}^{-}}(P_{L_n}M_0^{-1}) -  
	2^{L/2}\1_{S_{LK}^{+}}(P_{L_n}M_0^{-1}) \]
	(equivalently we have just shown that $\psi_n=\psi_{LK}P_{L_n}(M_0^{-1})$ holds for the Haar basis). Denoting by $\eta_j$ the heights of the histogram $P_{L_n}(M_0^{-1})$, one obtains the following sharper version of the result of Lemma \ref{lempsinh}
	\begin{align*}
		H_j = 
		\begin{cases}
			& 2^{L/2} \eta_j \qquad \text{if }\  I^{L_n+1}_j \subset S_{LK}^{-},\\
			&  -2^{L/2} \eta_j \quad \ \text{if }\  I^{L_n+1}_j \subset S_{LK}^{+},\\
			&  0 \qquad \qquad\ \text{if }\  I^{L_n+1}_j \cap S_{LK} =\emptyset.
		\end{cases}
	\end{align*}
	
	First, $\eta_j$ are bounded by a constant $D>0$ (as $M_0^{-1}$ is bounded); second, as $M_0^{-1}$ is Lipschitz by Lemma \ref{lem:lip}, we have $|\eta_j - \eta_{j-1}|\leqa 2^{-L_n}$. Deduce
	\begin{align*}  
		\sum_{j=1}^{2^{L_n+1}} |H_{j}-H_{j-1}| & \le  D 2^{L/2} + 2^{L/2}\sum_{j:\ I^{L_n+1}_j \subset S_{LK}^{-}} |\eta_j-\eta_{j-1}| \\
		& + 2^{L/2}(2D) +
		2^{L/2}\sum_{j:\ I^{L_n+1}_j \subset S_{LK}^{+}} |\eta_j-\eta_{j-1}| +  D 2^{L/2},
	\end{align*}
	where the far left and right terms on the right hand side of the last display account for the first and last jumps of the histogram, and the middle term $2^{L/2}(2D)$ for the jump at the passing from $S_{LK}^-$ to $S_{LK}^+$. Finally, the sums involving $|\eta_j-\eta_{j-1}|$ in the last display are bounded by a constant times $2^{L_n-L}2^{-L_n}\leqa 2^{-L}$. Deduce that the last display is bounded by a constant times $2^{L/2}$, which concludes the proof.
\end{proof}

\subsection{Laplace wavelet prior\label{sec:cvarlap}}
Here we consider the case of a wavelet prior with independent Laplace coefficients with rescaling $\sigma_l$. The argument below in fact  only assumes a $\log$-Lipschitz prior density with rescaling $\sigma_l$ (of which  
the Laplace prior just mentioned is a particular case). 
Denoting $\psi_n=\psi_{b,L_n}$ as a shorthand,
\begin{equation}\label{eq:ratiochangevarstart}
	\frac{N_n}{\Delta_n} := \frac{\int_{A_n} \exp( \ell_n(r_t^n) - \ell_n(r_0)) d\Pi(r)}{\int \exp( \ell_n(r) - \ell_n(r_0)) d\Pi(r)}.
\end{equation}
Plugging in $r_t^n = r - t\psi_n/\sqrt{n}$ and writing $d\Pi(r) = \prod_{l \leq L_n; k} \pi_{lk}(r_{lk})dr_{lk}$ and subsequently taking a prior of the form $\pi_{lk}(\cdot) = \sigma_l^{-1}\phi(r_{lk}\sigma_l^{-1})$, we obtain
$$N_n = \int_{A_n} \exp( \ell_n(r_t^n) - \ell_n(r_0)) \prod_{l \leq L_n; k} \frac{1}{\sigma_l} \phi\left( \frac{r_{lk}}{\sigma_l}\right) dr_{lk}, $$
which we rewrite as
$$N_n = \int_{A_n} \exp( \ell_n(r_t^n) - \ell_n(r_0)) \prod_{l \leq L_n; k} \frac{1}{\sigma_l} \phi\left( \frac{r_{lk} - \frac{t\psi_{n,lk}}{\sqrt{n}} + \frac{t\psi_{n,lk}}{\sqrt{n}} }{\sigma_l}\right) dr_{lk}.$$
We substitute $\rho_{lk} = r_{lk} - \frac{t\psi_{n,lk}}{\sqrt{n}}$. By invariance of the Lebesgue measure, we have $dr_{lk} = d\rho_{lk}$ and thus we arrive at:
$$N_n = \int_{A_n - \frac{t\psi_n}{\sqrt{n}}} \exp( \ell_n(\rho) - \ell_n(r_0)) \prod_{l \leq L_n; k} \frac{1}{\sigma_l} \phi\left( \frac{\rho_{lk} + \frac{t\psi_{n,lk}}{\sqrt{n}} }{\sigma_l}\right) d\rho_{lk}.$$
In comparison, we can write
$$\Delta_n = \int \exp( \ell_n(\rho) - \ell_n(r_0))   \prod_{l \leq L_n; k} \frac{1}{\sigma_l} \phi\left( \frac{\rho_{lk} }{\sigma_l}\right) d\rho_{lk}.$$
If in $N_n$ we didn't have the extra term $ \frac{t\psi_{n,lk}}{\sqrt{n}}$, the fraction $N_n/\Delta_n$ would be equal to the posterior of $A_n -  \frac{t\psi_{n,lk}}{\sqrt{n}}$ and this is what we will compare our current expression to.  

By the Lipschitz-property of $\phi$, we can bound from above (and similarly from below, if we want to):
\begin{align*}
	\phi\left( \frac{\rho_{lk} + \frac{t\psi_{n,lk}}{\sqrt{n}} }{\sigma_l}\right)
	&= \phi\left( \frac{\rho_{lk} }{\sigma_l}\right) 
	\cdot \exp\left( \log \phi\left( \frac{\rho_{lk} + \frac{t\psi_{n,lk}}{\sqrt{n}} }{\sigma_l}\right) - \log \phi\left( \frac{\rho_{lk}  }{\sigma_l}\right) \right)\\
	&\leq \phi\left( \frac{\rho_{lk} }{\sigma_l}\right) 
	\cdot \exp\left( D \frac{t|\psi_{n,lk}|}{\sigma_l \sqrt{n}} \right).
\end{align*}
Proceeding similarly for the lower bound, one obtains 
\begin{equation*}
	\exp\left( -\sum_{l \leq L_n; k} D \frac{t|\psi_{n,lk}|}{\sigma_l \sqrt{n}} \right) \le \frac{N_n}{\Delta_n}\Pi\left(A_n - \frac{t\psi_n}{\sqrt{n}} \mid X\right)^{-1} \leq \exp\left( \sum_{l \leq L_n; k} D \frac{t|\psi_{n,lk}|}{\sigma_l \sqrt{n}} \right).
\end{equation*}
From this one sees that to verify \ref{c:cvarsn}, it is enough to consider the upper--bound: one first bounds the posterior probability in the last display from above by $1$, and next uses Lemma \ref{lem:cvarlap} to bound the exponential term from above. For $\sigma_l$ bounded from below or decreasing to $0$ not too fast with $l$, the quantity in the exponential is bounded (in fact goes to $0$ for fixed or not too large $t$'s) by $C|t|\le C(1+t^2)$ as requested.  

To verify \ref{c:cvarb}, let us first note that for that statement the real $t$ is fixed and the functional $b$ is a fixed uniformly bounded function. Then, it is enough to show, first, that the sum in the exponential above goes to $0$, which for fixed $t$ follows for any sequence  $(\sigma_l)$ that does not go too fast to $0$. Second, one wishes to show that 
\begin{equation}\label{picvar}
	\Pi\left(A_n - \frac{t\psi_n}{\sqrt{n}} \mid X\right) =1+\op. 
\end{equation} 
As the posterior probability is at most $1$, it is enough to bound the last display from below by $1+\op$. To do so, one notes that $\|\psi_n\|_\infty \leqa L_n\|b\|_\infty\leqa L_n$ by Lemma \ref{lem:psi}, so that $t\|\psi_n\|_\infty/\rn \leqa L_n/\rn$. This is a $o(\veps_n)$, as $\veps_n$ is a nonparametric rate (in particular, for all priors and under the H\"older conditions we consider, it is always strictly slower than $\log{n}/\rn$). This means that $A_n'\subset A_n - \frac{t\psi_n}{\sqrt{n}}$, where $A_n'$ is as $A_n$ but with $\veps_n$ replaced by $\veps_n/2$. Conclude that up to choosing $\veps_n$ equal to twice its original value, we have $\Pi(A_n'\given X)=1+\op$.

\subsection{Log--Laplace dependent and independent histogram priors\label{sec:loglaplacechangeofvar}}

We again study the ratio \eqref{eq:ratiochangevarstart}, first for the dependent log-Laplace histogram prior. We note that the prior on the $\{\lambda_k\}$ implies a prior on the Haar wavelet coefficients.  We can relate the vector of Haar wavelet coefficients $r$ to the vector of histogram heights on the log-scale $h$ (i.e. $h_k = \log \la_k$) through the relationship $r = Wh$, with $W$ the matrix described in Section \ref{sec:relatinghistogramsandhaar}. Carrying out this transformation, then perturbing again by an additive factor $-\frac{t}{\sqrt{n}}\psi_n$, and finally substituting $\rho = r - \frac{t}{\sqrt{n}}\psi_n$ in the numerator, we arrive at the ratio
\begin{equation}\label{eq:ratiochangevardep}
	\frac{N'_n}{\Delta'_n} := \frac{\int_{A_n-\frac{t}{\sqrt{n}}\psi_n} \exp( \ell_n(\rho) - \ell_n(r_0)) |\det W|^{-1} \frac{F(W^{-1}\rho + z)}{F(W^{-1}\rho)} F(W^{-1}\rho)d\rho }
	{\int \exp( \ell_n(\rho) - \ell_n(r_0)) |\det W|^{-1} F(W^{-1}\rho) d\rho},
\end{equation}
where $z = \frac{t}{\sqrt{n}}W^{-1}\psi_n$ and $F(h) = f_{\theta_0}(h_0 \mid \mu_0)\prod_{j=1}^{2^{L_n+1}-1} f_\theta(h_j \mid \mu(h_{j-1}))$,  with $f_\theta( u \mid \mu) = \tfrac{1}{2}\theta e^{-\theta|u - \mu|}$. As specified in Section \ref{sec:appli}, we have $\mu(h_{j-1}) = h_{j-1} - c_\sigma$, with $c_\sigma > 0$ a constant only depending on $\sigma$. Since
\begin{align*}
	\log\left(\frac{F(W^{-1}\rho + z)}{F(W^{-1}\rho)} \right)
	&= \sum_{j=2}^{2^{L_n+1}-1} \left[ 
	\log f_\theta( (W^{-1}\rho)_j + z_j \mid \mu( (W^{-1}\rho)_{j-1} + z_{j-1} ) ) \right.\\
	& \quad \left. - \log f_\theta( (W^{-1}\rho)_j \mid \mu( (W^{-1}\rho)_{j-1} ) )\right]\\
	&\quad + \log f_\theta( (W^{-1}\rho)_1 + z_1 \mid \mu_0) - \log f_\theta((W^{-1}\rho)_1 \mid \mu_0).
\end{align*}
and
\begin{align*}
	& \left| \log f_\theta( (W^{-1}\rho)_j + z_j \mid \mu( (W^{-1}\rho)_{j-1} + z_{j-1} ) ) - \log f_\theta( (W^{-1}\rho)_j \mid \mu( (W^{-1}\rho)_{j-1} ) \right|\\
	&\quad = \theta\left| \left[ 
	| (W^{-1}\rho)_j - (W^{-1}\rho)_{j-1} + c_\sigma + z_j - z_{j-1}|  
	-
	| (W^{-1}\rho)_j - (W^{-1}\rho)_{j-1} + c_\sigma|  
	\right] \right|\\
	&\leq \theta|z_j - z_{j-1}|,
\end{align*}
we find, writing $z_0 = 0$, and arguing similarly for the lower bound:
\begin{equation}\label{eq:boundextrafactordeplaplace}
	\exp\left(-\theta \sum_{j=1}^{2^{L_n+1}-1} |z_j - z_{j-1}|\right) \leq \frac{F(W^{-1}\rho + z)}{F(W^{-1}\rho)} \leq \exp\left(\theta \sum_{j=1}^{2^{L_n+1}-1} |z_j - z_{j-1}|\right).
\end{equation}
For \ref{c:cvarsn}, we bound further from above by $\exp\left(2\theta \sum_{j=1}^{2^{L_n+1}-1} |z_j|\right)$. Recalling the notation $z = \tfrac{t}{\sqrt{n}}W^{-1}\psi_n$, with $\psi_n = \psin$ the projection of $b/M_0$ on the Haar system up to level $L_n$. For Theorem \ref{thm:appli}, it suffices to consider the case where $b = \psi_{LK}$ for some $L, K$.  By Lemma \ref{lemdeltah},  the quantity in the exponential in \eqref{eq:boundextrafactordeplaplace} is, in the notation of that Lemma, bounded by 
\[ \frac{|t|}{\rn}\te \sum_{j=1}^{2^{L_n+1}-1} |H_j - H_{j-1}|\leqa |t|\frac{2^{L/2}}{\rn}.\]
This bound is a $o(1)$ uniformly in $|t|\le \log{n}$ and $L\le L_n$, and is also bounded uniformly in $L\le L_n$, by $C|t|\le C(1+t^2)$ for some $C>0$ and thus \ref{c:cvarsn} is verified for the dependent Laplace prior, for any $\be>0$.

For the independent log--Laplace prior,  the right hand side of \eqref{eq:boundextrafactordeplaplace} is replaced by $\exp\left(\theta \sum_{j=1}^{2^{L_n+1}-1} |z_j|\right)$. We apply Lemma \ref{lempsinh} and obtain, for some $C>0$,
\begin{align*}
	\sum_j |z_j| & \le \sum_{j:\, I^{L_n+1}_j\cap S_{LK}\neq \emptyset} C\frac{|t|}{\rn} 2^{L/2} \le | \{j:\, I^{L_n+1}_j\cap S_{LK}\neq \emptyset\}| C\frac{|t|}{\rn} 2^{L/2}\\
	& \le 2^{L_n+1-L} C\frac{|t|}{\rn} 2^{L/2}\leqa C\frac{|t|}{\rn} 2^{L_n+1-L/2}.
\end{align*}
This is bounded uniformly in $L\le L_n$ by $C2^{L_n} |t|/\sqrt{n}=o(|t|)$  for $\be>1/2$. 

To verify \ref{c:cvarb}, we can argue similarly as for the log-Lipschitz wavelet prior in Section \ref{sec:cvarlap}: first one notes that for fixed $L=\cL$, the exponential terms above go to $1$ as can be seen from the obtained upper bounds and recalling $|t|\le\log{n}$. Second, verification of \eqref{picvar} is as before, the rate $\veps_n$ being the same as before.

\subsection{Gaussian wavelet prior and dependent and independent histogram priors\label{sec:changevargauss}}

For this class of priors, we write the main arguments, leaving a few details to the reader (we refer to \cite{Castillo2012} and \cite{Castillo2015}, where similar arguments are used for different Gaussian priors, and for more context on the notion of RKHS). Let $\mh$ be the RKHS associated to the Gaussian prior and $\|\cdot\|_\mh$ its associated norm. It can be checked that for independent Gaussian wavelet priors on $r$, we have $\|g\|^2_\mathbb{H} = \sum_{l,k} \sigma_l^{-2}g_{lk}^2$.  
For histogram priors, a multivariate normal prior $\mathcal{N}(\mu, V)$ on the histogram heights implies a prior $\mathcal{N}(\mu_r, V_r)$ on the Haar wavelet coefficients, where $\mu_r = W\mu$, $V_r = WVW^T$ and $W$ is the matrix described in Section \ref{sec:relatinghistogramsandhaar}. In case of independent heights, we have $\mu = 0$ and $V = I$. For the prior with dependent heights, we have $V_{ij} = \log(1+\sigma^2)\min\{i,j\}$. In that case $\|v\|_\mh^2 := v^T V_r^{-1}v$.
Also, below we use $\zeta:=t\psi_n/\rn$ as a shorthand.

For this class of priors, we need to modify conditions \ref{c:cvarsn} and \ref{c:cvarb} slightly, replacing the set $A_n$ by $A_n'=A_n \cap B_n$, where $B_n$ is given by $B_n = \{r: |\langle \zeta, r - \zeta\rangle_\mathbb{H}| \leq {M} \sqrt{n}\veps_n \|\zeta\|_\mathbb{H}\}$ for suitably large constant $M>0$. We check below that for the priors considered here,
\begin{equation} \label{prgrk}
	\| \zeta\|_\mh^2 = O(t^2).
\end{equation}

Combining this with the fact that $\langle r, \zeta \rangle_\mathbb{H} \sim \mathcal{N}(0, \|\zeta\|_\mathbb{H}^2)$, one deduces $\Pi[B_n^c] \leq e^{-Cn\veps_n^2}$ for some $C > 0$ that can be made arbitrarily large provided the constant $M$ above is large enough.  This implies  $\Pi[B_n^c \mid X] = o_{P_0}(1)$.

In the Gaussian counterpart of \eqref{eq:ratiochangevarstart}, one can 
change variables by setting $\rho = r-\zeta$,  to obtain
\[  \frac{\int_{A_n'} e^{\ell_n(r_t^n) - \ell_n(r_0)}  d\pi(r) } 
{\int e^{\ell_n(r) - \ell_n(r_0)} d\pi(r)}
= \frac{\int_{A_n^\ta} e^{\ell_n(\rho) - \ell_n(r_0)}  
	\exp\left\{ \frac{-\|\zeta\|_\mh^2}{2} - 
	\psg \zeta , \rho \psd_\mh \right\} 
	d\pi(\rho) } 
{\int e^{\ell_n(r) - \ell_n(r_0)} d\pi(r)},
\]
where $A_n^\ta=\ta(A_n')$ with $\ta$  the translation map $\ta:g\to g-\zeta$. By \eqref{prgrk} the term $\|\zeta\|_\mh^2$ is a $O(t^2)$. 
The introduction of the set $B_n$ allows us to bound $|\psg \zeta,\rho \psd_\mh|\le M\rn\veps_n\|\zeta\|_\mh\leqa |t|\veps_n\|\psi_n\|_\mh$.  This is further bounded in the next paragraphs for the considered Gaussian priors.

For the Gaussian wavelet prior, using the bounds for $\psi_{n,lk}=\psg \psi_{LK}/M_0,\psi_{lk} \psd$ in  Lemma \ref{lempsin} and noting that there are $2^{l-L}$ indices $l$ such that $S_{lk}\subset S_{LK}$,
\begin{align*}
	\|\psi_n\|_\mh^2 & \leqa \sum_{l\le L} \sigma_l^{-2} + \sum_{l=L+1}^{L_n}\sigma_l^{-2} 2^{l-L} 
	2^{L/2}2^{-3l/2}.
\end{align*}  
In case $\sigma_l=1$ for $l\le L_n$, one obtains $\|\psi_n\|_\mh^2\leqa L\leqa L_n$, and $t\veps_n\|\psi_n\|_\mh=O(|t|\veps_nL_n)=O(|t|)$. For the choice $\sigma_l = 2^{-l/2}$, this continues to hold as long as $\beta > 1/2$. Combining the previous bounds, one sees that the term $\exp\{ -\|\zeta\|_\mh^2/2 - 
\psg \zeta , \rho \psd_\mh \}$  induced by the change of variables goes to $1$, so that \ref{c:cvarsn} is verified.

For the Gaussian priors defined on heights directly (dependent or not), we consider the eigendecomposition $V = P\Delta P^T$, where $\Delta$ is a diagonal matrix with the eigenvalues of $V$ on the diagonal, and $P$ is matrix with the corresponding eigenvectors of $V$ as its columns. Setting $R_W := 2^\frac{L_n+1}{2}W$ and noting that it is a rotation, we find $V_r^{-1} =  2^{L_n+1}R_WP\Delta^{-1}P^{-1}R_W^{-1}$. We thus have $\|v\|_\mh^2 = v^T V_r^{-1}v = \|V_r^{-1/2}v\|_2^2$, where $V_r^{-1/2} = 2^\frac{L_n+1}{2} R_WP\Delta^{-1/2}P^{-1}R_W^{-1}$, and we bound: 
\begin{align*}
	\|v\|_\mh^2 &= 2^{L_n+1}\|\Delta^{-1/2}P^{-1}R_W^{-1}v\|_2^2
	\leq 2^{L_n+1} \max_{1 \leq i \leq 2^{L_n+1}} \frac{1}{|\la_i(V)|} \|v\|_2^2,
\end{align*} 
with $\la_{i}(V)$ the eigenvalues of $V$. We thus need a bound on the minimum eigenvalue of $V$. In the independent Gaussian case, this is equal to one. For the dependent prior, we find \cite{Anderson1997}:
\begin{align*}
	\|v\|_\mh^2 &\leq 2^{L_n+1} \max_{1 \leq i \leq 2^{L_n+1}} \frac{4\sin^2\left(\frac{(2i-1)\pi}{4\cdot 2^{L_n+1} + 2}\right)}{\log(1+\sigma^2)} \|v\|_2^2 \leq \frac{2^{L_n+3}}{\log(1+\sigma^2)}\|v\|_2^2.
\end{align*} 

From this we deduce the bound, valid for both the dependent and independent case:
$\|\tfrac{t}{\sqrt{n}}\psi_n\|_\mh^2 \leq   \frac{2^{L_n+3}}{\log(1+\sigma^2)}\frac{t^2}{n}\|\psi_n\|_2^2$. Thus we arrive at the bound 
$\|\zeta\|_\mh^2 \leq   \frac{2^{L_n+4}}{\log(1+\sigma^2)}\frac{t^2}{n}$,
valid for both the independent prior (where $2^{L_n+4}$ can be improved to $2^{L_n+2}$) and the dependent prior. This shows \eqref{prgrk}, and one should also check that $\veps_n\|\psi_n\|_\mh$ is bounded, which follows from the previous bound for $\be>1/2$. We can now follow the same arguments as for the case where the prior is defined on the wavelet coefficients, independently.

Finally, one verifies that condition \ref{c:cvarb} holds for Gaussian priors. This amounts to check that the intersecting set $B_n$ still has posterior mass going to one  after applying the  translation $g\to g-\zeta$: this is verified similarly as for the log-Laplace prior and details are left to the reader (see e.g. \cite{ic14} for a similar argument).

\subsection{Gamma dependent and independent histogram priors\label{sec:changevargamma}}

In the dependent case, we can use the same argument as for the dependent log-Laplace prior in Section \ref{sec:loglaplacechangeofvar}, with minor adaptations. The density on the $h$-scale is  given by $f(h_{i+1} \mid \alpha, h_{i}) = \frac{\alpha^\alpha }{\Gamma(\alpha)} e^{\alpha[h_{i+1} - h_i - e^{h_{i+1} - h_i}]}$. We  bound a single term
\begin{align}
	\log&\{ f(z_j + (W^{-1}\rho)_j \mid \alpha, z_j + (W^{-1}\rho)_{j-1} ) \} \notag\\
	&\quad - \log\{ f( (W^{-1}\rho)_j \mid \alpha, z_j + (W^{-1}\rho)_{j-1} ) \} \notag \\
	&= \alpha(z_j - z_{j-1}) + \alpha(1 - e^{z_j-z_{j-1}}) e^{(W^{-1}\rho)_j - (W^{-1}\rho)_{j-1}}. \label{trgamma}
\end{align}
In the log-Laplace case above, we already showed that $\sum_j |z_j-z_{j-1}|$ 
is bounded by  $|t|2^{L/2}/\rn$ which is $o(1)$ uniformly over $|t|\le \log{n}$ and $L\le L_n$.

We now have an extra term involving $(W^{-1}\rho)_j - (W^{-1}\rho)_{j-1}$. Using the relationship $\rho = W(h-z)$, we find
$ (W^{-1}\rho)_j - (W^{-1}\rho)_{j-1} = h_j - h_{j-1} - (z_j - z_{j-1})$. Now, for $\be>1/2$, one can use Lemma \ref{lem:consistsn}, which implies supremum--norm consistency for $r$ on $A_n$: in particular, the log--hazard $r$ and the corresponding histogram heights are bounded by a universal constant. We thus find that $|h_j-h_{j-1}|$ is a $O(1)$. Deduce that \eqref{trgamma} is bounded from above by a constant times (using $1-e^u\le 2u$ for small $u$)
\[ |z_j - z_{j-1}|+|z_j - z_{j-1}|e^{O(1)}. \]
By summing over $j$,
one sees that the proof proceeds like the log-Laplace case otherwise for both conditions \ref{c:cvarsn} and \ref{c:cvarb}.

In the independent case, the density on the $h$-scale is given by $f(h_i \mid \alpha_0, \beta_0) = \beta_0^{\alpha_0} e^{\alpha_0 h_i - \beta_0 e^{h_i}}/\Gamma(\alpha_0)$ for all $i$. The difference corresponding to \eqref{trgamma} then becomes
$$\alpha_0 z_j + \beta_0(1 - e^{z_j})e^{(W^{-1}\rho)_j},$$
and the argument proceeds similarly, using this time that, as proved for the independent log--Laplace prior, we have $\sum_j |z_j|\leqa |t|2^{L_n}/\rn=o(1)$ uniformly in $|t|\le\log{n}$ for $\be>1/2$, as well as $|W^{-1}\rho_j|\le |h_j|+|z_j|$, for which we can use the supremum--norm consistency as above to argue that $|h_j|$s are uniformly bounded.  From this one verifies \ref{c:cvarsn} and \ref{c:cvarb} as above, which concludes the verifications for the change of variable condition for all priors.

\section{\label{sec:cfun}Approximations of $\psi_b=b/M_0$}

\subsection{Approximation Lemmas}

\begin{lem} \label{lem:psi}
	Let $(\psi_{lk})$ be one of the considered wavelet bases (Haar or CDV).  Let $L_n$ be the prior cut--off and let $L\le L_n$ and $0\le K<2^L$. 
	For $b\in L^\infty$, we recall the notation $\psi_b=b/M_0$ and $ \psi_{b,L_n}=P_{L_n}(b/M_0)$. 
	For $\|\cdot\|_L$ the LAN norm, the following holds.
	\begin{enumerate}
		\item There exists a constant $C$, depending on the wavelet basis and $M_0$  only, such that for any $b\in L^\infty$, possibly depending on $n$,
		\[ \|\psi_{b,L_n}\|_\infty\le CL_n \|b\|_\infty,\qquad  \|\psin\|_2 \le C \|b\|_2.\]
		\item For any {\em fixed} bounded $b$, as $n\to \infty$ and $L_n\to\infty$,
		\begin{align*} \|\psi_{b,L_n}\|_L & \to \|\psi_{b}\|_L,\\
			W_n(\psi_{b,L_n}-\psi_b)& =\op. 
		\end{align*}
		In particular, $\|\psi_{b,L_n}\|_L$ and $\|\psi_{b,L_n}\|_2$ are uniformly bounded.   
	\end{enumerate}
\end{lem}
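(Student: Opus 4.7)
The plan is to prove the two parts separately, with part (1) reducing to elementary wavelet size bounds, and part (2) following from $L^2$--convergence of wavelet projections together with a direct variance bound on $W_n$.

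For part (1), recall from \ref{c:mod} that $M_0$ is bounded above and bounded below, so $\|b/M_0\|_2\lesssim\|b\|_2$ and $\|b/M_0\|_\infty\lesssim\|b\|_\infty$. Since $P_{L_n}$ is the orthogonal projection onto $\mathcal{V}_{L_n}$, it is non--expansive in $L^2$, yielding $\|\psi_{b,L_n}\|_2\le\|b/M_0\|_2\lesssim\|b\|_2$. For the $\|\,\cdot\,\|_\infty$--bound, expand
\[
\psi_{b,L_n}=\sum_{l\le L_n}\sum_{k}\langle b/M_0,\psi_{lk}\rangle\,\psi_{lk},
\]
and bound each coefficient by $|\langle b/M_0,\psi_{lk}\rangle|\le\|b/M_0\|_\infty\|\psi_{lk}\|_1\lesssim\|b\|_\infty 2^{-l/2}$. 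By the localisation property (W3) of the basis, $\bigl\|\sum_{k}|\psi_{lk}|\bigr\|_\infty\lesssim 2^{l/2}$. Summing across $l\le L_n$ then gives $\|\psi_{b,L_n}\|_\infty\lesssim L_n\|b\|_\infty$, as desired.

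For part (2), fix $b$ bounded. Since $b/M_0\in L^2[0,1]$ and $(\psi_{lk})$ is an orthonormal basis, $P_{L_n}(b/M_0)\to b/M_0$ in $L^2$ as $L_n\to\infty$, i.e. $\|\psi_{b,L_n}-\psi_b\|_2\to 0$. Under \ref{c:mod}, $M_0\lambda_0$ is bounded, hence
\[
\bigl|\|\psi_{b,L_n}\|_L^2-\|\psi_b\|_L^2\bigr|\le\|M_0\lambda_0\|_\infty\,\|\psi_{b,L_n}+\psi_b\|_2\,\|\psi_{b,L_n}-\psi_b\|_2\longrightarrow 0,
\]
which proves the first convergence.

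For the second convergence, one uses that under $P_0$ the terms $\delta_i g(Y_i)-\Lambda_0 g(Y_i)$ are i.i.d., mean zero, so $\operatorname{Var}_0(W_n(g))$ equals their common second moment. Since the density of $(Y,\delta)$ gives $E_0[\delta g(Y)^2\mathbf{1}\{Y<\tau\}]=\int g^2 M_0\lambda_0\lesssim\|g\|_2^2$, and since $|\Lambda_0 g(Y)|\le\int_0^1|g|\lambda_0\lesssim\|g\|_2$, one obtains the variance bound $E_0[W_n(g)^2]\lesssim\|g\|_2^2$. Applied to $g=\psi_{b,L_n}-\psi_b$, this yields $W_n(\psi_{b,L_n}-\psi_b)=O_{P_0}(\|\psi_{b,L_n}-\psi_b\|_2)=o_{P_0}(1)$ by Markov's inequality. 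The uniform boundedness of $\|\psi_{b,L_n}\|_L$ and $\|\psi_{b,L_n}\|_2$ follows from the established convergences. The only non--routine ingredient is the variance bound for $W_n$, which however follows directly from the product structure of the density of $(Y,\delta)$ under \ref{c:mod}.
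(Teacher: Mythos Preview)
Your proof is correct and follows essentially the same route as the paper's: the $L^\infty$ and $L^2$ bounds in part (1) are obtained exactly as in the paper via $|\langle b/M_0,\psi_{lk}\rangle|\lesssim\|b\|_\infty 2^{-l/2}$ plus the localisation property, and part (2) via $L^2$--convergence of projections combined with boundedness of $M_0\lambda_0$. The only minor difference is that the paper notes the variance of $W_n(g)$ equals $\|g\|_L^2$ exactly (from the counting--process martingale structure), whereas you give a direct crude bound $\operatorname{Var}_0(W_n(g))\lesssim\|g\|_2^2$; both are sufficient here.
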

\begin{proof}
	For the first point,  one bounds from above
	\[ \|\psi_{b,L_n}\|_\infty \le C\sum_{l\le L_n} 2^{l/2} \max_{k} |\psg  \frac{b}{M_0}, \psi_{lk} \psd| \leqa L_n\|b\|_\infty\|M_0^{-1}\|_\infty ,\]
	where we use for a bounded function $h$ that $|\psg h,\psi_{lk}\psd|\le \|h\|_\infty\|\psi_{lk}\|_1\leqa \|h\|_\infty 2^{-l/2}$. Also,  $\|\psin\|_2^2 \le \|\eif\|_2^2\le \|b\|^2\|M_0^{-1}\|_\infty$. This gives the result as $1/M_0$ is bounded. 
	
	For the second point, one notices that the LAN norm is bounded from above by a constant times the $\|\cdot\|_2$--norm, as $M_0\la_0$ is bounded by assumption, and the first convergence follows from $\|\psi_b-\psi_{b,L_n}\|_2\to 0$ by definition (as here $b$ is a fixed element of $L^\infty\subset L^2$). Similarly, $W_n(\psi_{b,L_n}-\psi_b)$ is centered with variance under $P_{\la_0}$ equal to $\|\psi_{b,L_n}-\psi_b\|_L^2=o(1)$, so is a $\op$.  
\end{proof}

\begin{lem} \label{lem:checkpsi}
	Under the same notation as in Lemma \ref{lem:psi},
	\begin{enumerate}
		\item For $b=\psi_{LK}$, uniformly over $L\le L_n$ and $K$, for some $C>0$,
		\begin{align*}
			\|\psi_{b,L_n}\|_2 \vee \|\psi_{b,L_n}\|_L & \le C\\
			\|\psi_{b,L_n}\|_\infty & \le C 2^{L/2} L_n, \\
			\| \psi_b - \psi_{b,L_n} \|_\infty & \le C_2 2^{L/2}2^{-L_n}.
		\end{align*}
		\item For $b\in \cH(\mu, D)$ for some $\mu,D>0$, with $\mu'=\mu\wedge 1$,
		\[\| \psi_b - \psi_{b,L_n} \|_\infty  \leqa 2^{-\mu' L_n}. \]
	\end{enumerate}
\end{lem}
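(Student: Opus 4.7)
For Part 1, the $L^2$ bound is immediate: since $P_{L_n}$ is an orthogonal projection, $\|\psi_{b,L_n}\|_2 \le \|\psi_b\|_2 = \|\psi_{LK}/M_0\|_2 \le \|M_0^{-1}\|_\infty \|\psi_{LK}\|_2 = \|M_0^{-1}\|_\infty$, which is a constant under \ref{c:mod} since $M_0$ is bounded away from zero. The LAN norm is controlled by the $L^2$ norm up to the constant $\|M_0\la_0\|_\infty$, also bounded under \ref{c:mod}. For the $L^\infty$ bound, I would apply part 1 of Lemma \ref{lem:psi} with $b=\psi_{LK}$, using $\|\psi_{LK}\|_\infty \leqa 2^{L/2}$ from property (W2), which directly yields $\|\psi_{b,L_n}\|_\infty \le C 2^{L/2} L_n$.

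The main step is the third bound. I would expand
\[
\psi_b - \psi_{b,L_n} = \sum_{l>L_n} \sum_{k} \psi_{n,lk}\, \psi_{lk},\qquad \psi_{n,lk}=\langle \psi_{LK}/M_0,\psi_{lk}\rangle.
\]
Since $L \le L_n < l$, the wavelets $\psi_{LK}$ and $\psi_{lk}$ are orthogonal, so denoting by $\overline{M_0^{-1}}$ the mean of $M_0^{-1}$ on $S_{lk}$,
\[
|\psi_{n,lk}| = \bigl|\langle \psi_{LK}(M_0^{-1}-\overline{M_0^{-1}}),\psi_{lk}\rangle\bigr| \leqa \|\psi_{LK}\|_\infty\, 2^{-l} \|\psi_{lk}\|_1 \leqa 2^{L/2} 2^{-3l/2},
\]
where I use that $M_0^{-1}$ is Lipschitz on $[0,1]$ (Lemma \ref{lem:lip}), so its oscillation on $S_{lk}$ is $O(2^{-l})$, and $\|\psi_{lk}\|_1 \leqa 2^{-l/2}$. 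Moreover $\psi_{n,lk}=0$ when $S_{lk}\cap S_{LK}=\emptyset$. By property (W3), at each $x\in[0,1]$ only a bounded number of $\psi_{lk}$ at level $l$ overlap, each satisfying $\|\psi_{lk}\|_\infty\leqa 2^{l/2}$, so
\[
\|\psi_b-\psi_{b,L_n}\|_\infty \leqa \sum_{l>L_n} \max_k |\psi_{n,lk}|\cdot 2^{l/2} \leqa \sum_{l>L_n} 2^{L/2} 2^{-3l/2}\cdot 2^{l/2} \leqa 2^{L/2} 2^{-L_n}.
\]

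For Part 2, I would argue that $\psi_b = b/M_0$ inherits the H\"older-$\mu'$ regularity of $b$ (with $\mu'=\mu\wedge 1$) from the fact that $M_0^{-1}$ is Lipschitz under \ref{c:mod} and the product of a $\cH(\mu,D)$ function with a Lipschitz, uniformly bounded and bounded away from zero function lies in $\cH(\mu',D')$. Then the standard wavelet approximation estimate for bases characterising Besov spaces (cf.\ \eqref{besovnorm}) gives $\|P_{L_n^c}\psi_b\|_\infty \leqa 2^{-\mu' L_n}$, which is the claim.

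The only delicate point is the approximation bound in Part 1: a naive bound on $|\psi_{n,lk}|\le \|\psi_{LK}/M_0\|_\infty\|\psi_{lk}\|_1 \leqa 2^{L/2}2^{-l/2}$ would yield $\|\psi_b-\psi_{b,L_n}\|_\infty \leqa 2^{L/2}$, which is useless. The trick is to exploit orthogonality (subtracting the constant $\overline{M_0^{-1}}$) together with the Lipschitz property of $M_0^{-1}$ to gain the extra factor $2^{-l}$; this is essentially the same mechanism as in Lemma \ref{lempsin} but applied to levels $l>L_n$.
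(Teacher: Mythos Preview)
Your proof is correct and follows essentially the same route as the paper's own argument: for Part 1 you invoke Lemma \ref{lem:psi} for the $L^2$, LAN and $L^\infty$ bounds, and for the approximation bound you subtract the mean $\overline{M_0^{-1}}$ on $S_{lk}$, exploit orthogonality of $\psi_{LK}$ and $\psi_{lk}$ for $l>L_n\ge L$, and use the Lipschitz property of $M_0^{-1}$ from Lemma \ref{lem:lip} to gain the extra $2^{-l}$; Part 2 likewise matches, via the H\"older-$\mu'$ regularity of $b/M_0$ and the standard wavelet tail bound. The only cosmetic difference is that you invoke (W3) explicitly for the localisation step, whereas the paper writes the same bound via $\big\|\sum_k |\psi_{lk}|\big\|_\infty \leqa 2^{l/2}$.
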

\begin{proof}
	For the first point and first two inequalities, one applies Lemma \ref{lem:psi} with $b=\psi_{LK}$, for which $\|\psi_{LK}\|_2\vee \|\psi_{LK}\|_L\leqa C$ and 
	$\|\psi_{LK}\|_\infty\leqa 2^{L/2}$. 
	
	For the last inequality of the first point,  one can bound
	\begin{align*}
		\| \psi_b - \psi_{b,L_n} \|_\infty & \le
		\sum_{l > L_n}
		2^{l/2} \max_{k}\left|\psg \frac{\psi_{LK}}{M_0} , \psi_{lk} \psd \right|.
	\end{align*}
	One further splits, for $h:=M_0^{-1}$, and denoting by $\bar{h}$ the mean of $h$ on the support of the wavelet $\psi_{lk}$,
	\[ \psg \frac{\psi_{LK}}{M_0} , \psi_{lk} \psd
	= \psg \psi_{LK} (h-\bar h), \psi_{lk} \psd + 
	\bar{h} \psg \psi_{LK} , \psi_{lk} \psd.
	\]
	As $l>L_n\ge L$ by definition, the last bracket is zero by orthogonality of the wavelet basis. We note that $h=M_0^{-1}$ is Lipschitz by Lemma \ref{lem:lip}.   So, for all $x$ in the support $S_{lk}$ of $\psi_{lk}$, there exists $c$ in $S_{lk}$ such that 
	\[ |h(x)-\bar{h}|=|h(x)-h(c)|\le C_0 |x-c|\le C_0 2^{-l}, \]
	using that $S_{lk}$ has diameter at most $C2^{-l}$, so that, using $\|\psi_{LK}\|_\infty\leqa 2^{L/2}$ and $\|\psi_{lk}\|_1\leqa 2^{-l/2}$,
	\[ |\psg \psi_{LK} (h-\bar h), \psi_{lk} \psd|
	\le C2^{L/2}2^{-l}2^{-l/2}=C2^{L/2-3l/2}.
	\]
	Combining the previous bounds leads to
	\[ \| \psi_b - \psi_{b,L_n} \|_\infty  \leqa
	\sum_{l > L_n} 2^{L/2-l} \leqa 2^{L/2-L_n}.  \]
	For the second point, for $b\in \cH(\mu, D)$, 
	\begin{align*}
		\|\psi_b-\psi_{b,L_n}\|_\infty &\leqa \sum_{l>L_n} 2^{l/2} \max_{k} 
		|\psg b/M_0 , \psi_{lk} \psd| \\
		&\leqa \sum_{l>L_n} 2^{l/2}  2^{-l(1/2+\mu')}\leqa 2^{-L_n\mu'}. \qedhere
	\end{align*}
	using that, since $M_0^{-1}$ is Lipschitz by Lemma \ref{lem:lip}, we have $b/M_0\in \cH(\mu', D)$.
	
\end{proof}

\subsection{Smooth linear functional example} \label{sec:linfunex}

Let us first check the change of variables condition \ref{c:cvarb}. By the arguments in Section \ref{sec:changevar}, for \ref{c:cvarb} to hold it is enough to bound, with $\psi_{n,lk} = \langle b/M_0, \psi_{lk}\rangle,$
\[ \frac{t}{\rn} \sum_{l\le L_n,\, k} \frac{|\psi_{n,lk}|}{\sigma_l}
\le \frac{t}{\rn} \sum_{l\le L_n} 2^{l(1/2-\mu')}{\sigma_l}, \]
using Lemma \ref{lem:cvarlap}, part 2. For $\sigma_l=1$, this always goes to $0$ as $2^{L_n/2}=o(\rn)$. For $\sigma_l=2^{-l/2}$, this goes to $0$ when  $2^{(1-\mu')L_n}=o(\rn)$, which happens if $\ga+\mu'>1/2$, that is also if $\ga+\mu>1/2$.

One now checks condition \ref{c:funb}.  By Lemma \ref{lem:checkpsi}, we have $\|\psi_b-\psi_{b,L_n}\|_\infty\leqa 2^{-L_n\mu'}$, with $\mu'=1\wedge \mu$. So, it is enough to check that 
$\rn \veps_n 2^{-L_n\mu'}=o(1)$. Using the upper-bound $\veps_n$ for the rate obtained  in Theorem \ref{thm:appli}, one obtains the condition $\mu'+ 
\be\wedge\ga > 1/2 + \ga$.

\section{Background on efficiency and centering} \label{sec:backgroundeff}

For a given function $b\in L^2(\Lambda_0)$, consider estimating the functional $\psi(b)=\int_0^1b\la_0=\La_0b$. The efficient influence function for estimating $\psi(b)$ can be seen to be (see \cite{aad98}, Chapter 25)
\[ \tilde\psi_b(Y,\delta)=\delta\frac{b}{M_0}(Y)-\left[\Lambda_0\left(\frac{b}{M_0}\right) \right](Y). \]
Note that $W_n(b/M_0)=W_n(\eif)=\tilde\psi_b(Y,\delta)$, for $\eif=b/M_0$. In particular, an estimator $\hat\psi_b$ of $\psi(b)$ is asymptotically efficient if, as $n\to\infty$,
\[ \hat\psi_b = \psi(b) + \frac1\rn  W_n(\psi_b) + \op. \]

{\em Comparing $T_n$ and $\la_{L_n}^*$.} By definition \eqref{lastar}, 
\[ \la_{L_n}^*=\la_{0,L_n}+\frac{1}{\rn}\sum_{L\le L_n} \sum_{0\le K<2^L} W_n(\psi_{n,LK}) \psi_{LK}. \]
Recall that the quantity $T_n$ is  defined similarly, with $\psi_{LK}$ in place of  $\psi_{n,LK}$, see  \eqref{tn}. We find that in the space $\cM_0$, one can use either the first or the second centering.
\begin{lem} \label{lem:tn}
	Let $T_n$ be defined by \eqref{tn} and $\la^*_{L_n}$ by \eqref{lastar}. Then for any admissible sequence $w=(w_l)$,
	\[ nE_{\la_0} \| T_n-\la_{L_n}^*\|_{\cM_0(w)}^2 = o(1). \]
	As a consequence, $\cB_{\mathcal M_0}\left(\Pi(\cdot\given X)\circ\tau_{T_n}^{-1}, \Pi(\cdot\given X)\circ\tau_{\la^*}^{-1}\right)=\op$.
\end{lem}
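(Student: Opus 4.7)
\textbf{Plan for Lemma \ref{lem:tn}.} The starting observation is that by \eqref{tn} and \eqref{lastar}, the wavelet coefficients of $T_n - \la^*_{L_n}$ vanish for $l>L_n$, and for $l\le L_n$ are given by
\[
\langle T_n - \la^*_{L_n},\psi_{lk}\rangle = \frac{1}{\rn}W_n(h_{lk}),\qquad h_{lk}:=(I-P_{L_n})(\psi_{lk}/M_0).
\]
Consequently
\[
n\|T_n - \la^*_{L_n}\|_{\cM_0(w)}^2 = \sup_{l\le L_n,\,0\le k<2^l}\frac{W_n(h_{lk})^2}{w_l^2},
\]
and the first claim reduces to bounding the expected supremum of finitely many squared empirical-process increments indexed by the residuals $h_{lk}$.

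The key variance estimate is the standard counting-process identity $E_{\la_0}[W_n(g)^2]=\int g^2 M_0\la_0 = \|g\|_L^2\leqa \|g\|_2^2$, valid under \ref{c:mod}. To control $\|h_{lk}\|_2^2$, I expand in the basis and apply Lemma \ref{lempsin} with $L=l$, $K=k$, noting $l\le L_n<l'$: only the $k'$ whose support meets $S_{lk}$ contribute, and there are at most $C2^{l'-l}$ such indices, each producing $|\langle \psi_{lk}/M_0,\psi_{l'k'}\rangle|\leqa 2^{l/2-3l'/2}$. This yields
\[
\|h_{lk}\|_2^2 \leqa \sum_{l'>L_n} 2^{l'-l}\cdot 2^{l-3l'} = \sum_{l'>L_n} 2^{-2l'}\leqa 2^{-2L_n},
\]
uniformly in $l\le L_n$ and $k$. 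Combining with the trivial bound $E\sup\le \sum E$, summing over $(l,k)$ with $l\le L_n$ contributes an additional factor $O(2^{L_n})$, and since $(w_l)$ admissible gives $\min_l w_l\ge c>0$, one concludes
\[
n E_{\la_0}\|T_n - \la^*_{L_n}\|_{\cM_0(w)}^2 \leqa 2^{L_n}\cdot 2^{-2L_n} = O(2^{-L_n}) = o(1).
\]

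For the consequence on the bounded-Lipschitz metric, note that for any $F:\cM_0(w)\to\RR$ with $\|F\|_{BL}\le 1$ and any $\la$,
\[
|F(\sqrt n(\la-T_n)) - F(\sqrt n(\la-\la^*_{L_n}))|\le \sqrt n\,\|T_n-\la^*_{L_n}\|_{\cM_0(w)},
\]
a bound which is independent of $\la$, so integrating against $\Pi[\cdot\given X]$ and taking the supremum over such $F$ yields
\[
\cB_{\cM_0}\!\left(\Pi[\cdot\given X]\circ \tau_{T_n}^{-1},\Pi[\cdot\given X]\circ\tau_{\la^*_{L_n}}^{-1}\right) \le \sqrt n\,\|T_n - \la^*_{L_n}\|_{\cM_0(w)}.
\]
Markov's inequality applied to the first claim then delivers $\op$. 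The proof is essentially a second-moment plus union-bound argument; the only non-trivial point is the decay estimate for $\|h_{lk}\|_2^2$, for which Lemma \ref{lempsin} is precisely tailored, so no real obstacle is expected.
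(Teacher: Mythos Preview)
Your proof is correct and follows essentially the same route as the paper: bound the squared $\cM_0(w)$--norm by a sum over wavelet coefficients, use $E_{\la_0}[W_n(g)^2]=\|g\|_L^2$, estimate the size of the residuals $h_{lk}=(I-P_{L_n})(\psi_{lk}/M_0)$ via the wavelet--coefficient decay of $\psi_{lk}/M_0$, and deduce the bounded--Lipschitz consequence from the Lipschitz property of $F$ exactly as you do. The one minor difference is that the paper bounds $\|h_{lk}\|_L^2$ via $\|h_{lk}\|_\infty^2\leqa 2^{l-2L_n}$ (Lemma~\ref{lem:checkpsi}) and then exploits $w_l^{-2}\leqa l^{-1}$, arriving at $O(L_n^{-1})$; your direct $L^2$ computation via Lemma~\ref{lempsin} gives the sharper uniform bound $\|h_{lk}\|_2^2\leqa 2^{-2L_n}$, so you can afford the crude $w_l^{-2}\le c^{-2}$ and still obtain $O(2^{-L_n})$. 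Either way the conclusion is the same.
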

\begin{proof}
	By definition of both quantities, we have 
	\[ \rn(T_n-\la_{L_n}^*) = \sum_{L\le L_n, K} W_n(\psi_{LK}/M_0-\psi_{n,LK}) \psi_{LK}.\]
	Using the inequality $\|f\|_{\cM(w)}^2 \le \sum_{l,k} w_l^{-2} f_{lk}^2$, one sees that
	\begin{align*} 
		nE_{\la_0} \| T_n-\la_{L_n}^*\|_{\cM(w)}^2 & \le \sum_{L\le L_n, K} w_L^{-2} E_{f_0}[ W_n(\psi_{LK}/M_0-\psi_{n,LK})^2 ] \\
		& =  \sum_{L\le L_n, K} w_L^{-2} \|\psi_{LK}/M_0-\psi_{n,LK}\|_L^2. 
	\end{align*}
	Using the $L^\infty$--bound from Lemma \ref{lem:checkpsi}, one gets
	\begin{align*}
		\| \psi_{LK}/M_0-\psi_{n,LK} \|_L^2 & \leqa
		\| \psi_{LK}/M_0-\psi_{n,LK} \|_\infty^2  \leqa 2^{L}2^{-2L_n}.
	\end{align*}
	From this one concludes that for any admissible sequence $(w_l)$, that is such that $w_l/\sqrt{l}$ increases to infinity,
	\begin{align*}
		nE_{\la_0} \| T_n-\la_{L_n}^*\|_{\cM(w)}^2 & \le 
		2^{-L_n} \sum_{L\le L_n} w_L^{-2}2^L 2^{L-L_n} \\
		& \le 2^{-L_n} \sum_{L\le L_n} 2^{L}/L = O(L_n^{-1})=o(1).
	\end{align*}
	Using the definition of the bounded Lipschitz metric,
	\begin{equation}\label{centerings}
		\left|\cB_{\mathcal M_0(w)}\left(\Pi(\cdot\given X)\circ\tau_{T_n}^{-1}, \Pi(\cdot\given X)\circ\tau_{\la^*}^{-1}\right)\right| \le \rn\|T_n-\la^*\|_{\cM(w)},
	\end{equation} 
	and the last bound is $o_{P_0}(1)$ by the previous computation.
\end{proof}

\begin{lem} \label{lem:diff}
	Let $T_n$ be as in \eqref{tn} with cut--off $L_n$ as in \eqref{eln}, for $\ga>0$. Let  $\mathbb{T}_n(t) =  \int_0^t T_n(u) du$, $t\in[0,1]$ and set
	\[ \La^*(t)= \La_0(t)+\frac{1}{\rn} W_n\left(\1_{\cdot\le t}/M_0(\cdot) \right),\quad t\in[0,1],\]
	where $W_n$ is as in \eqref{eq:LAN}. Then, if $\ga<\be+1/2$, as $n\to\infty$,
	\[ \rn\|\mathbb{T}_n-\La^*\|_\infty=\op. \]
\end{lem}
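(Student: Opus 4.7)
}
The plan is to decompose the difference $\mathbb{T}_n(t)-\Lambda^*(t)$ into a deterministic bias term involving only $\lambda_0$ and a stochastic term expressible via $W_n$ applied to a residual. First, using linearity of $W_n$ and the fact that $\sum_{l\le L_n,\,k}\bigl(\int_0^t\psi_{lk}\bigr)\psi_{lk}=P_{L_n}\mathbf 1_{\cdot\le t}$ (the $L^2$--projection onto $\mathcal V_{L_n}$), one obtains the identity
\[
\mathbb{T}_n(t)-\Lambda^*(t)=\int_0^t(P_{L_n}\lambda_0-\lambda_0)(u)\,du\;-\;\frac{1}{\sqrt n}W_n\!\left(\frac{\mathbf 1_{\cdot\le t}-P_{L_n}\mathbf 1_{\cdot\le t}}{M_0}\right).
\]
Multiplying by $\sqrt n$, it suffices to bound each piece in supremum norm.

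For the bias term, expand $\mathbf 1_{\cdot\le t}$ in wavelets and use $\psg \lambda_0-P_{L_n}\lambda_0,\mathbf 1_{\cdot\le t}\psd=\sum_{l>L_n,k}\lambda_{0,lk}\,a_{lk}(t)$, where $a_{lk}(t):=\int_0^t\psi_{lk}$. Since $\lambda_0\in\cH(\be,D)$ one has $|\lambda_{0,lk}|\lesssim 2^{-l(1/2+\be)}$, while $a_{lk}(t)$ vanishes unless the support of $\psi_{lk}$ contains $t$ (the wavelets have zero integral), which happens for only $O(1)$ values of $k$ per level, and then $|a_{lk}(t)|\le\|\psi_{lk}\|_1\lesssim 2^{-l/2}$. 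Hence
\[
\sup_{t\in[0,1]}\Bigl|\int_0^t(P_{L_n}\lambda_0-\lambda_0)\Bigr|\lesssim \sum_{l>L_n}2^{-(1+\be)l}\lesssim 2^{-(1+\be)L_n},
\]
and $\sqrt n\cdot 2^{-(1+\be)L_n}=o(1)$ is equivalent, using $2^{L_n}\asymp(n/\log n)^{1/(1+2\ga)}$, to $(1+\be)/(1+2\ga)>1/2$, i.e.\ $\ga<\be+1/2$, which is precisely the assumption.

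The main obstacle is controlling the stochastic term $\sup_t\bigl|W_n(g_t)\bigr|$ with $g_t:=(\mathbf 1_{\cdot\le t}-P_{L_n}\mathbf 1_{\cdot\le t})/M_0$, uniformly in $t$. I would write
\[
W_n(g_t)\;=\;-\sum_{l>L_n}\sum_{k:\,\mathrm{supp}(\psi_{lk})\ni t}a_{lk}(t)\,W_n\!\left(\psi_{lk}/M_0\right),
\]
with at most a constant number of nonzero summands per level and $|a_{lk}(t)|\lesssim 2^{-l/2}$. Since $\|\psi_{lk}/M_0\|_\infty\lesssim 2^{l/2}$ and $\|\psi_{lk}/M_0\|_L^2\lesssim 1$, a Bernstein inequality applied to the bounded centered summands defining $W_n(\psi_{lk}/M_0)$ yields, uniformly over $l\le c\log_2 n$ and $k$, that $|W_n(\psi_{lk}/M_0)|\lesssim\sqrt l$ with probability at least $1-n^{-2}$ after a union bound (using $2^l\le n/l$ in this range). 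Summing gives $\sup_t\sum_{L_n<l\le c\log_2 n}2^{-l/2}\sqrt l=O(\sqrt{L_n}\,2^{-L_n/2})=o(1)$. For the tail $l>c\log_2 n$, a crude bound using $\|g_t\|_\infty\lesssim 1$ together with the $L^2$--bound $\|g_t\|_2^2\lesssim 2^{-L_n}$ and a moment computation (or a bracketing--entropy bound for the class $\{g_t:t\in[0,1]\}$, which has $L^2$ diameter $\lesssim 2^{-L_n/2}$ and bracketing numbers polynomial in $1/\veps$) shows the contribution is $O_{P_0}(2^{-L_n/2}\sqrt{L_n})=o(1)$. Combining the two pieces gives $\sqrt n\|\mathbb{T}_n-\Lambda^*\|_\infty=\op$, which concludes the proof.
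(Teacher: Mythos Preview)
Your decomposition and the treatment of the bias term are essentially the same as the paper's: both arrive at $\sqrt n\,2^{-(1+\be)L_n}$, which is $o(1)$ exactly when $\ga<\be+1/2$. (Your observation that $a_{lk}(t)=\int_0^t\psi_{lk}$ vanishes unless $t\in\mathrm{supp}(\psi_{lk})$ is a clean way to get the $O(1)$ terms per level; the paper instead bounds $\|P_{L_n^c}\mathbf 1_{[0,t]}\|_1$ via the estimate $\sup_t\sum_k|\langle\mathbf 1_{[0,t]},\psi_{lk}\rangle|\lesssim 2^{-l/2}$.)

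The genuine difference lies in the stochastic term. The paper controls $\sup_t|W_n(g_t)|$ \emph{in one step} by a bracketing--entropy argument for the class $\cF_n=\{\Psi(g_t;\cdot):t\in[0,1]\}$: it computes the $L^2(P_{\la_0})$--diameter $\lesssim 2^{-L_n/2}$, an envelope bound $\|f_t\|_\infty\lesssim L_n$, and a Lipschitz--type estimate $\|f_t-f_s\|_{L^2(P_{\la_0})}^2\lesssim |t-s|^{1/2}2^{-L_n/2}$, then applies Lemma~3.4.2 of \cite{vvw96} (here Lemma~\ref{lembra}). Your route is more elaborate: a per--level Bernstein bound plus union bound over all $(l,k)$ with $L_n<l\le c\log_2 n$, and then a separate tail argument for $l>c\log_2 n$ which, as you note yourself, essentially reduces to the same bracketing bound. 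Your approach is valid and yields a comparable rate ($\sqrt{L_n}\,2^{-L_n/2}$ versus the paper's $L_n 2^{-L_n/2}+L_n^3/\sqrt n$), but the paper's single entropy step is more economical and avoids the artificial split.

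Two minor points: the sign in your wavelet expansion of $W_n(g_t)$ should be $+$ rather than $-$; and your claim $\|g_t\|_\infty\lesssim 1$ is immediate for the Haar basis but for CDV wavelets the paper only establishes $\|P_{L_n}\mathbf 1_{[0,t]}\|_\infty\lesssim L_n$, hence $\|g_t\|_\infty\lesssim L_n$. Neither affects the conclusion.
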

\begin{proof}
	By definition of  $T_n$ and $\mathbb{T}_n$, and writing $P_{L_n^c}\la=\la-P_{L_n}\la$ the projection of $\la$ onto the orthocomplement of $\cV_{L_n}$,
	\begin{align*}
		\mathbb{T}_n(t)  & = \int_0^t (P_{L_n}\la_0)(u)du 
		+ \frac{1}{\rn} \sum_{L\le L_n,\, K} W_n(\psi_{LK}/M_0) \int_0^t\psi_{LK}(u)du\\
		& = \La_0(t) - \int_0^t (P_{L_n^c}\la_0)(u)du +  \frac{1}{\rn}
		W_n\left(\frac{1}{M_0(\cdot)}  \sum_{L\le L_n,\, K} \psg \1_{[0,t]},\psi_{LK}\psd \psi_{LK}(\cdot) \right)\\
		& = \La_0(t) - \int_0^t (P_{L_n^c}\la_0)(u)du +  \frac{1}{\rn}
		W_n\left(\frac{(P_{L_n} \1_{[0,t]}) }{M_0}(\cdot)  \right).
	\end{align*}
	From this one deduces, using the definition of $\La^*$ above, for any $t\in[0,1]$,
	\[ \rn(\mathbb{T}_n(t)-\La^*(t)) = -\rn\int_0^1 (P_{L_n^c}\1_{[0,t]})(u)(P_{L_n^c}\la_0)(u)du 
	+  W_n\left(\frac{(P_{L_n^c} \1_{[0,t]}) }{M_0}(\cdot)  \right),\]
	where one uses $ \int_0^t P_{L_n^c}\la_0 = \int \1_{[0,t]} P_{L_n^c}\la_0 = \int 
	P_{L_n^c} \1_{[0,t]} P_{L_n^c}\la_0$ since $P_{L_n} \1_{[0,t]}$ and $P_{L_n^c}\la_0$ are orthogonal in $L^2[0,1]$. We deal with each term in the previous display separately. For the first, one uses the bound $\int fg \le \|f\|_\infty\|g\|_1$. Since $\la_0$ is $\be$--H\"older, we have
	\[ \| P_{L_n^c}\la_0\|_\infty \le \sum_{l>L_n} 2^{l/2} \max_{k}|\la_{0,lk}|\leqa 
	2^{-L_n\be}. \] 
	On the other hand, since $\|\psi_{lk}\|_1\leqa 2^{-l/2}$,
	\begin{align*}
		\| P_{L_n^c} \1_{[0,t]} \|_1 & \le  \sum_{l>L_n,\ k} |\psg  \1_{[0,t]},\psi_{lk}\psd| \int_0^1 |\psi_{lk}(u)|du \\
		& \leqa \sum_{l>L_n} 2^{-l/2} \left[\sup_{t\in[0,1]} \sum_k |\psg  \1_{[0,t]},\psi_{lk}\psd|\right]
		\leqa \sum_{l>L_n} 2^{-l} \leqa 2^{-L_n},
	\end{align*}  
	where we use that the supremum under brackets in the last display is bounded from above by a constant time $2^{-l/2}$, as shown e.g. in the proof of Lemma 3 in \cite{gnaop09}. We now turn to bounding the term involving $W_n$ in the identity for $\rn(\mathbb{T}_n(t)-\La^*(t))$ above. To do so, one notes 
	\[ W_n(a) =\mathbb{G}_n(\Psi(a;X)),\qquad \Psi(a;X):=\Psi(a;(\delta,Y))=
	\delta a(Y)-(\La_0 a)(Y). \]
	We wish to apply Lemma \ref{lembra} to the empirical process above. To bound the bracketing entropy, one first notes that for $a,b$ bounded functions,
	\begin{equation} \label{tecemp}
		E_{\la_0}(\Psi(a;X)-\Psi(b;X))^2 \leqa \int_0^1(a-b)^2,
	\end{equation} 
	where one uses $0\le \delta\le 1$ and, since $(\La h)(Y)=\int_0^1 \1_{u\le Y}h(u)\la_0(u)du$ for a given bounded function $h$, using Cauchy-Schwarz inequality,
	\[ E (\La h)(Y)^2 \le \int_0^1h(u)^2\la_0(u)^2du \leqa \int_0^1 h(u)^2 du.\]
	One wishes to bound $\|\mathbb{G}_n\|_{\cF_n}=\sup_{f\in\cF_n} |\mathbb{G}_n(f)|$, with \[ \cF_n=\{f_t:=\Psi(M_0^{-1} P_{L_n^c} \1_{[0,t]}),\, 
	t\in [0,1]\}.\] For any $f\in\cF_n$, we now bound $\int f^2 dP_{\la_0}$, $\|f\|_\infty$ and $\cJ_{[]}(\delta,\cF_n,L^2(P_{\la_0}))$. By using \eqref{tecemp} with $b=0$ and  that $M_0^{-1}$ is bounded, we have for $t\in[0,1]$,
	\[ \int f_t^2dP_{\la_0}\leqa \int_0^1 \left(M_0^{-1} P_{L_n^c} \1_{[0,t]}\right)^2 
	\leqa \|P_{L_n^c} \1_{[0,t]}\|_2^2.  \]
	Now, using $|\psg \1_{[0,t]},\psi_{lk}\psd|\le \|\psi_{lk}\|_1\leqa 2^{-l/2}$, one can bound
	\begin{align*}
		\|P_{L_n^c} \1_{[0,t]}\|_2^2 &
		= \sum_{l>L_n,\, k} \psg \1_{[0,t]},\psi_{lk}\psd^2\le
		\sum_{l>L_n} \sum_{k} 2^{-l/2} |\psg \1_{[0,t]},\psi_{lk}\psd| \\
		& \leqa \sum_{l>L_n} 2^{-l/2} \sup_{t\in[0,1]} \sum_{k}  |\psg \1_{[0,t]},\psi_{lk}\psd|\leqa \sum_{l>L_n} 2^{-l}\leqa 2^{-L_n}=:\bar\delta,
	\end{align*}
	where we use as above that the supremum in the last display is bounded by $C2^{-l/2}$. Also, for $f=f_t\in\cF_n$, we have  $\| f_t \|_\infty \leqa 
	\|P_{L_n^c} \1_{[0,t]}\|_\infty$. This last quantity is at most $CL_n$. Indeed, as $\1_{[0,t]}=P_{L_n} \1_{[0,t]}+P_{L_n^c} \1_{[0,t]}$ almost surely, this follows from $\|\1_{[0,t]}\|_\infty=1$ and
	\begin{align*}
		|P_{L_n} \1_{[0,t]}|& \le \sum_{l\le L_n} 2^{l/2} \max_{k} |\psg \1_{[0,t]},\psi_{lk}\psd|\leqa \sum_{l\le L_n}1\leqa L_n.
	\end{align*}
	It remains to bound the entropy $\cJ_{[]}(\delta,\cF_n,L^2(P_{\la_0}))$. Let $f_s, f_t\in\cF_n$, $0\le s\le t\le 1$. Then proceeding similarly as for the bound on  $\|P_{L_n^c} \1_{[0,t]}\|_2$ above, noting that $| \psg \1_{[s,t]},\psi_{lk}\psd|\leqa |s-t|^{1/2}$ by Cauchy-Schwarz inequality, one has
	\[ \|f_t-f_s\|_{L^2(P_{\la_0})}^2\leqa \sqrt{t-s}2^{-L_n/2},\]
	from which one deduces $N_{[]}(\veps,\cF_n,L^2(P_{\la_0}))\leqa \veps^{-4}2^{L_n}$ and then for small $\delta>0$,
	\[ \cJ_{[]}(\delta,\cF_n,L^2(P_{\la_0})) \leqa \sqrt{L_n}\delta + \delta\log(1/\delta).\]
	An application of Lemma \ref{lembra} now gives, with $\bar\delta=2^{-L_n}$ as above,
	\[ \|\mathbb{G}_n\|_{\cF_n} \leqa  j(\bar\delta)\left(1+\frac{j(\bar\delta)L_n}{\bar{\delta}^2\sqrt{n}}\right) \leqa L_n 2^{-L_n}+L_n^3/\rn=o(1). \]
	Putting the previous bounds on the different terms together leads to
	\[\|\rn(\mathbb{T}_n-\La^*)\|_\infty \le \rn 2^{-L_n(1+\be)} + \|\mathbb{G}_n\|_{\cF_n}.  \]
	The last bound is a $o(1)$ as soon as the first term goes to $0$, which happens as soon as $(1+\be)/(1+2\ga)>1/2$, or equivalently $\ga<\be+1/2$ as announced. 
\end{proof}

\section{$\ $ Lower bound for hazard rate in supremum norm}

Huber and McGibbon \cite{huber04} proved lower bounds for hazard estimation in terms of $L^p$--losses, $p<\infty$, but not $p=\infty$. 
The following result gives the $L^\infty$--counterpart, with the rate featuring the expected additional logarithmic term. 
\begin{thm} \label{lbhazard}
	Let $\be, L>0$. There exists a finite constant $M=M(\be,L)>0$ such that for large enough $n$,
	\[ R_M=\inf_T \sup_{\la \in\cH(\be,L)} E_\la \|T - \la\|_\infty \ge M \left(\frac{\log{n}}{n}\right)^{\frac{\be}{2\be+1}}=M\veps_{n,\be}^*, \]
	where the infimum is taken over all possible estimators $T=T(X)$ of the hazard rate in the survival model and $E_\la$ denotes the expectation with respect to the law of the observations in the survival model with hazard $\la$ on $[0,1]$.
\end{thm}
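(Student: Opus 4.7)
The plan is to use a classical Fano--type minimax argument with many hypotheses localized at different points on a grid of mesh $h_n \asymp (\log n / n)^{1/(2\be+1)}$. Fix a smooth bump function $K:\RR\to \RR_+$ supported in $(0,1)$, with $\|K\|_\infty=1$, $K\in \cH(\be,C_K)$ for some $C_K<\infty$. Set
\[ h_n = \left(\frac{\log{n}}{n}\right)^{\frac{1}{2\be+1}},\qquad M_n=\lfloor 1/h_n\rfloor,\qquad \veps_n^*=\veps_{n,\be}^*,\]
and let $x_j=jh_n$ for $j=0,\dots,M_n-1$. Choose a base hazard $\la_0\equiv c_0$ with $c_0\in(c_1,c_2)$, and, for $j=1,\dots,M_n$, define the alternative
\[ \la_j(t)=\la_0 + \al\veps_n^*\, K\!\left(\frac{t-x_{j-1}}{h_n}\right), \]
with $\al>0$ a small constant to be chosen. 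By construction the bumps have pairwise disjoint supports and $\la_j$ is bounded in $[c_1,c_2]$ for $n$ large.

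The first routine step is to check $\la_j\in \cH(\be,L)$: since the derivatives of $K((\cdot -x_{j-1})/h_n)$ of order $\lfloor\be\rfloor$ pick up a factor $h_n^{-\lfloor\be\rfloor}$, and the H\"older increment picks up an additional $h_n^{-(\be-\lfloor\be\rfloor)}$, the total prefactor is $\al\veps_n^* h_n^{-\be}=\al$, so by taking $\al\le L/C_K$ we stay in $\cH(\be,L)$.  The sup--norm separation is immediate from the disjoint supports: for $j\neq j'$, $\|\la_j-\la_{j'}\|_\infty=\al\veps_n^*\|K\|_\infty=\al\veps_n^*$.

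Next comes the key quantitative step, the KL control. Under \ref{c:mod}, a direct computation using the log--likelihood ratio written in Section \ref{sec:lan} together with the LAN expansion \eqref{eq:LAN} yields, for $\la$ with $\|\la-\la_0\|_\infty=o(1)$,
\[ K(P_{\la_0},P_\la)=\tfrac12\|\sqrt{n}(r-r_0)\|_L^2/n + o(\|r-r_0\|_L^2)\leqa \int_0^1 (\la-\la_0)^2, \]
so for each $j$,
\[ K(P_{\la_0},P_{\la_j})\leqa (\al\veps_n^*)^2\int_0^1 K^2\!\left(\tfrac{t-x_{j-1}}{h_n}\right)dt \leqa \al^2(\veps_n^*)^2 h_n=\al^2\,\frac{\log n}{n}, \]
using $(\veps_n^*)^2 h_n=\log n/n$ by definition. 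Hence $nK(P_{\la_0}^{\otimes n},P_{\la_j}^{\otimes n})\leqa \al^2\log n$, while $\log M_n=(1+o(1))(2\be+1)^{-1}\log n$. Choosing $\al$ small enough gives $n\max_j K(P_{\la_0},P_{\la_j})\le (\log M_n)/16$. Standard Fano-type lower bounds (e.g. Theorem 2.5 in Tsybakov) then yield a constant $c^*>0$ such that
\[ \inf_{\hat \jmath}\max_{0\le j\le M_n}P_{\la_j}(\hat \jmath\neq j)\ge c^*, \]
and a reduction from estimation to testing (using the sup--norm separation and the triangle inequality) translates this into
\[ \inf_T\sup_{\la\in \cH(\be,L)}E_\la\|T-\la\|_\infty\ge \tfrac{\al}{2}c^*\veps_n^*, \]
which is the claimed bound.

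The main delicate point will be the KL bound: in the censoring model one must be careful with the point mass of $P_C$ at $\ta=1$ and with the fact that $K(P_{\la_0},P_\la)$ is not exactly $\tfrac12\|\la-\la_0\|_L^2/n$. I would verify the inequality $K(P_{\la_0},P_\la)\leqa \int(\la-\la_0)^2$ directly by writing $K$ as an explicit integral against the measure $\text{Leb}([0,1])+\delta_1$, bounding each piece using $|\log(1+u)-u|\leqa u^2$ for bounded $u$ and the fact that $\la-\la_0$ is uniformly $O(\veps_n^*)=o(1)$. All other ingredients (Hölder regularity of the bump family, Varshamov--Gilbert is not even needed since disjoint supports suffice for pairwise separation, Fano's inequality) are standard.
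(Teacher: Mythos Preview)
Your proposal is correct and follows essentially the same route as the paper: a many--hypotheses lower bound with localized bump perturbations of a constant base hazard, separation in $\|\cdot\|_\infty$ of order $h_n^\be=\veps_{n,\be}^*$, and a KL bound of order $n(\veps_{n,\be}^*)^2 h_n=\log n$ to feed into Tsybakov's framework. The paper formalizes the KL step you flag as delicate via an explicit identity for $K(P_{\la_1},P_{\la_2})$ in the survival model (its Lemma~\ref{lemkl}), yielding directly $K(P_{\la_0},P_{\la_j})\leqa \int(\la_j-\la_0)^2/\la_0\, S_0\bar G$, which is exactly the inequality you propose to verify by hand; otherwise the arguments coincide.
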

\begin{remark}
	Theorem \ref{thm:sn} assumes slightly more than $\la\in \cH(\be,L)$ for some $\be, L>0$ and requires that $r=\log\la$ belongs to $\cH(\be,D)$ for some $D>0$. The lower bound proof below goes through for that subclass of hazards $\la$ (it suffices to note that all considered $\la_j$'s in that proof are bounded away from $0$ and $\infty$), thus showing the matching lower bound in the considered setting as well. 
\end{remark}
\begin{proof}
	We use the principle of lower bounds `based on many hypotheses' as developed by Ibragimov and Has'minskii \cite{ibra77} (see also \cite{tsybook}, Section 2.6). Suppose one can find hazards $\la_0,\la_1,\ldots,\la_M$ with $M\ge 2$ such that, for some $\al\in(0,1/8)$,
	\begin{align}
		\| \la_i - \la_j \|_\infty & \ge 2s>0\quad 0\le i<j\le M \label{mhyp1}\\
		\frac{1}{M}\sum_{j=1}^M K(P_{\la_j},P_{\la_0}) & \le \al \log M, \label{mhyp2}
	\end{align}
	where $K(P,Q)$ denotes the Kullback--Leibler divergence. 
	Then by the lower bound principle based on many hypotheses (\cite{tsybook},  Thm. 2.7), the minimax risk as in the statement is bounded from below by
	\[ R_M \ge C s, \]
	where $C=C(\al)>0$ is a constant depending only on $\al$. We now construct the hazards $\la_k$. One sets $\la_0=1$ a constant hazard and consider perturbations $\la_k$, $1\le k\le M$ as follows. Let $\psi$ be a smooth function verifying the following conditions: $\psi$ has compact support $[-1/2,1/2]$, 
	and $\psi\in\cH(\be,1)$ and $\psi(0)=c>0$ a small enough constant (see \cite{tsybook}, Section 2.7 for an explicit construction). Next set 
	\[ \la_k=\la_0+Lh^\be\psi\left(\frac{x-x_k}{h}\right),\ x_k=\frac{k-1/2}{M},\ h=1/M. \]
	
	By construction $\la_k$ are hazard functions (for small enough $h$) and $\la_k\in \cH(\be,L)$. Further, $\|\la_k-\la_j\|_\infty=cLh^\be$.  
	One uses the expression of the Kullback-Leibler divergence $K(P_{\la_1},P_{\la_2})$ for two given hazard rates $\la_1,\la_2$ bounded away from $0$ (say; and with the model parameters satisfying our general assumptions) given in Lemma \ref{lemkl}, combined with $K(P_{\la_1}^{\otimes n},P_{\la_2}^{\otimes n})=nK(P_{\la_1},P_{\la_2})$. 
	The computation is similar to that in \cite{huber04}, Lemma 1, up to the fact one works with the survival model up to time $\ta$ only (in the version of  \cite{huber04} we could consult, we note the presence of a minor sign typo in the expressions of Hellinger distance and KL divergence). Using Lemma \ref{lemkl}, one sees that it is enough to bound $\int (\la_k-\la_0)^2/\la_0 S_{\la_0}\bar{G} \le C\int(\la_k-\la_0)^2$ (as the survival function for the constant hazard as well as $\bar{G}$ are bounded).  As $\|\la_k-\la_0\|_2^2\le CL^2h^{2\be+1}$, deduce that
	\[  \frac{1}{M}\sum_{j=1}^M K(P_{\la_j},P_{\la_0}) \le CL^2nh^{2\be+1}. \]
	Hence choosing $h=(\delta \log{n}/n)^{1/(2\be+1)}$, the last bound can be made smaller than $\al\log{M}$, provided $d$ is a small enough constant (depending on the chosen value of $\al<1/8$), which concludes the proof.
\end{proof}

\begin{lem} \label{lemkl}
	In the survival model with respective distributions $P_{\la_1}, P_{\la_2}$, 
	\[ K(P_{\la_1}, P_{\la_2}) = \int \left[\log\left(\frac{\la_1}{\la_2}\right)+1 - \frac{\la_2}{\la_1}\right]\la_1 S_1\bar{G}, \]
	assuming hazards are bounded away from $0$.  
	If $\|(\la_1-\la_2)/\la_1\|_\infty\le 1/2$, there exists $C>0$ such that
	\[ K(P_{\la_1}, P_{\la_2}) \le C\int \left( \frac{\la_2-\la_1}{\la_1} \right)^2 \la_1 S_1\bar{G}.
	\]
\end{lem}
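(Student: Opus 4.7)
The plan is to compute both sides directly from the likelihood formula written at the start of Section \ref{sec:proofs}, then apply a Taylor expansion for the inequality. Since the censoring distribution factors out in the ratio $p_{\la_1}/p_{\la_2}$, I expect the calculation to reduce to two elementary expectations under $P_{\la_1}$.

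First, I would write, using the joint density $p_{\la}^{(Y,\delta)}(y,d)$ given below \eqref{eq:LAN}, that
\[
\log\frac{p_{\la_1}^{(Y,\delta)}}{p_{\la_2}^{(Y,\delta)}}(y,d)
 = d\log\bigl(\la_1(y)/\la_2(y)\bigr) + \log\bigl(S_1(y)/S_2(y)\bigr)
 = d\log(\la_1/\la_2)(y) - (\La_1-\La_2)(y),
\]
valid both for $y<\tau$ (where the $g$ and $\bar G$ factors cancel) and at $y=\tau$ (where the ratio reduces to $S_1(\tau)/S_2(\tau)$, since one must have $d=0$). Then $K(P_{\la_1},P_{\la_2}) = E_{\la_1}[\delta\log(\la_1/\la_2)(Y)] - E_{\la_1}[(\La_1-\La_2)(Y)]$.

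Next I would evaluate each expectation. For the first, the subdistribution of $(\delta=1,Y\in du)$ under $P_{\la_1}$ has density $\bar G(u)\la_1(u)S_1(u)$ on $[0,\tau)$, so
\[
E_{\la_1}\bigl[\delta\log(\la_1/\la_2)(Y)\bigr] = \int_0^\tau \log(\la_1/\la_2)(u)\,\la_1(u)S_1(u)\bar G(u)\,du.
\]
For the second, I would use Fubini together with the identity $M_0^{(1)}(u):=P_{\la_1}[Y\ge u]=S_1(u)\bar G(u)$ (cf.\ \eqref{emo}):
\[
E_{\la_1}[(\La_1-\La_2)(Y)] = \int_0^\tau (\la_1-\la_2)(u)\,S_1(u)\bar G(u)\,du.
\]
Combining the two lines and factoring out $\la_1 S_1 \bar G$ yields the first claim (after rewriting $\la_1\log(\la_1/\la_2)-(\la_1-\la_2) = \la_1[\log(\la_1/\la_2)-1+\la_2/\la_1]$).

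For the inequality, I would set $x=\la_2/\la_1$ and let $f(x):=-\log x + x - 1$. Then $f(1)=f'(1)=0$ and $f''(x)=x^{-2}$, so by Taylor's theorem with integral remainder, $f(x) = (x-1)^2\int_0^1(1-s)(1+s(x-1))^{-2}ds$. Under $\|(\la_1-\la_2)/\la_1\|_\infty\le 1/2$ we have $x\in[1/2,3/2]$, hence the integrand is bounded uniformly by a constant, giving $f(x)\le C(x-1)^2$. Multiplying by $\la_1$ and integrating against $S_1\bar G$ against the first identity produces
\[
K(P_{\la_1},P_{\la_2}) \le C\int \la_1\Bigl(\frac{\la_2-\la_1}{\la_1}\Bigr)^2 S_1\bar G,
\]
which is the stated bound.

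There is no serious obstacle; the only point requiring care is the clean handling of the $y=\tau$ atom in the first step (to ensure one correctly cancels the censoring factors in the log-ratio), and of the Fubini exchange in the second expectation. The lower-bound on $\la_1$ is used implicitly to guarantee that $f(\la_2/\la_1)$ is well-defined and that the Taylor remainder constant is finite.
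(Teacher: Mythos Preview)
Your argument is correct and essentially parallel to the paper's: where the paper writes out the three pieces of the expectation and applies integration by parts to $\int_0^1 \log(S_1/S_2)\,\bar G\la_1 S_1$, you instead use Fubini on $E_{\la_1}[(\La_1-\La_2)(Y)]$, which is the same identity in disguise. The Taylor step for the inequality is identical in content to the paper's use of $|\log(1+x)-x|\le Cx^2$.

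One point worth flagging: your own rewriting gives
\[
K(P_{\la_1},P_{\la_2})=\int\Bigl[\log\tfrac{\la_1}{\la_2}-1+\tfrac{\la_2}{\la_1}\Bigr]\la_1 S_1\bar G,
\]
whereas the lemma as stated has $+1-\la_2/\la_1$. Your version is the correct one (the integrand $-\log x + x - 1$ is nonnegative, as KL requires; the stated version $-\log x - x + 1$ is not). The paper's own proof carries the same sign slip in the line $K=\int\log(\la_1/\la_2)\la_1 S_1\bar G - \int(\la_2-\la_1)S_1\bar G$; the minus should be a plus. This does not affect the second part of the lemma, and you should simply note the typo rather than claim your computation ``yields the first claim'' verbatim.
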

\begin{proof}
	From the expression of the density in the survival model it follows 
	\begin{align*}
		K(P_{\la_1}, P_{\la_2}) = \int_0^1 & \log(S_1/S_2) gS_1
		+\int_0^1 \log\left(\frac{\la_1S_1}{\la_2S_2}\right)\bar{G}\la_1S_1\\
		& +\log\left(\frac{S_1(1)}{S_2(1)}\right)\bar{G}(1)S_1(1).
	\end{align*}
	One splits the second integral in the last display by writing $\log\{(\la_1S_1)/(\la_2S_2)\}=\log(\la_1/\la_2)+\log(S_1/S_2)$. By integrating by parts the integral $\int_0^1 \log(S_1/S_2)\bar{G}\la_1S_1$, the third term in the previous display cancels (using $S_1(0)=S_2(0)=1$) and by rearranging the obtained expression for the KL divergence one obtains 
	\[ K(P_{\la_1}, P_{\la_2}) = \int\log\left(\frac{\la_1}{\la_2}\right)\la_1 S_1\bar{G}
	- \int(\la_2-\la_1)S_1\bar{G}, \]
	which gives the first part of the lemma. The second part follows by using the inequality $|\log(1+x)-x|\le Cx^2$ for $|x|\le 1/2$. 
\end{proof}

\section{$\ $ Supporting lemmas}

\subsection{$\ \ $Smoothness of $M_0$}

\begin{lem} \label{lem:lip}
	Recall that $M_0(u)=E_{\la_0}[\1_{u\le Y}]$ for $u\in[0,1]$. Under assumption \ref{c:mod}, the maps $M_0(\cdot)$ and $M_0^{-1}(\cdot)$ are Lipschitz on $[0,1]$.
\end{lem}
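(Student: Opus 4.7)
The plan is to exploit the explicit factorization $M_0(u)=\bar G(u)\,e^{-\Lambda_0(u)}$ from \eqref{emo} and show each factor is bounded and Lipschitz on $[0,1]$ under \ref{c:mod}, then conclude for the product and for its reciprocal.

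First I would treat $\Lambda_0$: since $\lambda_0$ is uniformly bounded by $c_2$ on $[0,\tau]=[0,1]$ by \ref{c:mod}, the cumulative hazard $\Lambda_0(u)=\int_0^u\lambda_0$ is Lipschitz with constant $c_2$. Composing with the smooth bounded map $x\mapsto e^{-x}$ on the bounded range $[0,c_2]$ then yields a Lipschitz bound for $e^{-\Lambda_0(\cdot)}$, namely $|e^{-\Lambda_0(u)}-e^{-\Lambda_0(v)}|\le c_2|u-v|$ since the derivative of the exponential is bounded by $1$ on $(-\infty,0]$.

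Next I would handle $\bar G$. For $u,v\in[0,1)$ one has $\bar G(u)=1-G(u)$ because $G$ is continuous at every point of $[0,1)$ (it admits density $g$ there), so $|\bar G(u)-\bar G(v)|=|\int_u^v g|\le c_4|u-v|$ by \ref{c:mod}. The only subtle point is the endpoint $u=1$: $G$ has a point mass at $\tau=1$ by \ref{c:mod}, so $\bar G$ is only left-continuous a priori. However, by definition $\bar G(1)=1-G(1-)=\lim_{v\uparrow 1}(1-G(v))=\lim_{v\uparrow 1}\bar G(v)$, and for $u\in[0,1)$ we get $|\bar G(1)-\bar G(u)|=|\int_u^1 g|\le c_4(1-u)$. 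Hence $\bar G$ is $c_4$-Lipschitz on all of $[0,1]$.

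Since both $\bar G$ and $e^{-\Lambda_0}$ are bounded above by $1$ and Lipschitz, their product $M_0$ is Lipschitz on $[0,1]$ with constant at most $c_2+c_4$. For the inverse map $M_0^{-1}(\cdot)=1/M_0(\cdot)$, note that $M_0$ is decreasing (both factors are), so $M_0(u)\ge M_0(1)=\bar G(1)e^{-\Lambda_0(1)}$; by \ref{c:mod} we have $\bar G(1)=P(C=\tau)>0$ and $\Lambda_0(1)\le c_2<\infty$, so $M_0$ is bounded away from zero by some $m_0>0$. Then
\[
\Bigl|\tfrac{1}{M_0(u)}-\tfrac{1}{M_0(v)}\Bigr|=\frac{|M_0(v)-M_0(u)|}{M_0(u)M_0(v)}\le \frac{c_2+c_4}{m_0^2}|u-v|,
\]
which gives the Lipschitz property of $M_0^{-1}$ and completes the proof. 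The only real wrinkle is the verification of continuity/Lipschitz of $\bar G$ at the endpoint $\tau=1$ in the presence of a point mass, but this is handled cleanly by the definition $\bar G(u)=1-G(u-)$.
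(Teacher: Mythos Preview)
Your proof is correct and follows essentially the same route as the paper: the factorization $M_0=\bar G\,e^{-\Lambda_0}$, Lipschitz continuity of each factor under \ref{c:mod}, and the reciprocal bound via $M_0$ being bounded away from zero. You give more detail than the paper (explicit constants and the endpoint check at $u=1$), but the argument is the same.
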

\begin{proof}
	By definition $Y=T\wedge C$ and $T, C$ are independent, so $M_0(u)=\bar{G}(u)e^{-\La_0(u)}$ for $u\in[0,1]$. Note that
	$\bar{G}(u)=1-\int_0^ug(v)dv$. As $g$ is bounded by assumption, $\bar{G}$ is Lipschitz on $[0,1]$. The same holds for $e^{-\La_0}$, which is  $\cC^1$ as $\la_0$ is continuous. 
	So $M_0$ is Lipschitz, as a product of Lipschitz maps. The same is true for $M_0^{-1}$, noting that $|M_0(x)^{-1}-M_0(y)^{-1}|\leqa |M_0(x)-M_0(y)|$ for $x,y\in[0,1]$, as $M_0$ is bounded away from $0$ under \ref{c:mod}.
\end{proof}

\subsection{$\ \ $Bounds on empirical processes}

Recall the definition of the bracketing integral of a class of functions $\cF$ with respect to a norm $\|\cdot\|$, as in \cite{vvw96}, with $N_{[]}$ denoting the usual bracketing number,
\begin{equation} \label{bra}
	\cJ_{[\,]}(\delta,\cF,\|\cdot\|)=\int_0^\delta
	\sqrt{1+\log N_{[\,]}(\veps,\cF,\|\cdot\|)} d\veps. 
\end{equation}

\begin{lem}[variation on \cite{aad98}, Example 19.11] \label{lembe}
	Let $\cF(M_V)$ be the set of all functions $f:\RR\to\RR$ with $f(0)=0$ and total variation bounded by $M_V\ge \veps$. Then there exists a constant $K$ such that for every distribution $P$ and $\veps>0$,
	\[ \log N_{[\,]}(\veps,\cF(M_V),L^2(P))\le K \frac{M_V}{\veps}. \]
\end{lem}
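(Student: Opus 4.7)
\textbf{Proof plan for Lemma \ref{lembe}.} The plan is to reduce the claim to the classical statement (van der Vaart, Example 19.11), which treats the unit-variation case $M_V=1$ and asserts that $\log N_{[\,]}(\delta,\cF(1),L^2(P))\le K/\delta$ for some universal constant $K$ and every distribution $P$. The key observation is the simple rescaling identity $\cF(M_V)=M_V\cdot \cF(1)$: any $f\in\cF(M_V)$ can be written as $f=M_V g$ where $g:=f/M_V$ satisfies $g(0)=0$ and has total variation at most $1$, so $g\in\cF(1)$.

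The first step is to transport brackets through this scaling. If $[\ell,u]$ is an $L^2(P)$-bracket of radius $\delta$ for some $g\in\cF(1)$, meaning $\ell\le g\le u$ pointwise and $\|u-\ell\|_{L^2(P)}\le \delta$, then $[M_V \ell, M_V u]$ is an $L^2(P)$-bracket of radius $M_V\delta$ containing $f=M_Vg$. Applying this to a minimal $\delta$-net of brackets for $\cF(1)$ yields
\[
N_{[\,]}(M_V\delta,\cF(M_V),L^2(P))\ \le\ N_{[\,]}(\delta,\cF(1),L^2(P)).
\]
Setting $\veps=M_V\delta$ (so $\delta=\veps/M_V>0$) and invoking the classical bound gives
\[
\log N_{[\,]}(\veps,\cF(M_V),L^2(P))\ \le\ \log N_{[\,]}(\veps/M_V,\cF(1),L^2(P))\ \le\ K\,\frac{M_V}{\veps},
\]
which is exactly the desired inequality. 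The role of the hypothesis $M_V\ge \veps$ is only to guarantee that the right-hand side is at least $K$ (so the bound is nontrivial) and that the net in $\cF(1)$ is called at a scale $\delta\le 1$ where the classical estimate is meaningful.

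No step here is expected to be an obstacle: the classical bound for $\cF(1)$ follows from the usual argument based on approximating monotone functions by step functions on a quantile partition of $P$ (using the Jordan decomposition to split a BV function into the difference of two monotone increasing functions of total variation at most $M_V$), and the rest is purely a scaling of $L^2(P)$-distances. The statement is invoked later in the paper only to bound bracketing integrals entering empirical process inequalities, so only the linear dependence on $M_V$ matters and the argument above suffices.
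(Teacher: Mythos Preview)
Your proof is correct and follows essentially the same approach as the paper: rescale $\cF(M_V)$ to $\cF(1)$ via $f\mapsto f/M_V$, transport brackets under this scaling to obtain $\log N_{[\,]}(\veps,\cF(M_V),L^2(P))\le \log N_{[\,]}(\veps/M_V,\cF(1),L^2(P))$, and then invoke the classical bound from van der Vaart's Example~19.11. Your discussion of the role of the hypothesis $M_V\ge\veps$ (ensuring the classical estimate is applied at a scale $\delta\le 1$) makes explicit what the paper only alludes to with ``which holds uniformly for bounded $\veps'=\veps/M_V$''.
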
 
\begin{proof}
	
	For a function $f\in \cF(M_V)$ as in the statement, we have $f/M_V\in \cF(1)$, so that
	\[ \log N_{[\,]}(\veps,\cF(M_V),L^2(P))
	\le \log N_{[\,]}\left(\frac{\veps}{M_V},\cF(1),L^2(P)\right) \]
	and the latter expression is at most $KM_V/\veps$ by Example 19.11 in \cite{aad98}, which holds uniformly for bounded $\veps'=\veps/M_V$.
\end{proof}

In the next statement, $E^*_{\la_0}$ denotes the outer--expectation under $P_{\la_0}$, and one denotes $\|\mathbb{G}_n \|_\cF:=\sup_{f\in\cF} |\mathbb{G}_n f|$.
\begin{lem} \label{lemep}
	Let $\cF$ be a class of functions of bounded variation on $[0,1]$ verifying the following for any $f\in\cF$ and some $\mu>0$,
	\[ f(0)=0,\quad \|f\|_{BV}=: \int_0^1 |f'(u)|du \le \mu.\]
	There exists $c_1>0$ such that for  $\cF,\mu$ as above, 
	\[ E^*_{\la_0} \|\mathbb{G}_n \|_\cF \le c_1\mu.\]
	In particular if $\mu=\mu_n=o(1)$, we have $E^*_{\la_0} \|\mathbb{G}_n \|_\cF=o(1)$. 
\end{lem}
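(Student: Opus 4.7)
The plan is to apply a standard bracketing maximal inequality for empirical processes, using Lemma \ref{lembe} to control the bracketing numbers.

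First, I would observe that because every $f \in \cF$ satisfies $f(0)=0$ and $\int_0^1|f'| \le \mu$, the uniform bound $\|f\|_\infty \le \mu$ holds, so the class $\cF$ admits the constant envelope $F \equiv \mu$ and in particular $\|F\|_{P_{\la_0},2} \le \mu$. By Lemma \ref{lembe} applied with $M_V = \mu$, the bracketing numbers satisfy
\[
\log N_{[\,]}(\veps, \cF, L^2(P_{\la_0})) \le \frac{K\mu}{\veps}
\]
for every $\veps > 0$ and a universal constant $K$.

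Next, I would control the bracketing integral $\cJ_{[\,]}(\mu, \cF, L^2(P_{\la_0}))$ from \eqref{bra}. Plugging in the entropy bound and using the change of variable $u = \veps/\mu$,
\[
\cJ_{[\,]}(\mu, \cF, L^2(P_{\la_0})) \le \int_0^{\mu} \sqrt{1 + \frac{K\mu}{\veps}}\,d\veps = \mu \int_0^1 \sqrt{1 + \frac{K}{u}}\,du \le c\mu,
\]
for a constant $c > 0$, since the final integral $\int_0^1 \sqrt{1 + K/u}\,du$ is finite (the singularity $u^{-1/2}$ at $0$ is integrable).

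Finally, I would invoke the standard maximal inequality for empirical processes indexed by a class with bracketing (see, e.g., Theorem 19.5 of \cite{aad98} or Section 2.14.2 of \cite{vvw96}), which gives
\[
E^*_{\la_0}\|\mathbb{G}_n\|_\cF \lesssim \cJ_{[\,]}\bigl(\|F\|_{P_{\la_0},2}, \cF, L^2(P_{\la_0})\bigr) \le \cJ_{[\,]}(\mu, \cF, L^2(P_{\la_0})) \le c_1 \mu,
\]
which is the desired bound. The consequence that $E^*_{\la_0}\|\mathbb{G}_n\|_\cF = o(1)$ when $\mu = \mu_n \to 0$ follows immediately. There is no real obstacle here; the only point that requires some care is checking that the bracketing integral from $0$ to $\mu$ scales linearly in $\mu$, which follows from the homogeneity visible in the substitution step above.
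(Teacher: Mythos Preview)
Your proof is correct and follows essentially the same approach as the paper: bound the entropy of bounded variation functions via Lemma \ref{lembe}, then apply a bracketing maximal inequality. The only cosmetic difference is that the paper first rescales by $\mu$ to reduce to the case $\mu=1$ and then applies Lemma \ref{lembra} (Lemma 3.4.2 of \cite{vvw96}), whereas you keep $\mu$ general and observe via the substitution $u=\veps/\mu$ that $\cJ_{[\,]}(\mu,\cF,L^2(P_{\la_0}))$ scales linearly in $\mu$; these are two ways of exploiting the same homogeneity.
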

\begin{proof}
	One first notes that for any $f\in\cF$, we have $\int f^2 dP_{\la_0} \le \|f\|_\infty^2$ and in turn $\|f\|_\infty\le |f(0)|+\|f\|_{BV} \le \mu$, so that $\int f^2 dP_{\la_0}\le \mu^2$. 
	
	Next one remarks that it is enough to prove the Lemma when $\mu=1$, as otherwise one can consider the set $\mathcal{G}=\{g=f/\mu,\, f\in\cF\}$ and $\|\mathbb{G}_n\|_{\cF}=\mu\|\mathbb{G}_n\|_{\mathcal{G}}$. 
	Setting $\mu=1$ and using Lemma  \ref{lembra} below applied with $\delta=M=1$, deduce that 
	\[ E^*_{\la_0}\|\mathbb{G}_n \|_\cF \le C\cJ_{[\,]}(1,\cF,L^2(P_{\la_0}))\left(
	1+\frac{\cJ_{[\,]}(1,\cF,L^2(P_{\la_0}))}{\rn} \right). \] 
	For $\veps\le 1$, Lemma \ref{lembe} bounds the bracketing entropy $\log N_{[]}(\veps,\cF,L^2(P_{\la_0}))$ from above by $K/\veps$, so that 
	\begin{align*}
		\cJ_{[\,]}(1,\cF,L^2(P_{\la_0})) & \le \int_0^1 \sqrt{1+K/\veps}d\veps,
	\end{align*}
	which is bounded by a constant. This concludes the proof for $\mu=1$ and then for any $\mu>0$ arguing as above. 
\end{proof}

\begin{lem}[Lemma 3.4.2 in \cite{vvw96}] \label{lembra}
	Let $\cF$ be a class of measurable functions such that for any $f\in\cF$, 
	\[ \int f^2 dP<\delta^2,\quad \|f\|_\infty\le M. \]
	Then for $j(\delta):=\cJ_{[\,]}(\delta,\cF,L^2(P_{\la_0}))$ as defined  in \eqref{bra},
	\[ E^*_{P_{\la_0}}\|\mathbb{G}_n \|_\cF \leqa j(\delta)\left(1+\frac{j(\delta)M}{\delta^2\rn}\right).\] 
\end{lem}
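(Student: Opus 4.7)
The plan is to reproduce the classical chaining-by-brackets argument. First, I would choose a dyadic sequence of scales $\veps_k = \delta 2^{-k}$ and, at each scale, a minimal $\veps_k$-bracketing of $\cF$ with respect to $L^2(P)$. Selecting for each $f\in\cF$ a bracket at every scale $k\ge 0$ induces a nested sequence of approximations $\pi_k f$ with $\|\pi_k f - \pi_{k-1} f\|_{L^2(P)} \le 2\veps_{k-1}$ and, using that any $f\in\cF$ is uniformly bounded by $M$, the uniform bound $\|\pi_k f - \pi_{k-1} f\|_\infty \le 2M$. Telescoping $\mathbb{G}_n f - \mathbb{G}_n \pi_0 f = \sum_{k\ge 1} \mathbb{G}_n(\pi_k f - \pi_{k-1} f)$ reduces the task to controlling, at each scale, a maximum of centered empirical process increments over at most $N_{[\,]}(\veps_k)\cdot N_{[\,]}(\veps_{k-1})$ elements.

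Next, at each level I would apply Bernstein's inequality together with a union bound: since each $\pi_k f-\pi_{k-1}f$ is bounded by $2M$ and has variance at most $4\veps_{k-1}^2$, this yields
\[ E^*\max_{f\in\cF} |\mathbb{G}_n(\pi_k f - \pi_{k-1} f)| \leqa \veps_{k-1}\sqrt{\log N_{[\,]}(\veps_k,\cF,L^2(P))} + \frac{M}{\sqrt{n}}\log N_{[\,]}(\veps_k,\cF,L^2(P)). \]
Summing the first contribution in $k$ and comparing the Riemann-type sum $\sum_{k\ge 1}\veps_{k-1}\sqrt{\log N_{[\,]}(\veps_k)}$ to the integral $\int_0^\delta \sqrt{\log N_{[\,]}(\veps)}\, d\veps$ bounds it by a multiple of $j(\delta)$. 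For the second contribution, Cauchy--Schwarz gives $\sum_k \veps_{k-1} \log N_{[\,]}(\veps_k) \leqa j(\delta)^2$, so dividing by the characteristic scale produces the mixed term $j(\delta)^2 M/(\delta\sqrt{n})$. The initial level $\mathbb{G}_n \pi_0 f$ is absorbed by taking the $\delta$-bracketing as the base cover and controlling its $L^2$-norm by $\delta \leqa j(\delta)$.

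The delicate point, and the main obstacle, is the truncation: below the scale $\veps_{K^*} \sim M/\sqrt{n}$, the quadratic term in Bernstein's inequality dominates so further chaining does not help. One must stop at such a $K^*$ and absorb the tail difference $f - \pi_{K^*} f$ using only its $L^1$-norm bounded by the bracket size, which is exactly what produces the multiplicative factor $1 + j(\delta)M/(\delta^2\sqrt{n})$ rather than an additive tail contribution. Since the statement is an exact quote of Lemma 3.4.2 in \cite{vvw96}, in practice I would simply refer the reader there for the full technical details and carefully tracked constants.
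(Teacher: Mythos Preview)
Your proposal is correct and matches the paper's approach exactly: the paper does not prove this lemma at all but simply quotes it from \cite{vvw96}, and you conclude the same way. Your chaining sketch is a reasonable outline of the original argument in \cite{vvw96}, though note the mixed term should carry $\delta^2$ in the denominator (as in the statement), not $\delta$; this comes out when the truncation level is chosen optimally.
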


\begin{lem}\label{lem:parttwo} For any $n \geq 2$, cut--off $L_n\ge 1$, and
	$\la^*$ as in \eqref{lastar}, it holds
	\begin{equation*}
		E_{\la_0} \|\la_{L_n}^* - \la_{0,L_n}\|_\infty \leqa
		{E_{\la_0} \left[\ell_\infty(\la_{L_n}^*,\la_{0,L_n})\right] }
		\leqa \sqrt{\frac{L_n 2^{L_n}}{n}},
	\end{equation*}
	where $\ell_\infty(f,g)$ is defined in \eqref{ellinfty}.
\end{lem}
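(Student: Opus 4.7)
The plan is to use the wavelet characterisation of $\la^*_{L_n} - \la_{0,L_n}$ together with a Bernstein-type maximal inequality applied level by level.

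First, by the definition \eqref{lastar}, the wavelet coefficients of $\la^*_{L_n} - \la_{0,L_n}$ equal $W_n(\psi_{lk,n})/\sqrt{n}$ for $l\le L_n$ and vanish otherwise, so the function sits in $\cV_{L_n}$. The standard localisation inequality for wavelets, as recalled and used in the proof of Lemma \ref{lem:snf}, gives $\|\la^*_{L_n}-\la_{0,L_n}\|_\infty \lesssim \ell_\infty(\la^*_{L_n},\la_{0,L_n})$, so it suffices to control the second quantity. By linearity,
\[
E_{\la_0}\ell_\infty(\la^*_{L_n},\la_{0,L_n})=\frac{1}{\sqrt n}\sum_{l\le L_n}2^{l/2}\,E_{\la_0}\Bigl[\max_{0\le k<2^l}|W_n(\psi_{lk,n})|\Bigr].
\]

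Second, bound each inner expectation. The variable $W_n(\psi_{lk,n})=n^{-1/2}\sum_i\xi_i$ is a centred sum of i.i.d. bounded random variables $\xi_i=\delta_i\psi_{lk,n}(Y_i)-\La_0\psi_{lk,n}(Y_i)$. Its variance is at most $\|\psi_{lk,n}\|_L^2\lesssim \|\psi_{lk,n}\|_2^2 \lesssim 1$ by  \ref{c:mod} and the $L^2$-bound in Lemma \ref{lem:psi}. For the $L^\infty$-bound, I would use that the wavelet projection $P_{L_n}$ onto $\cV_{L_n}$ is $L^\infty$-bounded uniformly in $L_n$ (for the Haar basis it is a conditional expectation on dyadic intervals of length $2^{-L_n-1}$, hence a contraction; for CDV this is classical), which gives $\|\psi_{lk,n}\|_\infty\le C\|\psi_{lk}/M_0\|_\infty\lesssim 2^{l/2}$ and so $|\xi_i|\lesssim 2^{l/2}$. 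Bernstein's inequality applied together with the standard maximal inequality over $2^l$ summands then yields
\[
E_{\la_0}\Bigl[\max_{0\le k<2^l}|W_n(\psi_{lk,n})|\Bigr]\lesssim \sqrt{l}+\frac{2^{l/2}\,l}{\sqrt n}.
\]

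Third, sum over $l\le L_n$. The dominant (sub-Gaussian) contribution is $n^{-1/2}\sum_{l\le L_n}2^{l/2}\sqrt l\lesssim \sqrt{L_n 2^{L_n}/n}$ by geometric summation; the Bernstein-tail contribution is $n^{-1}\sum_{l\le L_n}2^l l\lesssim L_n 2^{L_n}/n$, which is at most $\sqrt{L_n 2^{L_n}/n}$ whenever $L_n 2^{L_n}\le n$ (and in the opposite regime the stated bound is trivial). Combining these with the wavelet localisation step completes the proof.

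The only technical point is the sharp sup-norm bound $\|\psi_{lk,n}\|_\infty\lesssim 2^{l/2}$: using only the generic estimate from Lemma \ref{lem:psi} would introduce an extra factor $L_n$ in the Bernstein tail and therefore force the (still mild) requirement $L_n^3 2^{L_n}=O(n)$ instead of $L_n 2^{L_n}=O(n)$; invoking the $L_n$-uniform $L^\infty$-boundedness of the wavelet projection removes this loss.
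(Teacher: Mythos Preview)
Your proof is correct and follows the same strategy as the paper: pass from $\|\cdot\|_\infty$ to $\ell_\infty$, then control $E_{\la_0}\max_k|W_n(\psi_{lk,n})|$ level by level via exponential moments. The paper carries out the Laplace--transform computation by hand rather than quoting Bernstein's maximal inequality, but the two are equivalent and use the same inputs (bounded $\|\psi_{lk,n}\|_L$ and a sup--norm bound on $\psi_{lk,n}$). Your use of the uniform $L^\infty$--boundedness of $P_{L_n}$ to obtain $\|\psi_{lk,n}\|_\infty\lesssim 2^{l/2}$ is a slight sharpening over the paper, which invokes only Lemma~\ref{lem:psi} and hence carries $L_n 2^{L_n/2}$; as you correctly note, this only affects the lower--order Bernstein tail.

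One small caveat: your assertion that ``in the opposite regime the stated bound is trivial'' is not justified --- there is no a~priori control on $E_{\la_0}\ell_\infty$ of order $\sqrt{L_n2^{L_n}/n}$ when $L_n2^{L_n}>n$. The paper's own proof is equally silent on this point (the factor $e^{|s|\|H_{n,lk}\|_\infty/\sqrt n}$ there is unbounded in that regime), and in practice the lemma is only applied with cut--offs \eqref{eln} satisfying $L_n2^{L_n}\ll n$, so this does not matter for any of the applications.
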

\begin{proof}
	Arguing as in Lemma 7 of \cite{ic14}, we find, for any $t > 0$:
	\begin{align*}
		E_{\la_0}  \| &\la_{L_n}^* - \la_{0,L_n}\|_\infty 
		\leqa {E_{\la_0} \ell_\infty(\la_{L_n}^*,\la_{0,L_n})} \\
		& \leqa  \frac{1}{\sqrt{n}}\sum_{l \leq L_n} \frac{2^{l/2}}{t} \log \sum_{k=0}^{2^l - 1} E_{\la_0}\left[ e^{tW_n(\psi_{n,lk})} +  e^{-tW_n(\psi_{n,lk})} \right].
	\end{align*}
	We now study $E_{\la_0} e^{sW_n(\psi_{n,lk})}$ for any real $s$ and some particular $l, k$. We introduce the notation $H_{n,lk}(X_i) = \delta_i\psi_{n,lk}(Y_i) - \La_0\psi_{n,lk}(Y_i)$, where $X_i$ is short for $(\delta_i, Y_i)$, and remark that $W_n(\psi_{n,lk}) = \frac{1}{\sqrt{n}}\sum_{i=1}^n H_{n,lk}(X_i)$. We also remark that, by construction, $E_{\la_0} H_{n,lk}(X_1) = 0$. We rewrite, by independence of the $X_i$:
	\begin{align*}
		E_{\la_0}  e^{sW_n(\psi_{n,lk})} = e^{n\log \E [e^{\frac{s}{\sqrt{n}} H_{n,lk}(X_1)}]},
	\end{align*}
	and bound, using $\E_{\la_0} H_{n,lk}(X_1) = 0$,
	\begin{align*}
		E \left[ e^{\frac{s}{\sqrt{n}} H_{n,lk}(X_1)} \right]
		&=E \left[ \sum_{k \geq 0} \left( \frac{s H_{n,lk}(X1)}{\sqrt{n}}\right)^k \frac{1}{k!}\right]
		= 1  + E \left[ \sum_{k \geq 2} \left( \frac{s H_{n,lk}(X1)}{\sqrt{n}}\right)^k \frac{1}{k!}\right]\\
		&\leq 1 + \sum_{k \geq 2} \left(\frac{|t|\|H_{n,lk}\|_\infty}{\sqrt{n}}\right)^{k-2} \frac{s^2 E[H_{n,lk}(X_1)^2]}{nk!}\\
		&\leq 1 + \frac{s^2}{2n}E[H_{n,lk}(X_1)^2]  e^{\frac{|s| \|H_{n,lk}\|_\infty}{\sqrt{n}}}.
	\end{align*}
	Returning to the first display, we use the just obtained bound with $t = \sqrt{l}$ and $s = t$ or $s = -t$. Noting that $E[H_{n,lk}(X_1)^2] = \|\psi_{n,lk}\|_L^2$ which is bounded by a constant by Lemma \ref{lem:psi}, and bounding $\|H_{n,lk}\|_\infty \leq (1 + \|\La_0\|_\infty)\|\psi_{n,lk}\|_\infty$ combined with  $\|\psi_{n,lk}\|_\infty \lesssim L_n 2^{L_n/2}$ as follows from Lemma \ref{lem:psi}, one obtains
	\[
	E_{\la_0} \|\la_{L_n}^* - \la_{0,L_n}\|_\infty \leq 
	\frac{1}{\sqrt{n}}\sum_{l \leq L_n} \frac{2^{l/2}}{t} \log \left( 2^l 2 e^{Ct}\right),
	\]
	which is bounded from above, if one chooses $t=t_l=l$, by a constant times $\sum_{l\le L_n}2^{l/2}\sqrt{l}/\rn\leqa (L_n2^{L_n}/n)^{1/2}$ and 
	the proof is finished. 
\end{proof}

\subsection{$\ $ Bounds on LAN remainders}

Let $\cH$ be the class of (possibly $n$--dependent) functions defined by, for some constant $D>0$ independent of $n$ and sequences $(\mu_n)$, $(v_n)$ of positive real numbers,
\begin{align}
	\cH_n & =\{h\in L^\infty[0,1],\quad \|h\|_\infty\le \mu_n,\, \|h\|_2\le D \}, 
	\label{hach}\\
	\cL_n & = \{\la\in L^\infty[0,1],\quad \la\ge 0,\, \|\la-\la_0\|_\infty\le v_n\}. \label{elln}
\end{align}
Further consider the sets of functions, recalling the notation $(\La f)(\cdot)=\int_0^\cdot fd\La=\int_0^\cdot f\la$, for $t>0$,
\begin{align}
	\cF_n & = \{ (\La_0-\La)h,\quad h\in\cH_n, \la\in\cL_n \}, \label{fn}\\
	\cG_n & =\left\{\rn\La\left( e^{- \tfrac{t}{\rn}h} - 1 
	+ \frac{t}{\rn} h \right),\quad h\in\cH_n, \la\in\cL_n \right\}.\label{gn}
\end{align} 

\begin{lem} \label{lem:rn1}
	Let $\cF_n, \cG_n$ be defined as in \eqref{fn}--\eqref{gn}, where $\cH_n, \cL_n$ are as in \eqref{hach}--\eqref{elln}, for sequences $(\mu_n)$, $(v_n)$ of positive real numbers and $t>0$. Suppose $|t|\mu_n/\rn\le d$ for some $d>0$. Then for universal constants $C_1, C_2>0$,
	\begin{align*}
		E^*_{\la_0}\left[\sup_{f_n\in\cF_n} |\mathbb{G}_n f_n| \right] & \le C_1 v_n, \\
		E^*_{\la_0}\left[\sup_{g_n\in\cG_n} |\mathbb{G}_n g_n| \right] & \le C_2 \frac{t^2}{\rn}(1+v_n).
	\end{align*} 
\end{lem}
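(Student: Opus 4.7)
The plan is to reduce both suprema to an application of Lemma \ref{lemep}, by verifying that the classes $\cF_n$ and $\cG_n$ consist of functions of bounded variation vanishing at $0$, with appropriately controlled total variation.

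For $\cF_n$, I note that any $f_n = (\La_0-\La)h$ satisfies $f_n(0)=0$ and is absolutely continuous with derivative $f_n'(u) = (\la_0-\la)(u)h(u)$. Thus
\[
\|f_n\|_{BV} = \int_0^1 |(\la_0-\la)(u)||h(u)|du \le \|\la_0-\la\|_\infty \|h\|_1 \le v_n \|h\|_2 \le Dv_n,
\]
using Cauchy--Schwarz on $[0,1]$ and the definitions \eqref{hach}--\eqref{elln}. Applying Lemma \ref{lemep} with $\mu=Dv_n$ yields the first bound.

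For $\cG_n$, the key tool is the elementary inequality $|e^{-u}-1+u|\le \tfrac12 u^2 e^{|u|}$ for $u\in\RR$. With $u = th(x)/\rn$ and the assumption $|t|\mu_n/\rn\le d$, this gives $|e^{-th(x)/\rn}-1+th(x)/\rn|\le \tfrac12 e^d (t^2/n) h(x)^2$. Writing $g_n(x)=\rn\int_0^x(e^{-th(u)/\rn}-1+\tfrac{t}{\rn}h(u))\la(u)du$, we have $g_n(0)=0$ and
\[
\|g_n\|_{BV} = \rn\int_0^1 \Bigl|e^{-th(u)/\rn}-1+\tfrac{t}{\rn}h(u)\Bigr|\la(u)du \le \tfrac{e^d}{2}\frac{t^2}{\rn}\|\la\|_\infty \|h\|_2^2.
\]
Since on $\cL_n$ we have $\|\la\|_\infty\le c_2+v_n\le C(1+v_n)$ by \ref{c:mod}, and $\|h\|_2\le D$ on $\cH_n$, this produces $\|g_n\|_{BV}\le C't^2(1+v_n)/\rn$. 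A second application of Lemma \ref{lemep} with $\mu = C't^2(1+v_n)/\rn$ gives the second bound.

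No step presents a real obstacle; the only care needed is verifying that the constant $d$ (controlling $|t|\mu_n/\rn$) is independent of $n$, so that the factor $e^d$ can be absorbed into the universal constant $C_2$. The rest is a routine calculation using Taylor's inequality and the definitions of $\cH_n$ and $\cL_n$.
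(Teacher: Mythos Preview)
Your proposal is correct and follows essentially the same route as the paper's proof: both bound the total variation of elements of $\cF_n$ and $\cG_n$ and then invoke Lemma \ref{lemep}. The only cosmetic differences are that the paper writes the Taylor bound as $|e^x-1-x|\le Cx^2$ for bounded $x$ (absorbing $e^d$ into the constant) and controls $\int_0^1 \la h^2$ by splitting $\la=(\la-\la_0)+\la_0$ rather than bounding $\|\la\|_\infty$ directly, but these lead to the same estimate.
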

\begin{proof}
	First, one observes that any $f_n\in\cF_n$ and $g_n\in\cG_n$ are functions of finite total variation: both are differentiable and for $f_n=(\La_0-\La)h$ (for which $f_n(0)=0$) we have
	\[ \|f_n\|_{BV} = \int_0^1 |(\la-\la_0)h| \le \|\la-\la_0\|_\infty\|h\|_2\le Dv_n,\]
	where we use $\|h\|_1\le \|h\|_2$ (Cauchy--Schwarz). 
	For $g_n$, noting that $|t|\|h\|_\infty/\rn$ is bounded by assumption and using the inequality $|e^x-1-x|\le Cx^2$ for $x\le 1$, one obtains that the total variation of $g_n$ is bounded by
	\begin{align*}
		\|g_n\|_{BV}=\int_0^1 |g_n'| & \le C\frac{t^2}{\rn}\int_0^1 \la h^2 
		\le C\frac{t^2}{\rn} \left[ \int_0^1 (\la-\la_0)h^2  + \int_0^1 \la_0 h^2\right]\\
		& \le C\frac{t^2}{\rn} \|h\|_2^2\left[\|\la-\la_0\|_\infty +1\right]
		\le \frac{t^2}{\rn} D(1+v_n),
	\end{align*}
	where one uses the definitions of $\cH_n, \cL_n$. The results follow by applying Lemma \ref{lemep} to both empirical processes at stake.
\end{proof}

\begin{lem}\label{lem:rn2}
	Let $\cH_n, \cL_n$ be as in \eqref{hach}--\eqref{elln}, for sequences $(\mu_n)$, $(v_n)$ of positive real numbers and $t>0$. Suppose $|t|\mu_n/\rn\le d$ for some $d>0$. Then
	\begin{align*}
		\sup_{h\in\cH_n,\, \la\in\cL_n} & \left|
		n \La_0\left\{M_0\left(e^{r-r_0}\left(e^{- \frac{t}{\sqrt{n}}h} - 1 + \frac{t}{\sqrt{n}}h\right)  - \frac{1}{2}\frac{t^2}{n}h^2  \right)  \right\} 
		\right| \\
		&\qquad =O\left(t^2\left\{v_n + |t| \frac{1+v_n}{\rn} \right\}\right). 
	\end{align*}
\end{lem}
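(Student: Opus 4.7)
The plan is to Taylor expand the inner exponential $e^{-th/\rn}$ to second order, exploit the algebraic cancellation between the resulting quadratic term and the subtracted $t^2 h^2/(2n)$, and then separately estimate the surviving main term (which will carry the small factor $v_n$) and the cubic Taylor remainder. The key observation that makes the $v_n$ improvement possible is that after cancellation the $t^2/n$--sized term is multiplied by $e^{r-r_0}-1$, which on $\cL_n$ is of size $v_n$ by assumption \ref{c:mod}.

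First, since the hypothesis $|t|\mu_n/\rn \le d$ makes $x := th/\rn$ uniformly bounded on $\cH_n$ by a constant $d$, I would invoke the Taylor bound $|e^{-x}-1+x-x^2/2| \le C_d |x|^3$ for $|x|\le d$ to write
\begin{equation*}
 e^{-th/\rn} - 1 + \tfrac{t}{\rn}h \;=\; \tfrac{t^2 h^2}{2n} + R_3,\qquad |R_3|\le C_d |t|^3 |h|^3/n^{3/2}.
\end{equation*}
Substituting this into the integrand and regrouping the quadratic piece with the subtracted term yields
\begin{equation*}
 M_0\bigl[e^{r-r_0}(e^{-th/\rn}-1+\tfrac{t}{\rn}h) - \tfrac{t^2h^2}{2n}\bigr]
 \;=\; \tfrac{t^2h^2}{2n}\,M_0(e^{r-r_0}-1) \;+\; M_0 e^{r-r_0} R_3,
\end{equation*}
so that after multiplying by $n$ and integrating against $\la_0\,du$ the problem reduces to bounding two integrals uniformly over $h\in \cH_n$ and $\la\in\cL_n$.

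For the main term, I would use that on $\cL_n$ one has $|e^{r-r_0}-1| = |\la-\la_0|/\la_0 \leqa v_n$ by \ref{c:mod} (which gives $\la_0 \geq c_1$), combined with $\|M_0\|_\infty \le 1$, $\|\la_0\|_\infty \le c_2$, and $\|h\|_2 \le D$, to obtain
\begin{equation*}
 \Bigl|\tfrac{t^2}{2}\int M_0 h^2 (e^{r-r_0}-1)\la_0\,du\Bigr| \leqa t^2 v_n \|h\|_2^2 \leqa t^2 v_n.
\end{equation*}
For the remainder piece, the bound $M_0 e^{r-r_0}\la_0 = M_0\la \leqa 1+v_n$ and the pointwise estimate on $R_3$ yield
\begin{equation*}
 \Bigl|n\int M_0 e^{r-r_0} R_3\,\la_0\,du\Bigr| \leqa \frac{|t|^3(1+v_n)}{\rn}\int |h|^3,
\end{equation*}
and then $\int |h|^3 \le \|h\|_\infty \|h\|_2^2 \le \mu_n D^2$, with the $\mu_n$ factor absorbed into the implicit constant via the assumption $|t|\mu_n/\rn \le d$. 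Summing the two contributions gives the stated bound.

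The main obstacle is the cancellation step: one must combine the $t^2h^2/(2n)$ coming from the Taylor expansion of $e^{-th/\rn}$ with the explicit $-t^2h^2/(2n)$ subtracted in the statement \emph{before} estimating, otherwise the $(e^{r-r_0}-1)$ factor is lost and one ends up only with $O(t^2(1+v_n))$ instead of $O(t^2 v_n)$ as the leading contribution, costing the critical $v_n$ gain needed downstream. The remaining bookkeeping on the cubic remainder, and the verification that the bounds on $M_0$, $\la_0$, and $\|h\|_2$ are indeed uniform in $h\in\cH_n,\la\in\cL_n$, is routine once the cancellation is carried out correctly.
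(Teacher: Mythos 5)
Your proposal follows the paper's own proof essentially step for step: the same second‑order Taylor expansion of $e^{-th/\rn}$, the same cancellation producing the factor $e^{r-r_0}-1=(\la-\la_0)/\la_0$ in front of $t^2h^2/(2n)$, the same bound $t^2 v_n\|h\|_2^2$ for the main term, and the same treatment of the cubic remainder via $M_0\la\leqa 1+v_n$. The only divergence is that you explicitly keep the $|h|^3$ factor and bound $\int|h|^3\le\mu_n\|h\|_2^2$, so your remainder contribution is $|t|^3\mu_n(1+v_n)/\rn\le d\,t^2(1+v_n)$ rather than the displayed $|t|^3(1+v_n)/\rn$; the paper's proof obtains the displayed form only by silently dropping the $|h|^3$ in its bound $|R_h|\leqa t^3n^{-3/2}$, so your version is the more honest one, and the weaker $O(t^2(1+v_n))$ form it yields is sufficient for every place the lemma is invoked (the applications always tolerate an extra $O(t^2)$ inside the exponent).
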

\begin{proof}
	As $|t|\|h\|_\infty/\rn$ is bounded by assumption, one 
	expands $e^{-th/\rn}$ with third-order remainder term $R_h$: we have $|R_h| \leqa t^3n^{-3/2} e^{t\|h\|_\infty/\rn}\leqa t^3n^{-3/2}$. 
	Further writing $e^{r-r_0}=\la/\la_0$ gives
	\begin{align*}
		n \La_0&\left\{M_0\left(e^{r-r_0}\left(e^{- \frac{t}{\sqrt{n}}h} - 1 + \frac{t}{\sqrt{n}}h\right)  - \frac{1}{2}\frac{t^2}{n}h^2  \right)  \right\}\\
		& \le n\La_0\left\{M_0\left(\frac{|\la-\la_0|}{\la_0}\frac{t^2}{2n}h^2 \right)
		+e^{r-r_0}|R_h| \right\}\\
		& \leqa t^2\|\la-\la_0\|_\infty \|h\|_2^2 +   t^3\frac{\|\La\|_\infty}{\rn} e^{t\|h\|_\infty/\rn} \\
		& \leqa t^2 \left[ v_n + |t|(v_n+1)/\rn \right], 
	\end{align*}
	where for the last inequality we use that $\La(1)=\int_0^1 \la(u)du$ and the bound $\|\La\|_\infty =\La(1) \le \|\la-\la_0\|_\infty + \|\la_0\|_\infty \leqa (v_n+1)$. 
\end{proof}

The following variant is helpful if one does not wish to use any supremum norm bound on $\la-\la_0$, and instead base arguments only on the $\|\cdot\|_1$--norm. Lemma \ref{lem:rn3} is used for Theorem \ref{thm:bvmlin}, which involves the set $A_n$ of \ref{c:l1}, indeed based on the $\|\cdot\|_1$--norm only. Note, on the other hand, that this has consequences on the obtained bounds, whose dependence in $\mu_n$ is not as good as those above for $\mu_n$ growing fast to infinity, so could not be used for later results such as Theorem \ref{thm:sn} and \ref{thm:bvm}, whose proofs require  $h$'s with rapidly growing $\|\cdot\|_\infty$--norm.   Let us first update slightly the definition of $\cL_n$ as, for some sequence $(\veps_n)$,
\begin{align}
	\cL^1_n & = \{\la\in L^\infty[0,1],\quad \la\ge 0,\, \|\la-\la_0\|_1\le \veps_n\}. \label{ellnb}
\end{align}
Further set
\begin{align}
	\cF^1_n & = \{ (\La_0-\La)h,\quad h\in\cH_n, \la\in\cL_n^1 \}, \label{fnb}\\
	\cG^1_n & =\left\{\rn\La\left( e^{- \tfrac{t}{\rn}h} - 1 
	+ \frac{t}{\rn} h \right),\quad h\in\cH_n, \la\in\cL_n^1 \right\}.\label{gnb}
\end{align} 
\begin{lem} \label{lem:rn3}
	Let $\cF^1_n, \cG^1_n$ be defined as in \eqref{fnb}--\eqref{gnb}, where $\cH_n, \cL^1_n$ are as in \eqref{hach}--\eqref{ellnb}, for a sequence  $(\veps_n)$ of positive real numbers and $t$ such that $|t|\mu_n/\rn\le d$ for some $d>0$. Then for universal constants $C_1, C_2>0$,
	\begin{align*}
		E^*_{\la_0}\left[\sup_{f_n\in \cF^1_n} |\mathbb{G}_n f_n| \right] & \le C_1 \veps_n\mu_n, \\
		E^*_{\la_0}\left[\sup_{g_n\in \cG^1_n} |\mathbb{G}_n g_n| \right] & \le C_2 \frac{t^2}{\rn}(1+\veps_n\mu_n^2).
	\end{align*} 
	Further, under the same notation and conditions,
	\begin{align*}
		\sup_{h\in\cH_n,\, \la\in\cL_n^1} & \left|
		n \La_0\left\{M_0\left(e^{r-r_0}\left(e^{- \frac{t}{\sqrt{n}}h} - 1 + \frac{t}{\sqrt{n}}h\right)  - \frac{1}{2}\frac{t^2}{n}h^2  \right)  \right\} 
		\right| \\
		&\qquad =O\left(t^2\left\{\veps_n\mu_n^2 + |t| \frac{1+\veps_n}{\rn} \right\}\right). 
	\end{align*}
\end{lem}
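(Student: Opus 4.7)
The plan is to treat Lemma \ref{lem:rn3} as an $L^1$ variant of Lemmas \ref{lem:rn1}--\ref{lem:rn2}, where the control $\|\la-\la_0\|_\infty\le v_n$ used there is replaced by the weaker $\|\la-\la_0\|_1\le\veps_n$. The crucial observation is that every time the previous proofs applied the Cauchy--Schwarz/trivial bound $\int|\la-\la_0|\cdot h^2\le \|\la-\la_0\|_\infty\|h\|_2^2$, one can instead apply H\"older in the opposite way: $\int |\la-\la_0|\cdot h^k\le \|\la-\la_0\|_1\|h\|_\infty^k$, which pairs the $L^1$--bound $\veps_n$ with the supremum bound $\mu_n$ from $\cH_n$ in \eqref{hach}. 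This explains the appearance of $\veps_n\mu_n$ and $\veps_n\mu_n^2$ in place of $v_n$ and $v_n$ in the respective conclusions.

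For the first empirical process bound, I would argue exactly as in Lemma \ref{lem:rn1}: any $f_n=(\La_0-\La)h\in\cF_n^1$ satisfies $f_n(0)=0$ and is differentiable with
\[
\|f_n\|_{BV}=\int_0^1|(\la-\la_0)h|\le \|\la-\la_0\|_1\|h\|_\infty\le \veps_n\mu_n.
\]
Applying Lemma \ref{lemep} (with $\mu=\veps_n\mu_n$) yields the first inequality. For the second bound, $g_n=\rn\La(e^{-th/\rn}-1+th/\rn)$ is also $\cC^1$; using $|e^x-1-x|\le C x^2$ valid on the bounded range $|th/\rn|\le d$ and splitting $\la=\la_0+(\la-\la_0)$,
\[
\|g_n\|_{BV}\le C\frac{t^2}{n}\int_0^1 \la h^2
\le C\frac{t^2}{n}\Big(\|\la_0\|_\infty\|h\|_2^2+\|\la-\la_0\|_1\|h\|_\infty^2\Big)
\leqa \frac{t^2}{n}(1+\veps_n\mu_n^2),
\]
and a second application of Lemma \ref{lemep} concludes the second inequality.

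For the pointwise LAN remainder, I would expand
\[
e^{-th/\rn}-1+\tfrac{t}{\rn}h=\tfrac{t^2 h^2}{2n}+R_h,\qquad |R_h|\leqa \tfrac{|t|^3}{n^{3/2}},
\]
where the bound on $R_h$ uses $|th/\rn|\leqa d$ uniformly in $\cH_n$. Writing $e^{r-r_0}=\la/\la_0$, the bracket inside the integral becomes $(\la/\la_0-1)\tfrac{t^2 h^2}{2n}+(\la/\la_0)R_h$. Multiplying by $nM_0$ and integrating against $d\La_0=\la_0\,du$ gives a quadratic piece $\tfrac{t^2}{2}\int_0^1 M_0(\la-\la_0)h^2$, bounded by $t^2\veps_n\mu_n^2$ via H\"older and $\|M_0\|_\infty\le 1$, plus a cubic piece $n\int_0^1 M_0\la R_h$, bounded by $|t|^3/\rn\cdot\int_0^1\la\leqa |t|^3(1+\veps_n)/\rn$ since $\int\la\le \|\la-\la_0\|_1+\La_0(1)\leqa 1+\veps_n$. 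Summing the two contributions yields the announced bound $O(t^2\{\veps_n\mu_n^2+|t|(1+\veps_n)/\rn\})$.

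There is no real obstacle here: once one notices that Hölder should be applied with $L^1$ on $\la-\la_0$ and $L^\infty$ on $h$ (and $h^2$), the proofs of Lemmas \ref{lem:rn1} and \ref{lem:rn2} transfer verbatim, and the only point requiring mild care is that $\mu_n$ now multiplies $\veps_n$ rather than being absorbed via $\|h\|_2$, which is precisely why this version is useful for bounded functional representers (as in Theorem \ref{thm:bvmlin}) but not for the supremum--norm analysis of Section \ref{sec:sn}.
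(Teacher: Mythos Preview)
Your proof is correct and matches the paper's approach exactly: the paper's own proof of Lemma \ref{lem:rn3} is a single paragraph observing that the arguments for Lemmas \ref{lem:rn1}--\ref{lem:rn2} go through verbatim once one swaps the H\"older pairing to $\int|\la-\la_0|\,|h|^k\le\|\la-\la_0\|_1\|h\|_\infty^k$ and bounds $\La(1)\le\La_0(1)+\|\la-\la_0\|_1\leqa 1+\veps_n$, precisely as you do. One typographical slip: your factor $t^2/n$ in the bound for $\|g_n\|_{BV}$ should be $t^2/\rn$, since the prefactor $\rn$ in the definition of $g_n$ combines with the $t^2h^2/n$ coming from $|e^x-1-x|\leqa x^2$.
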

\begin{proof}
	The proof is nearly identical to that of Lemmas \ref{lem:rn1}--\ref{lem:rn2}, except one now bounds $\int |\la-\la_0||h|\le \|\la-\la_0\|_1\|h\|_\infty$ and, for some uniformly bounded function $a$, 
	\[ \int_0^1 |\la-\la_0|(u)h(u)^2a(u)du \le C  \|\la-\la_0\|_1\|h\|_\infty^2,\]
	instead of bounding from above by $C\|\la-\la_0\|_\infty\|h\|_2^2$ as before. Finally to bound $\|\La\|_\infty\l=\La(1)$, one uses $\La(1)\le \La_0(1)+\int_0^1|\la-\la_0|\le \veps_n + \|\La_0\|_\infty\leqa 1+\veps_n$.
\end{proof}

\section{$\ $ Additional simulation and data application results}\label{sec:suppsim}

\subsection{$\ $ Simulation results for the cumulative hazard\label{sec:simhaz}} 
Table \ref{table:cumhaz} shows the coverage results for the credible bands for the cumulative hazard $\La$. In all scenarios, coverage is as expected from  Corollary \ref{cor:bands}. This illustrates that the Bayesian credible bands are suitable for uncertainty quantification in practice.

\begin{table}[h]
	\caption{Coverage of the credible bands for the cumulative hazard, using the dependent and independent Gamma priors. The parameter $\gamma$ is that of \eqref{eln}, so $\gamma = 1/2$ corresponds to $K = \left\lceil \left(n/\log{n}\right)^{1/2} \right\rceil$ intervals and $\gamma = 1$ to  $K = \left\lceil \left(n/\log{n}\right)^{1/3} \right\rceil$ intervals.}
	\label{table:cumhaz}
	\begin{tabular}{l|rr|rr}
		& \multicolumn{2}{c|}{$\gamma = 1/2$} & \multicolumn{2}{c|}{$\gamma = 1$}  \\
		& dep.	& indep.	& dep.	& indep.\\
		\hline
		\multicolumn{1}{l}{ \textbf{Smooth hazard}} \\
		\hline
		$n = 200$, adm. + uniform & 0.96 & 0.97 &  0.95 & 0.98 \\
		$n = 2000$, adm. + uniform & 0.96 & 0.96 & 0.95  & 0.97 \\
		$n = 200$, adm. & 0.96  &  0.96 & 0.96  & 0.97 \\
		$n = 2000$, adm. & 0.95 & 0.95  & 0.95  & 0.95 \\
		\hline
		\multicolumn{1}{l}{ \textbf{Piecewise linear hazard}} \\
		\hline
		$n = 200$, adm. + uniform & 0.96  & 0.98 & 0.95  & 0.98  \\
		$n = 2000$, adm. + uniform & 0.96  & 0.96  & 0.94  & 0.98  \\
		$n = 200$, adm. & 0.95  & 0.95 & 0.96  & 0.95 \\
		$n = 2000$, adm. & 0.95 & 0.96  & 0.95 & 0.94 \\
	\end{tabular}
\end{table}

\subsection{$\ $ Data application: results for the hazard and cumulative hazard\label{sec:datahaz}}

For the hazard, we compare the posterior mean to the kernel-based methods of Mueller and Wang \cite{Mueller1994} and Cao and Lopez-de-Ullibarri \cite{Cao2007}, in Figure \ref{fig:cancer_haz}.

We compare the posterior mean and credible band for the cumulative hazard to the Nelson-Aalen estimator with its pointwise confidence intervals, in Figure \ref{fig:cancer_haz} (although we draw it here for the sake of comparison, let us stress again that, contrary to the credible band, the corresponding `patched' band obtained from putting together the confidence intervals typically does {\em not} cover with the desired confidence level).

The posterior mean of the hazard resembles the other two estimates, and provides an estimate for a longer timeframe than the two kernel-based methods, compensating for the low number of events in the final two intervals by exploiting the dependence induced through the prior. 

In this example the credible band for the cumulative hazard is quite wide, more or less equal in width to the width of the final pointwise interval of the Nelson-Aalen estimator. This is not surprising for two reasons: (i) the pointwise intervals have very different guarantees than the credible band; (ii)  the credible band is constructed to have a fixed radius and needs to cover just as well near the end of follow-up as in the beginning, leading to a possibly too conservative width at the start of follow-up. Although beyond the scope of the present contribution, it would also be interesting to investigate constructions of credible bands with possibly time-varying radius.

\begin{figure}[h]
	\caption{Posterior mean of the hazard (solid piecewise constant function) and the estimators  of Cao and Lopez-de-Ullibarri (dotted) and of Mueller and Wang (dashed).  }
	\label{fig:cancer_haz}
	\includegraphics[width=0.8\textwidth]{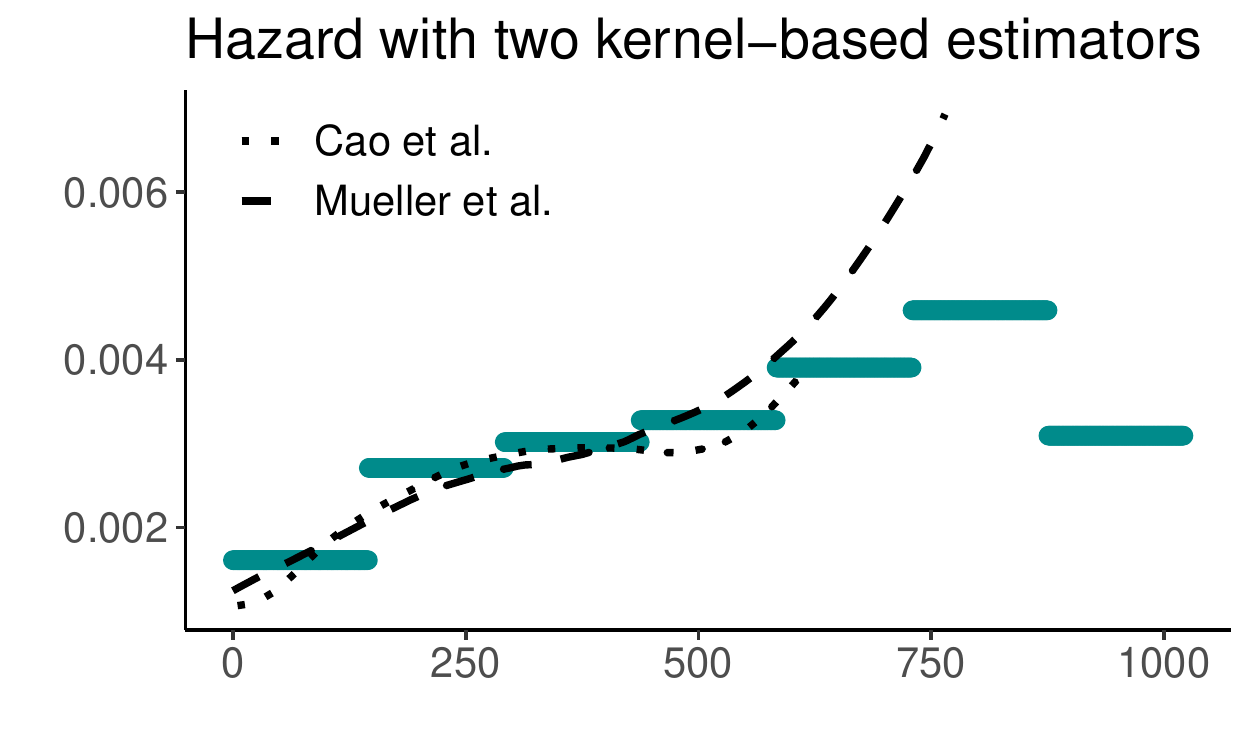}
\end{figure}

\begin{figure}[h]
	\caption{Posterior mean of the cumulative hazard (solid) with credible band (shaded area) and the Nelson-Aalen estimator (dashed) with its pointwise confidence intervals (dashed). Unlike the credible band, the collated pointwise confidence intervals do not form a confidence band.  }
	\label{fig:cancer_cumhaz}
	\includegraphics[width=0.8\textwidth]{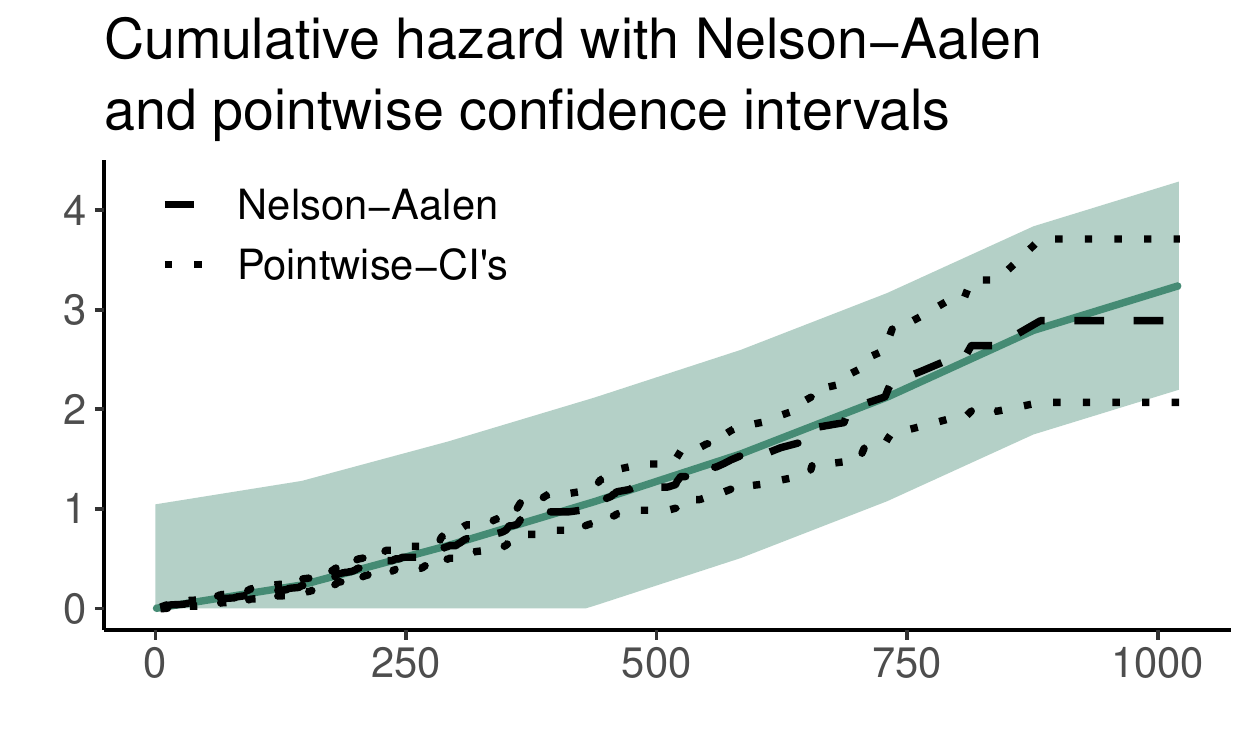}
\end{figure}

\subsection{$\ $ The sampler, independent and dependent Gamma priors\label{sec:sampler}} 
To sample from the posterior,  we derive a Gibbs sampler with Metropolis-Hastings steps within Gibbs. Let $K$ be the number of intervals and denote the draw of the hazard from iteration $j$ by $(\lambda_1^{(j)}, \ldots, \lambda_K^{(j)})$. We augment the data $\{(y_i, \delta_i)\}_{i=1}^n$ as in \cite{Holford1980, Laird1981}: $y_i$ is represented as $(y_{1i}, y_{2i}, \ldots, y_{Ki})$, where $y_{ki}$ is the total time individual $i$ was under follow-up during interval $k$, and $\delta_i$ is represented as $(\delta_{1i}, \delta_{2i}, \ldots, \delta_{Ki})$, where $\delta_{ki}$ is equal to one if individual $i$ experiences the event in interval $k$, and equal to zero otherwise.

For the sampler, we require for each interval $k$ the number of events in the interval,  $d_k = \sum_{i=1}^n \delta_{ki}$  and the total exposure in the interval, $T_k = \sum_{i=1}^n y_{ki}$. We initalize at $\la_k^{(1)} = \frac{d_k}{T_k+1} + \varepsilon$ for some small $\epsilon > 0$. At each iteration $j$ and for each interval $k$, we will draw a proposal $\la_k^{prop}$, compute the acceptance ratio $A$, then draw $u \sim \operatorname{Unif}[0,1]$ and accept the proposal if $u < A$, otherwise we reject the proposal.

For the dependent prior, at iteration $j$, for the first interval we draw a proposal $\la_1^{prop} \sim \operatorname{Gamma}(d_1 + \alpha_0 - \alpha, \beta_0 + T_1)$ and for the other intervals we draw a proposal $$\la_k^{prop} \sim  \operatorname{Gamma}\left(d_k + \varepsilon, \tfrac{\alpha}{\la_{k-1}^{(j)}} + T_k\right)$$ for some small $\varepsilon > 0$. This leads to 
$$ A = \exp\left(\alpha \la_2^{(j-1)}\left(\tfrac{1}{\la_1^{(j-1)}} - \tfrac{1}{\la_1^{prop}}\right)\right)$$ for the first intervals, and for $k = 2, \ldots, K-1$ to  
$$A = \exp\left(-\alpha \la_{k+1}^{(j-1)}\left( \tfrac{1}{\la_k^{(j-1)}} - \tfrac{1}{\la_k^{prop}}\right)\right)\left(\frac{\la_k^{(j-1)}}{\la_k^{prop}}\right)^\varepsilon.$$ 
For the final interval, the marginal conditional posterior distribution is a $ \operatorname{Gamma}(d_K + \alpha, \alpha  \la_{K-1}^{(j)} + T_K)$, from which we can sample directly.

For the independent prior, we can sample directly from the posterior, drawing from a $ \operatorname{Gamma}(d_k + \alpha, T_k + \beta)$ for each interval $k$.

\bibliographystyle{abbrv}
\bibliography{survival} 

\end{document}